\let\oldthebibliography\thebibliography
\renewcommand{\thebibliography}[1]{%
  \oldthebibliography{#1}%
  \footnotesize 
}
\newtheorem{thm}{Theorem}[section]
\newtheorem{prop}[thm]{Proposition}
\newtheorem{lemma}[thm]{Lemma}
\newtheorem{cor}[thm]{Corollary}
\theoremstyle{remark}
\newtheorem{rem}[thm]{Remark}
\theoremstyle{definition}
\newtheorem{definition}[thm]{Definition}
\numberwithin{equation}{section}
\newcommand{\eps}{\varepsilon}
\newcommand{\rr}{\mathbb{R}}
\newcommand{\nn}{\mathbb{N}}
\newcommand{\nchi}{{\raise.3ex\hbox{$\chi$}}}
\newcommand{\sfd}{{\sf d}}
\newcommand{\Lip}{{\rm Lip}}
\renewcommand{\phi}{\varphi}
\newcommand{\restr}[1]{\lower3pt\hbox{$|_{#1}$}}
\newcommand{\X}{{\rm X}}
\newcommand{\fr}{\penalty-20\null\hfill$\blacksquare$} 
\definecolor{mygray}{gray}{0.9}
\newcommand{\la}{\langle}
\newcommand{\ra}{\rangle}
\renewcommand{\div}{{\rm div}}
\newcommand{\mea}{\mathfrak{m}}
\newcommand{\mm}{\mathfrak{m}}
\newcommand{\LIP}{\mathsf{LIP}}
\newcommand{\test}{{\sf{Test}}}
\renewcommand{\d}{{\, \mathrm d}} 
\newcommand{\loc}{\mathsf{loc}}
\newcommand{\W}{\mathit{W}^{1,2}}
\newcommand{\supp}{\mathop{\rm supp}\nolimits}
\newcommand{\Xdm}{(\X,\sfd,\mm)}
\newcommand{\RCD}{\mathrm{RCD}}
\newcommand{\CD}{\mathrm{CD}}
\newcommand{\rcd}{\mathrm{RCD}}
\newcommand{\dom}{{\sf D}}
\renewcommand{\limsup}{\varlimsup}
\renewcommand{\liminf}{\varliminf}
\theoremstyle{plain}
\newcommand{\AC}{{\sf AC}}
\newcommand{\gradu}{{|\nabla u|}}
\newcommand{\alme}{\mm\text{-a.e.}}
\newcommand{\spn}{{\rm span}}
\newcommand{\hess}{{\rm Hess}}
\newcommand{\ww}{\wedge}
\renewcommand{\aa}{\mathfrak{a}}
\begin{document}

	\title[Regularity for Elliptic Equations in Metric Spaces]{Regularity  for quasilinear elliptic equations \\ in metric measure spaces
	}

	\author[S.~M.~Schulz]{Simon Schulz}
	\address[S.~M.~Schulz]{Laboratoire de Math\'ematiques de Versailles, UVSQ, Universit\'e Paris-Saclay, CNRS, 45 Av.~des \'Etats-Unis, 78000 Versailles Cedex, France}\email{simon.schulz@uvsq.fr}
	
	\author[I.~Y.~Violo]{Ivan Yuri Violo}
	\address[I.~Y.~Violo]{Universit\`a di Pisa, Dipartimento di Matematica, Largo Bruno Pontecorvo, 5, 56127 Pisa, Italy}
	\email{ivanyuri.violo@dm.unipi.it }

	\keywords{$p$-harmonic maps, Galerkin approximation, metric measure spaces, Ricci curvature, elliptic equations}
	
	\subjclass[2020]{35J15, 35J92, 35B65, 46E36, 	58J05 }

	\begin{abstract}
    {
    In the present article we prove second-order and Lipschitz regularity  for quasilinear elliptic equations in metric spaces endowed with a lower bound on the Ricci curvature. The estimates we obtain  are quantitative and  cover a large class of  elliptic equations with polynomial growth. As a particular case we settle the Lipschitz regularity of $p$-harmonic functions for all values of $p\in(1,\infty)$, {proving also a Cheng-Yau type inequality}. These results are the first in this setting that simultaneously address  a wide family of elliptic operators and extend beyond the classical H\"older regularity theory. Our strategy rests on the use of Galerkin's method, which we employ as an alternative to the traditional difference quotients technique.} 
	\end{abstract}
	
	\maketitle

    \begin{small}
	\setcounter{tocdepth}{1}
	\tableofcontents
	\end{small}
	
	\section{Introduction}
	This manuscript is concerned with the study of the regularity properties of weak solutions of quasilinear elliptic equations in metric measure spaces $\Xdm$, of the form 
	\begin{equation}\label{eq:intro eq}
		\div(\Psi(|\nabla u|)  \nabla u) = f, 
	\end{equation}
	where the conductivity $\Psi$ is a prescribed non-negative scalar function satisfying suitable coercivity and growth conditions {(see \eqref{eq:psi' condition}, \eqref{eq:psi growth})}, and $f$ is a given source term. A model example is the classical $p$-Laplacian operator
	\begin{equation}\label{eq:intro eq p laplace}
		\Delta_p u=\div(|\nabla u|^{p-2}  \nabla u) = f,
	\end{equation}
	which for $p=2$ reduces to the standard Laplacian $\Delta u$. For the rigorous definition of the above equations in metric setting we refer to \S \ref{pre:sobolev}. Observe that \eqref{eq:intro eq} is the Euler-Lagrange equation of the following functional
	\begin{equation}\label{eq:functional intro}
		F_\Phi(u)\coloneqq \int_\X \Big( \Phi(|\nabla u|)+f u \Big) \d \mm, \qquad 
		\Phi(t)\coloneqq \int_0^t t\Psi(t). 
	\end{equation}

	Over the past decades, the study of nonlinear elliptic operators and their solutions in non-smooth settings has attracted considerable attention.
	A well-established fact is that the De Giorgi-Nash-Moser method  can be extended to metric measure spaces under two fundamental assumptions: {firstly,} the space supports a \textit{local Poincaré inequality}, and {secondly} the reference measure satisfies a \textit{doubling condition}. These hypotheses make it possible to obtain interior H\"older regularity and Harnack inequalities. {These results} rely primarily on the observation that {the aforementioned} assumptions suffice to establish a local Sobolev embedding, which in turn provides the necessary framework to carry out the classical iteration techniques (see \textit{e.g.}~\cite{KS01,SaloffeHarnack,VIOLO2025}). Let us mention that there is by now an extensive literature about elliptic equations in this setting (see \textit{e.g.}~\cite{nastasi2025regularity,BB13,LMP06,lahti2018analog,maly2018neumann} and references therein).
	
	On the other hand it is natural to ask if  smoothness of solutions holds beyond H\"older regularity which, being in a metric setting, translates to Lipschitz regularity. Under  {the aforementioned set of}  assumptions this is not possible, as there are examples of harmonic functions  which are not better than H\"older continuous \cite{Koskela-Rajala-Shanmugalingam03}. The natural additional assumption turns out to be a suitable notion of Ricci curvature bounded below.

	The notion of Ricci curvature bounded below for metric spaces was introduced in the seminal works of Lott and Villani \cite{Lott-Villani09}  and Sturm \cite{sturmII}. They introduced the  $\CD(K,N)$ \textit{Curvature Dimension condition} which prescribes in a synthetic way a lower {bound} for the Ricci curvature by $K\in \rr$  and an upper bound for the dimension $N\in [1,\infty]$. This definition is given by suitable convexity inequalities for displacement interpolation via optimal transport.  In fact, to deal with PDEs it {is} more convenient to use the stricter $\RCD(K,N)$ \textit{Riemannian Curvature Dimension condition}. Roughly speaking, this allows for refined calculus tools {and}  differential operators and, more importantly, integration by parts. For example the Laplacian operator in $\CD$ spaces might not be even linear (see \cite{Gigli12}). For background we refer to the surveys \cite{AmbICM,Gigli23_working,Villani2016,sturmSurvey}.
	
	Recall that a Ricci curvature lower bound is a natural assumption in the context of  PDEs on curved spaces, already in the setting of Riemannian manifolds. The most famous {classical estimates  which are tightly connected to the Ricci curvature}  are the Cheng--Yau gradient bound for harmonic functions \cite{ChengYau75}, generalized to  $p$-harmonic functions by Wang and Zhang \cite{WZ11}, and the Li--Yau inequality for the heat equation \cite{LiYau}.
	
	In non-smooth $\RCD(K,N)$ spaces it was shown in \cite{Jiang13} (building upon \cite{Koskela-Rajala-Shanmugalingam03}) that harmonic functions are indeed locally Lipschitz (see also \cite{Gigli2023Regularity,MondinoSemola2022Lipschitz} for the case of harmonic maps). More generally it is known that a solution to 
	\[
	\Delta u=f \in L^q,
	\]
	is locally Lipschitz provided $q>N$ (see \cite{Kell13,Jiang12}). Generalized Cheng--Yau { and Li--Yau} inequalities also holds in this setting \cite{hua2013harmonic,garofalo2014li} together with a version of the Weyl's lemma \cite{ZZweyl}.
	
	In the present work, we extend the local Lipschitz regularity result to the $p$-Laplacian.
	\begin{thm}[Regularity for the $p$-Poisson equation]\label{thm:main plap}
		Fix $p\in(1,\infty).$ Let $\Xdm$ be an $\RCD(K,N)$ space, $N<\infty,$ and $\Omega\subset \X$ be open and bounded.  Suppose $u\in W^{1,p}(\Omega)$  solves
		\begin{equation}\label{eq:p poisson thm}
			\Delta_p u=f \in L^q(\Omega).
		\end{equation} for some $q>N$. Then $u \in \LIP_{\loc}(\Omega).$
	\end{thm}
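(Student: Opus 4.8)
The plan is to reduce the theorem to the \emph{a priori} estimate $\gradu\in L^\infty_\loc(\Omega)$; the asserted $\LIP_\loc$ regularity then follows from the general fact that an $\RCD(K,N)$ space is a length space carrying a local Poincar\'e inequality with a locally doubling measure, so that any function in $W^{1,p}_\loc(\Omega)$ with locally bounded minimal relaxed gradient admits a locally Lipschitz representative (telescoping over dyadic balls). Hence the crux is a second-order, Caccioppoli-type gradient estimate followed by a Moser iteration.

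Since $\Delta_p$ is degenerate (or singular) where $\nabla u$ vanishes and since no \emph{a priori} second-order control on $u$ is available, the computations below cannot be performed on $u$ directly; this is where the Galerkin strategy announced in the abstract enters. Concretely I would (i) regularize the conductivity, replacing $t^{p-2}$ by $\Psi_\eps(t)=(t^2+\eps^2)^{(p-2)/2}$ and mollifying $f$; and (ii) solve $\div\!\big(\Psi_\eps(\gradu_\eps)\nabla u_\eps\big)=f_\eps$ by projecting onto finite-dimensional eigenspaces of the Laplacian, so that every Galerkin iterate lies in $\dom(\bd)$ and in a test class in which the full second-order calculus of $\RCD$ spaces --- the Hessian and, crucially, the Bochner inequality --- is legitimate. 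The target is a bound for $\gradu_\eps$ in $L^\infty_\loc$ uniform in $\eps$ and in the Galerkin dimension; since $u_\eps\to u$ in $W^{1,p}_\loc$, passing to the limit yields the theorem.

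The engine of the uniform estimate is the Bochner inequality: on an $\RCD(K,N)$ space one has, weakly,
\[
\tfrac12\,\bd\,|\nabla v|^2\ \ge\ \Big(|\hess v|^2+\la\nabla v,\nabla\bd v\ra+K|\nabla v|^2+\tfrac{1}{N}(\bd v)^2\Big)\,\mm
\]
for $v$ in the test class. Applying it with $v=u_\eps$, using the regularized equation to express $\bd u_\eps$ through $f_\eps$, $\gradu_\eps$ and $\hess u_\eps$, testing against a squared cutoff $\phi^2$, and absorbing the cross term $|f_\eps|\,|\hess u_\eps|$ into $|\hess u_\eps|^2$ by Young's inequality, one obtains a Caccioppoli estimate and, upon choosing the right power, the subsolution inequality (schematically)
\[
\bd\,w_\eps\ \ge\ -\,C\,\big(|f_\eps|^2+(1+|K|)\,(1+w_\eps)\big),\qquad w_\eps:=(\gradu_\eps^{2}+\eps^{2})^{\beta},\ \ \beta=\beta(p)>0,
\]
with $C$ independent of $\eps$ (for $p\neq2$ the operator carries the degenerate weight $\Psi_\eps(\gradu_\eps)$). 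It is essential to retain the Hessian term and the dimensional term $\tfrac1N(\bd v)^2$ supplied by Bochner: for $p>2$ these are precisely what compensate the worst contribution produced by differentiating $\Psi_\eps$. A Moser iteration on this subsolution inequality --- using that $\RCD(K,N)$ spaces satisfy a Sobolev inequality with dimensional exponent (doubling and Poincar\'e) --- then gives $\|w_\eps\|_{L^\infty(B')}\le C\big(B',B,p,K,N,\|u\|_{W^{1,p}(B)},\|f\|_{L^q(B)}\big)$ for every $B'\Subset B\Subset\Omega$. The hypothesis $q>N$ enters exactly here: it guarantees $|f_\eps|^2\in L^{q/2}_\loc$ with $q/2>N/2$, which is the borderline integrability of the source needed to reach the $L^\infty$ conclusion.

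The principal obstacle is the \emph{uniformity in $\eps$}. Since $\Delta_p$ degenerates for $p\ne2$, the coefficients of the Bochner-derived inequality degenerate where $\gradu$ is small, so the iteration must be run with the correct $\eps$-weights, and one must check that the error terms created by the regularization are asymptotically negligible, that the good Hessian term genuinely dominates the bad terms for \emph{every} $p\in(1,\infty)$ (the ranges $p<2$ and $p>2$ calling for slightly different bookkeeping), and that the Galerkin iterates really lie in the test class demanded by the Bochner inequality, so that every integration by parts producing the differential inequality is justified. A further subtlety, proper to the non-smooth setting, is that one has only the Bochner \emph{inequality}, not the exact identity, so one must verify that the resulting differential inequality still possesses the sign structure required by Moser's scheme.
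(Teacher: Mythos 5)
Your proposal correctly identifies the overall strategy---Galerkin approximation to justify the second-order calculus, Bochner inequality, Caccioppoli estimate, Moser iteration---and this is indeed the spirit of the paper. However, there is a genuine structural gap in the way you propose to combine these ingredients.

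You intend to run the Moser iteration on the Galerkin iterates themselves, to obtain $\|\nabla u_{\eps,k}\|_{L^\infty_\loc}$ bounds \emph{uniform in the Galerkin dimension $k$}. This does not work. A Galerkin iterate $u_k \in g + V_k$ satisfies the regularized equation only in duality with the finite-dimensional space $V_k$, not pointwise and not against arbitrary $W^{1,2}_0$ test functions. Caccioppoli estimates and Moser iteration both require testing the equation against cutoff functions times powers of $|\nabla u_k|$, which are not in $V_k$. The only admissible test functions for the reduced problem are linear combinations of eigenfunctions; the paper exploits precisely the single available ``good'' test function $\Delta(u_k-g) \in V_k$ (since $V_k$ is invariant under $\Delta$) to obtain an $L^2$-Laplacian bound, nothing more. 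Consequently, the argument in the paper has a strict two-stage structure: (1) the Galerkin scheme, applied to a bounded non-degenerate operator $\Psi_{M,\eta}$ and with $L^2$-Laplacian estimates that depend on $M$ and a cut-off $\eta$ (Prop.\ \ref{prop:smooth laplacian estimate}), is used only to establish $\Delta u \in L^2_\loc$ for the limiting solution of the regularized equation, which now holds in duality with all of $W^{1,2}_0(\Omega)$ (Theorem \ref{thm:main quasilinear}); (2) once this second-order regularity is known, the weighted Caccioppoli estimate (Prop.\ \ref{prop:key estimate}) and the Moser iteration (Prop.\ \ref{prop:lip a priori}) are carried out \emph{a posteriori} on this limiting solution, not on the Galerkin iterates, and the uniform-in-$M$ Laplacian bound of Prop.\ \ref{prop:a priori quasilinear lapl estimates} (which again needs the pointwise equation) replaces the $M$- and $\eta$-dependent one.

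A secondary but substantive omission is that your one-parameter regularization $\Psi_\eps(t)=(t^2+\eps^2)^{(p-2)/2}$ is insufficient: for $p>2$ this coefficient is unbounded, so the Minty--Browder existence and the Galerkin analysis do not go through with it directly. The paper needs the truncated coefficient $\Psi((t\wedge M)\eta^2)$ (bounded above and below), hence two limits $M\to\infty$ and then $\delta\to 0$, and these limits are delicate because the $M$-truncation changes the growth exponent from $p$ to $2$, so simple energy arguments do not give convergence of solutions; the paper resolves this with a monotonicity argument in the spirit of Minty--Browder (Propositions \ref{prop:approx of delta by M-delta} and \ref{prop:approx W1p Fphi by Fphidelta}). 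Finally, a technical remark: passing from $\nabla u \in L^\infty_\loc$ to $u \in \LIP_\loc$ in this setting is by the Sobolev-to-Lipschitz property of $\RCD$ spaces, not a telescoping over dyadic balls (which gives only H\"older in general).
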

	As a particular case we obtain the Lipschitz regularity of $p$-harmonic functions.  { In fact we also prove a Cheng-Yau type inequality, stated below.}   We emphasise at this point that the results obtained in this manuscript extend to a class of more general elliptic operators with $p$-growth (see our third main result, Theorem \ref{thm:p-delta regularity}). 
	\begin{cor}[Cheng-Yau type gradient estimate]\label{cor:pharm}
		{Let $p\in(1,\infty)$, $\Xdm$ be an $\RCD(K,N)$ space, $N<\infty,$  and $u$ be  positive  and $p$-harmonic in $B_R(x)$. Then $u \in \LIP_\loc(B_R(x))$ and
        \begin{equation}\label{eq:CY}
            \||\nabla \log(u)|\|_{L^\infty(B_{R/2}(x))}\le C_N\frac{1+R\sqrt{K^-}}{R},
        \end{equation}
        where $C_N$ is a constant depending only on $N$, and $K^- = -\min\{0,K\}$.
        }
	\end{cor}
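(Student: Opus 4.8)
The plan is to deduce this from the main Lipschitz regularity theorem (Theorem~\ref{thm:main plap}, or rather its more general companion Theorem~\ref{thm:p-delta regularity}) applied not to $u$ itself but to a logarithmic-type transformation that linearizes the scaling. Since $u$ is positive and $p$-harmonic on $B_R(x)$, a standard computation shows that $v \coloneqq \log u$ satisfies a quasilinear equation of the form $\div(|\nabla v|^{p-2}\nabla v) = (p-1)|\nabla v|^p$ in the weak sense; more precisely, writing $w \coloneqq u^{(p-1-N)/(p-1)}$ or an analogous power (the exponent is chosen, as in the Wang--Zhang approach \cite{WZ11}, so that the nonlinear term has a favourable sign), one obtains an equation to which the interior estimates of the paper apply. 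First I would record the precise PDE satisfied by the transformed function and verify it falls within the class governed by the structural conditions \eqref{eq:psi' condition}--\eqref{eq:psi growth}.

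Second, I would invoke Theorem~\ref{thm:main plap} to conclude $u \in \LIP_\loc(B_R(x))$: the source term produced by the transformation lies in $L^\infty_\loc$, hence in $L^q_\loc$ for every $q>N$, so local Lipschitz regularity is immediate once the equation is in the admissible form. This already settles the qualitative part of the statement.

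Third, for the quantitative Cheng--Yau bound \eqref{eq:CY}, I would run the same argument but keeping track of constants, exploiting scaling. The key reduction is to the unit ball: set $\tilde u(y) \coloneqq u$ rescaled so that $R=1$, note that an $\RCD(K,N)$ space rescaled by $1/R$ is $\RCD(R^2 K, N)$, and so the curvature parameter enters only through the dimensionless quantity $R^2 K^- $, i.e.\ through $R\sqrt{K^-}$. Applying the quantitative interior gradient estimate of Theorem~\ref{thm:p-delta regularity} on $B_{1/2}$ in terms of (a power of) the $L^\infty$ norm of $\tilde u$ on $B_1$, and then eliminating the dependence on $\|\tilde u\|_{L^\infty(B_1)}$ by the Harnack inequality for nonnegative $p$-harmonic functions (valid on $\RCD(K,N)$ spaces, cf.\ the De Giorgi--Nash--Moser theory recalled in the introduction), one obtains a bound on $|\nabla \log \tilde u|$ on $B_{1/2}$ depending only on $N$ and $R\sqrt{K^-}$. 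Scaling back multiplies by $1/R$, yielding \eqref{eq:CY}.

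The main obstacle I anticipate is the quantitative bookkeeping in the last step: the interior estimate of Theorem~\ref{thm:p-delta regularity} presumably controls $\|\nabla u\|_{L^\infty}$ in terms of Sobolev or $L^\infty$ norms of $u$ and the data, with a constant that a priori depends on $K$, $N$, $R$ and $p$ in some combined way; one must check that after the logarithmic change of variables and the rescaling, this constant collapses precisely to the asserted form $C_N(1+R\sqrt{K^-})/R$, with \emph{no} residual dependence on $p$ beyond what can be absorbed into $C_N$, and no dependence on $u$ itself. Arranging the power in the transformation $u \mapsto u^\alpha$ optimally, and invoking Harnack with the sharp constant, is where the care is needed; the rest is a routine application of the already-established machinery.
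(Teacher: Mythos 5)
Your reduction of the qualitative part to Theorem~\ref{thm:main plap} is exactly what the paper does, and correctly so: since $u$ is locally Lipschitz it is enough to invoke the theorem with $f\equiv 0$. The quantitative part is where the proposal breaks down, and the gap is structural, not bookkeeping.

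The function $w\coloneqq -(p-1)\log u$ satisfies $\Delta_p w = |\nabla w|^p$ (as you note, up to the sign and normalization). If you now try to apply the quantitative gradient estimate \eqref{eq:gradient estimate for udelta} of Theorem~\ref{thm:p-delta regularity} to $w$, you face a circularity: the constant $\tilde C$ in that estimate depends on $C_0\geq\big(\fint|f|^q\big)^{1/q}$, and here $f=|\nabla w|^p$, so $C_0$ is a bound on the \emph{gradient of $w$} that is at least as strong as the bound you are trying to prove. There is no way to close the loop. If instead you apply \eqref{eq:gradient estimate for udelta} to $u$ itself (with $f\equiv 0$), you avoid the circularity, but now (i) the constant $\tilde C$ depends on $p$, whereas the Cheng--Yau constant $C_N$ must be $p$-independent, and (ii) the ``$1+$'' on the right of \eqref{eq:gradient estimate for udelta} is not invariant under $u\mapsto\lambda u$, so after normalizing $u$ by its infimum and invoking Harnack you do not recover the sharp form $C_N(1+R\sqrt{K^-})/R$; what comes out is a cruder bound with unresolved $K$- and $p$-dependence. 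Relatedly, the substitution $w = u^{(p-1-N)/(p-1)}$ you float is not what Wang--Zhang use for the $p$-Laplacian; their change of variables is the logarithmic one.

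The paper's actual proof uses your Step~1 but then takes a genuinely different route for the quantitative bound, following Wang--Zhang \cite{WZ11} rather than Theorem~\ref{thm:p-delta regularity}. Writing $\mathbf g = |\nabla w|^2$ (with $w=-(p-1)\log u$ as above), the goal is to derive the integral inequality \cite[eq.~(2.7)]{WZ11} for $\mathbf g$ and then run the Moser iteration of \cite{WZ11} on $\mathbf g$ itself; that iteration gives the $p$-independent constant $C_N$ and the precise $(1+R\sqrt{K^-})/R$ scaling. The technical content is how to derive that inequality without the smoothness (in particular $C^{1,\alpha}$ regularity and smoothness away from the critical set) that \cite{WZ11} exploit: the paper regularizes by solving $\Delta_{p,\delta}v_\delta = \mathbf g^{p/2}$ with the \emph{fixed} right-hand side $\mathbf g^{p/2}$ (so no circularity), applies Theorems~\ref{thm:main quasilinear}--\ref{thm:p-delta regularity} to get $\Delta v_\delta\in L^2_\loc$ and uniform Lipschitz bounds, derives the Bochner-type inequality for $f_\delta=|\nabla v_\delta|^2+\delta$, and passes to the limit $\delta\to 0^+$ using Proposition~\ref{prop:approx W1p Fphi by Fphidelta}. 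You would need to replace your third step entirely by this (or a comparable) argument; a direct application of \eqref{eq:gradient estimate for udelta} plus Harnack is not enough.
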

	Both Theorem \ref{thm:main plap} and Corollary \ref{cor:pharm} are new for all values of $K,N$ and $p\neq 2$, even in the case of Ricci limit spaces and finite dimensional Alexandrov spaces with sectional curvature bounded below ({for the case $p=2$ in Alexandrov spaces see \cite{petrunin1996subharmonic,zhang2012yau,zhang2018lipschitz}}). In particular Corollary \ref{cor:pharm} gives an answer to \cite[Question 3.11]{Gong2010} and  to the open question at the end of the introduction of \cite{LZZ24}.

	Previously, the second author and Benatti \cite{ivan1} obtained {the conclusion of} Theorem \ref{thm:main plap} under the additional assumptions that $\X$ is bounded,  $p$ is sufficiently close to 2  and only for global solutions, in the sense that $\Omega=\X$.  Under the same  restrictions (only removing assumption $\Omega=\X$) {the authors also obtained} the local Lipschitz regularity of  the particular subclass of $p$-harmonic functions  in $\Omega$ having relatively compact level sets. Let us stress that the present work  is not a mere technical improvement, since the extra assumptions present in \cite{ivan1} seem  be an intrinsic  limitation  of the techniques employed  {therein}, especially concerning the range of the parameter $p$ and the issue with local solutions. 
	
	For context, we recall  the usual strategy employed in $\rr^n$ to obtain regularity for the model case of the $p$-Laplacian. We consider for simplicity the case of the Dirichlet problem 
	\begin{equation*}
		\begin{cases}
			\Delta_p u =f, & \text{in $\Omega\subset \rr^n$},\\
			u\in W^{1,p}_0(\Omega).
		\end{cases}
	\end{equation*}
	The  {standard} argument  {in the Euclidean setting} can be divided in three steps:
	\begin{enumerate}
		\item Prove the regularity of solutions for the following regularized problem for all $\delta>0:$ 
		\begin{equation*}
			\begin{cases}
				\Delta_{p,\delta}\,u_\delta =f, & \text{in $\Omega$},\\
				u_\delta\in W^{1,p}_0(\Omega), 
			\end{cases}
		\end{equation*}
		{where the \textit{$(p,\delta)$-Laplacian} is given by  $ \Delta_{p,\delta}\,u_\delta\coloneqq \div((|\nabla u_\delta|^2+\delta)^\frac{p-2}{2}\nabla u_\delta)$;}
		\item Obtain uniform regularity estimates for $u_\delta$ independent of $\delta$;
		\item Show that $u_\delta \to u$ in $W^{1,p}(\Omega)$ and deduce the regularity of $u.$
	\end{enumerate}
	In the classical setting of $\rr^n$ the main technical step  is (2). Indeed the regularized operator is \textit{non-degenerate uniformly elliptic}  and thus (1) follows from standard results   \cite{UralLadybook,giustibook}, while step (3) is well known \textit{e.g.}~by variational arguments. On the other hand, in the non-smooth setting step (1) also turns out to be far from trivial. The reason is the lack of a  general second order and Lipschitz  regularity theory for elliptic operator in metric spaces, even with Ricci curvature bounded below. In the classical setting, {the orthodox approach is to proceed by means of \textit{difference quotients}}, which allow us to treat in a unified way a fairly general class of equations. { However, there seems to be little hope of extending this tool  to the non-smooth  and curved settings. To the knowledge of the authors, few attempts have been made in this direction and with limited potential for generality; the underlying idea is to study the variations of the solution along a geodesic {or suitable coordinates vector fields} and then infer the regularity of the solution itself from the composition of the maps, \textit{cf.}~\cite{MingioneHeisenberg} in the Heisenberg group and the ideas presented in \cite{JacquesSimonComposition}. {In our setting, the charts and families of independent vector fields possess merely Borel regularity, which makes this strategy particularly difficult to implement, if not impossible.} }  
    
    The only  second-order regularity result known so far, under synthetic Ricci curvature bounded below,  is the analogue of the classical Calder\'on--Zygmund estimate for $p=2$:
	\begin{equation}\label{eq:CZ intro}
		\Delta u \in L^2 \implies u \in W^{2,2}.
	\end{equation}
	This was proved in \cite{Gigli14} building upon \cite{Savare07} (see Lemma \ref{lem:gradgrad} for the precise statement). The proof of \eqref{eq:CZ intro} uses the special role of the Laplacian operator in the Bochner inequality combined with heat flow  and does not obviously generalize to any  operator other than the Laplacian.
	The definition of the space $W^{2,2}$ is also due to \cite{Gigli14} and it being  non-trivial  is also thanks to  \eqref{eq:CZ intro}. 
    
    In \cite{ivan1},  to circumvent the absence of the difference quotients method, the authors exploit that the operator $\Delta_{p,\delta}$  for $p$ close to 2 is, roughly speaking, {a small perturbation of } the Laplacian.  Building upon  this fact and with a quite involved combination of fixed point arguments allowed to transfer the second order regularity from $\Delta $ to the regularized operator $\Delta_{p,\delta}$. However, this only works for a suitable range of $p$ and under the extra assumptions reported above.
	
	 The method that we will use here is substantially different from \cite{ivan1}.  We will still follow the path to regularity  (1)-(2)-(3) and the main effort is still to deal with the regularity issue in (1), however  we employ an alternative strategy. The core idea is to apply \textit{Galerkin’s method} to approximate the solutions of a PDE. This method tackles boundary value problems by projecting them onto a finite-dimensional subspace, representing the solution as a linear combination of basis functions. In our case, the natural choice of basis functions will be the\textit{ eigenfunctions of the Laplacian} with zero Dirichlet boundary conditions. The key point  is that these functions are already known to be smooth, {\textit{i.e.}\ in $W^{2,2}$, by \eqref{eq:CZ intro} (\textit{cf}.\ Remark \ref{rmk:trhess})}. {We stress that, even with this choice {of basis functions}, our final results hold in  general and not only for zero Dirichlet boundary conditions.} A major advantage of this approach is its flexibility. As a matter of fact it allows us to treat not only the $p$-Laplacian and $(p,\delta)$-Laplacian, but also a wide class of quasilinear operators in a unified way.

	We recall that Galerkin's method is a classical technique, which has been traditionally used to obtain the existence of solutions to elliptic (resp.~parabolic) problems by reducing the PDE to a linear system (resp.~ODE system). To the knowledge of the authors, employing Galerkin's method as a tool for regularity is a relatively underutilized strategy, despite the idea being mentioned in classical texts (\textit{cf.}~\textit{e.g.}~\cite[\S 7.1.3]{Evans}). This is likely because proceeding by means of difference quotients is typically more direct in the Euclidean setting. Nevertheless, in $\mathbb{R}^n$, the method has been used to \emph{change from the divergence to the non-divergence form} of the equation by exploiting cancellations (\textit{cf.}~\textit{e.g.}~\cite{reg2,reg2Err}) and thereby produce improved estimates---we highlight that, in the setting of \cite[\S 6.2]{reg2}, both difference quotients and mollification arguments destroy the aforementioned cancellations and fail to deliver the sought regularity results.
    
    One of the secondary objectives of this paper is to illustrate the robustness of Galerkin's method as a means of obtaining regularity, {also in the abstract setting of metric measures spaces}. More generally, even in $\mathbb{R}^n$, we believe that the techniques presented in this manuscript can be used where the classical method of difference quotients fails. 
    
    Let us also mention that  the $p$-Laplacian operator and non-linear equations  of the form \eqref{eq:intro eq} have applications to geometric analysis, potential theory and geometric measure theory in both the smooth and non-smooth setting, where regularity plays a fundamental role (see \textit{e.g.}\ \cite{benatti2024minkowski,mari_flowlaplaceapproximationnew_2022,benatti2024fine,cucinotta2025minimal,agostiniani2023new}).  {Finally we highlight  that, while this work is focused on the metric space setting, the study of the regularity of solutions of variational problems in $\mathbb{R}^n$ with $p$-growth (or, more generally, $(p,q)$-growth) prevails as an active field of study. We refer both to the classical works \cite{giustibook,GiaquintaBook,lieberman1991natural,tolksdorf_regularitymoregeneralclass_1984,dibenedetto_alphalocalregularityweak_1983}, as well as the more recent contributions \cite{CianchiMazya,DMnonuniformly,mingioneSurvey,minDesurvey}. 
    }

	{
		Below we present the main outcome of our analysis. We obtain two general regularity results for uniformly elliptic equations in the form \eqref{eq:intro eq}, Theorems \ref{thm:main quasilinear} and \ref{thm:p-delta regularity}:  the first in the case for $\Psi$ bounded above and away from zero  and second for   $\Psi$ satisfying suitable $p$-growth conditions.
	}

	{We first need to introduce our main ellipticity condition on $\Psi$.}
	We assume $\Psi \in {\sf AC}_\loc(0,\infty)$ is  positive and  satisfies
	\begin{equation}\label{eq:psi' condition}
		-1<\lambda\le \frac{t\Psi'(t)}{\Psi(t)}\le \Lambda<\infty,  \quad \text{a.e.\ $t>0 $,}
	\end{equation}
	for some $\lambda,\Lambda \in \rr$.  In the smooth setting assumption \eqref{eq:psi' condition} is standard  and  it ensures uniform ellipticity\footnote{
			For quasilinear equations in $\rr^n$ uniform ellipticity classically means that: $\sup_{x\in \Omega,z\in \rr^n} \frac{\Lambda(x,z)}{\lambda(x,z)}<\infty$, where $\lambda(x,z)$ and $\Lambda(x,z)$ denote the minimum and maximum eigenvalue of the matrix coefficients $a_{i,j}(x,z)$. In particular the $p$-Laplacian \textit{is uniformly elliptic} (see \textit{e.g.}~\cite[Chapter 10]{GilbargTrudinger} or \cite{mingioneSurvey}).
		} of the operator $\div(\Psi(|\nabla u|)\nabla u)$ (see \textit{e.g.}~\cite{CianchiMazya,lieberman1991natural,mingioneSurvey}). The $p$-Laplacian and $(p,\delta)$-Laplacian correspond respectively to  $\Psi(t)=t^{p-2}$ and  $\Psi(t)=(t^2+\delta)^\frac{p-2}2$, which both satisfy \eqref{eq:psi' condition} with  $\lambda=\Lambda=p-2$. Note finally that the first in \eqref{eq:psi' condition} implies $t\Psi(t)$ is non-decreasing and thus the corresponding functional \eqref{eq:functional intro} is convex.
    
    We can now state our first general regularity statement for quasilinear equations.
    \begin{thm}[Non-degenerate quasilinear equations]\label{thm:main quasilinear}
		Let $\Xdm$ be a locally compact $\RCD(K,\infty)$  space and $\Omega\subset \X$ be open, bounded with $\mm(\X\setminus\Omega)>0$. Fix   $f\in L^2(\Omega)$ and $g\in \W(\Omega)$.  Suppose $\Psi\in C^1(0,\infty)$ satisfy \eqref{eq:psi' condition} and  
        \begin{equation}\label{eq:above and below}
            c\le \Psi \le c^{-1}, \quad \text{for some constant $c\in(0,1).$}
        \end{equation}
		Then, there exists a unique weak solution $u\in g+\W_0(\Omega)$ of \footnote{
			Throughout the  {paper} the product  $\Psi(|\nabla u|)\nabla u$ is taken to be zero whenever $|\nabla u|=0$.
		} 
		\begin{equation}\label{eq:quasilinear equation thm6}
			\div(\Psi(|\nabla u|)\nabla u)=f.
		\end{equation}
		Moreover, $\Delta u \in L^2_\loc(\Omega)$ and for all $\Omega'\subset \subset \Omega$  it holds
		\begin{equation}\label{eq:quasilinear estimate}
			\int_{\Omega'} |\Delta u|^2\d \mm \le C \int_\Omega \Big( f^2+ g^2+|\nabla g|^2 \Big) \d \mm,
		\end{equation}
		where $C>0$ is a constant depending only on  $c,\lambda $, $K, \Omega $ and $\Omega'.$
	\end{thm}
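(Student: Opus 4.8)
\textbf{Proof strategy for Theorem \ref{thm:main quasilinear}.}

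The plan is to follow the Galerkin scheme outlined in the introduction. First I would establish existence and uniqueness of the weak solution $u\in g+\W_0(\Omega)$ by the direct method of the calculus of variations: under \eqref{eq:psi' condition} the functional $F_\Phi$ from \eqref{eq:functional intro} is convex (strictly convex in the gradient, by the left inequality in \eqref{eq:psi' condition}, since $t\mapsto t\Psi(t)$ is then increasing), coercive on $g+\W_0(\Omega)$ thanks to \eqref{eq:above and below} and a Poincar\'e inequality on $\Omega$ (available because $\mm(\X\setminus\Omega)>0$ and the space is locally compact $\RCD(K,\infty)$, so $\W_0(\Omega)$ embeds compactly/continuously appropriately), and weakly lower semicontinuous; this yields a unique minimiser, which is the unique weak solution of \eqref{eq:quasilinear equation thm6}. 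The real content of the theorem is the estimate \eqref{eq:quasilinear estimate}.

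For the a priori estimate, fix an $L^2(\Omega)$-orthonormal basis $\{\phi_k\}$ of eigenfunctions of the Dirichlet Laplacian on $\Omega$; by the Calder\'on--Zygmund estimate \eqref{eq:CZ intro} (i.e.\ Lemma \ref{lem:gradgrad}) each $\phi_k\in W^{2,2}(\Omega)\cap \W_0(\Omega)$, so they are legitimate ``smooth'' test objects. Let $V_n=\spn\{\phi_1,\dots,\phi_n\}$ and let $u_n= g_n + w_n$ with $w_n\in V_n$ solve the finite-dimensional projected problem
\begin{equation*}
\int_\Omega \Psi(|\nabla u_n|)\,\la \nabla u_n,\nabla \phi_j\ra \d\mm = -\int_\Omega f\,\phi_j\d\mm,\qquad j=1,\dots,n,
\end{equation*}
where $g_n$ is a suitable $\W$-approximation of $g$ (e.g.\ its partial Fourier expansion plus a fixed correction so the boundary datum is respected); solvability of this nonlinear algebraic system follows from a standard Brouwer fixed-point/monotonicity argument using \eqref{eq:above and below} and strict monotonicity of $z\mapsto\Psi(|z|)z$. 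Testing with $w_n$ and using \eqref{eq:above and below} gives the uniform energy bound $\|\nabla u_n\|_{L^2(\Omega)}^2\lesssim \|f\|_{L^2}^2+\|g\|_{\W}^2$. The crucial step is to test with $\Delta u_n = \sum_j (\Delta u_n,\phi_j)_{L^2}\phi_j\in V_n$ (this is where the eigenbasis is essential, since $\Delta u_n$ stays in $V_n$): after an integration by parts one is led to an identity whose leading term is $\int_{\Omega'}\Psi(|\nabla u_n|)|\Delta u_n|^2$ plus a term of the form $\int \Psi'(|\nabla u_n|)|\nabla u_n|^{-1}\la \nabla u_n,\nabla|\nabla u_n|^2/2\ra\,(\cdots)$ which one recognises, via the Bochner inequality / integrated Bochner formula available in $\RCD(K,\infty)$ spaces (here the curvature lower bound $K$ enters, and the sign condition $\lambda>-1$ guarantees the quadratic form in the Hessian is coercive), as controlled from below. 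A cutoff $\eta$ supported in $\Omega$ and equal to $1$ on $\Omega'$ localises the estimate and produces, on the right, lower-order terms absorbed using the energy bound and Young's inequality. This yields $\int_{\Omega'}|\Delta u_n|^2\le C(c,\lambda,K,\Omega,\Omega')\big(\|f\|_{L^2}^2+\|g\|_{\W}^2\big)$ uniformly in $n$.

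Finally I would pass to the limit: the uniform bounds give $u_n\weakto u^*$ weakly in $\W(\Omega)$ and $\Delta u_n\weakto \zeta$ weakly in $L^2_\loc(\Omega)$ along a subsequence; using monotonicity of the operator (Minty's trick) together with the density of $\bigcup_n V_n$ in $\W_0(\Omega)$ one identifies $u^*$ as the weak solution $u$, hence $u^*=u$ by the uniqueness already proved, and $\zeta=\Delta u$ (closedness of the Laplacian), whence $\Delta u\in L^2_\loc(\Omega)$ and \eqref{eq:quasilinear estimate} follows from lower semicontinuity of the $L^2(\Omega')$-norm under weak convergence. The main obstacle I anticipate is the coercivity/Bochner step: making rigorous the manipulation of $\nabla|\nabla u_n|$ and the application of the integrated Bochner inequality with the $\Psi'$-weight in the abstract $\RCD$ setting—one must argue entirely at the level of the Galerkin approximants $u_n$, which are genuinely in $W^{2,2}$ by \eqref{eq:CZ intro}, and carefully track the dependence of constants on $\lambda$ (using $\lambda>-1$) and $c$ so that they survive the passage $n\to\infty$.
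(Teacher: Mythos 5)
Your overall architecture (Galerkin approximants built on the Dirichlet eigenbasis of $\Delta$, testing against the projected Laplacian of the approximant, Bochner inequality to bound the Hessian term) is the same as the paper's, and the existence/uniqueness step by the direct method or Minty--Browder is fine. However, there is a genuine gap in the central claim that testing with $\Delta u_n$ (really $\Delta(u_n-g)\in V_n$, and even that requires $\Delta g$ to exist) yields, uniformly in $n$, an estimate $\int_{\Omega'}|\Delta u_n|^2\le C(c,\lambda,K,\Omega,\Omega')(\|f\|_{L^2}^2+\|g\|_{W^{1,2}}^2)$. The Galerkin equation only says $\int\Psi(|\nabla u_n|)\la\nabla u_n,\nabla w\ra=-\int f w$ for $w\in V_n$; the distribution $\div(\Psi(|\nabla u_n|)\nabla u_n)$ is \emph{not} $f$. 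When you integrate by parts in the Bochner identity, the term $\div(\Psi(|\nabla u_n|)\nabla u_n)$ appears explicitly, and there is no uniform control on its $L^2$-norm — you cannot replace it by $f$. Hence the constant you can actually extract at the Galerkin level depends on more than just $c,\lambda,K$.

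The paper resolves this by working in two passes (see the discussion right after Prop.~\ref{prop:smooth laplacian estimate}). First, with the coefficient replaced by a truncated, cut-off version $\Psi_{M,\eta}(|\nabla u|)=\Psi((|\nabla u|\wedge M)\eta^2)$, $\eta\in\LIP_c(\Omega)$, and with $\Delta g\in W^{1,2}(\Omega)$ assumed, one obtains a $k$-uniform Laplacian bound (Prop.~\ref{prop:smooth laplacian estimate}) whose constant depends on $M$, $\eta$ and $\Psi$; this is enough to pass $k\to\infty$ and conclude $\Delta u\in L^2_\loc$ for the limiting weak solution of the truncated equation. Only \emph{then}, with the equation holding in full duality with $W^{1,2}_0(\Omega)$ (so $\div(\Psi_{M,\eta}(|\nabla u|)\nabla u)=f$ holds pointwise a.e.), can one invoke the a priori estimate of Prop.~\ref{prop:a priori quasilinear lapl estimates}, which delivers the constant depending only on $c,\lambda,K$. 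Finally the modifications are removed one by one ($\eta\to1$, $M\to\infty$, regularising $\Psi$ and $g$) using the stability result Prop.~\ref{prop:quasilinear stabilities}.

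Two further points you should attend to: (i) testing with $\Delta u_n$ is not legal; you must use $\Delta(u_n-g)\in V_n$, and this forces you to assume $\Delta g\in W^{1,2}(\Omega)$ (removed later by approximating $g$ by heat-flow regularisations); (ii) without the $M$-truncation and cut-off inside the argument of $\Psi$, the products $|\nabla u_n|\la\nabla\Psi(|\nabla u_n|),\nabla|\nabla u_n|\ra$ appearing in the Bochner identity are not obviously integrable (recall $\Psi'$ may behave like $1/t$ near $0$), and the integration-by-parts steps cannot be justified; subtracting $\Psi(0)$ so that the weight vanishes at zero is also essential to apply the weighted Bochner inequality of Prop.~\ref{prop:bochner type}. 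You correctly flagged this as the hard step, but the sketch does not show how to overcome it; without the truncation/localisation machinery the argument does not close.
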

	{Note that the above theorem immediately extends to local solutions, \textit{i.e.}\ if $u\in \W(\Omega)$ solves \eqref{eq:quasilinear equation thm6} in $\Omega$ then $\Delta u \in L^2_\loc(\Omega)$,  with no need of boundary conditions or assumptions on  $\Omega$. }
	We also highlight in passing that we do not require a compatibility condition between $f$ and $g$ in the statement of Theorem \ref{thm:main quasilinear}. This is because we are concerned with boundary value problems, not Neumann-type boundary conditions.

    In the next result we replace \eqref{eq:above and below} by the following $p$-growth condition:
	\begin{equation}\label{eq:psi growth}
		\nu^{-1} t^{p-2}\le \Psi(t) \le \nu t^{p-2}, \quad  \text{for all $t\ge 1$,}
	\end{equation}
	for   $p\in(1,\infty)$ and some constant $\nu >1.$    The main role of \eqref{eq:psi growth}  is to give  existence of energy solutions of \eqref{eq:intro eq} in the space $W^{1,p}$ and to allow for variational methods. {Together   \eqref{eq:psi' condition} and \eqref{eq:psi growth} are a particular case of the  classical conditions introduced by   Ladyzhenskaya and Uraltseva \cite{UralLadybook} for quasilinear uniformly  elliptic operator with $p$-growth  (see \textit{e.g.}~\cite{mingioneSurvey,minDesurvey,lieberman1991natural,tolksdorf_regularitymoregeneralclass_1984}).} {Requiring \eqref{eq:psi growth}  only for $t\ge 1$ allows some extra flexibility in the fact that $\Psi$ does not need to have $p$-growth near zero. For example $\Psi(t)=\min\{t^{p_1-2},t^{p_2-2}\}$ satisfies \eqref{eq:psi growth} with $p=\max\{p_1,p_2\}$, and \eqref{eq:psi' condition}. }

	\begin{thm}[Quasilinear operators with $p$-growth]\label{thm:p-delta regularity}
		Let $\Psi \in C^1(0,\infty)$ satisfy \eqref{eq:psi' condition}  with $\lambda,\Lambda$ and the $p$-growth condition \eqref{eq:psi growth} with $p\in (1,\infty)$ and $\nu>1$.
		Let $\Xdm$ be an $\RCD(K,N)$ space, $N<\infty$, and $\Omega\subset \X$ be open.  Assume that for some $f\in L^{2}(\Omega)$ and $u\in W^{1,p}(\Omega)$  it holds
		\begin{equation}\label{eq:quasilinear equation delta}
			\div\big(\, \Psi(|\nabla u|\big)\,\nabla u\,\big)=f, \quad \text{in $\Omega.$}
		\end{equation}
		Then $ \Psi(|\nabla u|\big)\,\nabla u \in  H^{1,2}_{C,\loc}(T\X;\Omega)$ and in particular $\Psi(|\nabla u|)|\nabla u|\in \W_\loc(\Omega).$ Additionally, for all $B_R(x)\subset \subset \Omega$ with $R\le 1,$ the following estimates hold$:$
		\begin{enumerate}[label=\roman*)]
			\item 
			\begin{equation}\label{eq:key applied udelta}
						\fint_{B_{R/4}(x)}     \big|\,\nabla \big(\Psi(|\nabla u|)\nabla u\big)\, \big|^2 \d \mm \le C
						\fint_{B_R(x)}   f^2 \d \mm 
						+  C\left(\fint_{B_R(x)} \Psi(|\nabla u|\big)|\nabla u|\d \mm\right)^2,
			\end{equation}
			where  $C>0$ is a constant depending only on $K,N,\Lambda,\lambda$ and $p$.
			\item 
			{if, {additionally}, $\fint_{B_{R}(x)} |f|^{q} \d \mm \le C_0$ for some $q>{\max\{N, 2\}}$, then 
				\begin{equation}\label{eq:gradient estimate for udelta}
					\| \nabla u \|_{L^\infty(B_{R/4}(x))}\le \tilde C \bigg(1+\fint_{B_{R}(x)}   \Psi(|\nabla u|\big)|\nabla u|\, \d \mm   \bigg)^\frac{1}{p-1},
				\end{equation}
				where $\tilde C>0$ is a constant depending only on $p,N,K,q,C_0,\lambda, \Lambda, \nu$.}
		\end{enumerate}

	\end{thm}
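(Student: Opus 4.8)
\emph{Setup (localisation and Galerkin).} The plan is to carry out the path (1)--(3) recalled in the introduction by means of a Galerkin approximation adapted to the metric setting, extracting the estimates from the Bochner inequality in place of the Euclidean difference-quotient technique. Since all conclusions are local, fix $B=B_R(x)$. Condition \eqref{eq:psi' condition} makes the primitive $\Phi$ of \eqref{eq:functional intro} strictly convex, as $\Phi''=\Psi+t\Psi'=\Psi(1+t\Psi'/\Psi)\ge(1+\lambda)\Psi>0$; hence $u|_B$ is the unique minimiser of $F_\Phi$ with its own boundary trace, equivalently the unique weak solution on $B$ of the corresponding Dirichlet problem. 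I would approximate $u|_B$ by the Galerkin scheme built on the $L^2(B)$-orthonormal basis $\{\phi_i\}$ of Dirichlet--Laplacian eigenfunctions of $B$, $-\Delta\phi_i=\lambda_i\phi_i$: fixing a corrector $\bar g$ that matches the boundary data of $u$ and has $\Delta\bar g\in\W$ (e.g.\ its harmonic extension), solve for each $n$ the finite-dimensional nonlinear system
\begin{equation*}
u_n=\bar g+\sum_{i=1}^n c_i^n\phi_i,\qquad\int_B\la\Psi(|\nabla u_n|)\nabla u_n,\nabla\phi_j\ra\d\mm=-\int_B f\phi_j\d\mm\quad(j=1,\dots,n),
\end{equation*}
which is solvable by a monotonicity/degree argument and satisfies $u_n\to u$ in $W^{1,p}(B)$ by the uniqueness above. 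The point of this basis is the Calder\'on--Zygmund estimate \eqref{eq:CZ intro}: each $\phi_i$, hence each $u_n$, has $\Delta u_n\in\W$, so $u_n\in W^{2,2}_\loc(B)$, $|\nabla u_n|\in\W_\loc(B)$, and --- unlike $u$ itself --- the Bochner inequality together with its self-improvement is available for $u_n$.

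\emph{The uniform a priori estimate.} Because $\bar g$ is harmonic, $\Delta u_n=-\sum_{i\le n}\lambda_i c_i^n\phi_i$ lies in $\spn\{\phi_1,\dots,\phi_n\}$ and is hence an \emph{admissible} test function in the Galerkin system --- the ``non-variational'' test that would be illegitimate on $u$, whose Laplacian is not a priori in $\W$. Testing the Galerkin system with $\Delta u_n$ gives the identity $\int_B\la\Psi(|\nabla u_n|)\nabla u_n,\nabla\Delta u_n\ra\d\mm=-\int_B f\,\Delta u_n\d\mm$. Combining this with the Bochner inequality
\begin{equation*}
\bd|\nabla u_n|^2\ge 2\big(|\hess u_n|^2+\la\nabla u_n,\nabla\Delta u_n\ra+K|\nabla u_n|^2\big)\,\mm,
\end{equation*}
localised by a cutoff $\eta\in\LIP_c(B)$ equal to $1$ on $B_{R/4}(x)$, I would: (a) use the Galerkin identity --- together with the fact that $\Psi(|\nabla u_n|)\Delta u_n$ equals $f-\la\nabla\Psi(|\nabla u_n|),\nabla u_n\ra$ modulo the Galerkin projection --- to replace the cross term $\la\nabla u_n,\nabla\Delta u_n\ra$ by $f$-dependent and lower-order contributions; (b) control the term carrying $\nabla\Psi(|\nabla u_n|)=\Psi'(|\nabla u_n|)\nabla|\nabla u_n|$, whose prefactor $|\nabla u_n|\Psi'/\Psi$ lies in $[\lambda,\Lambda]$, by Kato's inequality $|\nabla|\nabla u_n||\le|\hess u_n|$: since its negative part is $\le\max\{0,-\lambda\}<1$ by $\lambda>-1$, it can be absorbed into the Hessian term from Bochner; (c) use $(\Delta u_n)^2\le N|\hess u_n|^2$ and the pointwise bound $|\nabla(\Psi(|\nabla u_n|)\nabla u_n)|^2\lesssim_{\lambda,\Lambda}\Psi(|\nabla u_n|)^2|\hess u_n|^2$. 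This should produce a Caccioppoli-type inequality controlling $\int\eta^2|\nabla(\Psi(|\nabla u_n|)\nabla u_n)|^2\d\mm$ by $\int\eta^2 f^2\d\mm$ plus lower-order terms built from $|\nabla\eta|$ and $\Psi(|\nabla u_n|)|\nabla u_n|$; a dyadic iteration combined with a reverse-H\"older/self-improvement argument (using the $p$-growth \eqref{eq:psi growth}) then upgrades this to \eqref{eq:key applied udelta} for $u_n$, with $C=C(K,N,\lambda,\Lambda,p)$ independent of $n$.

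\emph{Passage to the limit and the gradient bound.} Letting $n\to\infty$, using $u_n\to u$ in $W^{1,p}(B)$ and lower semicontinuity of the covariant energy, gives $\Psi(|\nabla u|)\nabla u\in H^{1,2}_{C,\loc}(T\X;\Omega)$ --- hence $\Psi(|\nabla u|)|\nabla u|\in\W_\loc(\Omega)$ --- and the estimate \eqref{eq:key applied udelta} for $u$. For (ii), I would feed \eqref{eq:key applied udelta} on all sub-balls, together with the hypothesis $\fint_{B_R}|f|^q\le C_0$ with $q>\max\{N,2\}$, into a De Giorgi/Moser iteration for the scalar $\Psi(|\nabla u|)|\nabla u|$, relying on the doubling and local Poincar\'e properties of $\RCD(K,N)$ spaces with $N<\infty$; this turns the $L^1$-bound into an $L^\infty$-bound on $B_{R/4}(x)$, which by \eqref{eq:psi growth} (used for $|\nabla u|\ge1$) is equivalent to \eqref{eq:gradient estimate for udelta}.

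\emph{Main obstacle.} The crux is the a priori estimate: making the Galerkin approximation genuinely compatible with the Bochner machinery. One must keep $u_n$ in the regularity class where Bochner and its self-improvement hold, keep $\Delta u_n$ an admissible test function, reconcile the interior cutoff with the fact that $\eta^2\Psi(|\nabla u_n|)\Delta u_n\notin\spn\{\phi_i\}$ --- so the Galerkin identity has to be exploited ``globally'' and the localisation threaded through the Bochner step --- and, most delicately, combine Bochner with the equation so that the \emph{correct} quantity $|\nabla(\Psi(|\nabla u_n|)\nabla u_n)|^2$ appears on the left with only lower-order terms on the right, taming the degeneracy/singularity of $\Psi$ via the $\lambda>-1$ absorption; if necessary one first reduces, by replacing $\Psi(t)$ with $\Psi(\sqrt{t^2+\eps^2})$ and letting $\eps\to0$, to a $\Psi$ bounded below near the origin. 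All constants must be tracked so as to be independent of $n$ (and $\eps$), so that in the application to Theorem \ref{thm:main plap} they are independent of the regularising parameter $\delta$ as well. A minor technical point is the construction of the corrector $\bar g$ when $p<2$, where $u|_B$ need not lie in $\W(B)$ and must first be approximated by $\W$-functions.
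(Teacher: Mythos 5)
Your proposal captures the right high-level strategy: Galerkin on the Dirichlet--Laplacian eigenbasis, testing with $\Delta u_n$, absorbing the bad Bochner term via Kato's inequality and $\lambda>-1$, $\delta$-regularizing $\Psi$, and mollifying the boundary corrector. These are indeed the main ingredients the paper uses. However, the proposal leaves genuine gaps where the paper had to introduce non-obvious devices.

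The most serious issue is that you propose to run the Galerkin scheme directly on the original $p$-growth $\Psi$. For $p>2$ the coefficient $\Psi(|\nabla u_n|)\nabla u_n$ need not lie in $L^2$, so neither the Galerkin system on $V_n\subset W^{1,2}_0$ nor the testing with $\Delta u_n\in V_n$ is well-posed, and the Bochner-type inequalities (Propositions \ref{prop:bochner type}, \ref{prop:key estimate}) require $\Psi$ bounded to make sense of terms such as $|\nabla u|\langle\nabla\Psi(|\nabla u|),\nabla|\nabla u|\rangle$. The paper instead first establishes Theorem \ref{thm:main quasilinear}, the non-degenerate case $c\le\Psi\le c^{-1}$, and then proves Theorem \ref{thm:p-delta regularity} by applying it to $\Psi(t\wedge M)$ and passing $M\to\infty$ (Proposition \ref{prop:approx of delta by M-delta}), a convergence that is delicate precisely because the truncated coefficient has $2$-growth whereas the original has $p$-growth, so that simple variational comparison fails and one must argue via convexity and effective monotonicity. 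You omit the $M$-truncation entirely, so your scheme is not even well-defined, and the convergence "$u_n\to u$ by the uniqueness above" is far from automatic.

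You do notice (in the "main obstacle" paragraph) the localization incompatibility: $\eta^2\Psi(|\nabla u_n|)\Delta u_n\notin\spn\{\phi_i\}$, so a naive cutoff breaks the Galerkin structure. But you leave this as an obstacle to be "threaded through" rather than resolving it. The paper's resolution is to place the cutoff \emph{inside} the coefficient: it works with $\Psi_{M,\eta}(|\nabla u|)=\Psi((|\nabla u|\wedge M)\eta^2)$ with $\eta\in\LIP_c(\Omega)$, which keeps the Galerkin identity usable for the global test function $\Delta(u_k-g)\in V_k$ while localizing the Bochner estimate. This only yields an $M,\eta$-dependent Laplacian bound (Proposition \ref{prop:smooth laplacian estimate}); the clean $M$-independent estimate (Proposition \ref{prop:a priori quasilinear lapl estimates}) is obtained only after letting $k\to\infty$ and recovering a true, not merely Galerkin-projected, equation. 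Without this two-stage device your direct estimate does not close.

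Two smaller points. First, you invoke $(\Delta u_n)^2\le N|\hess u_n|^2$; this is false in the $\RCD$ setting where $\Delta u\ne\tr\hess u$ in general, and the correct replacement is $(\Delta u)^2\le N|{\bf H}u|^2$ where $|{\bf H}u|^2=|\hess u|^2+\frac{(\Delta u-\tr\hess u)^2}{N-n}$, as the paper uses through Theorem \ref{thm:improved Bochner}. Second, the estimate \eqref{eq:key applied udelta} in the paper is extracted not from the Galerkin approximants directly but from the \textit{a priori} uniform estimate of Proposition \ref{cor:key estimate} applied to the solutions $u_M$ of the non-degenerate equations (which satisfy $\Delta u_M\in L^2_\loc$ by Theorem \ref{thm:main quasilinear}); this decouples the uniform estimates from the Galerkin construction and is considerably cleaner than tracking constants through the finite-dimensional approximation as you propose.
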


{Note that, once Theorem \ref{thm:p-delta regularity} is proved, Theorem \ref{thm:main plap}   follows immediately as special case.} { Corollary \ref{cor:pharm} is instead proved in Section \ref{sec:p harmonic}.} {Some comments are in order. 
\begin{itemize}
\item[--] The $\RCD(K,N)$ assumption in Theorem \ref{thm:p-delta regularity} can be replaced by\textit{ $\RCD(K,\infty)$ plus locally doubling}, since  finite dimensionality is used only for the local Sobolev embedding.
    \item[--] 	Both Theorem \ref{thm:main quasilinear} and Theorem \ref{thm:p-delta regularity} hold replacing the assumption $\Psi \in C^1(0,\infty)$ with \textit{$\Psi'$ continuous up to a negligible set of points}. In fact we will prove both results directly in this higher generality.  The advantage is that this weaker assumption  is preserved by truncations of the type $\Psi(t\wedge M).$ 
    \item[--]   The assumptions of Theorem \ref{thm:p-delta regularity}  (in the more general form given by the previous point) are  preserved by taking $\min$ or $\max$ and  by the transformations $\Psi(t\wedge M)$ and $\Psi(\sqrt{t^2+\delta})$, possibly  changing the constants $\lambda,\Lambda $ and $p.$ 
    \item[--]  If in Theorem \ref{thm:p-delta regularity} for $p\ge 2$ we add the non-degenerate\footnote{Note that \eqref{eq:psi growth} coupled with \eqref{eq:nondeg} includes the standard non-degenerate $p$-growth assumption \\ $\nu^{-1}(t^2+1)^\frac{p-2}{2}\le \Psi(t)\le \nu (t^2+1)^\frac{p-2}{2}$ for $t>0$ for $p\ge 2.$} assumption
    		\begin{equation}\label{eq:nondeg}
			0<c\le 	\Psi(t) , \quad t> 0,
		\end{equation}
then $u\in H^{2,2}_\loc(\Omega)$.  This matches the corresponding  result in the Euclidean case (see \textit{e.g.}\ \cite{UralLadybook,giustibook}).     For $1<p<2$ the situation is more subtle, see Remark \ref{rem:nondeg detailed} for details.
\item[--]  Morally, Theorem \ref{thm:main quasilinear} is the non-degenerate version of Theorem \ref{thm:p-delta regularity} for $p=2$. In fact, roughly speaking, we will prove the latter using the former via the transformation $\Psi(\sqrt{(t \wedge M)^2+\delta})$, which clearly satisfies \eqref{eq:above and below}, and by sending $M\to +\infty$ and $\delta \to 0.$ This interplay between exponent $p$ and 2 will require careful and rather technical approximation arguments  in \S \ref{sec:p harmonic}-\ref{sec:proof of propositions}.
\end{itemize}}

		\begin{rem}[Non-autonomous equations]
			Theorem \ref{thm:p-delta regularity} holds 
			also for equations 
			\[
			\div\big(\aa(x)\Psi(|\nabla u|)\nabla u\big)=f,
			\]
			for any {scalar function} $\aa\in \LIP(\Omega)$ satisfying $A^{-1}\le \aa\le A $ for some constant $A\ge 1$ (taking the constants in the regularity estimates depending on $\Lip(\aa)$ and $A$ as well). For clarity of exposition we present only the simpler autonomous case, but straightforward modifications to the argument lead to this more general version, see Remark \ref{rmk:non autonomous summary} for more details.
			\fr
		\end{rem}

{
\begin{rem}[No $C^{1,\alpha}$-regularity]
    In the Euclidean setting $p$-harmonic functions and more generally solutions of quasilinear uniformly elliptic equations of the form \eqref{eq:intro eq} with $\Psi\in C^1$ are $C^{1,\alpha}$ (see \cite{tolksdorf_regularitymoregeneralclass_1984, dibenedetto_alphalocalregularityweak_1983}). However we cannot expect to obtain any continuity estimate on the gradient in our setting, even assuming a lower bound on the sectional curvature (see \cite{de2023behavior}).\fr 
\end{rem}}

   {
	We conclude the introduction by presenting some applications of our main results. {The proofs are given at the end of \S \ref{sec:p harmonic}.}

    \begin{cor}[Regularity for Sobolev minimizers]\label{cor:sobolev}
        Let $\Xdm$ be an $\RCD(K,N)$ space, $N<\infty$. Suppose that for some $p\in(1,N)$ it holds
        \begin{equation}\label{eq:sobolev}
            \|v\|_{L^{p*}(\mm)}^p\le A \||\nabla v|\|_{L^p(\mm)}^p+B\|v\|_{L^p(\mm)}^p, \quad \text{for all }  v \in W^{1,p}_\loc\cap L^{p^*}(\X),
        \end{equation}
        with $p^*\coloneqq \frac{pN}{N-p}$, $A>0$ and $B\in \rr.$ Then any $u \in W^{1,p}_\loc\cap L^{p^*}(\X)$ which satisfies equality in \eqref{eq:sobolev} is locally Lipschitz and $|\nabla u|^{p-1}\in \W_\loc(\X)$.
    \end{cor}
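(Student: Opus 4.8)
\medskip
\noindent\textbf{Proof proposal.} The plan is to show that $u$ satisfies an Euler--Lagrange equation of $p$-Laplacian type with a nonlinear right-hand side, to upgrade the integrability of that right-hand side by a Moser--Brezis--Kato bootstrap, and then to invoke Theorems \ref{thm:main plap} and \ref{thm:p-delta regularity}. If $u\equiv 0$ there is nothing to prove, so assume $u\not\equiv 0$. Both \eqref{eq:sobolev} (together with its equality case) and the two assertions to be proved are invariant under $u\mapsto\lambda u$, so we may rescale and assume $\|u\|_{L^{p^*}(\mm)}=1$. Since \eqref{eq:sobolev} holds for every admissible competitor and $u$ attains equality, $u$ is a global minimiser over $W^{1,p}_\loc\cap L^{p^*}(\X)$ of
\[
F(v)\coloneqq A\||\nabla v|\|_{L^p(\mm)}^p+B\|v\|_{L^p(\mm)}^p-\|v\|_{L^{p^*}(\mm)}^p\ \ge\ 0=F(u).
\]

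First I would derive the Euler--Lagrange equation. For any compactly supported Lipschitz $\phi$, the perturbation $u+t\phi$ is again admissible (balls in an $\RCD(K,N)$ space with $N<\infty$ have finite measure, so $\phi\in L^{p^*}$), hence $t\mapsto F(u+t\phi)$ attains a minimum at $t=0$. The maps $t\mapsto\|u+t\phi\|_{L^p}^p$ and $t\mapsto\|u+t\phi\|_{L^{p^*}}^p$ are differentiable at $t=0$ by dominated convergence, the dominating functions being integrable thanks to H\"older and $u\in L^{p^*}$. Differentiability of $t\mapsto\||\nabla(u+t\phi)|\|_{L^p}^p$, with derivative $p\int|\nabla u|^{p-2}\langle\nabla u,\nabla\phi\rangle\d\mm$, holds also for $1<p<2$: the integrand $t\mapsto|\nabla(u+t\phi)(x)|^p$ is convex for $\mm$-a.e.\ $x$, so its difference quotients are monotone in $t$ and therefore dominated on $|t|\le 1$ by $2^p(|\nabla u|^p+|\nabla\phi|^p)\in L^1$; on $\{|\nabla u|=0\}$ this integrand equals $|t|^p|\nabla\phi|^p$, whose $t$-derivative at $0$ vanishes (since $p>1$), consistently with the convention $|\nabla u|^{p-2}\nabla u=0$ there. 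Equating the derivative of $F(u+t\phi)$ at $t=0$ to zero and extending by density to all $\phi\in\W(\X)\cap L^{p^*}$ with bounded support, one obtains that $u$ weakly solves in all of $\X$
\[
\Delta_p u=\tfrac1A\big(B\,|u|^{p-2}u-|u|^{p^*-2}u\big)=:f,\qquad |f|\le\tfrac1A\big(|B|\,|u|^{p-1}+|u|^{p^*-1}\big).
\]

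Next I would bootstrap the integrability of $u$, the goal being $u\in L^\infty_\loc(\X)$ (equivalently, $f\in L^q_\loc$ for some $q>\max\{N,2\}$). Since $u\in L^{p^*}(\X)$ one has $f\in L^1_\loc$, and I would run a Moser iteration, which is legitimate here because $\RCD(K,N)$ spaces with $N<\infty$ are locally doubling, support a local Poincar\'e inequality, and hence a local Sobolev inequality (\textit{cf.}~\cite{KS01,SaloffeHarnack,VIOLO2025}). Testing the equation with $\phi=u\,(|u|\wedge M)^{\beta}\eta^p$ for a cutoff $\eta$, using Young's inequality on the gradient terms, and letting $M\to\infty$, the only non-routine point is absorbing the critical term $\int|u|^{p^*-p}(|u|^{\gamma}\eta)^p\d\mm$ (with $\gamma=\tfrac{\beta+p}{p}$) into the Sobolev term. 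For the first step, with $\gamma=p^*/p$, this is done on balls small enough that $\int_B|u|^{p^*}\d\mm$ is as small as needed, by absolute continuity of the integral, which gives $u\in L^{(p^*)^2/p}_\loc$; once the local integrability strictly exceeds the critical exponent $p^*$, a H\"older--interpolation--Young argument makes the absorption constant independent of $\beta$, and the standard iteration then produces $u\in L^\infty_\loc(\X)$. This step is where the main work lies; the rest is essentially formal.

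Finally, fix $x_0\in\X$ and $R>0$ with $B_R(x_0)\subset\subset\X$. Then $u\in W^{1,p}(B_R(x_0))$ and $u$ solves $\Delta_p u=f$ in $B_R(x_0)$ with $f\in L^\infty(B_R(x_0))$, hence $f\in L^q(B_R(x_0))$ for every $q$. Theorem \ref{thm:main plap} (applied with some $q>N$) gives $u\in\LIP_\loc(B_R(x_0))$, and Theorem \ref{thm:p-delta regularity} applied with $\Psi(t)=t^{p-2}$ and $f\in L^2(B_R(x_0))$ gives $\Psi(|\nabla u|)|\nabla u|=|\nabla u|^{p-1}\in\W_\loc(B_R(x_0))$. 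As $x_0$ and $R$ were arbitrary, $u\in\LIP_\loc(\X)$ and $|\nabla u|^{p-1}\in\W_\loc(\X)$, which is the claim.
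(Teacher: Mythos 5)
Your proof is correct and takes essentially the same route as the paper: normalize so that $\|u\|_{L^{p^*}}=1$, derive the Euler--Lagrange equation $\Delta_p u=\frac{1}{A}\big(B|u|^{p-2}u-|u|^{p^*-2}u\big)$ from minimality, upgrade $u$ to $L^\infty_\loc$ by a Brezis--Kato/Moser iteration (the paper simply cites this step to \cite[Theorem E.0.20]{peral}, noting the right-hand side has the form $g|u|^{p-2}u$ with $g\in L^{N/p}_\loc$), and then invoke the main regularity theorems. Your write-up is somewhat more careful than the paper's---in particular in justifying Gateaux differentiability of the gradient term for $1<p<2$ and in explicitly invoking Theorem~\ref{thm:p-delta regularity} (rather than only Theorem~\ref{thm:main plap}) to obtain the $|\nabla u|^{p-1}\in W^{1,2}_\loc$ assertion, which the paper's one-line conclusion leaves implicit.
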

    The case $p=2$ of the above result was observed in \cite{NobiliViolo24}.

	\begin{cor}[Regularity for minimal surface equation]\label{cor:minimal surf}
        Let $\Xdm$ be an $\RCD(K,N)$ space, $N<\infty$, and let $\Omega\subset \X$ be open. Suppose that $u\in \LIP_\loc(\Omega)$ satisfies
        \begin{equation}\label{eq:minimal}
             \div\bigg( \frac{\nabla u}{\sqrt{|\nabla u|^2+1}}\bigg)=0, \quad \text{in $\Omega$.}
        \end{equation}
        Then $\Delta u \in L^2_\loc(\Omega).$
    \end{cor}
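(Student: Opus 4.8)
\textbf{Proof plan for Corollary \ref{cor:minimal surf}.}
The strategy is to realize the minimal surface equation \eqref{eq:minimal} as an instance of the quasilinear framework of Theorem \ref{thm:p-delta regularity}, with $\Psi(t) = (t^2+1)^{-1/2}$ and $f\equiv 0$. The first task is to verify the structural hypotheses. The growth condition \eqref{eq:psi growth}: for $t\ge 1$ we have $(t^2+1)^{-1/2}$ comparable to $t^{-1}$, so \eqref{eq:psi growth} holds with $p=1$... but $p=1$ is excluded from $(1,\infty)$. This is precisely the main obstacle (see below), and the resolution is to exploit the a priori assumption $u\in\LIP_\loc(\Omega)$: on a relatively compact $\Omega'\subset\subset\Omega$ the gradient is bounded, $|\nabla u|\le M$, so the equation is genuinely \emph{non-degenerate and uniformly elliptic} on that region and we are in a position to invoke Theorem \ref{thm:main quasilinear} rather than Theorem \ref{thm:p-delta regularity}.

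Concretely, I would fix $\Omega'\subset\subset\Omega$, choose $M$ with $|\nabla u|\le M$ on an intermediate set $\Omega''$ with $\Omega'\subset\subset\Omega''\subset\subset\Omega$, and introduce the truncated conductivity $\Psi_M(t)\coloneqq \Psi(t\wedge M) = ((t\wedge M)^2+1)^{-1/2}$. One checks that $\Psi_M$ has $\Psi_M'$ continuous off a negligible set (it is $C^1$ away from $t=M$ and globally Lipschitz), that it satisfies \eqref{eq:psi' condition} — indeed $t\Psi'(t)/\Psi(t) = -t^2/(t^2+1)\in(-1,0]$ for $t<M$ and vanishes for $t>M$, so $\lambda$ can be taken in $(-1,0)$ and $\Lambda=0$ — and that $c\le \Psi_M\le c^{-1}$ with $c = (M^2+1)^{-1/2}$, which is exactly \eqref{eq:above and below}. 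Since $|\nabla u|\le M$ on $\Omega''$ we have $\Psi_M(|\nabla u|)=\Psi(|\nabla u|)$ there, so $u$ solves $\div(\Psi_M(|\nabla u|)\nabla u)=0$ in $\Omega''$. By the local version of Theorem \ref{thm:main quasilinear} (the remark immediately following its statement, which dispenses with boundary conditions and assumptions on the domain), we conclude $\Delta u\in L^2_\loc(\Omega'')$, hence in particular on $\Omega'$. Since $\Omega'\subset\subset\Omega$ was arbitrary, $\Delta u\in L^2_\loc(\Omega)$.

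The only genuinely delicate point is the one flagged above: the minimal surface operator sits at the borderline exponent $p=1$, outside the scope of both main theorems when taken at face value, and moreover $\Psi$ is not bounded below as $t\to\infty$ so \eqref{eq:above and below} fails globally. Both issues evaporate once one uses the hypothesis $u\in\LIP_\loc$ to pass to a region where $|\nabla u|$ is bounded and to truncate $\Psi$ accordingly; the bounded-gradient hypothesis is what makes the equation uniformly elliptic and places it squarely within Theorem \ref{thm:main quasilinear}. (Note that without an a priori Lipschitz bound the statement would be false in general, which is why it is assumed.) A minor point to keep track of is that one needs the $\RCD(K,\infty)$ plus local compactness hypotheses of Theorem \ref{thm:main quasilinear}, which are implied by the $\RCD(K,N)$, $N<\infty$, assumption here; and that the constant in \eqref{eq:quasilinear estimate} will depend on $M$, i.e.\ on $\|\nabla u\|_{L^\infty(\Omega'')}$, but this is harmless since we only claim $L^2_\loc$ membership, not a universal estimate.
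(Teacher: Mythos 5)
Your proposal is correct and takes essentially the same route as the paper: use the a priori Lipschitz bound to truncate $\Psi$ at $M = \|\nabla u\|_{L^\infty}$ on a smaller set, verify that $\Psi_M(t) = ((t\wedge M)^2+1)^{-1/2}$ satisfies \eqref{eq:psi' condition} with $\lambda = -M^2/(M^2+1)$, $\Lambda=0$ and is bounded above and below (hence \eqref{eq:above and below} holds), and then invoke the local version of Theorem \ref{thm:main quasilinear}. The paper's proof is a compressed version of exactly this argument.
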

 Equation \eqref{eq:minimal}  in the RCD setting was considered in \cite{cucinotta2025minimal} in connection with perimeter minimizers. {See \cite{BombieriDGMirandaMinSurf,TrudingerMinSurf} for background on the condition $u \in \LIP_\loc$  {in the minimal surface equation in $\rr^n$}. }

	\begin{cor}[Regularity for $p$-eigenfunctions]\label{cor:eigein}
        Fix $p\in(1,\infty)$. Let $\Xdm$ be an $\RCD(K,N)$ space, $N<\infty$, and let $\Omega\subset \X$ be open. Suppose that $u \in W^{1,p}(\Omega)$ satisfies
        \[
        \Delta_p u=-g |u|^{p-2}u, \quad \text{in $\Omega$.}
        \]
        for some $g\in L^\infty(\Omega)$. Then $u \in \LIP_\loc(\Omega)$  and $|\nabla u|^{p-1}\in \W_\loc(\Omega)$.
    \end{cor}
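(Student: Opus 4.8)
The plan is to deduce Corollary~\ref{cor:eigein} directly from Theorem~\ref{thm:p-delta regularity} (equivalently, from Theorem~\ref{thm:main plap}), applied with the conductivity $\Psi(t)=t^{p-2}$ and source term $f\coloneqq -g\,|u|^{p-2}u$. Indeed $\Psi(t)=t^{p-2}$ belongs to $C^1(0,\infty)$, satisfies \eqref{eq:psi' condition} with $\lambda=\Lambda=p-2>-1$, and trivially satisfies the $p$-growth condition \eqref{eq:psi growth} (say with $\nu=2$), while $\div(\Psi(|\nabla u|)\nabla u)=\Delta_p u$. Hence the only hypothesis of Theorem~\ref{thm:p-delta regularity} that is not immediate is the integrability of $f$: we need $f\in L^2_\loc(\Omega)$ for the conclusion $|\nabla u|^{p-1}\in\W_\loc(\Omega)$, and $f\in L^q_\loc(\Omega)$ for some $q>\max\{N,2\}$ in order to invoke estimate~\eqref{eq:gradient estimate for udelta} and reach local Lipschitz regularity. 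Since $|f|\le\|g\|_{L^\infty(\Omega)}\,|u|^{p-1}$, both requirements reduce to showing $u\in L^\infty_\loc(\Omega)$.

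The main step is therefore this local $L^\infty$ bound, which is a classical De Giorgi--Nash--Moser fact. The argument I would use is the standard one: test the equation with cut-offs of $u\,|u|^{\beta}$, $\beta\ge 0$, to obtain Caccioppoli-type inequalities, then combine these with the local Sobolev inequality available on $\RCD(K,N)$ spaces---which, being locally doubling and supporting a local Poincar\'e inequality, are local PI spaces---and iterate in $\beta$; the zero-order term $g\,|u|^{p-2}u$ has critical growth $|u|^{p-1}$, but since $g$ is bounded the iteration proceeds unchanged, yielding $u\in L^\infty_\loc(\Omega)$ (in fact local H\"older continuity), \textit{cf.}~\textit{e.g.}~\cite{KS01,VIOLO2025,nastasi2025regularity}. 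I note that this step is superfluous when $p\ge N$, since then $u\in W^{1,p}_\loc(\Omega)$ already embeds into $L^\infty_\loc(\Omega)$ (for $p>N$) or into $L^s_\loc(\Omega)$ for every $s<\infty$ (for $p=N$) by the Sobolev embedding on $\RCD(K,N)$ spaces; likewise, for small $N$ a single application of the Sobolev inequality already puts $f$ into $L^q_\loc$ with $q>N$. The Moser iteration covers the remaining cases uniformly.

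Finally, granted $u\in L^\infty_\loc(\Omega)$, we have $f=-g\,|u|^{p-2}u\in L^\infty_\loc(\Omega)$. Fixing an arbitrary open bounded $\Omega'\subset\subset\Omega$, the function $u\in W^{1,p}(\Omega')$ solves $\Delta_p u=f$ in $\Omega'$ with $f\in L^\infty(\Omega')\subset L^q(\Omega')$ for every $q<\infty$. Theorem~\ref{thm:p-delta regularity} applied on $\Omega'$ then gives $\Psi(|\nabla u|)|\nabla u|=|\nabla u|^{p-1}\in\W_\loc(\Omega')$, and, choosing any $q>\max\{N,2\}$, estimate~\eqref{eq:gradient estimate for udelta} gives $|\nabla u|\in L^\infty_\loc(\Omega')$, whence $u\in\LIP_\loc(\Omega')$ exactly as in the derivation of Theorem~\ref{thm:main plap} from Theorem~\ref{thm:p-delta regularity} (a Sobolev function on an $\RCD$ space whose minimal weak upper gradient is locally bounded admits a locally Lipschitz representative). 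Since $\Omega'\subset\subset\Omega$ was arbitrary, we conclude $u\in\LIP_\loc(\Omega)$ and $|\nabla u|^{p-1}\in\W_\loc(\Omega)$. The one genuine obstacle is the local $L^\infty$ estimate of the second paragraph; once that is in place, the statement is a black-box consequence of the main theorems.
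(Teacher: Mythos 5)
Your proposal is correct and follows essentially the same route as the paper: reduce to $u\in L^\infty_{\loc}(\Omega)$ so that the right-hand side $f=-g|u|^{p-2}u$ is locally bounded, then invoke Theorem~\ref{thm:main plap} (equivalently Theorem~\ref{thm:p-delta regularity} with $\Psi(t)=t^{p-2}$). The only difference is that where you sketch the De Giorgi--Nash--Moser iteration for the local $L^\infty$ bound, the paper simply cites \cite[Theorem~A.4]{ivan1} for this standard fact.
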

    $p$-eigenfunctions in RCD setting were studied in \cite{MondinoSemola20,CavallettiMondino17} (see also \cite{NobiliViolo2025}). 
	}

	\section{Preliminaries}\label{sec:prelim}

	\subsection{Notations and conventions} 
	$(\X,\sfd)$ is a separable metric space and $\mm$ is a non-negative Borel measure finite on bounded sets with $\supp(\mm)=\X$. $(\X,\sfd)$ is said to be proper if closed bounded sets are compact.   $\LIP(\X)$ is the space of Lipschitz functions in $\X.$ For $\Omega\subset \X$ open we denote by $\LIP_c(\Omega)$  the subsets of $\LIP(\X)$ of functions having compact support  in $\Omega$.  $L^p_\loc(\Omega)$ is the space of functions $L^p$-integrable in all compact subsets of $\Omega.$ We denote by $\mathds{1}_E$ the indicator function of a set $E.$ We write $E\subset\subset \Omega$ to say  that $E$ is relatively compact in $\Omega.$ With $L^0(\Omega)$ we denote the space of measurable functions finite $\mm$-a.e. 
    The notation $\liminf,\limsup$ are respectively used to denote the liminf and limsup of a sequence,  $a \wedge b \coloneqq \min\{a,b\}$,  $a \vee b \coloneqq \max\{a,b\}$.

	\subsection{Sobolev spaces and divergence operators}\label{pre:sobolev}
	
	We assume the reader to be familiar with the notion of Sobolev spaces in metric measure spaces and refer to \cite{Gigli23_working,GP20} for an introduction. 
	
	Throughout the {paper} we assume the metric measure space $\Xdm$ to be \textit{infinitesimally Hilbertian}, \textit{i.e.}\ $\W(\X)$ is a Hilbert space (see \cite{Gigli12}), and to have \textit{strong independence of the weak upper gradient on the exponent} (see \cite{GigliNobili21}). This is true in particular in the setting of $\RCD(K,\infty)$ spaces of this article (see \cite{GigliHan14} and \S \ref{subsec:rcd}).
	These assumptions allow to define general first-order calculus objects without dependence on the Sobolev exponent.
	
	In particular we are allowed to denote by $|D f|\in L^p(\mm)$ the minimal weak upper gradient for a function $f\in W^{1,p}(\X)$, omitting the dependence on $p$. Moreover if $|D f|,f\in L^q(\mm)$ then $f\in W^{1,q}(\X).$ It was shown \cite{GigliNobili21}, that under these assumptions there exists a notion of tangent bundle $L^0(T\X)$ that has the structure of $L^0$-normed Hilbert module (see \cite{Gigli14,GP20} for the language of modules). We  recall  that $L^0(T\X)$ is endowed with a pointwise norm $|\cdot |: L^0(T\X)\to L^0(\mm)$, a pointwise bilinear scalar product $\la \cdot ,\cdot \ra$ which satisfies $\la v,v\ra=|v|^2$ $\mm$-a.e.\ and that elements $v\in L^0(T\X)$ can be multiplied by functions $f\in L^0(\mm)$ so that $|fv|=|f||v|$ $\mm$-a.e. Finally, as shown in \cite{GigliNobili21}, it can be defined  a linear gradient operator $\nabla: \cup_{p\in (1,\infty)} W^{1,p}(\X) \to L^0(T\X)$ satisfying $|\nabla f|=|Df|$ $\mm$-a.e.
	We always write $|\nabla f|$ in place of $|Df|$ to simplify the notation. We denote by $L^0(T\X)\restr \Omega$ the sub-module $\mathds{1}_\Omega\cdot L^0(T\X)$ (see \cite[\S 3.1]{GP20}) and by $L^p(T\X)\restr \Omega$ (resp.\ $L^p_\loc(T\X)\restr \Omega$) the subsets of elements $v\in L^0(T\X)\restr \Omega$ such that $|v|\in L^p(\Omega)$ (resp.\ $L^p_\loc(\Omega)$).
	
	We also have that (see \cite[Prop.\ 4.4]{GigliNobili21} for a proof):
	\begin{equation}
		\parbox{12cm}{ $W^{1,p}(\X)$ is uniformly convex (and thus reflexive) and  separable for all $p\in (1,\infty)$. Moreover the space $\LIP_{bs}(\X)$ is strongly dense in $W^{1,p}(\X)$.}
	\end{equation}
	
	 By $W^{1,p}_\loc(\Omega)$ we denote the space of functions $f\in L^p_\loc(\Omega)$ such that $f\eta \in W^{1,p}(\X)$ for all $\eta \in \LIP_c(\Omega).$ Moreover $W^{1,p}(\Omega)=\{f \in W^{1,p}_\loc(\Omega) \ : \ |\nabla f|\in L^p(\Omega)\}.$ All the above objects and properties naturally generalize to local Sobolev spaces $W^{1,p}(\Omega)$ and $W^{1,p}_\loc(\Omega)$. 
	For all $\Omega\subset \X$ open and $p\in(1,\infty)$ we define the space $W^{1,p}_0(\Omega)$ as the closure of $\LIP_{c}(\Omega)$ in $W^{1,p}(\X)$ and
	\begin{equation}\label{eq:hat w1p}
		\widehat W^{1,p}_0(\Omega) \coloneqq \{f \in W^{1,p}(\X) \ : \  f=0\,\, \mm\text{-a.e.\ in $\X\setminus \Omega$ }\}.
	\end{equation}

	The following is proved in \cite{AHlocal} for $p=2$, but the argument works for all $p\in(1,\infty)$.
	\begin{lemma}\label{lem:good balls}
		Let $\Xdm$ be a proper metric measure space 
		with $W^{1,p}(\X)$ 
		separable and fix 
		$x\in \X.$ Then, it holds
		$W^{1,p}_0(B_r(x))=\widehat W^{1,p}_0(B_r(x))$ for a.e.\ $r>0$. 
	\end{lemma}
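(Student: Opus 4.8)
\textbf{Proof plan for Lemma \ref{lem:good balls}.}

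The plan is to follow the argument in \cite{AHlocal} for $p=2$ and observe that it never uses the Hilbertian structure, only the separability of $W^{1,p}(\X)$ and the fact that distance functions are Lipschitz. The inclusion $W^{1,p}_0(B_r(x))\subset \widehat W^{1,p}_0(B_r(x))$ always holds, since each $\eta\in\LIP_c(B_r(x))$ vanishes outside a compact subset of $B_r(x)$ and this property is preserved under strong $W^{1,p}(\X)$-limits. So the content is the reverse inclusion for a.e.\ $r$. First I would fix a function $f\in W^{1,p}(\X)$ vanishing $\mm$-a.e.\ outside $B_r(x)$; by splitting into positive and negative parts and then truncating we may assume $0\le f\le 1$ and (approximating in $W^{1,p}(\X)$) that $f$ has bounded support. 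The natural approximating sequence is $f_k\coloneqq(f-1/k)^+/(1-1/k)$, which is supported in $\{f\ge 1/k\}$; the point is that this set is $\mm$-essentially contained in $B_r(x)$ but need not be compactly contained in it, so one cannot immediately conclude $f_k\in W^{1,p}_0(B_r(x))$.

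To fix this, cut off with the distance function: for $\eps>0$ set $\phi_\eps\coloneqq\big(1-\eps^{-1}(\sfd(\cdot,x)-(r-\eps))^+\big)^+$, a Lipschitz function equal to $1$ on $B_{r-\eps}(x)$ and supported in $\overline{B_r(x)}$, and consider $\phi_\eps f_k$. Using the Leibniz rule $|\nabla(\phi_\eps f_k)|\le \phi_\eps|\nabla f_k|+f_k|\nabla\phi_\eps|$ and $|\nabla\phi_\eps|\le\eps^{-1}\mathds 1_{B_r(x)\setminus B_{r-\eps}(x)}$, the extra term is controlled by $\eps^{-1}\|f_k\|_{L^p(B_r(x)\setminus B_{r-\eps}(x))}$. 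Now $f_k$ is bounded and supported in $\{f\ge 1/k\}$, so this is at most $\eps^{-1}\big(\mm(\{f\ge1/k\}\cap(B_r(x)\setminus B_{r-\eps}(x)))\big)^{1/p}$. The decisive point, exactly as in \cite{AHlocal}, is that the function $r\mapsto \mm(\{f\ge1/k\}\cap B_r(x))$ is monotone, hence differentiable for a.e.\ $r$, and at every such $r$ one has $\eps^{-p}\mm(\{f\ge1/k\}\cap(B_r(x)\setminus B_{r-\eps}(x)))\to 0$ as $\eps\to0$ --- because the numerator is $o(\eps)$ at a differentiability point (one actually uses that $\mm(\{f\ge1/k\}\cap B_r(x)\setminus B_{r-\eps}(x))\le \mm(B_r(x))-\mm(B_{r-\eps}(x))=o(\eps^{1-p})$ when the derivative of $r\mapsto\mm(B_r(x))$ is finite; more carefully, one first passes to the subsequence where $\mm(\partial B_r(x))=0$ for a.e.\ $r$, then the set difference has measure $\to 0$ and the rate can be made $o(\eps)$ along a well-chosen $\eps=\eps_j\to0$). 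Since $f$ has bounded support we only need countably many radii $r$ to be exceptional, and a countable union of null sets is null.

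Concretely, I would proceed as follows. Since $f$ has bounded support, $r\mapsto \mm(B_r(x))$ is non-decreasing and finite, hence differentiable with finite derivative at a.e.\ $r$, and $\mm(\partial B_r(x))=0$ for all but countably many $r$; fix such an $r$ and also require that $W^{1,p}_0(B_r(x))$ contains the functions produced below, which is automatic. For this $r$, choose $\eps_j\to0$ with $\eps_j^{-p}\,\mm\big(B_r(x)\setminus B_{r-\eps_j}(x)\big)\to0$ (possible along a subsequence, using $\mm(B_r(x)\setminus B_{r-\eps}(x))\to 0$ together with a diagonal choice). Then $\phi_{\eps_j}f_k\in\LIP_c(B_r(x))\cdot(\text{bounded Sobolev})\subset W^{1,p}_0(B_r(x))$ for each $j,k$ --- more precisely $\phi_{\eps_j}f_k$ is a bounded $W^{1,p}$ function with support compactly contained in $B_r(x)$, so it lies in $W^{1,p}_0(B_r(x))$ by mollification/Lipschitz approximation within a fixed compact set. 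Letting $j\to\infty$ gives $\phi_{\eps_j}f_k\to f_k$ in $W^{1,p}(\X)$ (dominated convergence for the $L^p$ parts and the gradient part, using the estimate on $|\nabla\phi_{\eps_j}|$), so $f_k\in W^{1,p}_0(B_r(x))$; and letting $k\to\infty$ gives $f_k\to f$ in $W^{1,p}(\X)$ by the chain rule and dominated convergence. Hence $f\in W^{1,p}_0(B_r(x))$, and removing the normalisations $0\le f\le1$ and bounded support by the standard truncation/cutoff approximations (each continuous in $W^{1,p}(\X)$) yields the claim for every $f\in\widehat W^{1,p}_0(B_r(x))$.

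I expect the main obstacle to be purely bookkeeping: making precise the ``a.e.\ $r$'' statement, i.e.\ identifying the countable exceptional set and justifying that one may choose $\eps_j$ with $\eps_j^{-p}\mm(B_r(x)\setminus B_{r-\eps_j}(x))\to0$ at those good radii. This is where the separability hypothesis enters --- it guarantees we need only handle a countable dense family of $f$'s (or, alternatively, it is used to reduce to countably many balls in applications) --- and it is the one place where a little care is needed; everything else is the routine Leibniz-rule/truncation machinery available in infinitesimally Hilbertian spaces with strong independence of the weak upper gradient on the exponent.
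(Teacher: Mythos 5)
Your cutoff-and-truncate scheme has a genuine gap at the decisive quantitative step. To conclude $\phi_{\eps_j}f_k\to f_k$ in $W^{1,p}(\X)$ you need $\eps^{-1}\|f_k\|_{L^p(B_r(x)\setminus B_{r-\eps}(x))}\to 0$ along some $\eps_j\to 0$, which (using $0\le f_k\le 1$) requires $\mm\big(\{f\ge 1/k\}\cap(B_r(x)\setminus B_{r-\eps_j}(x))\big)=o(\eps_j^p)$. Setting $G(s):=\mm(\{f\ge 1/k\}\cap B_s(x))$, you are right that $G$ is monotone, hence differentiable a.e., and since $G$ is constant on $[r,\infty)$ one even gets $G'(r)=0$ at a differentiability point; but this yields only $G(r)-G(r-\eps)=o(\eps)$, which for $p>1$ is strictly weaker than the $o(\eps^p)$ you need. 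Your parenthetical ``$=o(\eps^{1-p})$'' is vacuous: $\eps^{1-p}\to\infty$ for $p>1$, so every bounded quantity is $o(\eps^{1-p})$. The claim that one can ``choose $\eps_j$ with $\eps_j^{-p}\mm(B_r\setminus B_{r-\eps_j})\to 0$'' is also false in general: passing to a subsequence cannot improve the order of a monotone increment, and the increment $\mm(B_r)-\mm(B_{r-\eps})$ is of order $\eps$ at a.e.\ $r$ where the density of $r\mapsto\mm(B_r)$ is positive. Nothing in the hypotheses of Lemma \ref{lem:good balls} (no doubling, no Poincar\'e, no Sobolev embedding) gives control beyond $o(\eps)$ on the measure of superlevel sets of $f$ near $\partial B_r(x)$, so the computation cannot be repaired as stated.

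The argument of \cite{AHlocal} is of a softer, purely functional-analytic nature, and is precisely where separability does its work. One first shows $\widehat W^{1,p}_0(B_r(x))\subset W^{1,p}_0(B_{r'}(x))$ for \emph{every} $r'>r$: given $f$ vanishing a.e.\ outside $B_r(x)$, by properness choose $\phi\in\LIP_c(B_{r'}(x))$ with $\phi\equiv 1$ on $\bar B_r(x)$, so $\phi f=f$ a.e.; since $\phi f\in W^{1,p}(\X)$ has support compactly contained in $B_{r'}(x)$, multiplying a $\LIP_{bs}$-approximating sequence of $f$ by $\phi$ gives $f=\phi f\in W^{1,p}_0(B_{r'}(x))$. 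Hence $\widehat W^{1,p}_0(B_r(x))\subset V_{r^+}:=\bigcap_{r'>r}V_{r'}$, where $V_r:=W^{1,p}_0(B_r(x))$, and it suffices to prove $V_r=V_{r^+}$ for all but countably many $r$. Now $(V_r)_r$ is an increasing family of closed subspaces: fix a countable dense $\{w_n\}\subset W^{1,p}(\X)$, so each function $r\mapsto \mathrm{dist}_{W^{1,p}}(w_n,V_r)$ is non-increasing and thus has at most countably many discontinuities; and if $V_r\subsetneq V_{r^+}$, picking $y\in V_{r^+}\setminus V_r$ and $w_n$ within $\tfrac13\,\mathrm{dist}_{W^{1,p}}(y,V_r)$ of $y$ forces the $n$-th distance function to jump at $r$. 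The exceptional set is the countable union of these countable sets. This argument involves no rate on measure increments, works verbatim for every $p\in(1,\infty)$, and makes the role of separability precise, where your sketch left it unclear how a countable reduction would repair the per-function rate estimate.
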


	\begin{definition}[Divergence operator]\label{def:div}
		Let $\Omega\subset \X$ be open and $v \in L^1_\loc(T\X)\restr{\Omega}$. We say that $v \in \dom(\div)$ provided there exists a function $f \in L^1_\loc(\Omega)$ satisfying
		\begin{equation}\label{eq:def div}
			-\int_\Omega \la v, \nabla \phi \ra\d \mm=\int_\Omega f \phi \d \mm, \quad \forall \phi \in \LIP_{c}(\Omega),
		\end{equation}
		in which case such $f$ is unique and will be denoted by $\div(v).$ 
	\end{definition}
	The dependence on $\Omega$ will be always clear from context, when needed we may write $\dom(\div,\Omega)$ and $\div_\Omega(v).$ We will use the short-hand   $\div(v)\in V$ (resp.\ $\div(v)=f$), for some space $V\subset L^1_\loc(\Omega)$ (resp.\ function $f\in L^1_\loc(\Omega)$), to say at once that $v \in \dom(\div)$ and $\div(v)\in V$ (resp.\ $\div(v)=f$).   The same for the Laplacian and $p$-Laplacian operators defined below.

	\begin{rem}[Testing against Sobolev functions]\label{rmk:sobolev test}
		Suppose $v\in \dom(\div)$ and $|v| \in L^{q}(\Omega)$ for some $q$. Then, \eqref{eq:def div} holds also for all $\phi \in W^{1,p}_0\cap L^\infty(\Omega)$ with $p=q/(q-1)$ the usual conjugate exponent. Additionally if  $\div(v)\in L^{q}(\Omega)$  then \eqref{eq:def div}  holds 
		for all $\phi \in W^{1,p}_0(\Omega)$.  Both assertions can be 
		checked by a density argument and will be used without further notice. \fr
	\end{rem}
	
	\begin{definition}[Laplacian]
		Let $\Omega\subset \X$ be open and let $u\in W^{1,2}(\Omega).$ We say that $u \in \dom(\Delta)$ provided $\nabla u\in \dom(\div)$, in which case we set $\Delta u\coloneqq \div(\nabla u)$.
	\end{definition}

	\begin{definition}[$p$-Laplacian]
		Let $p\in(1,\infty)$, $\Omega\subset \X$ be open, and $u\in W^{1,p}(\Omega).$ We write $u \in \dom(\Delta_p)$ provided $|\nabla u|^{p-2}\nabla u\in \dom(\div)$, in which case we set $\Delta_p u\coloneqq \div(|\nabla u|^{p-2}\nabla u)$.
	\end{definition}
	
	For the following  see \cite[Prop.\ 4.2]{GigliViolo23} or also \cite{Gigli12}.
	\begin{prop}[Leibniz rule for the divergence]\label{prop:leibniz rule div}
		Let $\Omega\subset \X$ be open. Let $u \in \W(\Omega)$ be such that $u \in \dom(\Delta,\Omega)$ with $\Delta u \in L^2(\Omega)$ and $g \in \W_\loc\cap L^\infty(\Omega)$. Then $g\nabla u\in \dom(\div,\Omega)$ and
		\begin{equation}
			\div(g\nabla u)=g\Delta u +\la \nabla g,\nabla u\ra, \quad \alme \text{ in $\Omega$.}
		\end{equation}
	\end{prop}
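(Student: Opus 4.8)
The plan is to read the formula off directly from the definition of $\Delta u$, by testing the equation $\div(\nabla u)=\Delta u$ against products $g\phi$ with $\phi\in\LIP_c(\Omega)$ arbitrary. First I would clear away the integrability bookkeeping that makes the statement meaningful: from $u\in\W(\Omega)$ we have $|\nabla u|\in L^2(\Omega)$, hence $|g\nabla u|\le\|g\|_{L^\infty(\Omega)}|\nabla u|\in L^2(\Omega)$, so $g\nabla u\in L^2(T\X)\restr{\Omega}\subset L^1_\loc(T\X)\restr{\Omega}$; likewise $g\Delta u\in L^2(\Omega)$ as the product of an $L^\infty$ and an $L^2$ function, and by the pointwise Cauchy--Schwarz inequality $|\la\nabla g,\nabla u\ra|\le|\nabla g||\nabla u|\in L^1_\loc(\Omega)$ since $|\nabla g|\in L^2_\loc(\Omega)$ and $|\nabla u|\in L^2(\Omega)$. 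Thus the candidate divergence $g\Delta u+\la\nabla g,\nabla u\ra$ lies in $L^1_\loc(\Omega)$, and it only remains to verify the integration-by-parts identity \eqref{eq:def div} of Definition \ref{def:div}.

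The heart of the matter is showing that $g\phi$ is an admissible test function. Fix $\phi\in\LIP_c(\Omega)$, put $K\coloneqq\supp\phi\subset\subset\Omega$, and choose a cutoff $\chi\in\LIP_c(\Omega)$ with $\chi\equiv1$ on a neighbourhood of $K$. Then $\chi g\in\W(\X)\cap L^\infty(\X)$, and since $\phi(\chi g)=g\phi$ (because $\chi\equiv1$ on $\supp\phi$), the Leibniz rule for gradients yields $g\phi\in\W(\X)\cap L^\infty(\X)$ with $\nabla(g\phi)=\phi\nabla g+g\nabla\phi$ $\mm$-a.e.\ on $\Omega$. To upgrade this to $g\phi\in\W_0(\Omega)$ — rather than just membership in the a priori larger class $\widehat{\W}_0(\Omega)$, which is the one genuinely subtle point — I would approximate $\chi g$ in $\W(\X)$ by $h_n\in\LIP_{bs}(\X)$ (recall $\LIP_{bs}(\X)$ is strongly dense in $\W(\X)$) and note that $\phi h_n\in\LIP_c(\Omega)$ converges to $\phi(\chi g)=g\phi$ in $\W(\X)$, via the product estimate $\|\phi(h_n-\chi g)\|_{\W}\le C(\phi)\,\|h_n-\chi g\|_{\W}$. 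Hence $g\phi\in\W_0\cap L^\infty(\Omega)$.

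Finally, since $\nabla u\in\dom(\div,\Omega)$ with $\div(\nabla u)=\Delta u\in L^2(\Omega)$ and $|\nabla u|\in L^2(\Omega)$, Remark \ref{rmk:sobolev test} allows us to use $g\phi$ as test function in \eqref{eq:def div}, which gives $-\int_\Omega\la\nabla u,\nabla(g\phi)\ra\d\mm=\int_\Omega g\phi\,\Delta u\,\d\mm$. Substituting $\nabla(g\phi)=\phi\nabla g+g\nabla\phi$ and using the bilinearity of the pointwise scalar product over $L^0(\mm)$ (so that $\la\nabla u,g\nabla\phi\ra=\la g\nabla u,\nabla\phi\ra$ and $\la\nabla u,\phi\nabla g\ra=\phi\la\nabla g,\nabla u\ra$ $\mm$-a.e.), a rearrangement of the two resulting terms yields $-\int_\Omega\la g\nabla u,\nabla\phi\ra\d\mm=\int_\Omega\phi\,\big(g\Delta u+\la\nabla g,\nabla u\ra\big)\,\d\mm$. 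As $\phi\in\LIP_c(\Omega)$ was arbitrary, this is precisely the defining relation of $\div(g\nabla u)$, so $g\nabla u\in\dom(\div,\Omega)$ with $\div(g\nabla u)=g\Delta u+\la\nabla g,\nabla u\ra$ $\mm$-a.e.\ on $\Omega$.

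The only place I expect to need genuine attention is the one carried out in the second paragraph, namely verifying $g\phi\in\W_0(\Omega)$ so that Remark \ref{rmk:sobolev test} actually applies; this is where the local-to-global density of Lipschitz functions enters, and it is organised above so as to rely only on the global density of $\LIP_{bs}(\X)$ in $\W(\X)$ recalled in the preliminaries. The remainder is routine: the module calculus (Leibniz rule for gradients, bilinearity of $\la\cdot,\cdot\ra$) is standard, and the identity itself is obtained by a direct substitution and rearrangement.
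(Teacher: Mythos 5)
Your proof is correct. The paper does not give an argument for this proposition — it is cited to \cite[Prop.\ 4.2]{GigliViolo23} and \cite{Gigli12} — so there is no in-text proof to compare against, but your route is the natural one: test $\div(\nabla u)=\Delta u$ against $g\phi$, apply the Leibniz rule for gradients, and rearrange. You correctly identify the one point that requires care, namely that $g\phi$ must lie in $\W_0(\Omega)$ (not merely in the a priori larger space $\widehat\W_0(\Omega)$ of \eqref{eq:hat w1p}) so that Remark \ref{rmk:sobolev test} applies, and the density argument via $\LIP_{bs}(\X)$ together with the cutoff $\chi$ handles this cleanly. The integrability bookkeeping and the algebra are all in order.
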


	In the paper we recurrently use the following elementary convergence result. {In order to state it, we recall the notion of having a continuous derivative up to a negligible set: for $\Psi \in \LIP(\mathbb{R})$, we say that $\Psi'$ is \emph{continuous up to a negligible set of points} if there exists a Lebesgue-null set $N \subset \mathbb{R}$ such that $\Psi$ is differentiable in $\mathbb{R}\setminus N$ and $\Psi'$ is continuous when restricted to $\mathbb{R} \setminus N$. This definition is independent of the representative chosen for $\Psi'.$
	}

	\begin{lemma}\label{lem:psi' convergence}
		Let $\Xdm$ be a metric measure space and $f_n\in \W(\X)$ be a sequence converging strongly to $f\in \W(\X).$ Let also $\Psi \in \LIP(\rr)$ be such that $\Psi'$ is continuous up to a negligible set of points. Then, up to a subsequence which we do not relabel, there holds 
		\begin{equation}\label{eq:grad psi convergence}
			|\nabla (\Psi(f_n)-\Psi(f))|\to 0, \quad \mm\text{-a.e..}
		\end{equation}
		In particular, provided $\Psi(0)=0,$ we have $\Psi(f_n)\to \Psi(f)$ strongly in $\W(\X)$.
	\end{lemma}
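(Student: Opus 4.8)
The plan is to reduce the statement to a pointwise (\,$\mm$-a.e.) analysis of the chain rule, after passing to a subsequence. Replacing $\Psi$ by $\Psi-\Psi(0)$, which changes neither the hypotheses on $\Psi$ nor the difference $\nabla\Psi(f_n)-\nabla\Psi(f)$, I may assume $\Psi(0)=0$, so that $\Psi(g)\in\W(\X)$ with $|\nabla\Psi(g)|\le\Lip(\Psi)|\nabla g|$ for every $g\in\W(\X)$. By definition of strong convergence in $\W(\X)$ we have $f_n\to f$ in $L^2(\mm)$ and $|\nabla(f_n-f)|\to 0$ in $L^2(\mm)$; extracting a subsequence (not relabelled) I obtain $f_n\to f$ and $|\nabla(f_n-f)|\to 0$ $\mm$-a.e., and in particular $|\nabla f_n|\to|\nabla f|$ $\mm$-a.e.

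Next, let $N\subset\rr$ be a Lebesgue-null set outside of which $\Psi$ is differentiable with $\Psi'$ continuous, and set $L:=\Lip(\Psi)$. By the chain rule for Lipschitz outer functions, for each $g\in\W(\X)$ one has $\nabla\Psi(g)=\Psi'(g)\nabla g$ $\mm$-a.e., the product being read as $0$ on $\{g\in N\}$; this is legitimate because $\nabla g=0$ $\mm$-a.e.\ on $g^{-1}(N)$ (a standard locality property of the minimal weak upper gradient, since $N$ is Lebesgue-null, see e.g.\ \cite{GP20}). I then fix $x$ in the full-measure set where these identities hold simultaneously for $f$ and all the $f_n$, where $f_n(x)\to f(x)$ and $|\nabla(f_n-f)|(x)\to 0$, and where $\nabla f(x)=0$ whenever $f(x)\in N$ (and likewise for each $f_n$), and distinguish two cases. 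If $\nabla f(x)=0$, then $\nabla\Psi(f)(x)=0$ while $|\nabla\Psi(f_n)(x)|\le L|\nabla f_n(x)|\to L|\nabla f(x)|=0$, so $|\nabla(\Psi(f_n)-\Psi(f))|(x)\to 0$. If $\nabla f(x)\neq 0$, then necessarily $f(x)\notin N$, and since $|\nabla f_n(x)|\to|\nabla f(x)|>0$ we get $\nabla f_n(x)\neq 0$, hence $f_n(x)\notin N$, for all large $n$; decomposing
\[
\nabla\Psi(f_n)(x)-\nabla\Psi(f)(x)=\Psi'(f_n(x))\big(\nabla f_n(x)-\nabla f(x)\big)+\big(\Psi'(f_n(x))-\Psi'(f(x))\big)\nabla f(x),
\]
the first summand has norm $\le L|\nabla(f_n-f)|(x)\to 0$ and the second has norm $|\Psi'(f_n(x))-\Psi'(f(x))|\,|\nabla f(x)|\to 0$, using $f_n(x)\to f(x)$ in $\rr\setminus N$ together with the continuity of $\Psi'$ there. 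This proves \eqref{eq:grad psi convergence}.

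For the final assertion, with $\Psi(0)=0$ the convergence $\Psi(f_n)\to\Psi(f)$ in $L^2(\mm)$ is immediate from $|\Psi(f_n)-\Psi(f)|\le L|f_n-f|$. For the gradients, $|\nabla(\Psi(f_n)-\Psi(f))|\le L(|\nabla f_n|+|\nabla f|)$, and the right-hand side converges to $2L|\nabla f|$ both $\mm$-a.e.\ and in $L^2(\mm)$; combined with the pointwise convergence \eqref{eq:grad psi convergence}, the dominated convergence theorem with varying dominating functions yields $|\nabla(\Psi(f_n)-\Psi(f))|\to 0$ in $L^2(\mm)$, i.e.\ $\Psi(f_n)\to\Psi(f)$ strongly in $\W(\X)$ along the subsequence. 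Since the limit is independent of the subsequence, a standard Urysohn-type argument upgrades this to convergence of the whole sequence.

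The only point I expect to require care is the pointwise step: the product $\Psi'(g)\nabla g$ is ambiguous precisely at points of $g^{-1}(N)$, and these must be excluded; this is handled by the dichotomy on whether $\nabla f(x)$ vanishes, together with the locality fact that $\nabla g=0$ $\mm$-a.e.\ on $g^{-1}(N)$ for Lebesgue-null $N$. Everything else is routine measure theory.
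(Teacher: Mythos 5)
Your proof is correct and uses precisely the ingredient the paper flags (the paper omits the argument, remarking only that it is "an exercise using the chain rule"): the chain rule $\nabla\Psi(g)=\Psi'(g)\nabla g$ together with the locality fact that $\nabla g=0$ $\mm$-a.e.\ on $g^{-1}(N)$, and a pointwise dichotomy on whether $|\nabla f|(x)$ vanishes. The only optional refinement worth noting is that the final subsequence–principle step upgrades strong $\W$-convergence from the extracted subsequence to the whole sequence, which is slightly more than the lemma literally asserts but causes no harm.
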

	
	 The proof is an exercise using the chain rule; we omit it for concision.

	\subsection{$\RCD$ spaces}\label{subsec:rcd}
    We introduce the class of $\RCD(K,N)$ spaces, using the equivalent  definition due to \cite{EKS15,AmbrosioGigliSavare12,AmbrosioMondinoSavare13}.
	\begin{definition}[RCD space]
		An infinitesimally Hilbertian metric measure space $\Xdm$ is said to be an $\RCD(K,N)$ space, $K\in \rr$, $N\in [1,\infty]$ provided:
		\begin{enumerate}[label=\roman*)]
			\item $\mm(B_r(x_0))\le a e^{br^2}$ for all $r>0$ and some $x_0 \in \X$ and constants $a>0,\,b>0;$
			\item \textit{Sobolev-to-Lipschitz}: $f\in \W(\X)$ with $|\nabla f|\le 1$ $\implies$ $f$ has  1-Lipschitz representative;
			\item \textit{Weak Bochner inequality}: for all $u,\phi \in \W(\X)$ with $\Delta u \in \W(\mm)$ and $\phi,\Delta \phi \in L^\infty(\mm)$ with $\phi\ge 0$ it holds
			\begin{equation}\label{eq:weak bochner}
				\frac12\int \Delta \phi|\nabla u|^2 \d \mm \ge \int  \bigg(\frac{(\Delta u)^2}{N}+\la \nabla u,\nabla \Delta u\ra  + K|\nabla u|^2\bigg)\phi \d \mm.
			\end{equation}
		\end{enumerate}
	\end{definition}
	In the sequel  we refer to \cite{GP20,Gigli14,Gigli23_working} for the definition of the space $W^{2,2}(\X)$. We limit ourselves to recall that for all $u\in W^{2,2}(\X)$ there is a notion of the Hessian, denoted by $\hess (u)$, which can be equivalently  regarded as a linear map $\hess (u): L^0(T\X)\to L^0(T\X)$ satisfying $|\hess(u)(v)|\le |\hess u||v|$, where  $|\hess (u)|\in L^2(\mm)$ is a generalized Hilbert-Schmidt norm of $\hess (u).$ Likewise, $W^{2,2}(\Omega)$ and $W^{2,2}_\loc(\Omega)$ for $\Omega \subset \X$ open can be defined in the natural way.
	
	We recall the following  crucial regularity result obtained in \cite{Gigli14} building upon \cite{Savare07}.
	\begin{lemma}\label{lem:gradgrad}
		Let $\Xdm$ be an $\RCD(K,\infty)$ space and $u \in \W(\X)$ with $\Delta u \in L^2(\mm)$. Then $u \in W^{2,2}(\X)$, $|\nabla u|\in \W(\X)$ and
		\begin{equation}\label{eq:gradgrad}
			|\nabla |\nabla u||\le  |{\rm Hess} (u)|, \quad \alme
		\end{equation}
	\end{lemma}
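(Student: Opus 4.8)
The plan is to follow the route of Savaré \cite{Savare07} and Gigli \cite{Gigli14}: first regularise $u$ by the heat flow so that the weak Bochner inequality \eqref{eq:weak bochner} becomes applicable, then \emph{self-improve} it in the spirit of Bakry--Émery so that the Hessian appears on the right-hand side (which in particular forces $u\in W^{2,2}(\X)$), and finally read off the pointwise Kato-type inequality \eqref{eq:gradgrad}. For \textbf{Step 1 (heat-flow regularisation)}, let $h_t$ be the heat semigroup of the Cheeger energy and set $u_t\coloneqq h_t u$ for $t>0$. Then $u_t\in\W(\X)$ with $\Delta u_t=h_t\Delta u\in\dom(\Delta)\subseteq\W(\X)$, and $u_t\to u$ in $\W(\X)$ while $\Delta u_t\to\Delta u$ in $L^2(\mm)$ as $t\downarrow0$; thus each $u_t$ is an admissible choice of $u$ in \eqref{eq:weak bochner}. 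Since $W^{2,2}$ is defined by an integration-by-parts identity whose relevant functional is $\W$-continuous in $u$ and whose representing tensor has weakly-$L^2$ lower semicontinuous norm, it suffices to establish $u_t\in W^{2,2}(\X)$, a bound on $\||\hess u_t|\|_{L^2(\mm)}$, and the inequality $|\nabla|\nabla u_t||\le|\hess u_t|$, all uniformly in $t$, and then pass to the limit $t\downarrow0$.

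\textbf{Step 2 (self-improved Bochner and $W^{2,2}$-regularity).} Fix one regularised $u$. Since $N=\infty$, the term $(\Delta u)^2/N$ in \eqref{eq:weak bochner} vanishes. Testing \eqref{eq:weak bochner} along perturbations $u+\eps g$ with $g$ ranging over a dense class of regular functions, polarising the quadratic dependence on $\eps$, and invoking the chain and Leibniz rules of the first-order calculus, one upgrades \eqref{eq:weak bochner} to the \emph{improved Bochner inequality}
\begin{equation}\label{eq:improved bochner plan}
\tfrac12\int\Delta\phi\,|\nabla u|^2\d\mm\ \ge\ \int\Big(|\hess u|^2+\la\nabla u,\nabla\Delta u\ra+K|\nabla u|^2\Big)\phi\d\mm
\end{equation}
for all admissible $\phi\ge0$. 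The very boundedness of the bilinear form implicitly appearing here yields that $u\in W^{2,2}(\X)$; and testing \eqref{eq:improved bochner plan} against an exhaustion of good cutoffs $\phi_n\uparrow1$ and integrating by parts ($\int\la\nabla u,\nabla\Delta u\ra\phi_n\d\mm\to-\int(\Delta u)^2\d\mm$) gives the quantitative bound $\int|\hess u|^2\d\mm\le\int(\Delta u)^2\d\mm+K^-\int|\nabla u|^2\d\mm$, which is uniform along the regularisation because $\|\Delta u_t\|_{L^2}\le\|\Delta u\|_{L^2}$ and $\||\nabla u_t|\|_{L^2}\le\||\nabla u|\|_{L^2}$.

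\textbf{Step 3 (Kato inequality).} With $u\in W^{2,2}(\X)$ at hand, the second-order calculus of \cite{Gigli14} provides the pointwise identity $\nabla|\nabla u|^2=2\,\hess u(\nabla u)$ $\mm$-a.e., together with $|\hess u(\nabla u)|\le|\hess u|\,|\nabla u|$ $\mm$-a.e.; in particular $|\nabla u|^2\in\W_\loc(\X)$. On $\{|\nabla u|>0\}$ the chain rule for $\sqrt{\cdot}$ gives $|\nabla u|\in\W_\loc$ there with
\begin{equation*}
|\nabla|\nabla u||=\frac{|\nabla|\nabla u|^2|}{2|\nabla u|}=\frac{|\hess u(\nabla u)|}{|\nabla u|}\le|\hess u|,\qquad\alme\text{ on }\{|\nabla u|>0\},
\end{equation*}
while on $\{|\nabla u|=0\}$ the locality of the minimal weak upper gradient forces $\nabla|\nabla u|=0$ $\mm$-a.e., so the inequality holds trivially there. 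Altogether $|\nabla u|$ has minimal weak upper gradient dominated in $L^2(\mm)$ by $|\hess u|$; since also $|\nabla u|\in L^2(\mm)$, we conclude $|\nabla u|\in\W(\X)$ and \eqref{eq:gradgrad} follows.

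\textbf{Main difficulty.} The crux is Step 2: carrying out the Bakry--Émery self-improvement at the level of the weakly-defined (indeed measure-valued) objects of the metric calculus and, above all, identifying the quadratic term that emerges with the \emph{full} Hilbert--Schmidt Hessian $|\hess u|^2$ rather than with the weaker $1$-dimensional quantity $|\nabla|\nabla u||^2$ or the $0$-dimensional $(\Delta u)^2/N$. This is precisely the technical heart of \cite{Savare07,Gigli14}, where the density of test objects, the rigorous $\Gamma_2$-calculus and the attendant lower semicontinuity results are established. Steps 1 and 3, by contrast, are routine soft-analysis arguments.
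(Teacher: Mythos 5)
The paper offers no proof of this lemma; it is stated as a citation to Gigli~\cite{Gigli14} and Savar\'e~\cite{Savare07}, and your sketch is a faithful reconstruction of the route taken there, correctly identifying the Bakry--\'Emery self-improvement of the weak Bochner inequality (Step~2) as the technical heart. One small point of care in Step~3: $\sqrt{\cdot}$ is not globally Lipschitz, so rather than a chain rule on $\{|\nabla u|>0\}$ patched with locality on $\{|\nabla u|=0\}$ (which only gives local, set-by-set information), the cleaner device is to bound $\sqrt{|\nabla u|^2+\eps^2}-\eps$ uniformly in $W^{1,2}(\X)$ using $|\nabla u|^2\in W^{1,1}_\loc$ and $|\nabla|\nabla u|^2|\le 2|\hess u|\,|\nabla u|$, then pass to a weak limit as $\eps\downarrow 0$ to produce a genuine $W^{1,2}$ object with gradient dominated by $|\hess u|$.
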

	The closure of $\{u \in \W(\X) \ : \ \Delta u \in L^2(\mm)\}$ in $W^{2,2}(\X)$ is denoted by $H^{2,2}(\X).$
	We will need also the following extension property.
	\begin{lemma}\label{lem:cut-off lemma}
		Let $\Xdm$ be a {locally compact} $\RCD(K,\infty)$ space,  $\Omega\subset \X$ be open, $C\subset \Omega$  be compact and $u \in \W_\loc(\Omega)$ {(resp.\ $\LIP_\loc(\Omega)$)} be so that $\Delta u \in L^2_\loc(\Omega)$. Then there exists $\bar u\in \W(\X)$ {(resp.\ $\LIP_c(\X)$)} with $\Delta \bar u\in L^2(\mm)$ such that $u=\bar u$ in a neighborhood of $C.$ 
	\end{lemma}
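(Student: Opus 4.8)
The plan is to obtain $\bar u$ by multiplying $u$ with a good cut-off function. Concretely, it suffices to construct $\eta\in\LIP(\X)$ with $0\le\eta\le1$, with $\supp\eta$ compact and contained in $\Omega$, with $\eta\equiv 1$ on an open neighbourhood of $C$, and such that $\eta\in\dom(\Delta)$ with $\Delta\eta\in L^\infty(\mm)$ (and $|\nabla\eta|\in L^\infty(\mm)$). Such cut-offs exist in every locally compact $\RCD(K,\infty)$ space (which is then proper, being complete and geodesic): starting from any Lipschitz cut-off $\eta_0$ equal to $1$ near $C$ and compactly supported in $\Omega$, one mollifies through the heat semigroup, setting $\tilde\eta:=\int_0^\delta\kappa(s)\,h_s\eta_0\,\d s$ for a smooth kernel $\kappa\ge 0$ with $\int\kappa=1$ and $\supp\kappa\subset(0,\delta)$. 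Integrating by parts in $s$ and using $\Delta h_s=\partial_s h_s$ yields $\Delta\tilde\eta=-\int_0^\delta\kappa'(s)\,h_s\eta_0\,\d s$, which is bounded by $\|\eta_0\|_\infty\int|\kappa'|$, while the Bakry--\'Emery gradient estimate $|\nabla h_s\eta_0|^2\le e^{-2Ks}h_s(|\nabla\eta_0|^2)$ bounds $|\nabla\tilde\eta|$, so that $\tilde\eta$ has a Lipschitz representative by the Sobolev-to-Lipschitz property. For $\delta$ small $\tilde\eta$ is close to $\eta_0$, and composing with a smooth $\Phi\colon\rr\to[0,1]$ that is $\equiv0$ near $0$ and $\equiv1$ near $1$ restores the properties ``$\equiv1$ near $C$'' and ``supported in a compact subset of $\Omega$'', the chain rule $\Delta(\Phi\circ\tilde\eta)=\Phi'(\tilde\eta)\Delta\tilde\eta+\Phi''(\tilde\eta)|\nabla\tilde\eta|^2$ keeping the Laplacian bounded.

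Granting such an $\eta$, set $\bar u:=\eta u$. Then $\bar u=u$ on the neighbourhood of $C$ where $\eta\equiv1$; moreover, since $u\in\W_\loc(\Omega)$ (resp.\ $\LIP_\loc(\Omega)$) and $\eta$ is Lipschitz with compact support in $\Omega$, the product $\bar u$ extends by zero to an element of $\W(\X)$ (resp.\ of $\LIP_c(\X)$). It remains to verify $\Delta\bar u\in L^2(\mm)$, for which we split $\nabla\bar u=\eta\nabla u+u\nabla\eta$ and treat the two vector fields separately, working first on $\Omega$. For $\eta\nabla u$: pick a ball $B$ with $\supp\eta\subset B\subset\subset\Omega$; then $u\in\W(B)\cap\dom(\Delta,B)$ with $\Delta u\in L^2(B)$ and $\eta\in\W_\loc\cap L^\infty(B)$, so Proposition~\ref{prop:leibniz rule div} gives $\eta\nabla u\in\dom(\div,B)$ with $\div(\eta\nabla u)=\eta\Delta u+\la\nabla\eta,\nabla u\ra$; as this is supported in $\supp\eta$, extending by zero gives $\eta\nabla u\in\dom(\div,\Omega)$ with the same formula, and the right-hand side lies in $L^2(\Omega)$ since $\eta,|\nabla\eta|$ are bounded and $\Delta u,|\nabla u|\in L^2_\loc(\Omega)$.

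For $u\nabla\eta$, where $u$ need not be bounded, we argue by hand. Given $\phi\in\LIP_c(\Omega)$, we have $u\phi\in\W(\X)$ by definition of $\W_\loc(\Omega)$, so testing the identity $\div(\nabla\eta)=\Delta\eta$ against $u\phi$ is permissible by Remark~\ref{rmk:sobolev test} (as $|\nabla\eta|,\Delta\eta\in L^2(\Omega)$, being bounded and compactly supported). Expanding $\nabla(u\phi)=\phi\nabla u+u\nabla\phi$ in $\int_\Omega\Delta\eta\,(u\phi)\,\d\mm=-\int_\Omega\la\nabla\eta,\nabla(u\phi)\ra\,\d\mm$ and rearranging gives $-\int_\Omega\la u\nabla\eta,\nabla\phi\ra\,\d\mm=\int_\Omega\big(u\Delta\eta+\la\nabla\eta,\nabla u\ra\big)\phi\,\d\mm$, whence $u\nabla\eta\in\dom(\div,\Omega)$ with $\div(u\nabla\eta)=u\Delta\eta+\la\nabla\eta,\nabla u\ra$, which is in $L^2(\Omega)$ precisely because $\Delta\eta\in L^\infty$ with compact support forces $u\Delta\eta\in L^2$ (here $u\in L^2_\loc(\Omega)$ suffices). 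Summing the two contributions, $\bar u\in\dom(\Delta,\Omega)$ with $\Delta\bar u=\eta\Delta u+2\la\nabla\eta,\nabla u\ra+u\Delta\eta\in L^2(\Omega)$, supported in $\supp\eta\subset\subset\Omega$; since $\bar u\in\W(\X)$ and $\nabla\bar u$ is supported in this same compact set, the divergence identity passes to all of $\X$, giving $\Delta\bar u\in L^2(\mm)$ and finishing the proof.

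The only genuine obstacle is the first step, namely the existence of a cut-off with a \emph{bounded} (not merely $L^2$) Laplacian: in the Sobolev case $u$ is not locally bounded, so $\Delta\eta\in L^2$ would only yield $u\Delta\eta\in L^1$, and it is exactly to upgrade this to $L^2$ that $\Delta\eta\in L^\infty$ is needed — which is where the heat-semigroup mollification and the lower Ricci bound genuinely enter.
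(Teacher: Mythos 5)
Your proof follows essentially the same route as the paper's: multiply $u$ by a cut-off $\eta\in\LIP_c(\Omega)$ with bounded Laplacian, then expand $\Delta(\eta u)$ by the Leibniz rule and check each term lies in $L^2$. The paper simply cites \cite[Lemma~6.7 and Remark~6.8]{AmbrosioMondinoSavare13-2} for the existence of such an $\eta$ and appeals to ``the Leibniz rule'' in one line, so where you differ is (a) you sketch the heat-semigroup construction of $\eta$ from scratch, and (b) you verify the Leibniz rule carefully by splitting $\nabla(\eta u)=\eta\nabla u + u\nabla\eta$ and computing the divergence of each piece by hand — which is useful, since the paper's Proposition~\ref{prop:leibniz rule div} does not literally cover the $u\nabla\eta$ term (your $u$ is unbounded). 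One caveat in your construction: the assertion that $\tilde\eta=\int_0^\delta\kappa(s)h_s\eta_0\,\d s$ is \emph{uniformly} close to $\eta_0$ for small $\delta$ is not automatic; the heat flow spreads a compactly supported function to all of $\X$, so both ``$\ge 1/2$ near $C$'' and ``$\le 1/2$ outside a compact subset of $\Omega$'' require a quantitative off-diagonal heat-kernel bound (Davies--Gaffney), which is exactly the nontrivial content that \cite[Lemma~6.7]{AmbrosioMondinoSavare13-2} packages and the paper therefore cites rather than reproves.
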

	\begin{proof}
		From \cite[Lemma 6.7 and Remark 6.8]{AmbrosioMondinoSavare13-2} there exists a cut-off function $\chi \in \LIP_{c}(\Omega)$ such that $\chi \equiv 1$ in a neighborhood of $C$ and $\Delta \chi \in L^\infty\cap \W(\X).$ Then $\bar u\coloneqq \chi u$ satisfies the required properties; by the Leibniz rule, $\Delta \bar u=\chi \Delta u+\Delta \chi u + 2\la \nabla \chi, \nabla u\ra \in L^2(\mm)$.
	\end{proof}
	Thanks to Lemma \ref{lem:cut-off lemma} we can localize Lemma \ref{lem:gradgrad} in all open sets $\Omega\subset \X$  in the sense that:
	\begin{equation}\label{eq:gradgrad local}
		u\in \W(\Omega), \quad \Delta u \in L^2(\Omega)\implies u \in H^{2,2}_ \loc(\Omega),\quad |\nabla u|\in \W_\loc(\Omega).
	\end{equation}
	
	\begin{rem}\label{rmk:trhess}
		In this setting the Laplacian is not always equal to the trace of the Hessian. It is also not  known whether there are (non-constant) functions with Hessian in $L^q$, $q>2$. Hence, in view of Lemma \ref{lem:gradgrad},  functions with $L^2$-Laplacian  are the smoothest  we can get. 
		\fr
	\end{rem}
	
	Recall  the following class of test functions
	\[
	\test(\X) \coloneqq \left \{ u \in \LIP\cap W^{1,2}(\X) \ : \  \Delta u \in \W(\X)  \right\}.
	\]
	We report the following approximation result which we will use often in the sequel.
	\begin{lemma}[{\cite[Lemma 2.11]{ivan1}}]\label{lem:approximation lemma}
		Let $\Xdm$ be an $\RCD(K,\infty)$ space and $u\in \W(\X)$ be such that $\Delta u \in L^2(\mm)$. Then there exists a sequence $u_n \in \test(\X)$ such that  $\Delta u_n\to  \Delta u$ in $L^2(\mm)$, $u_n\to u$ in $W^{2,2}(\X)$ and  $|\nabla u_n|\to |\nabla u|$ in $\W(\X)$.
	\end{lemma}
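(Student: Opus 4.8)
The plan is to build the approximating sequence by first truncating $u$ to a bounded function and then mollifying with the heat semigroup $(h_t)_{t>0}$ of $\Xdm$. By Lemma~\ref{lem:gradgrad} we already have $u\in W^{2,2}(\X)$, $|\nabla u|\in\W(\X)$ and $|\nabla|\nabla u||\le|\hess u|$ $\mm$-a.e.; in the argument I will also use the refinement of Lemma~\ref{lem:gradgrad} that $\nabla|\nabla w|=\hess(w)(\nabla w)/|\nabla w|$ $\mm$-a.e.\ on $\{|\nabla w|>0\}$, together with $\nabla|\nabla w|=0$ and $\hess w=0$ $\mm$-a.e.\ on $\{|\nabla w|=0\}$, which hold whenever $\Delta w\in L^2(\mm)$.

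Set $\beta_k(s):=(-k)\vee(s\wedge k)$, $e_k:=\beta_k(u)-u$ (so $\|e_k\|_{\W(\X)}\to 0$ since $u\in\W(\X)$), and fix $t_k:=\|e_k\|_{L^2(\mm)}^{1/2}\to 0$; if $e_k\equiv 0$ for some $k$ the construction only simplifies. I would take $u_k:=h_{t_k}\beta_k(u)$. Since $\beta_k(u)\in L^2\cap L^\infty(\X)$, the semigroup maps it into $\W(\X)$ and, crucially, into $\LIP(\X)$ — this is the $L^\infty$-to-Lipschitz regularization available on $\RCD(K,\infty)$ spaces (Bakry--Ledoux gradient bound combined with the Sobolev-to-Lipschitz property) — while analyticity of the semigroup gives $h_{t_k}\beta_k(u)\in\dom(\Delta)$ with $\Delta h_{t_k}\beta_k(u)\in\dom(\Delta)\subset\W(\X)$; hence $u_k\in\test(\X)$.

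For the three convergences I split $u_k=h_{t_k}u+h_{t_k}e_k$. The analytic smoothing bound $\|\Delta h_tg\|_{L^2}\le Ct^{-1}\|g\|_{L^2}$ gives $\|\Delta h_{t_k}e_k\|_{L^2}\le C\|e_k\|_{L^2}^{1/2}\to 0$, while Bakry--\'Emery gives $\|\nabla h_{t_k}e_k\|_{L^2}\lesssim\|\nabla e_k\|_{L^2}\to 0$; feeding these into the integrated improved Bochner estimate $\|\hess w\|_{L^2}^2\le\|\Delta w\|_{L^2}^2+K^-\|\nabla w\|_{L^2}^2$ (valid whenever $\Delta w\in L^2(\mm)$, and precisely the mechanism behind Lemma~\ref{lem:gradgrad}) yields $h_{t_k}e_k\to 0$ in $W^{2,2}(\X)$. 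On the other hand $\Delta h_{t_k}u=h_{t_k}\Delta u\to\Delta u$ in $L^2(\mm)$ by strong continuity and $h_{t_k}u\to u$ in $\W(\X)$, so the same integrated estimate applied to $h_{t_k}u-u$ gives $h_{t_k}u\to u$ in $W^{2,2}(\X)$. Combining, $\Delta u_k\to\Delta u$ in $L^2(\mm)$ and $u_k\to u$ in $W^{2,2}(\X)$; in particular $\hess u_k\to\hess u$ and $\nabla u_k\to\nabla u$ in $L^2$.

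Finally, to upgrade $|\nabla u_k|\to|\nabla u|$ from $L^2$ to $\W(\X)$, pass to a subsequence so that also $\nabla u_k\to\nabla u$ and $\hess u_k\to\hess u$ $\mm$-a.e.; by the chain rule $\nabla|\nabla u_k|=\hess(u_k)(\nabla u_k)/|\nabla u_k|$ on $\{|\nabla u_k|>0\}$, which converges $\mm$-a.e.\ on $\{|\nabla u|>0\}$ to $\hess(u)(\nabla u)/|\nabla u|=\nabla|\nabla u|$, while on $\{|\nabla u|=0\}$ one has $|\nabla|\nabla u_k||\le|\hess u_k|\to|\hess u|=0=|\nabla|\nabla u||$ $\mm$-a.e.; since $|\nabla|\nabla u_k||\le|\hess u_k|$ with $|\hess u_k|\to|\hess u|$ in $L^2(\mm)$, the generalized dominated convergence theorem gives $\nabla|\nabla u_k|\to\nabla|\nabla u|$ in $L^2(T\X)$, hence $|\nabla u_k|\to|\nabla u|$ in $\W(\X)$, and a diagonal argument removes the passage to a subsequence. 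The step I expect to be most delicate is the membership $u_k\in\test(\X)$: without ultracontractivity the heat flow need not map $\W(\X)$ into $\LIP(\X)$, which is why $u$ must be truncated before being regularized, and the calibration $t_k=\|e_k\|_{L^2}^{1/2}$ (so that $t_k^{-1}\|e_k\|_{L^2}\to 0$) is the bookkeeping that keeps the second-order convergence alive.
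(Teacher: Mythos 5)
Your route---truncate to $L^\infty$, then apply the heat semigroup, with the time parameter calibrated so that the analytic smoothing bound $\|\Delta h_t g\|_{L^2}\lesssim t^{-1}\|g\|_{L^2}$ does not destroy the second-order convergence---is the standard one, and almost certainly the one used in the cited reference. You correctly identify why truncation is needed (without ultracontractivity, $h_t$ need not map $L^2$ or $\W(\X)$ into $\LIP(\X)$) and the correct mechanism for membership in $\test(\X)$ (Bakry--Ledoux + Sobolev-to-Lipschitz on the truncation, analyticity for $\Delta u_k\in \W(\X)$). The decomposition $u_k = h_{t_k}u + h_{t_k}e_k$, Bakry--\'Emery for the gradient, and the Calder\'on--Zygmund/Bochner step to upgrade to $W^{2,2}$-convergence are all fine.

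The one place you are gliding over a genuine issue is the last paragraph, where you invoke two pointwise facts as a ``refinement of Lemma~\ref{lem:gradgrad}'': (a) $\nabla|\nabla w|=\hess(w)(\nabla w)/|\nabla w|$ $\mm$-a.e.\ on $\{|\nabla w|>0\}$, and (b) $\hess w=0$ $\mm$-a.e.\ on $\{\nabla w=0\}$. Neither is stated in Lemma~\ref{lem:gradgrad}, and (b) is not a triviality: the $\RCD$ Hessian is defined only weakly through integration against test vector fields, and measure-theoretic locality on the degenerate set $\{\nabla w=0\}$ does not come for free (unlike in $\mathbb R^n$, where each component of $\nabla w$ is a scalar Sobolev function and one uses scalar gradient locality). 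Both (a) and (b) do hold for $w$ with $\Delta w\in L^2(\mm)$---hence $w\in H^{2,2}(\X)$---but they need to be sourced rather than asserted: from the pointwise expansion $\nabla\langle\nabla w,\nabla f\rangle = \hess w(\nabla f)+\hess f(\nabla w)$ for $f\in\test(\X)$ (available for $w\in H^{2,2}(\X)$, see \cite[\S3.3]{Gigli14}), one derives (a) by the chain rule applied to $\sqrt{|\nabla w|^2}$, and (b) by applying scalar gradient locality to $\phi:=\langle\nabla w,\nabla f\rangle$ on $\{\nabla w=0\}\subset\{\phi=0\}$, noting that $\hess f(\nabla w)$ vanishes there, and then letting $\nabla f$ range over a generating family of $L^2(T\X)$. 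Once these two facts are properly in place, the $\mm$-a.e.\ convergence of $\nabla|\nabla u_k|$ plus generalized dominated convergence with the dominating sequence $|\hess u_k|\to|\hess u|$ in $L^2(\mm)$ close the argument; also note that the final ``diagonal argument'' is really just the subsequence principle, since the limit of every $\mm$-a.e.\ convergent subsequence is the same.
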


		We recall  the space of vector fields with local covariant derivative in $L^2$ introduced in \cite{ivan1}. For the definition of $H^{1,2}_C(T\X)$ see \cite{Gigli14,GP20}.
		\begin{definition}[Local covariant derivative]\label{def:local covariant}
			Let $\Xdm$ be an $\rcd(K,\infty)$ space and $\Omega\subset \X$ be open. We define the space 
			\begin{equation}\label{eq:def loc covariant}
				H^{1,2}_{C,\loc}(T\X;\Omega)\coloneqq \{ v \in L^0(T\X)\restr{\Omega} \ : \ \eta v\in H^{1,2}_C(T\X),\, \forall \, \eta \in \LIP_{c}(\Omega)\}.
			\end{equation}
		\end{definition}
		For every $v \in H^{1,2}_{C,\loc}(T\X;\Omega)$ the function $|\nabla v|\in L^2_\loc(\Omega)$ is well defined by  locality (see \cite[Prop.\ 3.4.9]{Gigli14}). Moreover for all $v\in H^{1,2}_{C,\loc}(T\X;\Omega)$ it holds that $|v|\in W^{1,2}_\loc(\Omega)$ with $|\nabla |v||\le |\nabla v|$.

	We conclude recalling that $\RCD(K,\infty)$ spaces posses a local Poincaré inequality (see  \cite{Rajala12-2}). The following version  follows easily from the local one (see \textit{e.g.}~\cite[Corollary 5.54]{BB13}). 
	\begin{prop}[Poincar\'e inequality]\label{prop:poincare} Fix $p\in(1,\infty)$.
		Let $\Xdm$ be an $\RCD(K,\infty)$ space, with $K\in \rr,$ and $\Omega\subset \X$ be  bounded open  such that  $\mm(\X\setminus \Omega)>0$. Then
		\begin{equation}\label{eq:poincare}
			\int_{\Omega} |u|^p\d\mm\le C_{p,\Omega} \int_\Omega |\nabla u|^p\d \mm,\quad \forall u \in W^{1,p}_0(\Omega),
		\end{equation}
		where $C_{p,\Omega}>0$ is a constant depending only on $p$ and $\Omega.$
	\end{prop}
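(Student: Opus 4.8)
The plan is to deduce \eqref{eq:poincare} from the local $(1,1)$-Poincar\'e inequality that $\RCD(K,\infty)$ spaces enjoy (\cite{Rajala12-2}), via the standard mechanism that upgrades a Poincar\'e inequality to a Friedrichs-type inequality for functions vanishing on a set of positive measure --- this is morally \cite[Corollary 5.54]{BB13}. The point requiring attention is that we cannot appeal to a global doubling property (it may fail when $N=\infty$), so the whole argument is carried out on a single pair of concentric balls rather than through a chaining/maximal-function scheme.

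First I would fix a base point $x_0\in\Omega$ and, using that $\Omega$ is bounded and $\mm(\X\setminus\Omega)>0$, choose a radius $R<\infty$ with $\Omega\subset B:=B_R(x_0)$ and $\mm(D)>0$ for $D:=B\setminus\Omega$; such an $R$ exists since $\mm(B_n(x_0)\setminus\Omega)\uparrow\mm(\X\setminus\Omega)>0$. I would also record the elementary observation that every $u\in W^{1,p}_0(\Omega)$, being a $W^{1,p}(\X)$-limit of functions in $\LIP_c(\Omega)$, satisfies $u=0$ and $|\nabla u|=0$ $\mm$-a.e.\ on $\X\setminus\Omega$ (pass to an $\mm$-a.e.\ convergent subsequence, for which both facts are clear by locality of the gradient). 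Consequently $\int_\Omega|u|^p\d\mm=\int_{B}|u|^p\d\mm$ and $\int_{B'}|\nabla u|^p\d\mm=\int_\Omega|\nabla u|^p\d\mm$ for every ball $B'\supseteq\Omega$.

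Next, I would write the local $(1,1)$-Poincar\'e inequality on $B$ in the form $\fint_{B}|w-w_B|\d\mm\le C_P R\fint_{\lambda B}|\nabla w|\d\mm$, valid for $w\in W^{1,1}_\loc$, where $w_B:=\fint_B w\d\mm$, $\lambda B:=B_{\lambda R}(x_0)$ with $\lambda\ge1$, and $C_P=C_P(K,R)$. From this I would first extract a zero-boundary version: if $w=0$ $\mm$-a.e.\ on $D$, then $w_B=\fint_D(w_B-w)\d\mm$, hence $|w_B|\le\frac{\mm(B)}{\mm(D)}\fint_{B}|w-w_B|\d\mm$, and therefore, using $\mm(B)\le\mm(\lambda B)$,
\[
\int_{B}|w|\d\mm\le C_3\int_{\lambda B}|\nabla w|\d\mm,\qquad C_3:=\Big(1+\tfrac{\mm(B)}{\mm(D)}\Big)C_P R.
\]
The final step is the self-improvement from $L^1$ to $L^p$: given $u\in W^{1,p}_0(\Omega)$, apply the last inequality to $w:=|u|^p$, for which $|\nabla w|=p|u|^{p-1}|\nabla u|$ by the chain rule, and then bound $\int_{\lambda B}p|u|^{p-1}|\nabla u|\d\mm$ by H\"older with exponents $\tfrac{p}{p-1}$ and $p$. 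Since $|\nabla u|$ vanishes $\mm$-a.e.\ outside $\Omega$, this yields
\[
\int_{B}|u|^p\d\mm\le C_3 p\Big(\int_{B}|u|^p\d\mm\Big)^{\frac{p-1}{p}}\Big(\int_\Omega|\nabla u|^p\d\mm\Big)^{\frac1p};
\]
dividing by the finite factor $\big(\int_{B}|u|^p\d\mm\big)^{(p-1)/p}$ (the case $\int_B|u|^p\d\mm=0$ being trivial) and raising to the power $p$ gives \eqref{eq:poincare} with $C_{p,\Omega}=(C_3p)^p$, which depends only on $p$, on $\Omega$ (through $R,\mm(B),\mm(D)$), and on the fixed ambient data.

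I do not anticipate a genuine obstacle: this is routine analysis on metric measure spaces. The only step that needs a little care is the last one, where it is essential to use that $|\nabla u|=0$ $\mm$-a.e.\ on $\X\setminus\Omega$ in order to shrink the right-hand integral from the dilated ball $\lambda B$ back to $\Omega$; otherwise one would obtain the inequality only with $\|\,|\nabla u|\,\|_{L^p(\lambda B)}$ on the right. Note finally that neither local compactness of $\X$ nor compactness of $\overline{\Omega}$ enters the argument, consistently with the statement being formulated already at the $\RCD(K,\infty)$ level of generality.
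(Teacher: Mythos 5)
Your proof is correct and implements exactly what the paper hints at (``follows easily from the local [Poincar\'e inequality]''), so there is no genuine divergence of strategy. Two details are worth noting. First, your self-improvement step --- applying the $(1,1)$-Poincar\'e inequality to $w=|u|^p$ rather than trying to pass to a $(p,p)$-Poincar\'e inequality --- is precisely what makes the argument work at the $\RCD(K,\infty)$ level of generality: the usual $(1,p)\Rightarrow(p,p)$ upgrade (chaining / maximal function / Keith--Zhong) requires doubling, which may fail when $N=\infty$, whereas the chain-rule trick does not. Second, the assertion $|\nabla|u|^p|=p|u|^{p-1}|\nabla u|$ deserves a word of justification since $t\mapsto|t|^p$ is not Lipschitz for $p>1$: one applies the usual chain rule to the Lipschitz truncations $\phi_n(t)=t^p\wedge(n^p+pn^{p-1}(t-n))$, obtaining the Poincar\'e inequality for $\phi_n(|u|)$ with upper gradient $\phi_n'(|u|)|\nabla u|\le p|u|^{p-1}|\nabla u|\in L^1$ (H\"older), and passes to the limit $n\to\infty$ by monotone/dominated convergence; alternatively one notes that $|\nabla u|=0$ a.e.\ off $\Omega$ already by locality of the minimal weak upper gradient on $\{u=0\}$, without the need for a subsequence argument. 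With that remark added, the proof is complete and the constant you obtain, $(C_3p)^p$ with $C_3=(1+\mm(B)/\mm(D))C_PR$, depends on $p$ and on $\Omega$ through the chosen pair of balls, as required.
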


	\subsection{Bochner type inequalities}
	In  metric  spaces (or even Riemannian manifolds), the relationship between the Hessian and the Laplacian is more subtle than in the Euclidean setting, because of the presence of curvature (see also Remark \ref{rmk:trhess}). In particular the identity $\int |\hess(u)|^2 = \int |\Delta u|^2$ is not available and needs to be replaced by the Bochner inequality. In this subsection we outline the versions of this inequality that we use in this paper (see also \cite[Theorem 3.8.8]{Gigli14}, \cite{Savare13}, or also \cite[Corollary 6.2.17]{GP20}).
	  
	 {For the first statement  we need to introduce the notation $|{\bf H} u|^2\coloneqq |{\rm Hess} (u)|^2 + \frac{(\Delta u - tr{\rm Hess}(u) )^2}{N-n}$, where $n\coloneqq \dim(\X)$ (see \cite{Han18} for details).}
	\begin{thm}[{Improved Bochner inequality,  \cite{Gigli14,Han18}}]\label{thm:improved Bochner}
		Let $\Xdm$ be an $\RCD(K,\infty)$ space and $u\in \test(\X)$. Then $|\nabla u|^2\in W^{1,2}(\X)$ and 
		\begin{equation}\label{eq:bochner}
			-\int_{\X} \frac{\la \nabla \phi,\nabla {(} |\nabla u|^2 {)} \ra}{2} \d \mm \ge \int_{\X} \Big(|{\bf H} u|^2  +  \la \nabla u, \nabla \Delta u\ra  + K|\nabla u|^2\Big) \phi  \d \mm
		\end{equation}
		holds for all $\phi \in\W\cap L^\infty(\X)$ with $\phi\ge0$.  {If instead $u\in \LIP_{bs}(\X) $ with $\Delta u \in L^2(\mm)$ then \eqref{eq:bochner} still holds replacing $ \la \nabla u, \nabla \Delta u\ra\phi$ with  $-\div(\varphi \nabla u) \Delta u$}.
	\end{thm}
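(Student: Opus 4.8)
\textbf{Proof strategy for Theorem~\ref{thm:improved Bochner}.} The plan is to deduce the improved Bochner inequality \eqref{eq:bochner} from the \emph{weak} Bochner inequality \eqref{eq:weak bochner} built into the definition of $\RCD(K,\infty)$, by upgrading the dimension-free term $(\Delta u)^2/N$ (appearing when $N<\infty$) and, more importantly, the Euclidean-trivial bound $(\Delta u)^2 \le n|\hess u|^2$ to the pointwise self-improving inequality $|{\bf H}u|^2 \le \ldots$ that encodes the full Hessian. The first reduction is to the test-function case $u \in \test(\X)$: here $\Delta u \in \W(\X)$, so $\la \nabla u, \nabla \Delta u \ra \in L^1(\mm)$ is a genuine function and the left-hand side makes sense because $|\nabla u|^2 \in \W(\X)$ by Lemma~\ref{lem:gradgrad} (applied to $u$, whose Laplacian lies in $L^2$). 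For such $u$, \eqref{eq:weak bochner} with a good class of test functions $\phi$ together with the $W^{2,2}$-theory of \cite{Gigli14} already gives \eqref{eq:bochner} with $|\hess u|^2$ in place of $|{\bf H}u|^2$; the point of Han's refinement \cite{Han18} is that, on a space of essential dimension $n$, one can run a self-improvement argument (testing with powers/compositions and optimising) that sharpens $(\Delta u)^2/N$ to $(\Delta u - \tr\hess u)^2/(N-n) + |\hess u|^2 - |\hess u|^2$, i.e.\ replaces the crude trace inequality by the exact defect term. I would simply \emph{cite} this: the statement of the theorem attributes it to \cite{Gigli14,Han18}, and reproving the self-improvement is outside the scope here.

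\textbf{The core analytic step} is the test-function Bochner identity itself. Concretely, for $u \in \test(\X)$ and $\phi \in \test(\X)$ with $\phi \ge 0$, one shows
\begin{equation*}
	\int_\X \la \nabla \phi, \nabla |\nabla u|^2 \ra \d\mm = \ldots
\end{equation*}
by approximation: use Lemma~\ref{lem:approximation lemma} to approximate $u$ by $u_k \in \test(\X)$ with $\Delta u_k \to \Delta u$ in $L^2$, $u_k \to u$ in $W^{2,2}$ and $|\nabla u_k| \to |\nabla u|$ in $\W(\X)$, apply \eqref{eq:weak bochner} (or its $W^{2,2}$-refined form from \cite{Gigli14}) to each $u_k$, and pass to the limit: the left-hand side converges because $\nabla |\nabla u_k|^2 = 2|\nabla u_k|\nabla|\nabla u_k| \to 2|\nabla u|\nabla|\nabla u|$ in $L^1$ (product of an $L^2$-strongly convergent factor and a bounded one, since $u_k$ Lipschitz — but here one must be a bit careful, as the Lipschitz bounds are not uniform; instead one localises via $\phi \in L^\infty$ and uses $|\nabla u_k|^2 \in \W(\X)$), the term $\la \nabla u_k, \nabla \Delta u_k \ra$ converges weakly against $\phi \in L^\infty$, and the curvature term passes to the limit trivially. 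Then one enlarges the class of admissible $\phi$ from $\test(\X)$ to $\W \cap L^\infty(\X)$, $\phi \ge 0$, by a density/truncation argument (approximate $\phi$ by test functions, or by $\phi_k = \phi \wedge k$ composed with suitable mollifications), noting that every term is continuous in $\phi$ with respect to strong $\W$-convergence plus uniform $L^\infty$-bounds.

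\textbf{The second assertion}, for $u \in \LIP_{bs}(\X)$ with $\Delta u \in L^2(\mm)$, is where the statement genuinely differs: now $\Delta u$ need not lie in $\W(\X)$, so the term $\la \nabla u, \nabla \Delta u \ra \phi$ is replaced by $-\div(\phi \nabla u)\,\Delta u$, which by the Leibniz rule of Proposition~\ref{prop:leibniz rule div} equals $-(\phi\,\Delta u + \la \nabla \phi, \nabla u\ra)\Delta u$ — an $L^1$ function, using $\Delta u \in L^2$ and $\phi, |\nabla u| \in L^\infty$ (so $\nabla u \in \dom(\div)$ via Proposition~\ref{prop:leibniz rule div}, since $u$ is Lipschitz with compact support and $L^2$-Laplacian). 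To obtain this version I would approximate $u$ by $u_k \in \test(\X)$ as in Lemma~\ref{lem:approximation lemma} (which gives $\Delta u_k \to \Delta u$ in $L^2$ and $|\nabla u_k| \to |\nabla u|$ in $\W$), write the already-proven inequality for each $u_k$, rewrite its $\la \nabla u_k, \nabla \Delta u_k\ra\phi$ term by integration by parts as $-\int \div(\phi \nabla u_k)\Delta u_k \d\mm = -\int (\phi \Delta u_k + \la \nabla\phi,\nabla u_k\ra)\Delta u_k \d\mm$, and pass to the limit: $\int \phi (\Delta u_k)^2 \to \int \phi(\Delta u)^2$ and $\int \la\nabla\phi,\nabla u_k\ra \Delta u_k \to \int \la\nabla\phi,\nabla u\ra\Delta u$ by strong $L^2$-convergence of both $\nabla u_k$ and $\Delta u_k$. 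The left-hand side and the curvature term pass as before, and the Hessian term is lower semicontinuous along $u_k \to u$ in $W^{2,2}$, which is exactly the direction needed to preserve the inequality.

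\textbf{Main obstacle.} The delicate point is the passage to the limit in the left-hand side $-\tfrac12\int \la \nabla\phi, \nabla|\nabla u_k|^2\ra$ and in the Hessian/$|{\bf H}u|$ term \emph{simultaneously with the right sign}: one needs lower semicontinuity of $\int |{\bf H}u_k|^2 \phi$ (true for $\phi \ge 0$ by the $L^2$-weak convergence of Hessians guaranteed by Lemma~\ref{lem:approximation lemma} plus convexity of the Hilbert–Schmidt norm) combined with \emph{exact} convergence of the other three terms; any slack here would only be consistent with an inequality in the wrong direction. The other genuinely technical nuisance is justifying $\nabla u \in \dom(\div)$ and the Leibniz identity for $\div(\phi\nabla u)$ in the non-test case, which is precisely the content of Proposition~\ref{prop:leibniz rule div} and so can be invoked directly. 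Neither obstacle is conceptually new — both are standard in the $\RCD$ literature — so in the write-up I would keep the approximation arguments terse and lean on \cite{Gigli14, Han18, GP20} for the self-improvement and the $W^{2,2}$-calculus.
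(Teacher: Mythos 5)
Your strategy matches the paper's almost exactly: the first assertion (the improved Bochner inequality for $u \in \test(\X)$, with $|{\bf H}u|^2$ replacing the dimensional term) is attributed to and cited from \cite{Gigli14,Han18} rather than reproven, and the second assertion (for $u \in \LIP_{bs}(\X)$ with $\Delta u \in L^2(\mm)$) is derived by approximating $u$ through the heat flow via Lemma~\ref{lem:approximation lemma}, integrating the $\la\nabla u_k,\nabla\Delta u_k\ra\phi$ term by parts, and passing to the limit. The paper also records an even more direct route to the second assertion: invoke Lemma~\ref{lem:bochner type 2} with $\psi \equiv 1$, which is already formulated for $u$ of precisely this regularity and so bypasses the re-approximation entirely.

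One step in your limit passage is stated too loosely. You claim $\int\la\nabla\phi,\nabla u_k\ra\Delta u_k \to \int\la\nabla\phi,\nabla u\ra\Delta u$ ``by strong $L^2$-convergence of both $\nabla u_k$ and $\Delta u_k$''; but with $\phi$ merely in $\W\cap L^\infty(\X)$ the factor $\nabla\phi$ is only $L^2$, and a product of three $L^2$-converging sequences need not converge in $L^1$. The same issue recurs in $\int\la\nabla\phi,\nabla|\nabla u_k|^2\ra$, which you flag but leave unresolved. Two standard fixes: (i) use that the heat-flow approximants of a Lipschitz function carry a \emph{uniform} $L^\infty$-bound on $|\nabla u_k|$ (Bakry--\'Emery gradient contraction), so that $\la\nabla\phi,\nabla u_k\ra$ is dominated by $C|\nabla\phi|\in L^2$ and one passes to the limit by generalised dominated convergence against $\Delta u_k\to\Delta u$ in $L^2$ --- this is a property of the construction behind Lemma~\ref{lem:approximation lemma}, not of its statement; or (ii) first establish the second assertion for $\phi\in\LIP_{bs}(\X)$ (where $\nabla\phi\in L^\infty$ makes every convergence immediate) and then upgrade to $\phi\in\W\cap L^\infty(\X)$ by a density/truncation argument, observing that all the terms involving the limit function $u$ are continuous under strong $\W$-convergence combined with a uniform $L^\infty$-bound on $\phi$. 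Either way the gap closes easily, and your argument is the one the paper intends.
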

    \begin{proof}
       { The first part was proven in \cite{Han18}, while the second follows easily by integration by parts and approximation via the heat flow (alternatively apply Lemma \ref{lem:bochner type 2} with $\psi\equiv 1$).}
    \end{proof}
	
	Integrating \eqref{eq:bochner} we obtain the following result, which extends the classical Calderón–Zygmund inequality in $\rr^n$. In particular it quantifies the regularity result in Lemma \ref{lem:gradgrad}.
	\begin{cor}[Calderón–Zygmund inequality]\label{cor:hessian laplacian estimate}
		Let $\Xdm$ be a {locally compact} $\RCD(K,\infty)$ space, $\Omega\subset \X$ be open,  $u\in \W(\Omega)$ be such that $\Delta u \in L^2(\Omega)$. Then for all $\phi \in \LIP_{c}(\Omega)$  it holds 
		\begin{equation}\label{eq:hessian laplacian estiamte}
			\int_\Omega |{\rm Hess} (u)|^2\phi^2 \d \mm\le  4\int_\Omega (\Delta u)^2 \phi^2 \d \mm +C_K \int_\Omega  |\nabla u|^2(|\nabla \phi|^2 +\phi^2) \d \mm,
		\end{equation}
		where $C_K>0$ is a constant depending only on $K.$
	\end{cor}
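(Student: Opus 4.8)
The plan is to obtain \eqref{eq:hessian laplacian estiamte} by testing the improved Bochner inequality \eqref{eq:bochner} against $\phi^2$, after reducing to a situation where \eqref{eq:bochner} is available. First I would use Lemma \ref{lem:cut-off lemma} to replace $u$ by a function $\bar u \in \W(\X)$ with $\Delta \bar u \in L^2(\mm)$ agreeing with $u$ on a neighborhood of $C := \supp \phi$; since every term in \eqref{eq:hessian laplacian estiamte} is local (Hessian, gradient, Laplacian all coincide with those of $u$ on $\{\phi \neq 0\}$ by locality, \textit{cf.}\ \eqref{eq:gradgrad local}), it suffices to prove the inequality for $\bar u$ and $\phi \in \LIP_c(\X)$. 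Then I would apply Lemma \ref{lem:approximation lemma} to get $u_n \in \test(\X)$ with $\Delta u_n \to \Delta u$ in $L^2(\mm)$, $u_n \to u$ in $W^{2,2}(\X)$ (so $\hess(u_n) \to \hess(u)$ in $L^2$) and $|\nabla u_n| \to |\nabla u|$ in $\W(\X)$; proving the estimate for each $u_n$ and passing to the limit will give the general case.

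So the core is the case $u \in \test(\X)$, $\phi \in \LIP_c(\X)$. Here I would plug the admissible test function $\phi^2 \in \W \cap L^\infty(\X)$, $\phi^2 \ge 0$, into \eqref{eq:bochner}, giving
\begin{equation*}
	-\int_\X \la \nabla(\phi^2), \nabla(|\nabla u|^2)\ra \, \d\mm \ge 2\int_\X \Big( |{\bf H}u|^2 + \la \nabla u, \nabla \Delta u\ra + K|\nabla u|^2 \Big)\phi^2 \, \d\mm.
\end{equation*}
Since $|{\bf H}u|^2 \ge |\hess(u)|^2$, the left side controls $2\int |\hess(u)|^2 \phi^2$ plus error terms. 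The term $\int \la \nabla u, \nabla \Delta u\ra \phi^2$ is handled by integration by parts: $\int \la \nabla u, \nabla \Delta u\ra \phi^2 = -\int \Delta u \,\div(\phi^2 \nabla u) = -\int \Delta u (\phi^2 \Delta u + \la \nabla(\phi^2), \nabla u\ra)$ via the Leibniz rule (Proposition \ref{prop:leibniz rule div}), which produces $-\int (\Delta u)^2 \phi^2$ and a mixed term $-2\int \Delta u \,\phi \la \nabla \phi, \nabla u\ra$. The remaining left-hand term $-\int \la \nabla(\phi^2), \nabla(|\nabla u|^2)\ra = -2\int \phi \la \nabla \phi, \nabla(|\nabla u|^2)\ra$, together with the mixed term, are the quantities to be absorbed.

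The estimates to be carried out are then elementary Cauchy–Schwarz/Young manipulations. For the mixed term one writes $|\nabla \phi \cdot \nabla u| \le |\nabla \phi| |\nabla u|$ and uses $2|\Delta u \, \phi \la \nabla \phi, \nabla u\ra| \le \tfrac14 (\Delta u)^2\phi^2 + 4|\nabla \phi|^2|\nabla u|^2$ (or similar), keeping the coefficient of $(\Delta u)^2\phi^2$ small. For the term with $\nabla(|\nabla u|^2)$ one uses $|\nabla(|\nabla u|^2)| = 2|\nabla u| |\nabla |\nabla u|| \le 2|\nabla u| |\hess(u)|$ by Lemma \ref{lem:gradgrad}, so $|2\phi \la \nabla\phi, \nabla(|\nabla u|^2)\ra| \le 4 |\hess(u)| \phi \, |\nabla u| |\nabla \phi| \le \tfrac12 |\hess(u)|^2\phi^2 + 8 |\nabla u|^2|\nabla \phi|^2$, which lets the Hessian part be absorbed into the $2\int |\hess(u)|^2\phi^2$ on the right. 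Collecting everything, moving the Hessian terms to one side and the $(\Delta u)^2$ and $|\nabla u|^2(|\nabla \phi|^2 + \phi^2)$ terms to the other, yields \eqref{eq:hessian laplacian estiamte} with the factor $4$ in front of $\int (\Delta u)^2\phi^2$ and a constant $C_K$ absorbing the contribution of $K|\nabla u|^2\phi^2$ (note $K$ may be negative, so one bounds $-2K\int|\nabla u|^2\phi^2 \le 2K^-\int |\nabla u|^2\phi^2$). I do not expect a genuine obstacle here: the only point requiring a little care is book-keeping the constants so the coefficient of $\int |\hess(u)|^2\phi^2$ stays strictly positive after absorption and the coefficient of $\int(\Delta u)^2\phi^2$ lands at $4$; the reduction steps via Lemmas \ref{lem:cut-off lemma} and \ref{lem:approximation lemma} are routine given the stated convergences.
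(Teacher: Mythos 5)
Your argument is correct and carries out exactly what the paper asserts (the paper states the corollary follows by "integrating \eqref{eq:bochner}" without giving details): localize via Lemma \ref{lem:cut-off lemma}, reduce to $\test(\X)$ via Lemma \ref{lem:approximation lemma}, plug $\phi^2$ into \eqref{eq:bochner}, integrate by parts on $\la\nabla u,\nabla\Delta u\ra$ via Prop.\ \ref{prop:leibniz rule div}, and absorb the mixed terms by Young using $|\nabla|\nabla u||\le|\hess u|$ from \eqref{eq:gradgrad}. The bookkeeping does close: with Young parameters $\delta=\varepsilon=1$ one lands precisely on $4\int(\Delta u)^2\phi^2$ with $C_K=\max\{6,2K^-\}$.
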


	We will also need the following variant of \eqref{eq:bochner}.
	\begin{prop}[Bochner-type inequality I]\label{prop:bochner type}
		Let $\Xdm$ be a {locally compact} $\RCD(K,\infty)$ space and $u\in \W(\Omega)$ be such that $\Delta u \in \W(\Omega)$. Let also $\Psi\in \LIP(\rr)$ be  non-negative, with $\Psi(0)=0$ and such that $\Psi'$ is continuous up to a negligible set of points.  Then for all $\eta \in \LIP_c(\Omega)$ and constants $M>0$ it holds
		\begin{equation}\label{eq:bochner type}
			-\int_\Omega |\nabla u|\la \nabla \Psi_{M,\eta} ,\nabla |\nabla u| \ra \d \mm \ge \int_\Omega \Big( |{\rm Hess} (u)|^2  +  \la \nabla u, \nabla \Delta u\ra  + K|\nabla u|^2 \Big) \Psi_{M,\eta} \  \d \mm,
		\end{equation}
		where {$($with slight abuse of notation$)$ we write} $\Psi_{M,\eta} \coloneqq \Psi((|\nabla u|\wedge M)\eta^2) \in\W\cap L^\infty(\X)$.
	\end{prop}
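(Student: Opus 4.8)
The plan is to recognise \eqref{eq:bochner type} as the improved Bochner inequality of Theorem~\ref{thm:improved Bochner} tested against the nonnegative weight $\phi=\Psi_{M,\eta}$: using $\nabla(|\nabla u|^2)=2|\nabla u|\,\nabla|\nabla u|$ and replacing $|{\bf H}u|^2$ by the smaller $|{\rm Hess}(u)|^2$, the two sides of \eqref{eq:bochner type} are exactly those of \eqref{eq:bochner} for this choice of $\phi$. Two points must be handled to make this rigorous. First, one has to check that $\Psi_{M,\eta}$ is an admissible weight. Second, Theorem~\ref{thm:improved Bochner} is stated for $u\in\test(\X)$, whereas here $u$ only lies in $\W(\Omega)$ with $\Delta u\in\W(\Omega)$; this gap is bridged by a localisation-plus-approximation argument.

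\emph{Admissibility of the weight.} Since $\Delta u\in\W(\Omega)\subset L^2_{\loc}(\Omega)$, the local Calder\'on--Zygmund property \eqref{eq:gradgrad local} gives $|\nabla u|\in\W_\loc(\Omega)$, hence $|\nabla u|\wedge M\in\W_\loc\cap L^\infty(\Omega)$ and $(|\nabla u|\wedge M)\eta^2\in\W\cap L^\infty(\X)$ with compact support in $\Omega$; composing with the Lipschitz function $\Psi$ (chain rule, using $\Psi(0)=0$ and that $\Psi'$ is continuous up to a null set, cf.\ Lemma~\ref{lem:psi' convergence}) yields $\Psi_{M,\eta}\in\W\cap L^\infty(\X)$, nonnegative because $\Psi\ge 0$, and supported in a compact $C\subset\Omega$. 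I would record that $\nabla\Psi_{M,\eta}=\Psi'(\cdot)\big(\mathbf{1}_{\{|\nabla u|<M\}}\eta^2\,\nabla|\nabla u|+(|\nabla u|\wedge M)\,\nabla(\eta^2)\big)$, so that the ``dangerous'' term $|\nabla u|\langle\nabla\Psi_{M,\eta},\nabla|\nabla u|\rangle$ is integrable: its leading part is bounded by $M\eta^2|\nabla|\nabla u||^2\le M\eta^2|{\rm Hess}(u)|^2\in L^1$ by \eqref{eq:gradgrad}. This is precisely why the truncation at level $M$ is built into the statement.

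\emph{Localisation and approximation.} As $\supp\Psi_{M,\eta}\subset C\subset\subset\Omega$, pick (Lemma~\ref{lem:cut-off lemma}) a cut-off $\chi$ with $\chi\equiv 1$ on an open neighbourhood of $C$, so that $\bar u:=\chi u\in\W(\X)$ has $\Delta\bar u\in L^2(\mm)$ and on $\{\chi=1\}^{\circ}\supset C$ all relevant quantities ($|\nabla u|$, $\nabla|\nabla u|$, ${\rm Hess}(u)$, $\Delta u$, $\langle\nabla u,\nabla\Delta u\rangle$) agree $\mm$-a.e.\ with those of $\bar u$; it thus suffices to prove \eqref{eq:bochner type} for $\bar u$. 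Let $u_n\in\test(\X)$ be the approximants of $\bar u$ from Lemma~\ref{lem:approximation lemma}: $\Delta u_n\to\Delta\bar u$ in $L^2$, $u_n\to\bar u$ in $W^{2,2}(\X)$, $|\nabla u_n|\to|\nabla\bar u|$ in $\W(\X)$. Set $\phi_n:=\Psi\big((|\nabla u_n|\wedge M)\eta^2\big)$; by Lemma~\ref{lem:psi' convergence}, $\phi_n\to\Psi_{M,\eta}$ in $\W(\X)$, with $(\phi_n)$ uniformly bounded (by $\|\Psi'\|_\infty M\|\eta\|_\infty^2$) and, along a subsequence, $\mm$-a.e.\ convergent together with $\nabla\phi_n$. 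Applying Theorem~\ref{thm:improved Bochner} to $u_n$ with weight $\phi_n$ (using $|{\bf H}u_n|^2\ge|{\rm Hess}(u_n)|^2$) and integrating the term $\langle\nabla u_n,\nabla\Delta u_n\rangle\phi_n$ by parts via Proposition~\ref{prop:leibniz rule div} and Remark~\ref{rmk:sobolev test}, one gets
\[
-\int|\nabla u_n|\langle\nabla\phi_n,\nabla|\nabla u_n|\rangle\,\d\mm\ \ge\ \int\big(|{\rm Hess}(u_n)|^2+K|\nabla u_n|^2\big)\phi_n\,\d\mm-\int\phi_n(\Delta u_n)^2\,\d\mm-\int\langle\nabla\phi_n,\nabla u_n\rangle\Delta u_n\,\d\mm.
\]

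\emph{Passage to the limit --- the main obstacle.} The core of the argument is letting $n\to\infty$. The terms $\int|{\rm Hess}(u_n)|^2\phi_n$, $\int K|\nabla u_n|^2\phi_n$, $\int\phi_n(\Delta u_n)^2$ converge because $|{\rm Hess}(u_n)|^2$, $|\nabla u_n|^2$, $(\Delta u_n)^2$ converge strongly in $L^1$ while $\phi_n$ is bounded and converges a.e. The difficulty lies in the two ``cross terms'' $\int|\nabla u_n|\langle\nabla\phi_n,\nabla|\nabla u_n|\rangle$ and $\int\langle\nabla\phi_n,\nabla u_n\rangle\Delta u_n$, which a priori only pair an $L^2$-convergent factor against an $L^1$-convergent product. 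Here the truncation is decisive: since $|\nabla u_n|\,\mathbf{1}_{\{|\nabla u_n|<M\}}\le M$, one has $|\,|\nabla u_n|\,\nabla\phi_n|\le\|\Psi'\|_\infty M\big(\eta^2|\nabla|\nabla u_n||+|\nabla u_n|\,|\nabla(\eta^2)|\big)$, whose right-hand side converges strongly in $L^2$; hence $|\nabla u_n|\,\nabla\phi_n$ is bounded in $L^2(T\X)$ and, being a.e.\ convergent, converges \emph{weakly} in $L^2(T\X)$ to $|\nabla\bar u|\,\nabla\Psi_{M,\eta}$. Pairing this weak convergence with the \emph{strong} $L^2$-convergence of $\nabla|\nabla u_n|$ disposes of the first cross term; the same $L^2$-bound shows $\langle\nabla\phi_n,\nabla u_n\rangle$ is bounded in $L^2$ and converges weakly, which, paired with the strong $L^2$-convergence of $\Delta u_n$, disposes of the second. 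This gives for $\bar u$ the integrated-by-parts inequality, and undoing the integration by parts --- legitimate since $\Delta\bar u=\Delta u\in\W_\loc$ on $\{\chi=1\}^{\circ}\supset\supp\Psi_{M,\eta}$, again by Proposition~\ref{prop:leibniz rule div} --- and restricting to $\supp\Psi_{M,\eta}$, where $\bar u$ and $u$ coincide, recovers \eqref{eq:bochner type}. The technical heart of the proof is exactly this limit passage in the cross terms, and the truncation at level $M$ together with the weak/strong $L^2$ pairing is the mechanism that makes it work.
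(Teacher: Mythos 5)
Your proof is correct and follows essentially the same route as the paper: verify $\Psi_{M,\eta}$ is an admissible weight via \eqref{eq:gradgrad local}, localise with Lemma~\ref{lem:cut-off lemma}, approximate by $\test(\X)$ functions with Lemma~\ref{lem:approximation lemma}, apply Theorem~\ref{thm:improved Bochner} with $\phi=\Psi_{M,\eta}$ and integrate the $\la\nabla u,\nabla\Delta u\ra$ term by parts. The only stylistic departure is in the limit passage, where you pair weak $L^2$ convergence of $|\nabla u_n|\,\nabla\phi_n$ (resp.\ $\la\nabla\phi_n,\nabla u_n\ra$) against strong $L^2$ convergence of $\nabla|\nabla u_n|$ (resp.\ $\Delta u_n$), whereas the paper passes to a dominated subsequence and invokes the (generalised) Dominated Convergence Theorem; both are standard and equally valid here.
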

	\begin{proof}
		Note that $\Psi_{M,\eta}\in \W\cap L^\infty(\X)$ with compact support in $\Omega$, which can be seen using the chain rule, that $|\nabla u|\wedge M\in \W_\loc(\Omega)$ by Lemma \ref{lem:gradgrad} and the assumption $\Psi(0)=0.$

		\medskip
		\noindent \textit{Step 1:}  Observe that it suffices to show that
		\begin{equation}\label{eq:bochner type modified}
			\begin{aligned} -\int_\Omega |\nabla u|\la \nabla \Psi_{M,\eta} ,\nabla |\nabla u| \ra \d \mm \ge &\int_\Omega \!\Big( |{\rm Hess} (u)|^2   + K|\nabla u|^2  \Big)\Psi_{M,\eta} \d \mm  \!\\
            &\quad-\int_\Omega \!  (\Delta u)^2\Psi_{M,\eta}+\la\nabla u,\nabla \Psi_{M,\eta}\ra\Delta u\d \mm.
			\end{aligned}
		\end{equation}
		Indeed \eqref{eq:bochner type} follows from \eqref{eq:bochner type modified}  integrating by parts. We also note that the integral on the left-hand side of \eqref{eq:bochner type modified} makes sense and is finite thanks to
		\begin{equation*}
			\nabla \Psi_{M,\eta} = \Psi'((|\nabla u | \wedge M)\eta^2) \Big( \eta^2 \mathds{1}_{\{|\nabla u| \leq M\}}\nabla |\nabla u| + (|\nabla u|\wedge M) \nabla \eta^2 \Big),
		\end{equation*}
		which implies
		\begin{equation}\label{eq:upper bound for nabla psi}
			|\nabla \Psi_{M,\eta}| |\nabla u| |\nabla |\nabla u||\le M \Lip (\Psi) (\eta^2|\nabla |\nabla u||^2+|\nabla \eta^2 ||\nabla u| |\nabla |\nabla u||)\in L^1(\mm). 
		\end{equation}
		
		\medskip 
		\noindent \textit{Step 2:} It is sufficient to show \eqref{eq:bochner type modified} for globally defined functions $u\in \W(\X)$ with $\Delta u \in L^2(\mm).$ Indeed by Lemma \ref{lem:cut-off lemma} there  exists $\bar u \in \W(\X)$ with $\Delta \bar u \in L^2(\mm)$ such that $\bar u=u$ in a neighborhood of $\supp(\eta).$ Moreover $\Psi_{M,\eta}=0$ outside the support of $\eta.$ Hence by locality \eqref{eq:bochner type modified} holds for $u$ if and only if it holds for $\bar u.$

		\medskip 
		\noindent \textit{Step 3:} It is sufficient to show \eqref{eq:bochner type modified} for functions $u\in  \test(\X).$  Indeed by Lemma \ref{lem:approximation lemma} there exists a sequence  $u_n \in \test(\X)$ such that  $\Delta u_n\to  \Delta u$ in $L^2(\mm)$, $u_n\to u$ in $H^{2,2}(\X)$  and  $|\nabla u_n|\to |\nabla u|$ in $\W(\X).$  
		We claim that \eqref{eq:bochner type modified} passes to the limit as $n\to +\infty.$ Up to passing to a subsequence we can assume {from the Generalised Dominated Convergence Theorem} that the functions $|\nabla u_n|$, $|\nabla |\nabla u_n||$, $|\hess (u_n)|$ and $\Delta u_n$ are all dominated {independently of $n$} by an $L^2(\mm)$-function and also converge pointwise $\mm$-a.e.\ to the corresponding expressions for $u$. Hence by Dominated Convergence Theorem it is sufficient to show pointwise $\mm$-a.e.\ converge of the integrands in \eqref{eq:bochner type modified}. For the terms without $\nabla \Psi_{M,\eta}$ this is immediate, so we focus on the others. Set $\Psi_{M,\eta}^n\coloneqq  \Psi((|\nabla u_n|\wedge M)\eta^2) .$  We observe that $|\nabla \Psi_{M,\eta}^n- \nabla \Psi_{M,\eta}|\to 0$ $\mm$-a.e., which follows by Lemma \ref{lem:psi' convergence} applied with $f_n=(|\nabla u_n|\wedge M)\eta^2$ and $f=(|\nabla u|\wedge M)\eta^2$. This settles the term with $\nabla \Psi_{M,\eta}$ on the right-hand side of \eqref{eq:bochner type modified}. For the left-hand side we write
		\begin{align*}
			|\la \nabla \Psi_{M,\eta}^n ,&\nabla |\nabla u_n| \ra\!-\!\la \nabla \Psi_{M,\eta} ,\nabla |\nabla u| \ra| \!\le\! |\nabla |\nabla u|||\nabla \Psi_{M,\eta}^n\!\!-\! \nabla \Psi_{M,\eta}|\!+\!| \nabla \Psi_{M,\eta}^n||\nabla |\nabla u|\!-\!\!\nabla |\nabla u_n||\!\to\! 0,
		\end{align*}
		$\alme$~as $n\to\infty$. {By recalling the aforementioned application of the Generalised Dominated Convergence Theorem and \eqref{eq:upper bound for nabla psi},} we have that \eqref{eq:bochner type modified} passes to the limit as $n\to\infty$.

		\medskip 
		\noindent \textit{Step 4:} 
		Inequality \eqref{eq:bochner type modified} indeed holds for all  $u\in \test(\X).$ This follows immediately from \eqref{eq:bochner} {of Theorem \ref{thm:improved Bochner}} by taking $\phi=\Psi_{M,\eta}$ integrating by parts the right-hand side and developing  the left-hand side (recall that $|\nabla u|\in L^\infty$ as $\test(\X)\subset \LIP(\X)$).
	\end{proof}
	
	The following is another version of the Bochner inequality that was proved in \cite{ivan1}.
	\begin{lemma}[{Bochner-type  inequality II, \cite[Lemma 4.8]{ivan1}}]\label{lem:bochner type 2}
		Let $\Xdm$ be $\rcd(K,\infty)$, $u \in \W(\X)$ with $\Delta u \in L^2(\mm)$,  $\psi \in \LIP\cap L^\infty(\rr)$ with $\psi'$ continuous up to a negligible set and 
		\begin{equation}\label{eq:psi' bound}
			|\psi '(t)|\le c (1+|t|)^{-1}, \quad \quad \text{ for a.e.\ $t \in\rr$,}
		\end{equation} for some constant $c>0$. Then, for every $\eta \in \LIP_{bs}(\X)$, $\eta \ge 0,$ it holds
		\begin{equation}\label{eq:starting point}
			\begin{split}
				&\int_\X{|{\bf H}u|^2 }
				\psi(|\nabla u|) \eta \d \mm \le 	\int_\X \la\nabla u\Delta u -|\nabla u|\,\nabla |\nabla u|, \nabla \eta\ra \psi(|\nabla u|)\d\mm\\
				&+ \!\!\int_\X \!\la\nabla u\Delta u \!-\!|\nabla u|\,\nabla |\nabla u|, \nabla |\nabla u|\ra \psi'(|\nabla u|)\eta \d\mm
				\!+\!\int_\X \! ((\Delta u)^2\!+\!K|\nabla u|^2)\psi(|\nabla u|) \eta\d\mm.
			\end{split}
		\end{equation}
	\end{lemma}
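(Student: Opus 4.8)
The plan is to obtain \eqref{eq:starting point} by testing the improved Bochner inequality of Theorem \ref{thm:improved Bochner} against $\phi \coloneqq \psi(|\nabla u|)\,\eta$ and rearranging; since $u$ and $\Delta u$ are already globally defined on $\X$, no cut-off is needed here (in contrast with Proposition \ref{prop:bochner type}). First I would check that $\phi$ is an admissible test function: as $\Delta u \in L^2(\mm)$, Lemma \ref{lem:gradgrad} gives $|\nabla u| \in \W(\X)$, and since $\psi \in \LIP\cap L^\infty(\rr)$ the chain rule yields $\psi(|\nabla u|) \in \W_\loc\cap L^\infty(\X)$; multiplying by $\eta \in \LIP_{bs}(\X)$, $\eta\ge 0$, we obtain $\phi \in \W\cap L^\infty(\X)$, $\phi\ge 0$, with bounded support, and
\begin{equation*}
\nabla\phi = \psi'(|\nabla u|)\,\eta\,\nabla|\nabla u| + \psi(|\nabla u|)\,\nabla\eta.
\end{equation*}
The growth bound \eqref{eq:psi' bound} makes $\psi'(|\nabla u|)\,|\nabla u|$ bounded, which is precisely what renders the products occurring below — such as $\psi'(|\nabla u|)\la\nabla u,\nabla|\nabla u|\ra$ — dominated by $c\,|\nabla|\nabla u||\le c\,|\hess u|\in L^2(\mm)$, so that all the integrals in \eqref{eq:starting point} make sense.

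Next I would reduce to $u \in \test(\X)$. By Lemma \ref{lem:approximation lemma} choose $u_n \in \test(\X)$ with $u_n\to u$ in $W^{2,2}(\X)$, $\Delta u_n\to\Delta u$ in $L^2(\mm)$ and $|\nabla u_n|\to|\nabla u|$ in $\W(\X)$. For each $n$ apply the first part of Theorem \ref{thm:improved Bochner} with $\phi_n\coloneqq\psi(|\nabla u_n|)\eta$, and then integrate the curvature term by parts, expanding via the Leibniz rule (Proposition \ref{prop:leibniz rule div}),
\begin{equation*}
\int_\X\la\nabla u_n,\nabla\Delta u_n\ra\phi_n\,\d\mm = -\int_\X\div(\phi_n\nabla u_n)\,\Delta u_n\,\d\mm = -\int_\X\Big(\phi_n(\Delta u_n)^2 + \la\nabla\phi_n,\nabla u_n\ra\Delta u_n\Big)\d\mm,
\end{equation*}
so that only $\Delta u_n$ (and not its gradient) appears — the form that is stable under $\Delta u_n\to\Delta u$ in $L^2$. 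Substituting $\nabla\phi_n=\psi'(|\nabla u_n|)\eta\nabla|\nabla u_n|+\psi(|\nabla u_n|)\nabla\eta$ both here and in the left-hand side $-\frac12\int\la\nabla\phi_n,\nabla|\nabla u_n|^2\ra = -\int|\nabla u_n|\la\nabla\phi_n,\nabla|\nabla u_n|\ra$, and then collecting separately the terms carrying $\psi(|\nabla u_n|)\nabla\eta$ and those carrying $\psi'(|\nabla u_n|)\eta\nabla|\nabla u_n|$, reorganizes the inequality for $u_n$ exactly into the structure of \eqref{eq:starting point}: the first two groups become the scalar products of $\nabla u_n\Delta u_n-|\nabla u_n|\nabla|\nabla u_n|$ against $\nabla\eta$ and against $\nabla|\nabla u_n|$ respectively, and the leftover terms assemble into the curvature integral appearing as the last term of \eqref{eq:starting point}. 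This step is purely algebraic, and all integrands lie in $L^1(\mm)$ by the previous paragraph.

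Finally I would pass to the limit $n\to\infty$, arguing as in Step 3 of the proof of Proposition \ref{prop:bochner type}: after extracting a subsequence one may assume that $|\nabla u_n|$, $|\nabla|\nabla u_n||$, $|\hess u_n|$ and $\Delta u_n$ converge $\mm$-a.e.\ and are dominated by a fixed $L^2(\mm)$ function (Generalised Dominated Convergence), while Lemma \ref{lem:psi' convergence} — together with an inspection of its proof — supplies $\psi(|\nabla u_n|)\to\psi(|\nabla u|)$, $\psi'(|\nabla u_n|)\to\psi'(|\nabla u|)$ $\mm$-a.e., and $\nabla|\nabla u_n|\to\nabla|\nabla u|$ in $L^2$; combined with the uniform bounds $\|\psi\|_{L^\infty}$ and $\sup_t|\psi'(t)t|\le c$ coming from \eqref{eq:psi' bound}, every integrand is dominated by an $L^1(\mm)$ function, and dominated convergence upgrades the inequality for $u_n$ to \eqref{eq:starting point}. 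The main obstacle I anticipate is exactly this passage to the limit: one must guarantee that the composition $\psi'(|\nabla u_n|)$ converges $\mm$-a.e.\ even though $\psi'$ is only continuous away from a Lebesgue-null set (forcing a check that $|\nabla u|$ meets that set only on an $\mm$-negligible set, which is the content of Lemma \ref{lem:psi' convergence}), and that the most delicate term $\int_\X\psi'(|\nabla u_n|)\la\nabla u_n,\nabla|\nabla u_n|\ra\Delta u_n\,\eta\,\d\mm$ — a triple product of two $L^2$ functions and an $L^\infty$ function — stays equi-integrable along the approximation, which is precisely what the decay \eqref{eq:psi' bound} of $\psi'$ provides.
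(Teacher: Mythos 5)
Your strategy is the natural one and it works: approximate $u$ by $u_n\in\test(\X)$ via Lemma \ref{lem:approximation lemma}, apply the first (Han's) part of Theorem \ref{thm:improved Bochner} against $\phi_n\coloneqq\psi(|\nabla u_n|)\eta$, integrate the $\la\nabla u_n,\nabla\Delta u_n\ra$-term by parts so that only $\Delta u_n$ appears, rearrange, and pass to the limit using the Generalised Dominated Convergence Theorem together with Lemma \ref{lem:psi' convergence}. The paper itself provides no proof here (it cites \cite[Lemma~4.8]{ivan1}), but this is the same structure as the proof of Proposition \ref{prop:bochner type}, and the decay bound \eqref{eq:psi' bound} is invoked exactly where it is needed, namely to dominate the $\psi'$-weighted terms by $|\hess u_n|^2$ and $|\Delta u_n|\,|\nabla|\nabla u_n||$ uniformly in $n$.

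Two points deserve a closer look. First, you assert $\phi\ge 0$, which is what Theorem \ref{thm:improved Bochner} requires, but this needs $\psi\ge 0$, a hypothesis that does \emph{not} appear in the statement of Lemma \ref{lem:bochner type 2} as reproduced here. Without it the conclusion fails: applying \eqref{eq:starting point} to $\psi\equiv -1$ and to $\psi\equiv 1$ would force the Bochner inequality to be an equality, which it is not. Since the inequality is linear but one-sided, one cannot subtract a constant to reduce to the non-negative case, so you should make the assumption $\psi\ge 0$ explicit (it holds in every application in the paper, e.g.\ $\psi=\Psi^2|\cdot|^\beta_{M,\delta}$ in Prop.\ \ref{prop:key estimate}, and is presumably present in \cite{ivan1}). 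Second, if you actually carry the algebraic rearrangement to the end you obtain $-K\int|\nabla u|^2\psi(|\nabla u|)\eta\,\d\mm$ on the right, not $+K$ as written in \eqref{eq:starting point}; this is almost certainly a typo in the statement (harmless downstream, since only $|K|$ enters the constants), but your assertion that the leftover terms ``assemble into'' the last term of \eqref{eq:starting point} should be checked rather than taken on faith. A minor remark for accuracy: Lemma \ref{lem:psi' convergence} yields $\psi'(|\nabla u_n|)\nabla|\nabla u_n|\to\psi'(|\nabla u|)\nabla|\nabla u|$ $\mm$-a.e.\ along a subsequence, not the $\mm$-a.e.\ convergence of $\psi'(|\nabla u_n|)$ alone; this suffices because $\psi'(|\nabla u_n|)$ appears in \eqref{eq:starting point} only as a factor of $\nabla|\nabla u_n|$.
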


	\subsection{Eigenfunctions of the Laplacian}

	Consider $\Xdm$  an $\RCD(K,\infty)$ space.
	For any $\Omega \subset \X$ bounded, the embedding of $W^{1,2}_0(\Omega)$ in $L^2(\Omega)$ is compact (see \cite[Theorem 6.3-ii)]{GigliMondinoSavare13}) hence by spectral theory, the Dirichlet eigenvalues of the Laplacian form a discrete  sequence
	\[
	0= \lambda_1 \le  \lambda_2 \le \dots \le  \lambda_k \to +\infty,
	\]
	counted with multiplicity (see e.g.\ \cite[Lemma 2.8]{PFV25}). Moreover, the corresponding eigenfunctions
	\begin{equation}\label{eq:eig laplacian}
		\left\lbrace\begin{aligned}
			&\Delta \phi_k=-\lambda_k \phi_k \quad \text{in } \Omega, \\ 
			&\varphi_k \in W^{1,2}_0(\Omega), 
		\end{aligned}\right.
	\end{equation}
	with the renormalization $\|\phi_k\|_{L^2(\mm)}=1$, form an orthonormal base of $L^2(\Omega)$ and an orthogonal base of $W^{1,2}_0(\Omega)$;  
	in particular their linear span is dense in $\W_0(\Omega).$ For all $k\in \nn$ we define
	\begin{equation}\label{eq:Vk def}
		V_k := \spn \{\phi_1,\dots,\phi_k\}, 
	\end{equation}
	and we emphasise that $V_k$ depends on the choice of domain $\Omega$. An obvious but crucial observation is that $V_k$ is mapped to itself by the Laplacian operator.
	We finally observe that eigenfunctions satisfy by definition $\Delta \phi_k\in L^2(\Omega)$  and thus are ``smooth'' in the sense of  \eqref{eq:gradgrad local} (see Remark \ref{rmk:trhess}).

	\subsection{Minimization problems and Euler-Lagrange equations}\label{sec:minim}

	Here we collect existence, uniqueness and variational formulations  for the elliptic problems that we subsequently consider.

	For simplicity  throughout this section $\Xdm$ is an $\RCD(K,\infty)$ space. However all the results contained here  hold for any infinitesimally Hilbertian spaces with strong $p$-independence of weak upper gradients supporting a local Poincaré inequality (\textit{cf.}~ \cite[\S 2-3]{VIOLO2025}). 
	
	We start defining a general convex functional, for which we make the following definition. 
	
	\begin{definition}[Admissible $\Phi$]\label{def:admissible Phi}
		Let $p \in (1,\infty)$.	We say $\Phi: [0,\infty) \to \rr$ is \emph{$p$-admissible} if it is differentiable, convex, satisfies $\Phi'(0)=0$, and there exists a constant $C>0$ such that 
		\begin{equation}\label{eq:basic Phi bound}
			|\Phi(t)|\le C(1+|t|^p) \quad \text{for all $t\ge0$}.
		\end{equation}
	\end{definition}
	
	Inequality \eqref{eq:basic Phi bound} and the convexity condition imply the following derivative estimate
	\begin{equation}\label{eq:phi' bound}
		0 \leq \Phi'(t) \le 2^{p+2}C(1+|t|^{p-1}) \quad \text{for all $t\ge0$,}
	\end{equation}
	which straightforward verification is omitted  for succinctness. In particular $\Phi$ is monotone non-decreasing. For convenience of notation, we  set $\Psi(t)\coloneqq t^{-1}{\Phi'(t)}$, taken to be zero at $t=0.$
	 We omit the dependence on $\Phi$, which will be  clear from context.

	Let $\Omega \subset \X$ be open and bounded, $\Phi$ be a $p$-admissible function and $f\in L^{p'}(\Omega)$, where $p'=p/(p-1).$
	We define the functional $F_\Phi: W^{1,p}(\Omega) \to \rr$ as
	\begin{equation}\label{eq:F_Phi def}
		F_\Phi(u)\coloneqq \int_\Omega \Big( \Phi(|\nabla u|)+f u \Big) \d \mm.
	\end{equation}
	We omit the dependence {of the functional} $F_\Phi$ on $f$, which will be always clear from context.

	The next result establishes the equivalence between the minimization problem associated to the functional $F_\Phi$ of \eqref{eq:F_Phi def} and the equation \eqref{eq:intro eq}.  The proof is classical and  analogous to the one in the Euclidean setting (see \textit{e.g.}~\cite[Prop.\ 3.6]{VIOLO2025} or \cite[Theorem 2.5]{GR19}).

	\begin{prop}[Euler--Lagrange equations for $F_\Phi$]\label{prop:EL}
		Fix $p\in(1,\infty)$. Let $\Omega\subset \X$ be open and bounded  and  $\Phi$ be {$p$-}admissible in the sense of Definition \ref{def:admissible Phi}.
        
  { Then, $F_\Phi(u) \le  F_\Phi( u+\phi)$ for all $\phi \in W^{1,p}_0(\Omega)$ if and only if 
			\begin{equation}\label{eq:weak EL}
				-\int_\Omega \Psi(|\nabla u|)\la \nabla u, \nabla \phi \ra\d \mm=\int_\Omega f \phi \d \mm, \quad \forall \phi \in W^{1,p}_0(\Omega);
			\end{equation}  }
			 in which case  $\div(\Psi(|\nabla u|) \nabla u)=f.$
	\end{prop}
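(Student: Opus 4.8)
The plan is to follow the classical variational argument adapted to the metric setting, splitting into the two implications.

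\textbf{Forward direction.} Suppose $u$ minimizes $F_\Phi$ over $u + W^{1,p}_0(\Omega)$. Fix $\phi \in W^{1,p}_0(\Omega)$ and consider the scalar function $g(\eps) \coloneqq F_\Phi(u + \eps \phi)$ for $\eps \in \rr$. By convexity of $\Phi$ and the growth bound \eqref{eq:basic Phi bound}, together with the derivative bound \eqref{eq:phi' bound}, one checks that $g$ is well-defined, convex, and finite on all of $\rr$, and that $\eps = 0$ is a minimum. The key technical step is to show $g$ is differentiable at $0$ with
\[
g'(0) = \int_\Omega \Psi(|\nabla u|)\la \nabla u, \nabla \phi\ra \d\mm + \int_\Omega f\phi \d\mm.
\]
To justify differentiating under the integral sign, I would write the difference quotient $\eps^{-1}(\Phi(|\nabla u + \eps \nabla \phi|) - \Phi(|\nabla u|))$, observe that $t \mapsto |\nabla u + t\nabla \phi|$ is Lipschitz in $t$ (pointwise $\mm$-a.e., from the Hilbert module structure of $L^0(T\X)$) with derivative $\la \nabla u + t\nabla\phi, \nabla\phi\ra / |\nabla u + t\nabla\phi|$ where the denominator is nonzero, and use the chain rule for $\Phi$ together with the elementary identity $\Phi'(|v|)\,v/|v| = \Psi(|v|)\,v$ (consistent with the convention that this is $0$ when $v = 0$). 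The difference quotient is then dominated uniformly in $\eps \in (-1,1)$ by an $L^1(\mm)$ function using $0 \le \Phi'(t) \le 2^{p+2}C(1+t^{p-1})$, Hölder's inequality, and $f \in L^{p'}(\Omega)$, $|\nabla\phi| \in L^p(\Omega)$; dominated convergence yields the formula for $g'(0)$. Since $g$ has a minimum at $0$, $g'(0) = 0$, which is exactly \eqref{eq:weak EL}.

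\textbf{Reverse direction.} Suppose \eqref{eq:weak EL} holds. For $\phi \in W^{1,p}_0(\Omega)$, convexity of $\Phi$ gives the pointwise inequality $\Phi(|\nabla(u+\phi)|) \ge \Phi(|\nabla u|) + \Phi'(|\nabla u|)\,(|\nabla(u+\phi)| - |\nabla u|)$ $\mm$-a.e., and by Cauchy--Schwarz in the module, $|\nabla(u+\phi)| \ge \la \nabla u, \nabla(u+\phi)\ra/|\nabla u| = |\nabla u| + \la \nabla u, \nabla\phi\ra/|\nabla u|$ wherever $|\nabla u| \ne 0$; on the set $\{|\nabla u| = 0\}$ the term $\Phi'(|\nabla u|) = 0$ so it contributes nothing. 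Combining, $\Phi(|\nabla(u+\phi)|) \ge \Phi(|\nabla u|) + \Psi(|\nabla u|)\la \nabla u, \nabla\phi\ra$ $\mm$-a.e. Integrating over $\Omega$, adding $\int_\Omega f(u+\phi)\d\mm$ to both sides, and invoking \eqref{eq:weak EL} to cancel the cross term gives $F_\Phi(u+\phi) \ge F_\Phi(u)$, as desired.

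\textbf{Last assertion.} Finally, \eqref{eq:weak EL} states precisely that $v \coloneqq \Psi(|\nabla u|)\nabla u \in L^{p'}(T\X)\restr{\Omega}$ (note $|v| = \Psi(|\nabla u|)|\nabla u| = \Phi'(|\nabla u|) \le 2^{p+2}C(1+|\nabla u|^{p-1}) \in L^{p'}(\Omega)$) satisfies $-\int_\Omega \la v, \nabla\phi\ra \d\mm = \int_\Omega f\phi\d\mm$ for all $\phi$ in a class that, by Remark \ref{rmk:sobolev test} and density of $\LIP_c(\Omega)$ in $W^{1,p}_0(\Omega)$, contains $\LIP_c(\Omega)$; hence $v \in \dom(\div)$ with $\div(v) = f$ by Definition \ref{def:div}. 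The main obstacle is the careful justification of passing the $\eps$-derivative inside the integral in the forward direction — in particular handling the set where $|\nabla u|$ vanishes and producing a uniform $L^1$ dominating function — but this is routine given the module calculus and the growth bounds already recorded.
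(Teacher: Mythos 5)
Your proposal is correct and follows precisely the classical variational argument that the paper invokes (first variation via $g(\eps) = F_\Phi(u+\eps\phi)$ with dominated convergence for the forward implication, pointwise convexity plus Cauchy--Schwarz for the reverse, and Definition \ref{def:div} for the divergence identity). The paper does not write the proof out, but this is the argument it refers to when it calls the proof "classical and analogous to the Euclidean setting," so there is nothing substantively different to compare.
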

	The left-hand side of \eqref{eq:weak EL} is well-defined because $\Psi(|\nabla u|)|\nabla u|\in L^{p'}(\Omega)$ by \eqref{eq:phi' bound}.
    
    The following is a well-known  semicontinuity result which is obtained using  Mazur's Lemma. 
	
	\begin{lemma}[$W^{1,p}$ weak lower semicontinuity]\label{lem:lsc energy}
		Let  $\Phi:[0,\infty)\to [0,\infty)$ be a monotone non-decreasing convex function satisfying  $|\Phi(t)|\le C(1+|t|^p)$ for all $t\ge 0$.
		Suppose also that the sequence $\{f_n\}_n$ converges weakly in $W^{1,p}(\Omega)$ to some $f$. Then, 
		\begin{equation}\label{eq:lsc energy}
			\int_\Omega \Phi(|\nabla f|)\, \d\mm\le \liminf_n \int_\Omega  \Phi(|\nabla f_n|)\,\d \mm.
		\end{equation}
		If the convergence is strong in $W^{1,p}(\Omega)$ then equality holds 
		and we replace $\liminf_n$ with $\lim_n$.
	\end{lemma}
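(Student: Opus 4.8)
The plan is to prove Lemma~\ref{lem:lsc energy} by the standard convexity-plus-Mazur argument, reducing weak lower semicontinuity to strong $L^p$-convergence along a sequence of convex combinations. First I would record the elementary consequence of convexity of $\Phi$: since $\Phi$ is convex and monotone non-decreasing on $[0,\infty)$, for any nonnegative reals $a,b$ one has $\Phi(a)\ge \Phi(b)+\Phi'(b)(a-b)$ wherever $\Phi$ is differentiable; more robustly, convexity of $t\mapsto\Phi(t)$ together with monotonicity gives that $v\mapsto\int_\Omega\Phi(|v|)\,\d\mm$ is a convex functional on $L^p(T\X)\restr\Omega$ (here $|\cdot|$ is the pointwise norm on the module). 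The growth bound $|\Phi(t)|\le C(1+|t|^p)$ guarantees this functional is finite on $L^p$, and lower semicontinuity with respect to \emph{strong} $L^p$-convergence of the gradient follows from Fatou's lemma after passing to an $\mm$-a.e.\ convergent subsequence (continuity of $\Phi$ and the pointwise norm).

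Next I would invoke Mazur's Lemma. Since $f_n\weakto f$ in $W^{1,p}(\Omega)$, the gradients $\nabla f_n\weakto\nabla f$ weakly in $L^p(T\X)\restr\Omega$ (the gradient operator is linear and bounded, hence weak-weak continuous). Mazur's Lemma produces, for each $n$, a finite convex combination $g_n\coloneqq\sum_{j\ge n}\alpha_j^{(n)} f_j$ with $\alpha_j^{(n)}\ge 0$, $\sum_j\alpha_j^{(n)}=1$, such that $g_n\to f$ strongly in $W^{1,p}(\Omega)$, and in particular $\nabla g_n\to\nabla f$ strongly in $L^p$. By the convexity of the gradient functional and the triangle inequality for the pointwise norm, $\Phi(|\nabla g_n|)=\Phi\bigl(|\sum_j\alpha_j^{(n)}\nabla f_j|\bigr)\le\Phi\bigl(\sum_j\alpha_j^{(n)}|\nabla f_j|\bigr)\le\sum_j\alpha_j^{(n)}\Phi(|\nabla f_j|)$ pointwise $\mm$-a.e., using monotonicity for the first inequality and Jensen/convexity for the second. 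Integrating,
\begin{equation*}
\int_\Omega\Phi(|\nabla g_n|)\,\d\mm\le\sum_{j\ge n}\alpha_j^{(n)}\int_\Omega\Phi(|\nabla f_j|)\,\d\mm\le\sup_{j\ge n}\int_\Omega\Phi(|\nabla f_j|)\,\d\mm.
\end{equation*}
Letting $n\to\infty$, the left-hand side converges to $\int_\Omega\Phi(|\nabla f|)\,\d\mm$ by the strong-$L^p$ lower semicontinuity established in the first step (in fact by strong continuity along a subsequence), while the right-hand side tends to $\liminf_n\int_\Omega\Phi(|\nabla f_n|)\,\d\mm$; this yields \eqref{eq:lsc energy}.

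Finally, for the strong-convergence case: if $f_n\to f$ strongly in $W^{1,p}(\Omega)$, then $|\nabla f_n|\to|\nabla f|$ in $L^p(\Omega)$, so up to a subsequence $|\nabla f_n|\to|\nabla f|$ $\mm$-a.e.\ and dominated by an $L^p$ function; continuity of $\Phi$ and the growth bound \eqref{eq:basic Phi bound} let us apply the Generalised Dominated Convergence Theorem to conclude $\int_\Omega\Phi(|\nabla f_n|)\,\d\mm\to\int_\Omega\Phi(|\nabla f|)\,\d\mm$, and the usual subsequence argument upgrades this to convergence of the full sequence. I expect no serious obstacle here; the only point requiring a little care is the pointwise inequality $\Phi(|\sum_j\alpha_j v_j|)\le\sum_j\alpha_j\Phi(|v_j|)$ in the module setting, which combines the pointwise triangle inequality $|\sum_j\alpha_j v_j|\le\sum_j\alpha_j|v_j|$ with the monotonicity and convexity of $\Phi$, and the verification that the strong-$L^p$ lower semicontinuity of $v\mapsto\int\Phi(|v|)$ holds without any further hypotheses beyond continuity and the growth bound — both of which are routine.
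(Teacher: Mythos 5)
Your approach via Mazur's Lemma is the one the paper itself alludes to (``obtained using Mazur's Lemma''), and the architecture of your argument is right: reduce to strong $L^p$-convergence of gradients through convex combinations, then use the pointwise triangle inequality of the module norm together with monotonicity and convexity of $\Phi$. The module-theoretic steps ($|\sum_j\alpha_j\nabla f_j|\le\sum_j\alpha_j|\nabla f_j|$ $\mm$-a.e., then $\Phi$ non-decreasing, then Jensen) are all valid, and the strong-convergence half of the lemma is handled correctly by the Generalised Dominated Convergence Theorem and the subsequence principle.

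There is, however, a genuine error at the final limit passage. You bound
\[
\int_\Omega\Phi(|\nabla g_n|)\,\d\mm\;\le\;\sup_{j\ge n}\int_\Omega\Phi(|\nabla f_j|)\,\d\mm,
\]
and then assert that the right-hand side tends to $\liminf_n\int_\Omega\Phi(|\nabla f_n|)\,\d\mm$. It does not: the sequence $n\mapsto\sup_{j\ge n}\int_\Omega\Phi(|\nabla f_j|)\,\d\mm$ is non-increasing, and by definition its limit is the \emph{limsup}, not the liminf. As written, your argument only delivers $\int_\Omega\Phi(|\nabla f|)\,\d\mm\le\limsup_n\int_\Omega\Phi(|\nabla f_n|)\,\d\mm$, which is strictly weaker than \eqref{eq:lsc energy} (for a weakly convergent sequence the energies need not converge, so limsup and liminf can genuinely differ). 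The standard fix is to first pass to a subsequence $\{f_{n_k}\}$ along which $\int_\Omega\Phi(|\nabla f_{n_k}|)\,\d\mm$ converges to the liminf; this subsequence still converges weakly to $f$ in $W^{1,p}(\Omega)$, and running your Mazur argument on it (the convex combinations now drawn from $\{f_{n_j}\}_{j\ge k}$) yields the sup over a tail whose limit is precisely $\liminf_n\int_\Omega\Phi(|\nabla f_n|)\,\d\mm$. With that one-line preliminary reduction inserted, the rest of your proof goes through.
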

	
	The following existence result is a standard application of the direct method of the calculus of variations; we omit its proof for brevity.
	\begin{prop}[Existence and uniqueness of minimizers of $F_\Phi$]\label{prop:existence variational}
		Let $p\in(1,\infty).$ Suppose  that $\Omega\subset \X$ is bounded open and $\mm(\X\setminus\Omega)>0$. Let $g \in W^{1,p}(\Omega)$ and $\Phi$ be $p$-admissible $($\textit{cf.}~Definition \ref{def:admissible Phi}$)$ and satisfying, for some given constant $c>0$, 
		\begin{equation}\label{eq:coerciveness}
			\Phi(t)\ge c(|t|^p-1) \quad \forall t \in [0,\infty).
		\end{equation}
	Then, there exists a minimizer $u \in g+W^{1,p}_0(\Omega)$ which achieves {$F_\Phi(u) = \inf_{v\in g+W^{1,p}_0(\Omega)} F_\Phi(v).$} Moreover, if $\Phi$ is strictly convex then the minimizer is unique.
	\end{prop}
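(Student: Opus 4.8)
The plan is to prove Proposition~\ref{prop:existence variational} by the direct method of the calculus of variations. First I would check that the infimum $I\coloneqq \inf_{v\in g+W^{1,p}_0(\Omega)} F_\Phi(v)$ is finite: it is $>-\infty$ because, using the coercivity \eqref{eq:coerciveness}, H\"older's inequality on $\int_\Omega f v\,\d\mm$ with $f\in L^{p'}(\Omega)$, and the Poincar\'e inequality of Proposition~\ref{prop:poincare} applied to $v-g\in W^{1,p}_0(\Omega)$ (which is available since $\mm(\X\setminus\Omega)>0$), one gets
\begin{equation*}
F_\Phi(v)\ge c\big(\||\nabla v|\|_{L^p(\Omega)}^p-\mm(\Omega)\big)-\|f\|_{L^{p'}(\Omega)}\|v\|_{L^p(\Omega)}
\ge \tfrac{c}{2}\||\nabla v|\|_{L^p(\Omega)}^p - C_1\|v-g\|_{L^p(\Omega)}^{\,0}\cdots
\end{equation*}
more precisely, bounding $\|v\|_{L^p(\Omega)}\le \|v-g\|_{L^p(\Omega)}+\|g\|_{L^p(\Omega)}\le C_{p,\Omega}^{1/p}\||\nabla(v-g)|\|_{L^p(\Omega)}+\|g\|_{L^p(\Omega)}$ and $\||\nabla(v-g)|\|_{L^p}\le \||\nabla v|\|_{L^p}+\||\nabla g|\|_{L^p}$, then absorbing the gradient term into $c\||\nabla v|\|_{L^p}^p$ via Young's inequality, shows $F_\Phi(v)\ge \tfrac{c}{4}\||\nabla v|\|_{L^p(\Omega)}^p - C_2$ for a constant $C_2$ depending on $c,p,\Omega,f,g$. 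In particular $I>-\infty$, and $I<\infty$ since $F_\Phi(g)<\infty$ by \eqref{eq:basic Phi bound} and $g\in W^{1,p}(\Omega)$, $fg\in L^1(\Omega)$.

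Next I would take a minimizing sequence $u_n\in g+W^{1,p}_0(\Omega)$ with $F_\Phi(u_n)\to I$. The lower bound just derived gives $\sup_n\||\nabla u_n|\|_{L^p(\Omega)}<\infty$, and then the Poincar\'e inequality applied to $u_n-g$ gives $\sup_n\|u_n\|_{L^p(\Omega)}<\infty$, so $\{u_n\}$ is bounded in $W^{1,p}(\Omega)$. Since $W^{1,p}(\X)$ is reflexive (recalled in the preliminaries) and $u_n-g\in W^{1,p}_0(\Omega)$ with $W^{1,p}_0(\Omega)$ a closed subspace of $W^{1,p}(\X)$, hence weakly closed, up to a subsequence $u_n\rightharpoonup u$ weakly in $W^{1,p}$ with $u-g\in W^{1,p}_0(\Omega)$, i.e.\ $u\in g+W^{1,p}_0(\Omega)$. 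The convex term $\int_\Omega\Phi(|\nabla u_n|)\,\d\mm$ is weakly lower semicontinuous by Lemma~\ref{lem:lsc energy} (note $\Phi\ge0$, monotone non-decreasing, convex, with $p$-growth—here I use that we may assume $\Phi\ge0$, or otherwise bound $\Phi$ below by an affine function and treat the linear correction together with the next term). The linear term $\int_\Omega f u_n\,\d\mm$ is weakly continuous because weak $W^{1,p}(\Omega)$ convergence implies weak $L^p(\Omega)$ convergence and $f\in L^{p'}(\Omega)$ is a bounded linear functional on $L^p(\Omega)$. Adding these, $F_\Phi(u)\le \liminf_n F_\Phi(u_n)=I$, so $F_\Phi(u)=I$ and $u$ is a minimizer.

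For uniqueness, suppose $\Phi$ is strictly convex and let $u_1,u_2\in g+W^{1,p}_0(\Omega)$ both achieve the minimum. Set $u=(u_1+u_2)/2\in g+W^{1,p}_0(\Omega)$. The map $v\mapsto\int_\Omega fv\,\d\mm$ is affine, and $v\mapsto\int_\Omega\Phi(|\nabla v|)\,\d\mm$ is convex by convexity and monotonicity of $\Phi$ together with convexity of $|\nabla\cdot|$; hence $F_\Phi(u)\le\tfrac12F_\Phi(u_1)+\tfrac12F_\Phi(u_2)=I$, forcing equality in the convexity of the $\Phi$-term, hence $\Phi\big(\tfrac{|\nabla u_1|+|\nabla u_2|}{2}\big)=\tfrac12\Phi(|\nabla u_1|)+\tfrac12\Phi(|\nabla u_2|)$ pointwise $\mm$-a.e.\ together with $\big|\tfrac{\nabla u_1+\nabla u_2}{2}\big|=\tfrac{|\nabla u_1|+|\nabla u_2|}{2}$; strict convexity of $\Phi$ then gives $|\nabla u_1|=|\nabla u_2|$ $\mm$-a.e., and the equality case of the triangle inequality in the Hilbert module $L^0(T\X)$ gives $\nabla u_1=\nabla u_2$ $\mm$-a.e., i.e.\ $|\nabla(u_1-u_2)|=0$. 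Since $u_1-u_2\in W^{1,p}_0(\Omega)$, the Poincar\'e inequality \eqref{eq:poincare} forces $u_1=u_2$ $\mm$-a.e. The only genuinely delicate point is the lower semicontinuity/coercivity bookkeeping when $\Phi$ is not assumed nonnegative; this is handled by writing $\Phi(t)=\big(\Phi(t)-\Phi'(0)t\big)+\Phi'(0)t=\Phi(t)+0$ (since $\Phi'(0)=0$), so convexity and $\Phi'(0)=0$ already give $\Phi(t)\ge\Phi(0)$, a harmless additive constant; thus no real obstacle remains.
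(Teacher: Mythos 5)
Your proof is the standard direct-method argument that the paper explicitly declines to spell out (``a standard application of the direct method of the calculus of variations; we omit its proof for brevity''), and it is correct. The only step worth tightening is in the uniqueness part: to pass from equality in the pointwise chain $\Phi(|\nabla u|)\le\Phi\big(\tfrac{|\nabla u_1|+|\nabla u_2|}{2}\big)\le\tfrac12\Phi(|\nabla u_1|)+\tfrac12\Phi(|\nabla u_2|)$ to the \emph{norm} equality $|\nabla u|=\tfrac{|\nabla u_1|+|\nabla u_2|}{2}$ you also need $\Phi$ to be strictly increasing on $(0,\infty)$, which indeed follows from strict convexity together with $\Phi'(0)=0$; once this and $|\nabla u_1|=|\nabla u_2|$ are known, the equality case of Cauchy--Schwarz in the Hilbert module yields $\nabla u_1=\nabla u_2$ as you say.
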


	We provide the connection between assumptions \eqref{eq:psi' condition} and \eqref{eq:psi growth} for $\Psi$ and $p$-admissible $\Phi$ . 
	
	\begin{lemma}\label{lem:psi and phi}
		Let $\Psi \in {\sf AC}_\loc(0,\infty)$ be positive satisfying both \eqref{eq:psi' condition}  and the $p$-growth condition \eqref{eq:psi growth} for $p \in(1,\infty).$ Define $\Phi(t)\coloneqq \int_0^t s\Psi(s)\d s.$ Then $\Phi$ is strictly convex, $p$-admissible in the sense of Definition \ref{def:admissible Phi}  and satisfies the coercivity condition \eqref{eq:coerciveness}.
	\end{lemma}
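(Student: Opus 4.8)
The plan is to verify, one at a time, the three claimed properties of $\Phi(t)\coloneqq\int_0^t s\Psi(s)\,\d s$: strict convexity, $p$-admissibility in the sense of Definition \ref{def:admissible Phi}, and the coercivity \eqref{eq:coerciveness}. The only point requiring care is the behaviour of $\Psi$ near the origin, on which both the well-posedness of $\Phi$ and the requirement $\Phi'(0)=0$ hinge; everything else is elementary once this is in hand.

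\textbf{Behaviour near zero.} Since $\Psi\in\AC_\loc(0,\infty)$ is positive it is continuous and locally bounded away from $0$, so $\log\Psi\in\AC_\loc(0,\infty)$ and the left inequality in \eqref{eq:psi' condition} reads $(\log\Psi)'(s)\ge\lambda/s$ for a.e.\ $s>0$. Integrating this over $[t,1]$ for $t\in(0,1)$ yields $\Psi(t)\le\Psi(1)\,t^{\lambda}$. Because $\lambda>-1$, it follows that $s\Psi(s)\le\Psi(1)\,s^{1+\lambda}\to0$ as $s\to0^+$ and that $s^{1+\lambda}$ is integrable near $0$; hence $\Phi$ is well-defined, finite and continuous on $[0,\infty)$, it is $C^1$ on $(0,\infty)$ with $\Phi'(t)=t\Psi(t)$ (by continuity of $s\Psi(s)$ there), and
\[
0\ \le\ \Phi'(0)=\lim_{t\to0^+}\frac1t\int_0^t s\Psi(s)\,\d s\ \le\ \lim_{t\to0^+}\frac{\Psi(1)}{2+\lambda}\,t^{1+\lambda}=0,
\]
using $\Phi'\ge0$ for the lower bound; thus $\Phi'(0)=0$, and $\Phi'$ extends continuously to $0$. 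I expect this to be the only delicate step, essentially a careful logarithmic integration together with the endpoint estimate.

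\textbf{Convexity, growth, coercivity.} For strict convexity I would note that on $(0,\infty)$ the function $\Phi'(t)=t\Psi(t)$ is $\AC_\loc$ with a.e.\ derivative $\Psi(t)+t\Psi'(t)=\Psi(t)\bigl(1+\tfrac{t\Psi'(t)}{\Psi(t)}\bigr)\ge(1+\lambda)\Psi(t)>0$ by \eqref{eq:psi' condition} and $\Psi>0$; hence $\Phi'$ is strictly increasing on $[0,\infty)$, i.e.\ $\Phi$ is strictly convex. For the remaining bounds I would invoke the $p$-growth condition \eqref{eq:psi growth} in the form $\nu^{-1}s^{p-1}\le s\Psi(s)\le\nu s^{p-1}$ for $s\ge1$: for $t\ge1$, writing $\Phi(t)=\Phi(1)+\int_1^t s\Psi(s)\,\d s$ gives $\tfrac1{\nu p}(t^p-1)\le\Phi(t)-\Phi(1)\le\tfrac{\nu}{p}(t^p-1)$, while for $0\le t\le1$ the monotonicity of $\Phi$ together with $\Phi\ge0$ trivially gives $0\le\Phi(t)\le\Phi(1)$ and $\Phi(t)\ge0\ge\tfrac1{\nu p}(t^p-1)$. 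Combining, $0\le\Phi(t)\le C(1+t^p)$, which is \eqref{eq:basic Phi bound}, so together with the previous step $\Phi$ is $p$-admissible; and $\Phi(t)\ge\tfrac1{\nu p}(t^p-1)$, which is \eqref{eq:coerciveness} with $c=\tfrac1{\nu p}$. No obstacle beyond the near-zero analysis is anticipated.
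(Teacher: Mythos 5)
Your proof is correct and follows essentially the same approach as the paper: the near-zero bound $\Psi(t)\le\Psi(1)t^{\lambda}$ you derive by integrating $(\log\Psi)'$ is exactly part i) of the paper's Lemma~\ref{lem:psi basic} (which the paper cites rather than re-proves), and the strict-convexity argument via $(t\Psi(t))'\ge(1+\lambda)\Psi(t)>0$ is identical. The only difference is that you write out the $p$-growth and coercivity verifications from \eqref{eq:psi growth} explicitly, whereas the paper leaves those routine estimates implicit.
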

	\begin{proof}
		{By Lemma \ref{lem:psi basic}-i)}, $s\Psi(s)\to 0$ as $s\to 0^+$ and so it is bounded near zero, in particular $\Phi$ is well defined, differentiable in $[0,\infty)$ and $\Phi'(0)=0.$
		Moreover \eqref{eq:psi' condition} implies that $(t\Psi(t))'\ge (1+\lambda)\Psi(t)>0$, which shows that $t\Psi(t)$ is strictly increasing and so $\Phi$ is strictly convex.  
	\end{proof}

	\section{A priori Estimates}
	In this section we derive  \textit{a-priori}, \textit{uniform} second-order and gradient estimates for local solutions of $\div(\Psi(|\nabla u|)\nabla u)=f$. The term ``a-priori'' means that we assume that $u$ satisfies $\Delta u \in L^2(\mm)$. By ``uniform'' instead we mean that the estimates will depend  on   
	$\Psi$ only via the parameters appearing in the ellipticity and growth conditions \eqref{eq:psi' condition}, \eqref{eq:above and below} and \eqref{eq:psi growth}.
	
	In the sequel we shall frequently employ the following approximations of the gradient: 
	\begin{equation}\label{eq:cutoff approx of gradient sec 7}
		|\nabla u|_{M,\delta} := \sqrt{(|\nabla u| \wedge M)^2 + \delta}, \qquad |\nabla u|_\delta\coloneqq \sqrt{|\nabla u|^2+\delta},  
	\end{equation}
    {where $M>0,\delta>0$ are arbitrary constants.  }

	\subsection{$L^2$-Laplacian uniform estimate}

	The following proposition will be used  to prove Theorem \ref{thm:main quasilinear}, and in particular to obtain the desired local estimate on the Laplacian, which only involves the source term $f$. We emphasise that the result of Prop.\  \ref{prop:a priori quasilinear lapl estimates} does not rely on $u$ being the weak solution of a particular equation: it is true for a general $u \in W^{1,2}(\X)$ with $\Delta u \in L^2(\mm)$, with $\Psi$ as prescribed in the statement. 
	\begin{prop}[\textit{A priori} Laplacian estimates]\label{prop:a priori quasilinear lapl estimates}
		Let $\Xdm$ be an $\RCD(K,\infty)$ space and let $u\in\W(\X)$ with $\Delta u\in L^2 (\mm)$. Let $\Psi\in \LIP([0,\infty))$ be such that $\Psi'$ continuous up to a negligible set of points and  $\Psi'(t)=0$ for all $t\ge M$ for some constant $M>0$. {Suppose also that $\Psi$ satisfies the first in \eqref{eq:psi' condition}  with $\lambda\le 0 $} and $ c\leq \Psi\leq  c^{-1}$ for some $c\in(0,1)$. 
        
        Then for all $\phi \in \LIP_c(\X)$ it holds
		\begin{equation}\label{eq:quasilinear a-priori 6.3}
			\int\phi^2 (\Delta u)^2\d \mm \le C \int [\div(\Psi(|\nabla u|)\nabla u)]^2\phi^2+ |\nabla u|^2(\phi^2+|\nabla \phi|^2)  \d \mm,
		\end{equation}
		where $C>0$ is a constant depending only on  $c$, $\lambda$ and $K$.
	\end{prop}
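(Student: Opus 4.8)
\emph{The plan.} I would first reduce to the case $u\in\test(\X)$, then run a Bochner-type argument with a test function built out of $\Psi(|\nabla u|)$, and finally absorb terms using the ellipticity condition; the delicate point will be the very last step.

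\emph{Step 1: reduction to $\test(\X)$.} By Lemma~\ref{lem:approximation lemma} there are $u_n\in\test(\X)$ with $\Delta u_n\to\Delta u$ in $L^2(\mm)$, $u_n\to u$ in $W^{2,2}(\X)$ and $|\nabla u_n|\to|\nabla u|$ in $\W(\X)$. By the Leibniz rule (Proposition~\ref{prop:leibniz rule div}) and the chain rule, $\div(\Psi(|\nabla u_n|)\nabla u_n)=\Psi(|\nabla u_n|)\Delta u_n+\Psi'(|\nabla u_n|)\langle\nabla|\nabla u_n|,\nabla u_n\rangle$, and since $\Psi'(t)=0$ for $t\ge M$ the second summand is supported on $\{|\nabla u_n|<M\}$, hence dominated by $M\,\Lip(\Psi)\,|\nabla|\nabla u_n||$; together with $c\le\Psi\le c^{-1}$ and the Generalised Dominated Convergence Theorem this gives (along a subsequence) $\div(\Psi(|\nabla u_n|)\nabla u_n)\to\div(\Psi(|\nabla u|)\nabla u)$ in $L^2_{\sf loc}(\X)$. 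Since the remaining terms in \eqref{eq:quasilinear a-priori 6.3} pass to the limit trivially, it suffices to prove the estimate for $u\in\test(\X)$ with a constant \emph{independent of $M$}.

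\emph{Step 2: the Bochner argument and the absorption.} Let $u\in\test(\X)$, write $w:=|\nabla u|$ and $F:=\div(\Psi(w)\nabla u)=\Psi(w)\Delta u+\langle\nabla\Psi(w),\nabla u\rangle$. I would apply the Bochner-type inequality (Theorem~\ref{thm:improved Bochner}, or Proposition~\ref{prop:bochner type}) with the nonnegative test function $\Psi(w)\phi^2\in\W\cap L^\infty(\X)$, integrate by parts the term containing $\langle\nabla u,\nabla\Delta u\rangle$, and insert the equation by testing $F=\div(\Psi(w)\nabla u)$ against $\phi^2\Delta u$ (licit, after a truncation of $\Delta u$ if necessary, by Remark~\ref{rmk:sobolev test}). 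This leads to an inequality whose leading left-hand term is $\int\Psi(w)\phi^2|{\rm Hess}(u)|^2\d\mm$ and whose right-hand side contains $-\int\phi^2\,w\Psi'(w)\,|\nabla w|^2\d\mm$ together with terms built from $F$, $\nabla\phi$, $K$, and from $\int\Psi(w)\phi^2(\Delta u)^2\d\mm$. The key point is that the first inequality in \eqref{eq:psi' condition} with $\lambda\le 0$ yields the pointwise bound $-w\Psi'(w)\le|\lambda|\,\Psi(w)$ with $|\lambda|<1$; since $|\nabla w|\le|{\rm Hess}(u)|$ by Lemma~\ref{lem:gradgrad}, the offending term $-\int\phi^2 w\Psi'(w)|\nabla w|^2$ is absorbed into $\int\Psi(w)\phi^2|{\rm Hess}(u)|^2$, leaving the favourable factor $1-|\lambda|=1+\lambda>0$ in front --- this is precisely where $\lambda>-1$ is indispensable. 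The bounds $c\le\Psi\le c^{-1}$ are used throughout to trade weights for factors $c^{\pm1}$, and every Young-inequality parameter is fixed in terms of $c,\lambda,K$ only.

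\emph{Step 3: closing the estimate --- the main difficulty.} The Bochner step controls $\int\Psi(w)\phi^2|{\rm Hess}(u)|^2$ but leaves the Laplacian term $\int\Psi(w)\phi^2(\Delta u)^2$ on the right, while the equation gives only the pointwise bound $\Psi(w)^2(\Delta u)^2\le 2F^2+2\langle\nabla\Psi(w),\nabla u\rangle^2$, reducing everything to controlling $\int\phi^2\langle\nabla\Psi(w),\nabla u\rangle^2$. Here is the crux, and the step I expect to be the main obstacle: in $\RCD(K,\infty)$ the Laplacian and the Hessian are \emph{not} comparable (Remark~\ref{rmk:trhess}), so the cross term $\langle\nabla\Psi(w),\nabla u\rangle\Delta u$ and the quantity $\langle\nabla\Psi(w),\nabla u\rangle^2$ must not be dominated naively (doing so produces the uncontrollable weight $(w\Psi'(w)/\Psi(w))^2$, which the hypotheses do not bound from above). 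Instead I would repeatedly exploit the identity $\langle\nabla\Psi(w),\nabla u\rangle=F-\Psi(w)\Delta u$ and further integrations by parts against $\phi^2$-weighted vector fields, so that after all cancellations the only surviving quantities are $\int\Psi(w)\phi^2|{\rm Hess}(u)|^2$, $\int\Psi(w)\phi^2(\Delta u)^2$, $\int\phi^2 F^2$ and the lower-order integrals $\int w^2(\phi^2+|\nabla\phi|^2)$. One is then left with a pair of coupled inequalities for $\int\Psi(w)\phi^2(\Delta u)^2$ and $\int\Psi(w)\phi^2|{\rm Hess}(u)|^2$ whose coefficients --- again by virtue of $\lambda>-1$ --- can be arranged to close; absorbing and using $\int\phi^2(\Delta u)^2\le c^{-1}\int\Psi(w)\phi^2(\Delta u)^2$ then gives \eqref{eq:quasilinear a-priori 6.3} with $C=C(c,\lambda,K)$.
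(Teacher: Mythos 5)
Your Steps 1 and 2 are essentially sound and match the paper's outline: approximate by $\test(\X)$, test the divergence against $\phi^2\Delta u$, invoke Bochner with a $\Psi$-weighted cutoff, and use $t\Psi'(t)\ge\lambda\Psi(t)$ plus $|\nabla|\nabla u||\le|\hess u|$ to handle the term $\phi^2 w\Psi'(w)|\nabla w|^2$. The genuine gap is in Step 3, and it is not merely a ``main difficulty'' you can hope to maneuver around --- it is structural. Applying Bochner with the weight $\Psi(w)\phi^2$ and integrating by parts the term $\langle\nabla u,\nabla\Delta u\rangle$ always produces, on the favourable side, a Hessian integral (with your coefficient $1+\lambda$) bounded by quantities including $\int\phi^2 F\Delta u$. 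After Young this is an estimate of the form $\int\Psi\phi^2|\hess u|^2\lesssim\int\phi^2(\Delta u)^2+\dots$, which is the \emph{Calder\'on--Zygmund direction} (Corollary~\ref{cor:hessian laplacian estimate}) and gives no control of $(\Delta u)^2$; and $|\hess u|^2$ does not control $(\Delta u)^2$ in $\RCD(K,\infty)$. Your hope that ``repeated integrations by parts'' and ``a pair of coupled inequalities'' will close is not substantiated, and as you yourself observe, the naive pointwise bound $\Psi^2(\Delta u)^2\lesssim F^2+\langle\nabla\Psi(w),\nabla u\rangle^2$ is useless because $w\Psi'(w)/\Psi(w)$ has no \emph{upper} bound under the hypotheses. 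Nothing in your scheme makes $\int\phi^2(\Delta u)^2$ appear with a positive coefficient on the left.

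The paper's resolution is a decomposition of $\Psi$ that your argument is missing: set $\tilde c:=(1+\lambda)c>0$ and $\tilde\Psi:=\Psi-\tilde c\ge 0$ (this uses $\lambda\le 0$ and $\Psi\ge c$). Then one computes
\begin{equation}
t\Psi'(t)\ \ge\ \lambda\Psi\ =\ (1+\lambda)\Psi-\Psi\ \ge\ c(1+\lambda)-\Psi\ =\ -\tilde\Psi,
\end{equation}
so the differential inequality transfers to $\tilde\Psi$ in a form that makes the Hessian terms in Bochner cancel \emph{exactly} when the test function is $\phi^2\tilde\Psi$ --- rather than merely surviving with a factor $1+\lambda$. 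Crucially, the remaining \emph{constant} piece $\tilde c$ is what generates the Laplacian on the left: from $-\tilde c\int\phi^2\langle\nabla u,\nabla\Delta u\rangle\d\mm=\tilde c\int\phi^2(\Delta u)^2\d\mm+2\tilde c\int\phi\,\Delta u\,\langle\nabla\phi,\nabla u\rangle\d\mm$, the desired $\tilde c\int\phi^2(\Delta u)^2\d\mm$ appears directly, because $\div(\tilde c\,\phi^2\nabla u)=\tilde c\,\phi^2\Delta u+\tilde c\langle\nabla\phi^2,\nabla u\rangle$ has a constant in front of $\Delta u$. After Young and a final absorption (which does use Corollary~\ref{cor:hessian laplacian estimate}, but only of the error terms), this yields \eqref{eq:quasilinear a-priori 6.3} with $C=C(c,\lambda,K)$. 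Without splitting off this constant part of $\Psi$, your argument cannot reach the stated estimate.
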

	Recall from Lemma \ref{lem:gradgrad} that under the above assumptions $|\nabla u|\in \W(\X)$ and so the divergence operator appearing in \eqref{eq:quasilinear a-priori 6.3} is well defined (recall Prop.\  \ref{prop:leibniz rule div}).

	We note in passing that our choice of assumptions regarding the continuity of $\Psi'$ is motivated by the fact that we will  apply Prop.\  \ref{prop:a priori quasilinear lapl estimates} to the  {truncation $\Psi(t\wedge M)$}. We emphasise that the final estimate is independent of $M$.

	\begin{proof}

		\noindent 1. \textit{We first assume that $u \in \test(\X)$}:  Let $\phi \in \LIP_c(\X)$ be fixed as per the statement.  Since $|\nabla u|\in \W(\X)$, 
		by the Leibniz rule for the divergence in Prop.\  \ref{prop:leibniz rule div} we have
		\[
		\div(\Psi(|\nabla u|)\nabla u)=\Delta u \Psi(|\nabla u|)+\la \nabla \Psi(|\nabla u|) , \nabla u\ra\in L^2(\mm),
		\]
		where the $L^2(\mm)$-integrability follows  because $|\nabla u|\in L^\infty(\mm)$, $\Psi\in L^\infty([0,\infty))$ and $|\nabla \Psi(|\nabla u|)|\le \Lip(\Psi)|\nabla |\nabla u||\in L^2(\mm).$ Moreover $\Psi(|\nabla u|)|\nabla u|\in L^2(\mm)$. Hence by Remark \ref{rmk:sobolev test} we can use functions in $\W(\X)$ to test $\div(\Psi(|\nabla u|)\nabla u)$. {Set $\tilde c\coloneqq (1+\lambda)c$,  $\tilde \Psi:=\Psi-{\tilde c}\ge 0$ and note that by $\lambda \in (-1,0]$ it holds $\tilde c\in(0,c]$. Moreover from \eqref{eq:psi' condition} we deduce
			\begin{equation}\label{eq:psi trick 2}
				{t\Psi'(t)} \ge \lambda\Psi 
				= (1 +\lambda)\Psi-\Psi\ge c(1+\lambda)-\Psi=-\tilde \Psi , \quad \text{ for a.e.\  } t>0.
			\end{equation}
		} Choosing $\phi^2 \Delta u$ as test function we have
		\begin{equation*}
			\begin{split}
				\int& \div(\Psi(|\nabla u|)\nabla u) \phi^2 \Delta u\d \mm = -\int  \Psi(|\nabla u|) \la \nabla u, \phi^2 \nabla \Delta u + \Delta u \nabla \phi^2   \ra  \d\mm\\
				=& -\!\int \phi^2 \tilde \Psi(|\nabla u|) \la \nabla u, \nabla \Delta u\ra \d \mm- \!2\!\!\int \Psi(|\nabla u|) \phi \la \nabla u, \nabla \phi \ra \Delta u  \d \mm - \!{\tilde c}\!\! \int \phi^2  \la \nabla u, \nabla \Delta u\ra \d \mm ,
			\end{split}
		\end{equation*}
		where we note that $0 \leq \phi^2 \tilde \Psi(|\nabla u|) \in \W\cap L^\infty(\X)$. Hence, applying the Bochner inequality of Theorem \ref{thm:improved Bochner}  with $\phi= \phi^2 \tilde \Psi(|\nabla u|)$ to the first term on the right-hand side gives
		\begin{equation} \label{eq:orange 1}
			\begin{aligned}
				\int &\div(\Psi(|\nabla u|)\nabla u) \phi^2 \Delta u\d \mm \ge \int  \phi^2 \tilde \Psi(|\nabla u|) \Big( |\hess u|^2 + K |\nabla u|^2 \Big) \d \mm  \\
				& +  \!\int \!\bigg(\frac{\la \nabla (\phi^2\tilde \Psi(|\nabla u|)) ,\nabla |\nabla u|^2 \ra}2 -2\Psi(|\nabla u|) \phi \la \nabla u, \nabla \phi \ra \Delta u  \bigg)\d \mm - \!{\tilde c} \!\!\int \phi^2  \la \nabla u, \nabla \Delta u\ra \d \mm.
			\end{aligned}
		\end{equation}
		We now expand the term containing the gradient of $\phi^2\tilde \Psi(|\nabla u|)$:
		\begin{equation}\label{eq:orange 2}
			\begin{split}
				\frac{1}{2}\la \nabla (\phi^2\tilde \Psi(|\nabla u|)) ,\nabla |\nabla u|^2 \ra &= 2 \phi\tilde \Psi(|\nabla u|)|\nabla u|\la \nabla \phi, \nabla |\nabla u|\ra
				+  \phi^2\Psi'(|\nabla u|)|\nabla u||\nabla |\nabla u||^2\\
				\quad &\ge  2 \phi\tilde \Psi(|\nabla u|)|\nabla u|\la \nabla \phi, \nabla |\nabla u|\ra- \phi^2 \tilde \Psi(|\nabla u|)|\hess (u)|^2, 
			\end{split}
		\end{equation}
		where we used \eqref{eq:psi trick 2} and \eqref{eq:gradgrad} to obtain the final line.  Moreover, integration by parts yields 
		\begin{equation}\label{eq:orange 3}
			- {\tilde c}\int \phi^2  \la \nabla u, \nabla \Delta u\ra \d \mm= {\tilde c}\int \phi^2 (\Delta u)^2 \d\mm +  2{\tilde c}\int \phi \Delta u\la \nabla \phi , \nabla u\ra \d \mm.
		\end{equation}
		By combining \eqref{eq:orange 1},\eqref{eq:orange 2} and \eqref{eq:orange 3},  the  Hessian terms cancel out and we get 
		\begin{align*}
			{\tilde c} \int \phi^2 (\Delta u)^2 \d \mm\le & -K \int \phi^2 \tilde \Psi (|\nabla u|)|\nabla u|^2 \d \mm + \int \div(\Psi(|\nabla u|)\nabla u) \phi^2 \Delta u \d \mm \\ 
			&+ 2{c^{-1}} \int |\nabla \phi| |\phi| |\nabla u|\big(|\Delta u|+ |\nabla |\nabla u||\big) \d \mm,
		\end{align*}
		where we used that {$\tilde \Psi\le \Psi\le c^{-1}$ and $\tilde c \leq c\le c^{-1}.$}
		By bounding the term involving $|\nabla|\nabla u||$ by $|\hess(u)|$ as per \eqref{eq:gradgrad} and applying twice Young's inequality to the above, we deduce 
		\begin{align*}
			\frac{{\tilde c}}{2} \int \phi^2 (\Delta u)^2 \d \mm\le& (|K| + \delta^{-1}) {c^{-1}}\int  (\phi^2 + |\nabla \phi|^2) |\nabla u|^2 \d \mm \\ 
			&+ \frac{1}{2 {\tilde c}} \int [\div(\Psi(|\nabla u|)\nabla u)]^2 \phi^2 \d \mm +  \delta{c^{-1}}\int \phi^2 (|\hess (u)| +|\Delta u| )^2 \d \mm,
		\end{align*}
		for any $\delta >0.$ Using \eqref{eq:hessian laplacian estiamte} and choosing $\delta>0$ sufficiently small with respect to $c$, we absorb all the terms containing $\hess (u)$ and $\Delta u$ into the left-hand side. This completes the proof of \eqref{eq:quasilinear a-priori 6.3}. 
		
		\smallskip 
		
		\noindent 2. \textit{Assume now only $u\in\W(\X)$ with $\Delta u\in L^2 (\mm)$}:
		By Lemma \ref{lem:approximation lemma} there exists a sequence $u_n\in \test(\X)$ such that $u_n\to u$ in $\W(\X)$, $\Delta u_n\to \Delta u$ in $L^2(\mm)$ and $|\nabla u_n|\to |\nabla u|$ in $\W(\X)$. By the previous step we know that \eqref{eq:quasilinear a-priori 6.3} holds for $u_n.$ For convenience, we fix from now on Borel representatives of all these functions and of the corresponding ones for $u.$ Up to passing to a subsequence, we can also assume {by the Generalised Dominated Convergence Theorem} that the previous convergences hold $\mm$-a.e.\ and that the functions $|\Delta u_n|, |\nabla u_n|$ and $|\nabla |\nabla u_n||$ are all dominated $\mm$-a.e.~by a common $L^2(\mm)$-function $G\ge 0$. Hence, by passing to the limit in \eqref{eq:quasilinear a-priori 6.3}, we see that to obtain the result for $u$ it is enough to show:
		\begin{equation}\label{eq:convergence of div}
			\lim_{n\to\infty} \int [\div(\Psi(|\nabla u_n|)\nabla u_n)]^2 \phi^2 \d \mm = \int [\div(\Psi(|\nabla u|)\nabla u)]^2 \phi^2 \d \mm.
		\end{equation}
		We aim to apply the Dominated Convergence Theorem, for which we begin by showing the existence of a suitable dominating function.
		Note that, by the Leibniz rule of Prop.\  \ref{prop:leibniz rule div}, 
		\begin{equation}\label{eq:div expression}
			\div(\Psi(|\nabla u_n|)\nabla u_n)=\Delta u_n \Psi(|\nabla u_n|)+\la \nabla \Psi(|\nabla u_n|) , \nabla u_n\ra
		\end{equation}
		and that the same expression holds for $u$.
		From \eqref{eq:div expression} we immediately get
		$$	[\div(\Psi(|\nabla u_n|)\nabla u_n)]^2\phi^2 \le 2\|\Psi\|_\infty^2 G^2 \phi^2+ 2M^2\Lip(\Psi)^2G^2\phi^2 \in L^1(\mm),$$
		where we used that $\Psi'(t)=0$ for all $t\ge M.$ Thus we have a suitable dominating function.

		It remains to check pointwise $\mm$-a.e.\ convergence. The continuity of $\Psi$ implies that the sequence $\{\Psi(|\nabla u_n|)\Delta u_n\}_n$ converges pointwise $\mm$-a.e.~to  $\Psi(|\nabla u|)\Delta u $. It remains to check  
		\begin{equation}\label{eq:a.e. annoying}
			\la \nabla \Psi(|\nabla u_n|) , \nabla u_n\ra \to \la \nabla \Psi(|\nabla u|) , \nabla u\ra, \quad \mm\text{-a.e.,}
		\end{equation}
		and this follows as an immediate consequence of Lemma \ref{lem:psi' convergence} {and the triangle inequality}.
	\end{proof}

	\subsection{Second order Caccioppoli-type estimates}

	The following result gives a weighted estimate on $\hess(u)$ in terms of   $\div(\Psi(|\nabla u|)\nabla u)$. The free parameter $\beta \ge 0$ will be useful when applying a Moser-type iteration for the gradient in the next section. 

	\begin{prop}[{Weighted gradient Caccioppoli estimate}]\label{prop:key estimate}
		Let $\Xdm$ be an $\RCD(K,\infty)$ space and $u\in \W(\X)$ with $\Delta u \in L^2(\mm).$ Let $\Psi\in \LIP\cap L^\infty([0,\infty))$ be positive and such that $\Psi'$ is continuous up to a negligible set of points. Assume also that $\Psi$ satisfies \eqref{eq:psi' condition} for some $\Lambda,\lambda\in \rr$.
		Then for all   $\beta\ge 0$, {$M >0$}, $\delta>0$ and all functions $\eta \in \LIP_{bs}(\X)$ it holds
        
		\begin{equation}\label{eq:key with alpha}\begin{aligned}
				\int_\X  \Psi(|\nabla u|)^2 |\nabla u|_{M,\delta}^\beta  |{\rm Hess} (u)|^2 \eta^2 \d \mm \le C
				\int_\X  & (1+|\beta|^2)\div(\Psi(|\nabla u|)\nabla u)^2|\nabla u|_{M,\delta}^{\beta} \eta^2 \d \mm \\
				+C \int_\X & |\nabla u|^2\Psi(|\nabla u|)^2 |\nabla u|_{M,\delta}^\beta(|\nabla \eta|^2+\eta^2) \d \mm,\end{aligned}
		\end{equation}
		where $|\nabla u|_{M,\delta}$ was defined in \eqref{eq:cutoff approx of gradient sec 7} and $C>0$ is a constant depending only on $K,\Lambda$ and $\lambda.$ 
	\end{prop}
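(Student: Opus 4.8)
The strategy is to run the same Bochner-type computation as in Proposition \ref{prop:a priori quasilinear lapl estimates}, but now testing against $\phi = \eta^2 \Psi(|\nabla u|)|\nabla u|_{M,\delta}^{\beta}$ inside the Bochner inequality rather than against a weight that only involves $\Psi$. Since the left-hand side of \eqref{eq:key with alpha} carries a full Hessian term with the weight $\Psi^2|\nabla u|_{M,\delta}^\beta$, we must make sure that the Hessian terms generated on the right-hand side can be absorbed, which is where the two-sided bound \eqref{eq:psi' condition} (both $\lambda$ and $\Lambda$, not just $\lambda$) comes into play. As usual, by Lemma \ref{lem:cut-off lemma} and Lemma \ref{lem:approximation lemma} it suffices to prove the estimate for $u \in \test(\X)$, so that $|\nabla u| \in \LIP\cap W^{1,2}(\X)$ and all the quantities below are legitimate; the passage to the limit is handled exactly as in Step 2–3 of the proofs above, using Lemma \ref{lem:psi' convergence} for the convergence of $\nabla \Psi(|\nabla u_n|)$ and the Generalised Dominated Convergence Theorem, noting that the cutoff at level $M$ keeps everything dominated.

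\textbf{Main computation.} For $u \in \test(\X)$, set $w \coloneqq \div(\Psi(|\nabla u|)\nabla u) = \Psi(|\nabla u|)\Delta u + \Psi'(|\nabla u|)\langle \nabla u, \nabla|\nabla u|\rangle$ (Leibniz rule, Proposition \ref{prop:leibniz rule div}), and use the test function
\begin{equation*}
\phi \coloneqq \eta^2 \, \Psi(|\nabla u|)\, |\nabla u|_{M,\delta}^{\beta} \in W^{1,2}\cap L^\infty(\X), \quad \phi \ge 0,
\end{equation*}
which has bounded support in $\supp\eta$. On the one hand, testing $w$ against $\phi\,\Delta u$ and expanding $\nabla\bigl(\eta^2\Psi(|\nabla u|)|\nabla u|_{M,\delta}^{\beta}\Delta u\bigr)$ produces $\int w\,\phi\,\Delta u\,\d\mm$ on the left and, on the right, a term $\int \Psi(|\nabla u|)^2 |\nabla u|_{M,\delta}^\beta \eta^2 \langle \nabla u, \nabla\Delta u\rangle\,\d\mm$ together with lower-order terms involving $\nabla\eta$, $\nabla|\nabla u|$, $\nabla\Psi(|\nabla u|)$ and $\Delta u$. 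On the other hand, applying the improved Bochner inequality of Theorem \ref{thm:improved Bochner} (or Proposition \ref{prop:bochner type}, which is tailored to exactly this kind of truncated weight) with this $\phi$, we bound $-\int \phi\,\langle \nabla u,\nabla\Delta u\rangle\,\d\mm$ from below by $\int\bigl(|\hess u|^2 + K|\nabla u|^2\bigr)\phi\,\d\mm + \tfrac12\int \langle\nabla\phi,\nabla|\nabla u|^2\rangle\,\d\mm$. The crucial expansion is
\begin{equation*}
\tfrac12 \langle \nabla\phi, \nabla|\nabla u|^2\rangle = \eta^2\Psi^2 |\nabla u|_{M,\delta}^\beta |\nabla|\nabla u||^2 \,\Bigl(1 + \tfrac{|\nabla u|\Psi'}{\Psi} + \tfrac{\beta}{2}\tfrac{|\nabla u|^2 \mathbbm 1_{\{|\nabla u|\le M\}}}{|\nabla u|_{M,\delta}^2}\Bigr) + (\text{terms with }\nabla\eta),
\end{equation*}
where I have pulled out the factor $\Psi^2|\nabla u|_{M,\delta}^\beta$; by \eqref{eq:psi' condition} the bracket lies in $[1+\lambda, 1+\Lambda+\beta/2]$, in particular it is bounded below by the positive constant $1+\lambda$ and above by $C(1+\beta)$. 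Using \eqref{eq:gradgrad}, $|\nabla|\nabla u||\le|\hess u|$, to convert this into a coercive Hessian term $\ge (1+\lambda)\int \eta^2\Psi^2|\nabla u|_{M,\delta}^\beta|\hess u|^2$, and combining with the Hessian term $\int|\hess u|^2\phi$ coming directly from Bochner, we obtain a positive multiple of the left-hand side of \eqref{eq:key with alpha}. All remaining terms are estimated by repeated applications of Young's inequality: those carrying one factor of $\nabla\eta$ are absorbed using $|\nabla u||\nabla|\nabla u|| \le \tfrac12\varepsilon|\hess u|^2 + \tfrac{1}{2\varepsilon}|\nabla u|^2$, while the $w$-term is kept as is, picking up the factor $(1+\beta^2)$ from a Young inequality of the form $w\,\Psi|\nabla u|_{M,\delta}^{\beta/2}\eta \cdot (\text{stuff})\le \tfrac{1}{2\varepsilon}(1+\beta^2) w^2\Psi^2|\nabla u|_{M,\delta}^\beta\eta^2/(1+\beta^2) \cdots$ — more precisely $|w|\cdot\beta|\hess u| \le \varepsilon|\hess u|^2 + C\varepsilon^{-1}\beta^2 w^2$, which accounts for the $|\beta|^2$ on the right-hand side of \eqref{eq:key with alpha}. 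Finally, Corollary \ref{cor:hessian laplacian estimate} (Calderón–Zygmund) is invoked to control the leftover $\int\eta^2(\Delta u)^2$-type contributions by $\int|\nabla u|^2(\eta^2+|\nabla\eta|^2)$ plus a small Hessian term, and choosing $\varepsilon$ small relative to $1+\lambda$ completes the absorption.

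\textbf{Main obstacle.} The delicate point is the sign and size of the bracket $1 + |\nabla u|\Psi'/\Psi + \tfrac\beta2(\cdots)$: the upper bound \eqref{eq:psi' condition} is used to keep it $\le C(1+\beta)$ so that the induced Hessian term does not blow up, while the lower bound $\lambda > -1$ is exactly what guarantees it is bounded below by a \emph{positive} constant, so that together with the Bochner Hessian term it genuinely dominates the left-hand side of \eqref{eq:key with alpha} rather than being swallowed. A second subtlety is that $\nabla\Psi(|\nabla u|) = \Psi'(|\nabla u|)\nabla|\nabla u|$ appears both inside $w$ and in the expansion of $\nabla\phi$, so one must be careful that the $\Psi'$-terms are tracked consistently and that, after using \eqref{eq:psi' condition} to write $|\nabla u|\Psi' = (\text{bounded})\cdot\Psi$, every stray $\Psi'$ has been converted to a controlled multiple of $\Psi$; the $L^\infty$ and Lipschitz bounds on $\Psi$ together with the truncation at $M$ are what make the approximation argument of Step 2 go through without any further integrability input. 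The $\delta$-regularisation plays no essential role here beyond making $|\nabla u|_{M,\delta}^\beta$ smooth for non-integer $\beta$; all constants are independent of $\delta$ and $M$.
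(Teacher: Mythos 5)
Your proposal takes a different route from the paper's proof: the paper starts from the already-integrated Bochner-type inequality of Lemma \ref{lem:bochner type 2} (from \cite{ivan1}), substitutes the pointwise identity \eqref{eq:Delta=trick} $\Delta u = \frac{f}{\Psi(|\nabla u|)} - p(|\nabla u|)\frac{\Delta_\infty u}{|\nabla u|^2}$ everywhere $\Delta u$ appears, and then closes the estimate by a purely algebraic argument; your approach is instead modeled on the proof of Proposition \ref{prop:a priori quasilinear lapl estimates}, testing $\div(\Psi(|\nabla u|)\nabla u)$ against $\phi\,\Delta u$ and using the differential form of Bochner.

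There is, however, a genuine gap in your argument, and it is precisely where the difference in route matters. To match the $\Psi^2$ weight on the left of \eqref{eq:key with alpha}, the Bochner inequality must be applied with the test function $\tilde\phi = \eta^2\Psi(|\nabla u|)^2|\nabla u|_{M,\delta}^\beta$. But then $\nabla\tilde\phi$ carries a factor $2\Psi\Psi'$, so the bracket you isolate is
\begin{equation*}
1 + 2\,\frac{|\nabla u|\Psi'(|\nabla u|)}{\Psi(|\nabla u|)} + \beta\,\frac{|\nabla u|(|\nabla u|\wedge M)}{|\nabla u|_{M,\delta}^2}\,\mathbbm{1}_{\{|\nabla u|\le M\}},
\end{equation*}
whose lower bound via \eqref{eq:psi' condition} is $1 + 2\lambda$, not the $1 + \lambda$ you claim (your expansion also has $\beta/2$ where it should be $\beta$, a secondary but telling slip). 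Since \eqref{eq:psi' condition} only requires $\lambda > -1$, the quantity $1+2\lambda$ can be negative (for instance for the $p$-Laplacian with $1<p\le 3/2$, where $\lambda = \Lambda = p-2 \le -1/2$). The crude bound $|\nabla|\nabla u||^2\le|\hess u|^2$ then fails to produce a positive multiple of the left-hand side, and your absorption breaks down on exactly the range $\lambda \in (-1,-1/2]$ that the statement is supposed to cover. The paper closes this gap by appealing, in addition to $|\nabla|\nabla u||^2 \le |\hess u|^2$, to the sharper (Kato-type) pointwise inequality $|\hess u|^2 \ge 2|\nabla|\nabla u||^2 - \frac{(\Delta_\infty u)^2}{|\nabla u|^4}$ from \cite[Prop.\ 4.4]{ivan1}; this extra degree of freedom, fed into the substitution $\Delta u = \frac{f}{\Psi}-p(|\nabla u|)\frac{\Delta_\infty u}{|\nabla u|^2}$, is what lets the final algebraic inequality hold for the full range $\lambda > -1$. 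Without this refinement, or some other mechanism exploiting the structure of the $\Delta_\infty u$ terms, the argument you outline cannot reach \eqref{eq:key with alpha} with a constant depending only on $K,\Lambda,\lambda$ on the whole range $\lambda>-1$.
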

	\begin{proof}
    {The argument is similar to \cite[Lemma 4.9]{ivan1}.}
		We report  inequality \eqref{eq:starting point} in Lemma \ref{lem:bochner type 2}:
		\begin{equation}\label{eq:starting point bis}
			\begin{split}
				&\int_\X |{\rm Hess} (u)|^2 
				\psi(|\nabla u|) \eta^2 \d \mm \le 	\int_\X \la\nabla u\Delta u -|\nabla u|\,\nabla |\nabla u|, \nabla \eta^2\ra \psi(|\nabla u|)\d\mm\\
				&+\!\! \int_\X \! \la\nabla u\Delta u \!-\!|\nabla u|\,\nabla |\nabla u|, \nabla |\nabla u|\ra \psi'(|\nabla u|)\eta^2 \d\mm
				+\!\!\int_\X \! ((\Delta u)^2\!+\!K|\nabla u|^2)\psi(|\nabla u|) \eta^2\d\mm, 
			\end{split}
		\end{equation}
		which holds for all $\eta \in \LIP_{bs}(\X)$ and all $\psi \in \LIP\cap L^\infty(\rr)$ with $\psi'$ continuous up to a negligible set and such that $\psi'(t)\le c(1+|t|)^{-1}$ for all $t\ge 0$ and for some constant $c>0.$  We choose
		$\psi(t)\coloneqq \Psi(t)^2 ({(t\wedge M)^2+\delta})^\frac \beta2,$ which satisfies these constraints {due to \eqref{eq:psi' condition}}.
		Note that 
		\begin{equation}\label{eq:t psi deriv in proof with infinity laplacian}
			t\psi'(t)=\psi(t) \bigg(2p(t)+\mathds{1}_{\{t\le M\}}\frac{\, \beta t^2}{{t^2+\delta}}\bigg), \quad \text{a.e.\ $t\ge 0$,}
		\end{equation}
		where $p(t)\coloneqq \frac{t\Psi'(t)}{\Psi(t)}$ satisfies $\lambda \le p\le \Lambda$ {by the assumption \eqref{eq:psi' condition}}. The strategy is to get rid term by term of the Laplacian in \eqref{eq:starting point bis}, as it does not appear in the final estimate. To do so, set $f\coloneqq \div(\Psi(|\nabla u|)\nabla u)$ and note that, by the Leibniz rule of Prop.\  \ref{prop:leibniz rule div}, 
		\begin{equation}\label{eq:Delta=trick}
			\Delta u =  \frac{f}{\Psi(|\nabla u|)} - p(|\nabla u|) \frac{\Delta_\infty u}{|\nabla u|^2},
		\end{equation}
		where $\Delta_\infty u\coloneqq \la \nabla |\nabla u|,\nabla u\ra|\nabla u|$ and $\frac{\Delta_\infty u}{|\nabla u|}$ is assumed zero when $|\nabla u|=0.$ Plugging in \eqref{eq:Delta=trick}, the first term of the right-hand side of \eqref{eq:starting point bis} can be estimates as follows:
		\begin{align}\label{eq:splitting that causes C to depend on Psi Linfty norm}
			\la\nabla u\Delta u -&|\nabla u|\nabla |\nabla u|, \nabla \eta^2\ra \psi(|\nabla u|) \\
			&\le 
			\bigg(\frac{|f|}{\Psi(|\nabla u|)}+(|\Lambda|+1) |{\rm Hess} (u)|\bigg) |\nabla u| |\nabla \eta^2| \psi(|\nabla u|),
		\end{align}
		having used that $|\nabla |\nabla u||\le |{\rm Hess} (u)|$ {and direct computation to bound $\frac{|\Delta_\infty u|}{|\nabla u|^2} \leq |\hess(u)|$}. Similarly, for the first half of the second term in the right-hand side of \eqref{eq:starting point bis} we have \footnote{The functions $\psi'(|\nabla u|)$ and $p(|\nabla u|)$ for different representatives of $\Psi'$ and $|\nabla u|$  might change on positive measure sets. However on these sets $|\nabla |\nabla u||=0$ $\mm$-a.e.\ by locality and so, since $\psi'(|\nabla u|)$ appears only multiplying $|\nabla |\nabla u||$ or $|\Delta_\infty u|\le |\nabla |\nabla u|||\nabla u|^2 $, the quantities we write are  in fact  independent of the representatives.}
		\begin{align*}
			\la\nabla u\Delta u&, \nabla |\nabla u|\ra \psi'(|\nabla u|) \d\mm=\Delta u \frac{\Delta_\infty u}{|\nabla u|} {\psi'(|\nabla u|)}\\
			& =  \frac{f }{\Psi(|\nabla u|)} \frac{\Delta_\infty u}{|\nabla u|}  \psi'(|\nabla u|)- p(|\nabla u|) \frac{(\Delta_\infty u)^2}{|\nabla u|^3} \psi'(|\nabla u|)\\
			&\le  \bigg(\frac{\delta_1^{-1}f^2}{\Psi(|\nabla u|)^2} + \delta_1\frac{(\Delta_\infty u)^2}{|\nabla u|^4}\bigg)  {|\nabla u|\psi'(|\nabla u|)}- p(|\nabla u|) \frac{(\Delta_\infty u)^2}{|\nabla u|^3} \psi'(|\nabla u|)\\
			&\le  (2|\Lambda |+ \beta)\bigg(\frac{\delta_1^{-1}f^2}{\Psi(|\nabla u|)^2} + \delta_1\frac{(\Delta_\infty u)^2}{|\nabla u|^4}\bigg)  {\psi(|\nabla u|)}- p(|\nabla u|) \frac{(\Delta_\infty u)^2}{|\nabla u|^4} |\nabla u|\psi'(|\nabla u|),
		\end{align*}
		for all $\delta_1>0$, {where we used \eqref{eq:t psi deriv in proof with infinity laplacian} to write $t \psi'(t) \leq (2|\Lambda|+\beta)\psi(t)$ in the first term on the right-hand side}. {The second half of the aforementioned term  of \eqref{eq:starting point bis} equals}:
		\[
		\la-|\nabla u|\nabla |\nabla u|, \nabla |\nabla u|\ra \psi'(|\nabla u|)=-|\nabla u|\psi'(|\nabla u|)|\nabla |\nabla u||^2.
		\]
		Finally, for all $\delta_2>0$, {by squaring \eqref{eq:Delta=trick} and using Young's inequality}, 
		\begin{align*}
			(\Delta u)^2\psi(|\nabla u|)  &\le
			(1+\delta_2^{-1}) f^2 ({(}|\nabla u|\!\wedge \!M {)^2}+{\delta})^{\frac\beta2}  + (1+\delta_2)p(|\nabla u|)^2 \frac{(\Delta_\infty u)^2}{|\nabla u|^4}\psi(|\nabla u|) \\
			\le  (1&\!+\!\delta_2^{-1}\!) f^2 ({(}|\nabla u|\!\wedge \!M {)^2}\!\!+\!{\delta})^{\frac\beta2}  \!\!+\! p(|\nabla u|)^2 \frac{(\Delta_\infty u)^2}{|\nabla u|^4}\psi(|\nabla u|) \!+\!\delta_2 |\Lambda|^{{2}} |{\rm Hess}(u)|^{{2}}{\psi(|\nabla u|)}, 
		\end{align*}
		where we used the explicit definition of $\psi$ to write $\frac{\psi(t)}{\Psi(t)^2}\leq ((t\wedge M)^2 + 1)^{\frac{\beta}{2}}$. Plugging all the above estimates into \eqref{eq:starting point bis}, using the Young's inequality and choosing $\delta_1,\delta_2$ small enough to reabsorb the terms containing ${\rm Hess}(u)$ or $\Delta_\infty u$ into the left-hand side,  we reach 
		\begin{equation*}
			\begin{split}
				\int_\X |{\rm Hess} & (u)|^2 
				\psi(|\nabla u|) \eta^2 \d \mm \\ 
				\le &	C\int_\X  \bigg [(1+\beta^2) f^2\eta^2 + |\nabla u|^2 {\Psi(|\nabla u|)^2}(|\nabla \eta|^2+\eta^2)\bigg] ({(}|\nabla u|\!\wedge \!M {)^2}+ \delta)^\frac \beta2 \d \mm \\
				&+ \int_\X \underbrace{\bigg[ \frac{(\Delta_\infty u)^2}{|\nabla u|^4}(p^2-pg)(|\nabla u|) - g(|\nabla u|)|\nabla |\nabla u||^2  \bigg]}_{F\coloneqq }  \psi(|\nabla u|) \eta^2 \d \mm,
			\end{split}
		\end{equation*}
		where $g(t)\coloneqq {\frac{t \psi'(t)}{\psi(t)}} = 2p(t)+\mathds{1}_{\{t\le M\}}\frac{\, \beta t^2}{{t^2+\delta}}$ {as per \eqref{eq:t psi deriv in proof with infinity laplacian}}. Note that {\eqref{eq:psi' condition} implies } 
		\begin{equation}\label{eq:g>-2lamb}
			{\inf_{[0,\infty)}g  \ge 2 \inf_{[0,\infty)} p \geq 2\lambda {>-2}.   } 
		\end{equation}
		To conclude it remains to show that $F\le \theta |{\rm Hess}(u)|$ $\mm$-a.e.\ for some $\theta \in (0,1)$ depending only on $\lambda.$  Choosing $\theta$ close enough to one we can assume that $g+2\theta\ge 0.$  For convenience set $A\coloneq \frac{(\Delta_\infty u)^2}{|\nabla u|^4}$ and $B\coloneqq |\nabla |\nabla u||^2$ and ${C}\coloneqq |{\rm Hess}(u)|^2.$ By \eqref{eq:gradgrad} we have $0\le A\le B\le C$. Moreover $C\ge 2B-A$ (see \cite[Prop.\  4.4]{ivan1}). Hence it is enough to show, since $\beta\ge 0,$ that 
		\begin{align*}
			A(p^2-pg)-gB\le \theta (2B-A) &\iff      A(p^2-pg+\theta)-(g+2\theta)B\le 0.
		\end{align*}
		{Due to the lower bound \eqref{eq:g>-2lamb}, we have $\theta \in (-\lambda,1) \implies g+2\theta \geq 0$, whence}
		\begin{align*}
			&A(p^2-pg)-gB\le \theta (2B-A)   \iff      A(p^2-pg+\theta)-(g+2\theta)A\le 0\\
			&\iff      (p^2-pg-g-\theta \pm 1)\le 0  \iff (p+1)(p-1-g)+1-\theta\le 0,
		\end{align*}
		and the last inequality is true for {$\theta \in (-\lambda,1)$}, since by \eqref{eq:g>-2lamb}
		\[(p+1)(p-1-g)\le (p+1)(-p-1)= -(\lambda+1)^2<0.\qedhere\]
	\end{proof}
	
	{
		\begin{rem}\label{rmk:non autonom version 2}
			Prop.\  \ref{prop:key estimate} holds also replacing $\div(\Psi(|\nabla u|)\nabla u)$ with $\div(\aa(x)\Psi(|\nabla u|)\nabla u)$, for any $\aa\in \LIP(\X)$ with $A^{-1}\le \aa\le A $ for some constant $A\ge 1,$ up to take the constant $C$ in \eqref{eq:key with alpha}  depending on $\Lip(\aa)$ and $A$ as well. The argument is exactly the same only that in the right-hand side of \eqref{eq:Delta=trick} the extra term $-\frac{\la \nabla \aa , \nabla u \ra }{\aa}$ appears. Keeping track of this term in the estimates, noting that is bounded by $A\,\Lip(a)|\nabla u|$,  gives the result. 
			The same applies to Prop.\  \ref{cor:key estimate}, Prop.\  \ref{prop:key estimate hessian} and Prop.\  \ref{prop:lip a priori}, since their proof rely only on \eqref{eq:key with alpha}.
			\fr
		\end{rem}
	}

    {From the previous result in the case $\beta=0$ we have the following consequence.}

	\begin{prop}[Local second order estimate]\label{cor:key estimate}
		Assume that the conditions of Prop.\  \ref{prop:key estimate} hold and moreover that $\Xdm$ is an $\RCD(K,N)$ space with $N<\infty.$ Then, for all $B_R(x)\subset \X$ with $R\le 1,$ it holds 
		\begin{equation}\label{eq:key}
			\begin{split}
				\fint_{B_{R/2}(x)} &\Big( R^{-2}|V_u|^2 +  |\nabla  V_u|^2 \Big) \d \mm \le C
				\fint_{B_R(x)}   \div(V_u)^2 \d \mm 
				+ C \left(\fint_{B_R(x)}|V_u| \d \mm\right)^2,
			\end{split}
		\end{equation}
		where $V_u\coloneqq \Psi(\gradu) \nabla u$ and $C$ is a constant depending only on $K,N,\lambda$ and $\Lambda.$
	\end{prop}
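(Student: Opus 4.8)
The plan is to read off \eqref{eq:key} from Proposition \ref{prop:key estimate} in the special case $\beta=0$, in which the truncated gradient $|\nabla u|_{M,\delta}$ drops out of \eqref{eq:key with alpha} and one is left, for every $\eta\in\LIP_{bs}(\X)$, with the Caccioppoli-type inequality
\[
\int_\X \Psi(|\nabla u|)^2|\hess(u)|^2\eta^2\d\mm
\;\le\; C\int_\X \div(V_u)^2\,\eta^2\d\mm
+ C\int_\X |V_u|^2\bigl(|\nabla\eta|^2+\eta^2\bigr)\d\mm ,
\]
with $C=C(K,\Lambda,\lambda)$, where we used $|V_u|^2=\Psi(|\nabla u|)^2|\nabla u|^2$.

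The first genuine step is to pass from $\Psi(|\nabla u|)\hess(u)$ to $\nabla V_u$ on the left. By the Leibniz and chain rules in the tangent module one has $\nabla V_u=\Psi(|\nabla u|)\hess(u)+\Psi'(|\nabla u|)\,\nabla|\nabla u|\otimes\nabla u$; condition \eqref{eq:psi' condition} yields $|\Psi'(|\nabla u|)|\,|\nabla u|\le\max\{|\lambda|,|\Lambda|\}\,\Psi(|\nabla u|)$ and \eqref{eq:gradgrad} yields $|\nabla|\nabla u||\le|\hess(u)|$, whence $|\nabla V_u|\le\bigl(1+\max\{|\lambda|,|\Lambda|\}\bigr)\Psi(|\nabla u|)|\hess(u)|$ $\mm$-a.e.; the same computation applied to the scalar function $t\mapsto t\Psi(t)$, whose derivative $\bigl(1+t\Psi'(t)/\Psi(t)\bigr)\Psi(t)$ lies in $[(1+\lambda)\Psi,(1+\Lambda)\Psi]$ by \eqref{eq:psi' condition}, also gives $|V_u|\in W^{1,2}_\loc(\X)$ with $|\nabla|V_u||\le C\Psi(|\nabla u|)|\hess(u)|$. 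Consequently the Caccioppoli inequality above holds verbatim with $\Psi(|\nabla u|)^2|\hess(u)|^2$ replaced by $|\nabla V_u|^2$ (and by $|\nabla|V_u||^2$), up to enlarging $C$.

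I would then localise by choosing $\eta\equiv1$ on $B_{R/2}(x)$, $\supp\eta\subset B_R(x)$ and $|\nabla\eta|\le C/R$: since $R\le1$ one has $\eta^2\le1\le R^{-2}$, so $|\nabla\eta|^2+\eta^2\le CR^{-2}\mathds 1_{B_R(x)}$ and hence
\[
\int_{B_{R/2}(x)}|\nabla V_u|^2\d\mm
\;\le\; C\int_{B_R(x)}\div(V_u)^2\d\mm+\frac{C}{R^2}\int_{B_R(x)}|V_u|^2\d\mm .
\]
Dividing by $\mm(B_R(x))$ and invoking the Bishop--Gromov volume doubling of $\RCD(K,N)$ spaces (with $N<\infty$), which for $R\le1$ allows one to replace $\fint_{B_R}$ by $\fint_{B_{R/2}}$ on the left at the cost of a constant $C_{K,N}$, this already gives the gradient part of \eqref{eq:key}, but with the lower-order term in the form $CR^{-2}\fint_{B_R(x)}|V_u|^2$. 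To convert it into the $L^1$-average appearing on the right of \eqref{eq:key} — and to recover the $R^{-2}|V_u|^2$ summand on the left — I would use the $(2,2)$-Poincaré inequality available in $\RCD(K,\infty)$ spaces (a consequence of the local Poincaré inequality of \cite{Rajala12-2} and volume doubling, via the usual self-improvement) applied to $w:=|V_u|$, namely $\fint_{B_\rho}w^2\le C\rho^2\fint_{B_\rho}|\nabla w|^2+C\bigl(\fint_{B_\rho}w\bigr)^2$, and feed it into the Caccioppoli inequality iterated over a geometric sequence of radii in $[R/2,R]$, absorbing the gradient contributions through the standard iteration lemma (see e.g.\ \cite{giustibook}).

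The module-calculus identity for $\nabla V_u$, the choice of cut-off (trivialised by $R\le1$) and the passage to averages via doubling are all routine; the only delicate point is the final absorption, since the term $R^{-2}\int|V_u|^2$ produced by the Caccioppoli inequality is a priori comparable to the left-hand side, so turning it into the harmless quantity $\bigl(\fint|V_u|\bigr)^2$ genuinely requires iterating on shrinking balls rather than a single interpolation.
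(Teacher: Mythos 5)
The opening of your argument coincides with the paper's: taking $\beta=0$ in \eqref{eq:key with alpha}, using the chain rule together with \eqref{eq:psi' condition} and \eqref{eq:gradgrad} to control $|\nabla V_u|$ by $\Psi(|\nabla u|)|\hess(u)|$, and localising with a cut-off. The difficulty that you correctly flag is converting the lower-order term $R^{-2}\int|V_u|^2$ into the harmless $(\fint|V_u|)^2$ — but the tool you choose does not do the job. Feeding the $(2,2)$-Poincar\'e inequality $\fint_{B_\rho}w^2\le C\rho^2\fint_{B_\rho}|\nabla w|^2+C(\fint_{B_\rho}w)^2$ into the Caccioppoli estimate $\int_{B_{sR}}|\nabla w|^2\le C\int_{B_R}\div(V_u)^2+\tfrac{C}{(t-s)^2R^2}\int_{B_{tR}}w^2$ (with $w=|V_u|$, $\tfrac12\le s<t\le1$) produces a recursion whose coefficient in front of the recursive gradient integral is $Ct^2/(t-s)^2$, which blows up as $t\to s$ and is never below $1$. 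The iteration lemma of \cite{giustibook} and \cite[Lemma~3.1, Ch.~V]{GiaquintaBook} applies only when the recursive coefficient is a fixed $\theta<1$, with the $(t-s)^{-\alpha}$ blow-up confined to the inhomogeneous terms; the $(2,2)$-Poincar\'e inequality cannot deliver this because its constant is a fixed $C$, not a tunable small parameter, so the recursion you describe does not close. (Note also that you appeal to volume doubling in $\RCD(K,\infty)$; doubling in fact requires $N<\infty$, which is another hint that the finite-dimensional Sobolev embedding, not a mere Poincar\'e inequality, must be used.)

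What the paper's ``interpolation-iteration argument'' (via \cite[Cor.~4.6]{ivan1} and \cite[p.~594]{CianchiMazya}) actually uses is the local Sobolev inequality $\bigl(\fint_{B_\rho}|v|^{2\chi}\bigr)^{1/\chi}\le C\rho^2\fint_{B_\rho}|\nabla v|^2+C\fint_{B_\rho}v^2$ with a gain $\chi>1$, combined with the interpolation $\|w\|_{L^2}\le\|w\|_{L^1}^\theta\|w\|_{L^{2\chi}}^{1-\theta}$ for some $\theta\in(0,1)$ and Young's inequality with a \emph{free} parameter $\varepsilon>0$. Choosing $\varepsilon$ proportional to $(t-s)^2$ — which is allowed, since $\varepsilon$ may depend on the intermediate radii, the price being paid in the $(t-s)^{-2\gamma}$ constant that multiplies $(\fint|w|)^2$ — turns the recursion into one with a genuine $\theta=\tfrac12$ coefficient and an admissible $(t-s)^{-\alpha}$ inhomogeneity, to which Giaquinta's lemma applies. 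Replacing your Poincar\'e step with this Sobolev-plus-interpolation step closes the gap; the rest of your outline is sound.
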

	\begin{proof}
		{Fix $0 < R \leq 1$.}	{By scaling  we may assume $\mm(B_R(x))=1.$}
		 By the chain rule, 
		\begin{equation}\label{eq:grad computation psi}
			|\nabla V_u|\le |{\rm Hess} (u)|(\Psi'(|\nabla u|)|\nabla u|+\Psi(|\nabla u|))\le (\Lambda +1) \Psi(|\nabla u|)|{\rm Hess} (u)|,
		\end{equation}
		where we used \eqref{eq:psi' condition}. For all  $s,t \in (1/2,1)$ with $s<t$ choose $\eta \in \LIP_c(B_{tR}(x))$ with $\eta \equiv 1$ in $B_{sR}(x)$, $0\le \eta \le 1,$ and $\Lip(\eta)\le R^{-1}(t-s)^{-1} $. Applying \eqref{eq:key with alpha} with $\beta=0$ and this choice of $\eta$ to control the 
		Hessian term, we get 
		\begin{equation}\label{eq:pre estimate for plap}
			\begin{split}
				\int_{B_{sR}(x)}   |V_u|^2 \eta^2 \d \mm &\le C
				\int_{B_R(x)}   \div(V_u)^2 \d \mm+ \frac{C}{R^2(t-s)^2}\int_{B_{tR}(x)} |V_u|^{2}\d \mm,
			\end{split}
		\end{equation}
		where $C$ depends only on $K,\lambda,\Lambda$. The desired inequality now follows from \eqref{eq:pre estimate for plap} using the same interpolation-iteration argument as in \cite[Corollary 4.6]{ivan1}. As this iteration method is well-known (see \textit{e.g.}~\cite[Lemma 3.1, Chapter V]{GiaquintaBook} or \cite[pag.\ 594]{CianchiMazya}) but technical, we omit the details. 
	\end{proof}
	
	{
	\begin{prop}[Local non-degenerate Hessian estimate]\label{prop:key estimate hessian}
		Under the assumptions and notations of Prop.\  \ref{cor:key estimate}, assuming also that that $\Psi\ge c>0$,  it holds 
		\begin{equation}\label{eq:key hessian}
				c^2\fint_{B_{R/4}(x)} |\hess (u)|^{2} \d \mm \le C \fint_{B_R(x)} f^2 \d \mm + C \left(\fint_{B_R(x)} \Psi(|\nabla u|)|\nabla u| \d \mm \right)^2
		\end{equation}
		where $f\coloneqq \div(\Psi(|\nabla u|)\nabla u)$ and  $C$ is a constant depending only on $N,K,\Lambda,\lambda, p,\nu $.
	\end{prop}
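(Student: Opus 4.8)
The plan is to derive \eqref{eq:key hessian} directly from the weighted Caccioppoli estimate of Prop.\ \ref{prop:key estimate} with the free parameter $\beta=0$, exploiting the non-degeneracy hypothesis $\Psi\ge c$, and then to replace the squared $L^2$-norm of $V_u:=\Psi(|\nabla u|)\nabla u$ that appears on the right-hand side by the sharper average quantity $\big(\fint_{B_R}|V_u|\big)^2$ by appealing to Prop.\ \ref{cor:key estimate}. No genuinely new idea is required: the analytic core is already contained in Prop.\ \ref{prop:key estimate} (the Bochner computation) and in Prop.\ \ref{cor:key estimate} (the Moser-type interpolation--iteration), so the proof amounts to bookkeeping. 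Recall that under the inherited hypotheses $u\in\W(\X)$ with $\Delta u\in L^2(\mm)$, so $|\nabla u|\in\W(\X)$ and $u\in W^{2,2}(\X)$ by Lemma \ref{lem:gradgrad} and the quantities below make sense.

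First I would fix $0<R\le 1$ and choose a cutoff $\eta\in\LIP_{bs}(\X)$ with support in $B_{R/2}(x)$, $0\le \eta\le 1$, $\eta\equiv 1$ on $B_{R/4}(x)$ and $\Lip(\eta)\le 8R^{-1}$. Applying \eqref{eq:key with alpha} with $\beta=0$ (so that $|\nabla u|_{M,\delta}^{\beta}\equiv 1$ and the truncation parameters $M,\delta$ play no role) and recalling that $\div(V_u)=f$ and $|V_u|=\Psi(|\nabla u|)|\nabla u|$, one obtains
\begin{equation*}
\int_\X \Psi(|\nabla u|)^2|\hess(u)|^2\eta^2\d\mm\le C\int_\X f^2\eta^2\d\mm+C\int_\X |V_u|^2\big(|\nabla\eta|^2+\eta^2\big)\d\mm.
\end{equation*}
Estimating the left-hand side from below by $c^2\int_{B_{R/4}(x)}|\hess(u)|^2\d\mm$ (using $\Psi\ge c$), and the right-hand side by $\eta\le 1$ together with $|\nabla\eta|^2+\eta^2\le 65R^{-2}$ (valid since $R\le 1$), yields
\begin{equation*}
c^2\int_{B_{R/4}(x)}|\hess(u)|^2\d\mm\le C\int_{B_{R/2}(x)}f^2\d\mm+\frac{C}{R^2}\int_{B_{R/2}(x)}|V_u|^2\d\mm,
\end{equation*}
with $C=C(K,\Lambda,\lambda)$.

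The next step is to bound the last term. Estimate \eqref{eq:key} of Prop.\ \ref{cor:key estimate}, applied on the ball $B_R(x)$, has $\fint_{B_{R/2}(x)}R^{-2}|V_u|^2$ on its left-hand side, hence it gives at once $R^{-2}\fint_{B_{R/2}(x)}|V_u|^2\d\mm\le C\fint_{B_R(x)}f^2\d\mm+C\big(\fint_{B_R(x)}|V_u|\d\mm\big)^2$ with $C=C(K,N,\Lambda,\lambda)$. I would then divide the previous display by $\mm(B_{R/4}(x))$, bound $\int_{B_{R/2}}f^2\le\int_{B_R}f^2$, rewrite $\int_{B_{R/2}}|V_u|^2=\mm(B_{R/2}(x))\fint_{B_{R/2}}|V_u|^2$, and insert the bound just obtained; what remains is to absorb the volume ratios $\mm(B_R(x))/\mm(B_{R/4}(x))$ and $\mm(B_{R/2}(x))/\mm(B_{R/4}(x))$ into the constant, which is possible by the Bishop--Gromov inequality since $R\le 1$, the resulting bound depending only on $K$ and $N$. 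This produces exactly \eqref{eq:key hessian}, with a constant depending only on $K,N,\Lambda,\lambda$.

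The only points deserving attention are: (i) checking that the $M,\delta$-dependence in \eqref{eq:key with alpha} genuinely disappears, which is automatic for $\beta=0$; and (ii) the passage between averages over the concentric balls $B_{R/4}$, $B_{R/2}$, $B_R$, which is handled by local doubling. I do not foresee any real obstacle.
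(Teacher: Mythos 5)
Your proof is correct and follows exactly the same approach as the paper's (one-line) proof: apply Prop.\ \ref{prop:key estimate} with $\beta=0$ and a cutoff adapted to $B_{R/4}(x)\subset B_{R/2}(x)$, use $\Psi\ge c$ to pull out $c^2$, and then control the resulting $\int|V_u|^2$ term via \eqref{eq:key}. The only difference is that you spell out the bookkeeping (cutoff parameters, Bishop--Gromov for the volume ratios) that the paper leaves implicit, and you correctly observe that the constant in fact depends only on $K,N,\lambda,\Lambda$ --- the $p,\nu$ listed in the statement are superfluous.
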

	\begin{proof}
   Immediate by combining \eqref{eq:key with alpha} with $\beta=0$ and \eqref{eq:key}, choosing  $\eta\in \LIP_c(B_R(x))$ so that $\eta \equiv 1$ on $B_{R/4}(x)$  and $\Lip(\eta)\le 8R^{-1}.$
	\end{proof}
	}
	\subsection{Gradient estimates}

	In what follows, we employ a Moser iteration technique on the gradient to obtain local Lipschitz estimates on the solution of \eqref{eq:intro eq} with $\Psi$ satisfying \eqref{eq:psi' condition} and \eqref{eq:lower bound in lip}. We note that iteration strategies have been used to obtain gradient estimates for non-linear elliptic equations in a number of classical works (\textit{cf.}~\textit{e.g.}~\cite[\S 3 of Ch.4]{UralLadybook}, \cite[Chapter 8]{giustibook}, \cite{lieberman1991natural}) and also in recent developments (see \cite{DuzarMing,DMnonuniformly,mingioneSurvey,minDesurvey} and references therein).

	\begin{prop}[$L^\infty$-gradient estimate]\label{prop:lip a priori}
		Fix $p\in(1,\infty).$ Let $\Psi \in \LIP([0,\infty)$ be positive, bounded, satisfy \eqref{eq:psi' condition} with constants $\lambda,\Lambda$ and
		\begin{equation}\label{eq:lower bound in lip}
			\Psi(t)\ge \nu t^{p-2}, \quad \text{for all $t\ge 1$}
		\end{equation}
		for some $\nu>0$.
		Let $\Xdm$ be an $\rcd(K,N)$ space with $N<\infty$ and let $q>{\max\{N, 2\}}.$
		Suppose that $u\in \dom(\Delta)$ satisfies $\Delta u \in L^2(\mm)$ and  it holds
		\begin{equation}\label{eq:lip a priori ass}
			\big |\div(\Psi(|\nabla u|)\nabla u)|= f,\quad \mea\text{-a.e.\ in $B_{R}(x)$},
		\end{equation}
		for some $B_{R}(x)\subset \X$, with $R\le 1$ and $\left(\fint_{B_{R}(x)} |f|^{q}\right)^{1/q} \d \mm \le C_0$. 
		Then  
		\begin{equation}\label{eq:apriori lip}
			\| \nabla u \|_{L^\infty(B_{R/4}(x))}\le C \bigg(1+\Psi(1)+\fint_{B_{R}(x)} \Psi(|\nabla u|) |\nabla u| \, \d \mm  \bigg)^\frac{1}{p-1},
		\end{equation}
		where $C$ is a constant depending only on $N,K,C_0,\lambda,q,p,\nu$ and $\Lambda.$
	\end{prop}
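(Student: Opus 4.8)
The plan is a Moser iteration for the gradient, using the weighted Caccioppoli estimate of Proposition~\ref{prop:key estimate} --- with its free parameter $\beta$ --- in place of the classical ``differentiated equation''. Write $V:=\Psi(|\nabla u|)\nabla u$ and $\Theta(t):=t\Psi(t)$; by \eqref{eq:psi' condition} one has $\Theta'=\Psi\,(1+t\Psi'/\Psi)\ge(1+\lambda)\Psi>0$, so $\Theta$ is a strictly increasing bijection of $[0,\infty)$ (surjectivity from $\Theta(t)\ge\nu t^{p-1}\to\infty$, $t\ge1$, by \eqref{eq:lower bound in lip}), and from $\Theta(t)\ge\nu t^{p-1}$ one checks $\Theta^{-1}(s)\le C_{p,\nu}(1+s)^{1/(p-1)}$. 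Since $|\nabla u|=\Theta^{-1}(|V|)$ a.e.\ and $\Theta^{-1}$ is increasing and continuous, $\|\nabla u\|_{L^\infty(B_{R/4}(x))}\le C_{p,\nu}(1+\||V|\|_{L^\infty(B_{R/4}(x))})^{1/(p-1)}$, so --- recalling $|V|=\Psi(|\nabla u|)|\nabla u|$ --- it suffices to prove
\[
\big\||V|\big\|_{L^\infty(B_{R/4}(x))}\le C\Big(1+\Psi(1)+\fint_{B_R(x)}\Psi(|\nabla u|)|\nabla u|\,\d\mm\Big).
\]

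First I would apply Proposition~\ref{prop:key estimate} with weight $|\nabla u|_{M,\delta}^{\beta}$ and a cutoff $\eta\in\LIP_c(B_R(x))$; using $\div(\Psi(|\nabla u|)\nabla u)=\pm f$ on $\supp\eta$, $|V|^2=\Theta(|\nabla u|)^2$ and $|\nabla|\nabla u||\le|{\rm Hess}(u)|$, this yields, for all $\beta\ge0$, $M>0$, $\delta\in(0,1]$,
\begin{equation*}
\int\Psi(|\nabla u|)^2|\nabla u|_{M,\delta}^{\beta}|\nabla|\nabla u||^2\eta^2\,\d\mm\le C(1+\beta^2)\Big(\int f^2|\nabla u|_{M,\delta}^{\beta}\eta^2\,\d\mm+\int\Theta(|\nabla u|)^2|\nabla u|_{M,\delta}^{\beta}(|\nabla\eta|^2+\eta^2)\,\d\mm\Big),
\end{equation*}
$C=C(K,\Lambda,\lambda)$. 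The truncation makes every integral a priori finite; by a preliminary bootstrap on the integrability of $|V|$ (base case $|V|\in L^2_{\loc}$, improved to $L^{2\chi}_{\loc}$ by Proposition~\ref{cor:key estimate} and the local Sobolev inequality), one may pass to the limit $M\to\infty$, $\delta\to0$ in this inequality at each stage, replacing $|\nabla u|_{M,\delta}^{\beta}$ by $|\nabla u|^{\beta}$. The point is that the left-hand side then equals exactly $\int|\nabla W_\beta|^2\eta^2$ for the primitive $W_\beta:=\int_0^{|\nabla u|}\Psi(s)s^{\beta/2}\,ds$, which is the correct quantity to iterate.

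Next, using \eqref{eq:psi' condition} (which gives $\Psi(1)t^{\lambda}\le\Psi(t)\le\Psi(1)t^{\Lambda}$ for $t\ge1$ and the monotonicity of $\Theta$) together with the lower growth \eqref{eq:lower bound in lip}, I would show that the shifted quantity $\widehat W_\beta:=W_\beta+c_\beta(1+\Psi(1))$ obeys a two-sided comparison $c'\,\Theta(\bar w)\,\bar w^{\beta/2}\le\widehat W_\beta\le c''\,\Theta(\bar w)\,\bar w^{\beta/2}$, with $\bar w:=\max\{|\nabla u|,1\}$ and $c',c'',c_\beta$ depending only on $\lambda,\Lambda,\beta,p,\nu$; consequently $\Theta(|\nabla u|)^2|\nabla u|^{\beta}\le C_\beta\widehat W_\beta^{\,2}$ and $|\nabla u|^{\beta}\le C_\beta\widehat W_\beta^{\,2\theta_\beta}$ with $\theta_\beta:=\tfrac{\beta/2}{p-1+\beta/2}<1$. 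Inserting these into the inequality of the previous step, applying the local Sobolev inequality of $\rcd(K,N)$ spaces ($N<\infty$) to $\widehat W_\beta\eta$, and estimating the source term by H\"older with exponents $q/2$ and $q/(q-2)$ --- where $q>\max\{N,2\}$ is used precisely because it forces $q/(q-2)<\tfrac{N}{N-2}$, so that $\|(1+\widehat W_\beta)^{\theta_\beta}\eta\|_{L^{2q/(q-2)}}\le\|(1+\widehat W_\beta)\eta\|_{L^{2q/(q-2)}}$ interpolates between $L^{2\chi}$ and $L^2$ and a small multiple can be reabsorbed, the remainder controlled by $C_0$ --- one arrives at a reverse-H\"older inequality which, iterated over a geometric sequence $\beta_k\uparrow\infty$ and shrinking radii $R_k\downarrow R/4$, gives $\|\widehat W_0\|_{L^\infty(B_{R/4}(x))}\le C\fint_{B_R(x)}\widehat W_0\,\d\mm$; unwinding the definitions ($\widehat W_0\simeq 1+\Psi(1)+|V|$) this is exactly the bound on $\||V|\|_{L^\infty(B_{R/4}(x))}$ above, and combined with the reduction step it proves \eqref{eq:apriori lip}.

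The main obstacle is the comparison step: the weight delivered by Proposition~\ref{prop:key estimate} is adapted to $|\nabla u|$, not to the iterated quantity, so it must be matched to powers of $W_\beta$ through the ellipticity and growth bounds; because $\Psi$ may degenerate as $|\nabla u|\to0$ (for $1<p<2$) one is forced to iterate the shifted primitive $\widehat W_\beta$ rather than $|\nabla u|$ itself, to distinguish the behaviours for $p\ge2$ and $1<p<2$ when matching exponents, and to control the $\beta$-dependent constants carefully enough that the infinite product over the iteration converges with constants depending only on $N,K,\lambda,\Lambda,p,\nu,q,C_0$; moreover the lower-order term $\int\Theta(|\nabla u|)^2|\nabla u|_{M,\delta}^{\beta}(|\nabla\eta|^2+\eta^2)$ and the source term (where $q>\max\{N,2\}$ is essential) must be fed back into the scheme without spoiling the exponents. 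The remaining ingredients --- the Caccioppoli input, the Sobolev inequality, and the interpolation over radii lowering the exponent from $2$ to $1$ --- are routine.
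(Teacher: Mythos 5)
Your proposal follows the same Moser-iteration strategy as the paper's proof, built on the weighted Caccioppoli estimate of Proposition~\ref{prop:key estimate} (with free parameter $\beta$), the local Sobolev inequality, H\"older with exponent $q>\max\{N,2\}$, and the final $L^2$-to-$L^1$ reduction via Proposition~\ref{cor:key estimate}. The main bookkeeping difference is that the paper keeps the truncation $M$ fixed throughout the iteration --- iterating the quantity $v=\Psi(|\nabla u|_1)\,|\nabla u|_1\,|\nabla u|_{M,1}^{\gamma}$ (after applying Proposition~\ref{prop:key estimate} to the regularized $\Psi^1(t)=\Psi(\sqrt{t^2+1})$), so that all integrals are trivially finite, and removing $M$ only at the very end --- whereas you propose to pass $M\to\infty$, $\delta\to0$ at each stage via a preliminary bootstrap and iterate the shifted primitive $\widehat W_\beta$, which introduces the two-sided comparison lemma and forces you to track the polynomial $\beta$-dependence of its constants by hand, a complication the paper's choice of iterand sidesteps.
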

	
	The proof of this result is very similar to the one of \cite[Prop.\  4.10]{ivan1}, which focused on the particular case of $\Psi(t)\coloneqq (t^2+\delta)^\frac{p-2}{2}$ for  $p\in (1,3+\frac{2}{N-2})$ and $\delta>0$. Note that {this particular choice of} $\Psi$ satisfies \eqref{eq:psi' condition} with $\lambda=\Lambda=p-2$ but is neither bounded nor Lipschitz for $p>2$.  Except for this, the assumptions in \cite[Prop.\  4.10]{ivan1}  are the same as here.

	\begin{proof}[Proof of Proposition \ref{prop:lip a priori}]
    {Up to increasing $N$ we can assume that $N>2.$}
		Fix $B\coloneqq B_R(x)\subset \X$, $R\le 1$ and $u$ as in the hypotheses. By the scaling of \eqref{eq:apriori lip} we can assume  $\mea(B_{R}(x))=1.$   We will denote by $C\ge 1$ a constant whose value may change from line to line but  depending only on $N,K,\lambda ,\Lambda, C_0,q,\nu,p$. The rest of the proof is divided into several steps. 
		
		\smallskip 
		
		\noindent 1. \textit{Caccioppoli-type inequality}: For any  
		$\gamma\ge 0$,  define the function 
		\begin{equation}\label{eq:def of v in moser proof}
			v\coloneqq \Psi(|\nabla u|_1)|\nabla u|_1 (|\nabla u|_{M,1})^{\gamma}\in \W_\loc(\X), 
		\end{equation}
		{where we recall the notations $|\cdot|_1$ and $|\cdot|_{M,1}$ introduced in \eqref{eq:cutoff approx of gradient sec 7}.} {Direct computation} using the chain rule (recall the bound \eqref{eq:gradgrad} on $\nabla|\nabla u|$ from Lemma \ref{lem:gradgrad}) yields 
		\begin{align*}
			|\nabla v|\le(|\Lambda|+|\lambda|+1+\gamma) |\hess (u)|\Psi(|\nabla u|_1) |\nabla u|_{M,1}^{\gamma}, \quad \mea\text{-a.e.} 
		\end{align*}
		Let $\eta \in \LIP_c(B)$ be such that $0\le \eta\le 1$. It is immediate to verify that $\Psi^1(t)\coloneqq \Psi(\sqrt{t^2+1})$ still satisfies the hypotheses (see Lemma \ref{lem:regularity of truncated psi}). Hence we can apply Prop.\  \ref{prop:key estimate} to  $\Psi^1$ (with the constants $\beta$ and $\delta$ therein set to $2\gamma$ and $1$, respectively)  to control the Hessian term: 
		\begin{equation}\label{eq:start moser}
			\begin{split}
				\int_\X |\nabla v|^2\eta^2 \d \mm &\le C  (1+\gamma^4)\int_\X \Big(  f^2 \eta^2|\nabla u|_{M,1}^{2\gamma}+ (|\nabla \eta|^2+\eta^2) v^{2}   \Big)   \d \mm\\
				&\le C  (1+\gamma^4)\int_\X \bigg( f^2  \eta^2|\nabla u|_{1}^{2\gamma \frac{p-1}{\gamma+p-1}}|\nabla u|_{M,{1}}^{2\gamma \frac{\gamma}{\gamma+p-1}}+ (|\nabla \eta|^2+\eta^2) v^{2} \bigg)   \d \mm\\
				&\le   C (1+\gamma^4)\int_\X \Big( f^2 \eta^2 (v^2)^{\frac{\gamma}{\gamma+p-1}}+ (|\nabla \eta|^2+\eta^2) v^{2} \Big) \d \mm, 
			\end{split}
		\end{equation}
		where, in the first inequality, we used \eqref{eq:lip a priori ass} {and $|\nabla u| \leq |\nabla u|_1$}, and in the last one we used \eqref{eq:lower bound in lip}. {Note} \eqref{eq:start moser} is identical to \cite[eq.\ (4.28)]{ivan1}, where $\frac{\gamma}{\gamma+p-1}\in [0,1)$ is denoted there $\lambda_\beta$.

		\smallskip

		\noindent 2. \textit{Reverse Sobolev inequality}: 
		Arguing exactly as per \cite[Prop.\  4.10]{ivan1}, from \eqref{eq:start moser} we get 
		\begin{equation}\label{eq:final moser}
			\| \eta v\|_{L^{2^*}(\mm)}^2  
			\le C  (1+\gamma^4)^{\alpha}\max \Big\{ \left( R^2\Lip(\eta)^2+1\right)\|v\|_{L^2(\supp(\eta))}^2\,,1 \Big\},
		\end{equation}
		 where $2^*\coloneqq \frac{2N}{N-2}$  and $\alpha\coloneqq 1+\frac{N}{q-N}$. {For the benefit of the reader, we briefly outline the main idea. To show \eqref{eq:final moser} we apply H\"older's inequality to the term in \eqref{eq:start moser} containing $f$:
			\begin{equation}\label{eq:holder f moser}
				\int_\X  f^2 \eta^2 (v^2)^{\frac{\gamma}{\gamma+p-1}}\d \mm 
				\le  \|f\|_{L^q(B)}^{{2}} \|\eta v\|_{L^{\frac{2\gamma}{\gamma +p-1}\cdot \frac{2q}{q-2}}(B)}^\frac{2\gamma}{\gamma +p-1}
				\le \|f\|_{L^q(B)}^{{2}} \|\eta v\|_{L^{\frac{2q}{q-2}}(B)}^\frac{2\gamma}{\gamma +p-1},
			\end{equation} 
			where in the first inequality we used $|\eta| \le 1$, {$q>2$}, and $\frac{\gamma}{\gamma +p-1}\in (0,1] $ and in the last one Jensen's inequality.
			Plugging  \eqref{eq:holder f moser} into \eqref{eq:start moser},  by interpolation, we absorb the $L^{\frac{2q}{q-2}}$-norm of $\eta v$ into the  left-hand side of \eqref{eq:start moser} after an  application the Sobolev inequality. Here we use that $q>N$ {implies $\frac{2q}{q-2} < 2^*$}. See the argument in \cite[eq.~(4.30) to eq.~(4.33)]{ivan1} for further details.
		}

		\smallskip

		\noindent 3. \textit{Recursion estimate}: Having the bound \eqref{eq:final moser} we are now in a position to perform the Moser iteration. For each $k \in \nn\cup\{0\}$ we choose a function $\eta_k$ as before and satisfying 
		\[
		\eta_k \equiv 1 \text{ in $B_{  \frac R4+\frac{1}{2^{k+1}}\frac R4 }(x) =: B_k$}, \quad  \supp\eta_k\subset \bar B_{k-1} , \quad \Lip(\eta_k)\le  R^{-1}{2^{k+3}}.
		\]
		Set $\gamma_0\coloneqq p-1$  and  $\gamma_k\coloneqq  (\frac{N}{N-2})\gamma_k >\gamma_{k-1}$ for all $k\in \nn$.  
		Next, we define 
		\begin{equation}\label{eq:ak def moser proof}
			{\bf a}_k\coloneqq \left(\int_{B_k}  \Psi(|\nabla u|_1)^2|\nabla u|_1^2|\nabla u|_{M,1}^{2(\gamma_{k}-p +1)} \, \d \mm   \right)^\frac {1}{\gamma_k}, \quad k\in \nn\cup \{0\}, 
		\end{equation}
		and choose $\eta = \eta_k$ and $\gamma=\gamma_{k-1}-(p-1)$ in the inequality \eqref{eq:final moser}. 
		Iterating  \eqref{eq:final moser} exactly as in \cite[Prop.\  4.10]{ivan1} we reach
		\begin{equation}
			{\bf a}_k\le C ({\bf a}_0+1)
		\end{equation}
		where $C$  still depends only on $N,K,\lambda,\Lambda,C_0,q,\nu,p$ (here we used again \eqref{eq:lower bound in lip}). 
		\smallskip 
		
		\noindent 4. \textit{Deducing the $L^\infty$-bound}: 
		From the definition of $\mathbf{a}_k$, using \eqref{eq:lower bound in lip} and the bounds $|\nabla u| \wedge M\le |\nabla u|_{M,1} \le |\nabla u|_1$   we obtain for all $k$
		\begin{equation}\label{eq:almost the end}
			\begin{split}
				\nu^\frac{2}{\gamma_k}\bigg(\int_{B_{R/4}(x)}  &(|\nabla u|\wedge M)^{2\gamma_{k}} \, \d \mm   \bigg)^\frac 1{\gamma_k} \le {\bf a}_{k}\le C( {{\bf a}_{0}}+1)\\
				&
				\le C \|\Psi(|\nabla u|)|\nabla u|\|_{L^2(B_{{R/2}}(x))}^\frac{2}{p-1} +C(1+\Psi(1)^\frac{2}{p-1}),
			\end{split}
		\end{equation}
		where we used that $\sqrt{t^2+1}\Psi(\sqrt{t^2+1})\le 2^{\Lambda +1}(t\Psi(t)+\Psi(1))$ (see Lemma \ref{lem:psi basic}). 
		To estimate the term on the right-hand side we use Prop.\  \ref{cor:key estimate}, which gives
		\begin{equation}\label{eq:from 2(p-1) to p-1}
			\|\Psi(|\nabla u|)|\nabla u|\|_{L^2(B_{{R/2}}(x))}^2 
			\le C'
			\|f\|_{L^2(B_R(x)}^2 \d \mm 
			+ C'\|\Psi(|\nabla u|)|\nabla u|\|_{L^1(B_{{R}}(x))}^2 ,
		\end{equation}
		for some $C'$ depending only on $K,N,\lambda,\Lambda$.  Letting $k\to \infty$ and then $M\to \infty$ we conclude.
	\end{proof}

	\section{Non-degenerate Quasilinear Equations}\label{sec:the quasilinear case}
	
	{The ultimate goal of this section is to obtain $L^2$-Laplacian estimates for equation \eqref{eq:intro eq} where the function $\Psi$ satisfies the ellipticity condition \eqref{eq:psi' condition} and the non-degeneracy bound
		\begin{equation}\label{eq:upper lower c}
		c\le \Psi \le c^{-1}, \quad c\in(0,1).
		\end{equation}
 In particular we prove Theorem \ref{thm:main quasilinear}. With $c$ we will  denote the constant in \eqref{eq:upper lower c}. }

{To help the reader navigate this section we summarize below its content:
\begin{itemize}
    \item In \S \ref{subsec:general exist approx quasilinear} we prove an existence result, Prop.\  \ref{prop:existence minty}, for a truncated/cut-off problem, where we   replace $\Psi(|\nabla u|)$ by $	\Psi_{M,\eta}(|\nabla u|)$ (see \eqref{eq:PsiM def in 6.2}). 
    \item In \S \ref{subsec:galerkin} we employ a Galerkin  scheme for the truncated/cut-off problem, Prop.\  \ref{prop:galerkin scheme}. 
    \item In \S \ref{subsec:proof of theorem 2} we send $\eta\to 1$ and $M\to \infty$ and we obtain regularity for the original operator $\div(\Psi(\gradu)\nabla u)$, thus proving Theorem \ref{thm:main quasilinear}.
\end{itemize}
}

	\subsection{Existence for the truncated/cut-off problem}\label{subsec:general exist approx quasilinear}

	In what follows, we establish the existence of a weak solution $u$ with boundary data $g\in \W(\Omega)$ to the equation
	\begin{equation}\label{eq:approx problem general discussion start of S6}
		\div(\Psi_{M,\eta}(|\nabla u|) \nabla u ) = f, \quad \text{in $\Omega$,}
	\end{equation}
	in duality with a chosen closed subspace $V$ of $W^{1,2}_0(\Omega)$, where the shorthand $\Psi_{M,\eta}$ denotes 
	\begin{equation}\label{eq:PsiM def in 6.2}
		\Psi_{M,\eta}(|\nabla u|) := \Psi((|\nabla u| \wedge M)\eta^2), 
	\end{equation}
	and is used throughout \S \ref{sec:the quasilinear case} to simplify notation. { To solve \eqref{eq:approx problem general discussion start of S6},  $\eta$ can be any $L^0$-function,  later we will need to choose $\eta\in \LIP_c(\Omega)$.} The existence result   is given in Prop.\  \ref{prop:existence minty} below and will be used in subsequent section to obtain the Galerkin approximations $u_k$ in duality with the spaces $V=V_k$. We will then deduce the convergence $u_k \rightharpoonup u$ for $u$ a solution of equation \eqref{eq:approx problem general discussion start of S6} and, by obtaining $k$-independent {Laplacian-}estimates (see Prop.\  \ref{prop:smooth laplacian estimate}), also that $\Delta u \in L^2_\loc$. These estimates however depend on $M$ and the cut-off function $\eta$ in \eqref{eq:PsiM def in 6.2}. As a result, in order to let $M\to\infty$ and $\eta\to 1$, we will  rely instead on the estimate previously obtained in  Prop.\  \ref{prop:a priori quasilinear lapl estimates}.

	We explain briefly why we proceed in this manner. {The advantage of considering the Galerkin approximations $u_k$ is that they are { $W^{2,2}_\loc$} by definition and so we can justify taking { second} derivatives.} Thus, we  deduce the second-order estimate for  $u_k$, and then pass it to the limit for the original solution $u$ of \eqref{eq:intro eq}. {Unfortunately we cannot obtain these estimates working directly with $\Psi(|\nabla u|)$  as the coefficient in \eqref{eq:approx problem general discussion start of S6} and we need to introduced the $M$-truncation and the cut-off $\eta\in\LIP_c(\Omega)$}. {Indeed, both of these modifications are needed when applying the Bochner inequality of Prop.\  \ref{prop:bochner type}; the former is needed to make sense of products such as $|\nabla u| \langle \nabla \Psi_{M,\eta}, \nabla|\nabla u|\rangle$ in \eqref{eq:bochner type}, while the }latter is used primarily so that we may justify integrating by parts in various manipulations.

	\begin{prop}\label{prop:existence minty}
		Let $\Xdm$ be an $\RCD(K,\infty)$ space and $\Omega\subset \X$ be open bounded, such that $\mm(\X\setminus \Omega)>0.$ Let $\Psi\in C([0,\infty))$ satisfy  {\eqref{eq:upper lower c}} for some $c\in(0,1)$ and be such that $t \mapsto t \Psi(t)$ is monotone non-decreasing.
		Let also $\eta \in L^0(\Omega)$,  $g\in \W(\Omega)$, $M>0$. Then 
		\begin{enumerate}[label=\roman*)]
			\item   for all $v_1,v_2 \in \W_0(\Omega),$ it holds
			\begin{equation}\label{eq:monotonicity}
				\int_\Omega \la \Psi_{M,\eta}(|\nabla v_2+g|) \nabla (v_2+g)-\Psi_{M,\eta}(|\nabla v_1+g|) \nabla (v_1+g),\nabla (v_2-v_1)\ra\ge 0,
			\end{equation}
			\item for all $f\in L^2(\Omega)$ and all closed subspace $V\subset \W_0(\Omega)$   there exists $u\in g+V$ so that
			\begin{equation}\label{eq:weak equation nonlinear}
				\int_\Omega \Psi_{M,\eta}(|\nabla u|)\la \nabla u,\nabla w\ra =\int_\Omega f w \d \mm, \quad \forall w \in V
			\end{equation}
			and (any such) $u$ satisfies
			\begin{equation}\label{eq:weak equation nonlinear W12estimate}
				\|u\|_{\W(\Omega)}\le C (\|g\|_{\W(\Omega)}+\|f\|_{L^2(\Omega)}),
			\end{equation}
			where $C>0$ is a constant depending only on $c$ and $\Omega$. Moreover if $t\mapsto t\Psi(t)$ is strictly increasing then such $u$ is unique.
		\end{enumerate}
	\end{prop}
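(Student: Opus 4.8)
The proof reduces everything to a single pointwise (fibrewise) monotonicity property, which I would establish first. Fix $a\ge 0$ and $M>0$ and, for $\xi$ ranging over an inner product space, set $T_a(\xi)\coloneqq \Psi\big((|\xi|\wedge M)a\big)\,\xi$ and $h_a(t)\coloneqq \Psi\big((t\wedge M)a\big)\,t$, $t\ge 0$. The key observation is that $h_a$ is non-decreasing: on $[0,M]$ one has $h_a(t)=\tfrac1a\,(ta)\Psi(ta)$ when $a>0$ (and $h_0(t)=\Psi(0)t$), which is non-decreasing since $s\mapsto s\Psi(s)$ is; on $[M,\infty)$ one has $h_a(t)=\Psi(Ma)\,t$ with $\Psi(Ma)\ge c>0$, hence increasing; the two pieces match at $t=M$. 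Writing $b_a(t)\coloneqq\Psi\big((t\wedge M)a\big)\in[c,c^{-1}]$, Cauchy--Schwarz together with $b_a\ge 0$ give
\begin{multline*}
\la T_a(\xi_2)-T_a(\xi_1),\,\xi_2-\xi_1\ra=b_a(|\xi_2|)|\xi_2|^2+b_a(|\xi_1|)|\xi_1|^2-\big(b_a(|\xi_1|)+b_a(|\xi_2|)\big)\la\xi_1,\xi_2\ra\\
\ge\big(h_a(|\xi_2|)-h_a(|\xi_1|)\big)\big(|\xi_2|-|\xi_1|\big)\ge 0 ,
\end{multline*}
so $T_a$ is monotone. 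Applying this $\mm$-a.e.\ in $\Omega$ with $a=\eta^2$ and $\xi_i=\nabla(v_i+g)$ — all integrands being $L^1$ because $c\le\Psi\le c^{-1}$ — and integrating yields \eqref{eq:monotonicity}, which proves (i). If in addition $s\mapsto s\Psi(s)$ is strictly increasing, then $h_a$ is strictly increasing for every $a\ge 0$, and tracing the equality cases above shows that $\la T_a(\xi_2)-T_a(\xi_1),\xi_2-\xi_1\ra=0$ forces $\xi_1=\xi_2$; this strict version will give uniqueness.

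For (ii), fix a closed subspace $V\subset\W_0(\Omega)$, which is itself a separable Hilbert space, and define $\mathcal A\colon V\to V^{*}$ by $\la\mathcal A\bar u,w\ra\coloneqq\int_\Omega\Psi_{M,\eta}(|\nabla(\bar u+g)|)\la\nabla(\bar u+g),\nabla w\ra\d\mm-\int_\Omega fw\,\d\mm$, so that $u=\bar u+g$ solves \eqref{eq:weak equation nonlinear} exactly when $\mathcal A\bar u=0$. The plan is to invoke the classical Browder--Minty surjectivity theorem for monotone operators, after checking: \emph{monotonicity} of $\mathcal A$ is precisely (i) with $v_i=\bar u_i$; \emph{hemicontinuity} (in fact strong continuity) follows since $\bar u_n\to\bar u$ in $\W(\Omega)$ forces $|\nabla(\bar u_n+g)|\to|\nabla(\bar u+g)|$ $\mm$-a.e.\ along a subsequence of any subsequence, hence $\Psi_{M,\eta}(|\nabla(\bar u_n+g)|)\to\Psi_{M,\eta}(|\nabla(\bar u+g)|)$ $\mm$-a.e.\ by continuity of $\Psi$, and dominated convergence (dominant $c^{-1}|\nabla(\bar u_n+g)|$) upgrades this to strong $L^2(T\X)$-convergence of $\Psi_{M,\eta}(|\nabla(\bar u_n+g)|)\nabla(\bar u_n+g)$; \emph{coercivity} follows from $\Psi_{M,\eta}\ge c$ and Prop.~\ref{prop:poincare} — testing with $\bar u$, writing $\nabla\bar u=\nabla(\bar u+g)-\nabla g$ and using $\|\bar u\|_{L^2}\le C_\Omega\|\nabla\bar u\|_{L^2}$ one gets $\la\mathcal A\bar u,\bar u\ra\ge c\|\nabla(\bar u+g)\|_{L^2}^2-C(c,\Omega)\big(\|\nabla g\|_{L^2}+\|f\|_{L^2}\big)\big(1+\|\nabla(\bar u+g)\|_{L^2}\big)$, which is superlinear in $\|\bar u\|_{\W(\Omega)}$. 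Thus $0\in\mathrm{Range}(\mathcal A)$, producing $u\in g+V$. Equivalently, one could minimise over the affine set $g+V$ the convex, coercive, weakly lower semicontinuous functional $J(u)=\int_\Omega G_{\eta^2}(|\nabla u|)\d\mm-\int_\Omega fu\,\d\mm$ with $G_a(t)\coloneqq\int_0^t h_a(s)\,\d s$, whose Euler--Lagrange condition on $V$ is \eqref{eq:weak equation nonlinear}.

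The a priori bound \eqref{eq:weak equation nonlinear W12estimate} I would obtain by testing \eqref{eq:weak equation nonlinear} with $w=u-g\in V$: this gives $\int_\Omega\Psi_{M,\eta}(|\nabla u|)|\nabla u|^2\d\mm=\int_\Omega\Psi_{M,\eta}(|\nabla u|)\la\nabla u,\nabla g\ra\d\mm+\int_\Omega f(u-g)\,\d\mm$; bounding the left side below by $c\|\nabla u\|_{L^2}^2$ and the right side above by Cauchy--Schwarz, Prop.~\ref{prop:poincare} and Young's inequality (absorbing the resulting quadratic term) yields $\|\nabla u\|_{L^2}^2\le C(c,\Omega)\big(\|g\|_{\W(\Omega)}^2+\|f\|_{L^2}^2\big)$, and a final use of Poincaré applied to $u-g$ controls $\|u\|_{\W(\Omega)}$. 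For uniqueness under strict monotonicity, if $u_1,u_2\in g+V$ both solve, subtracting the two identities tested with $w=u_1-u_2\in V$ gives $\int_\Omega\la\Psi_{M,\eta}(|\nabla u_1|)\nabla u_1-\Psi_{M,\eta}(|\nabla u_2|)\nabla u_2,\nabla(u_1-u_2)\ra\d\mm=0$; the integrand is $\ge0$ $\mm$-a.e.\ by (i), hence vanishes $\mm$-a.e., and the strict monotonicity of $T_a$ from the first paragraph forces $\nabla u_1=\nabla u_2$ $\mm$-a.e.\ in $\Omega$, so $\nabla(u_1-u_2)=0$ and Prop.~\ref{prop:poincare} (recall $\mm(\X\setminus\Omega)>0$) gives $u_1=u_2$.

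The argument is conceptually the textbook monotone-operator package, so the main obstacle is bookkeeping rather than ideas: making the fibrewise map $T_a$ and its measurable dependence on the parameter $a=\eta^2$ precise in the $L^0(T\X)$-module language, and carefully carrying the boundary datum $g$ through the coercivity and testing computations. The verifications of hemicontinuity and coercivity are where most of the routine work lies.
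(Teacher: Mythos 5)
Your proof is correct and takes essentially the same route as the paper's: both invoke the Minty--Browder theorem for the operator on $V$ after establishing monotonicity, (hemi)continuity, and coercivity, and both derive the pointwise monotonicity of the integrand from the same Cauchy--Schwarz manipulation reducing to the monotonicity of $t\mapsto t\,\Psi((t\wedge M)\lambda)$ (your $h_a$ is the paper's $\zeta_\lambda$). The only cosmetic differences are that you phrase the monotonicity as a cleaner fibrewise lemma, you sketch the alternative variational minimisation route, and you derive the $W^{1,2}$ bound by testing with $u-g$ rather than reading it off the coercivity computation, but these are the same estimates.
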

	\begin{proof}
		Our strategy is to apply the Minty--Browder Theorem (see \textit{e.g.}~\cite[Theorem 5.16]{BrezisBook}) to the operator $A:V\to V^*$, defined by 
		\[
		Av(w)\coloneqq \int_\Omega \Psi_{M,\eta}(|\nabla (v+g)|)\la \nabla (v+g),\nabla w\ra\d \mm,
		\]
		where $V$ is endowed with the $\|\cdot\|_{\W(\Omega)}$ norm.
		Since $|\Psi|$ is bounded, it is clear that  $Av\in V^*$, hence the map is well posed.  We need to prove that $A$ is continuous, monotone and coercive; we stress that, while $Av$ is a linear map for each fixed $v \in V$, the map $A$ {itself} is not  linear. 
		
		\smallskip 
		
		\noindent 1. \textit{$A$ is continuous}: We argue sequentially. Take any sequence $\{v_n\}_n \subset V$ such that $v_n\to v\in V$  strongly in $\W(\Omega)$. Observe that, given any $w \in V$, there holds 
		\begin{equation*}
			\begin{aligned}
				|Av_n(w) - Av(w)| = \bigg| \int_\Omega \langle \Psi_{M,\eta}(|\nabla (v_n+g)|)\nabla (v_n+g) - \Psi_{M,\eta}(|\nabla (v+g)|) \nabla (v+g),\nabla w \ra \d \mm \bigg|, 
			\end{aligned}
		\end{equation*}
		whence, by H\"older's inequality and taking the supremum over all $w \in V$ with unit norm, 
		\begin{align*}
			\|Av_n-Av\|_{V^*} &\le \Big\|  \left|\Psi_{M,\eta}(|\nabla (v_n+g)|) \nabla (v_n+g) - \Psi_{M,\eta}(|\nabla (v+g)|) \nabla (v+g)\right| \Big\|_{L^2(\Omega)} \to 0,
		\end{align*}
		which follows from the Dominated Convergence Theorem {and the continuity of $\Psi$}. 
		
		\smallskip 
		
		\noindent 2. \textit{$A$ is coercive}: By direct computation, using {\eqref{eq:upper lower c}} and the Poincar\'e inequality {of Prop.\  \ref{prop:poincare}}, we obtain, for all $v \in V$,
		\begin{equation}\label{eq:Avv>}
			\begin{split}
				\frac{Av(v)}{\|v\|_{\W(\Omega)}} &\ge  \frac{{c}\Vert\nabla v\Vert_{L^2(\Omega)}^2- {c^{-1}} \|\nabla v\|_{L^2(\Omega)}\|\nabla g\|_{L^2(\Omega)}}{\|v\|_{W^{1,2}(\Omega)}} \\ 
				&\geq {c}C_P^{-1} \Vert v \Vert_{W^{1,2}(\Omega)} -  {c^{-1}} \Vert g \Vert_{W^{1,2}(\Omega)}, 
			\end{split}
		\end{equation}
		{with $C_P$ the Poincar\'e constant}. Hence $ \frac{Av(v)}{\|v\|_{\W(\Omega)}} \to\infty$ as $\|v\|_{\W(\Omega)}\to \infty$,   as required. 
		
		\smallskip 
		
		\noindent 3. \textit{$A$ is monotone}: It remains to show monotonicity, that is  
		\begin{equation}\label{eq:monotonicity in proof}
			(Av_1-Av_2)(v_1-v_2)\ge 0, \quad \forall v_1,v_2\in V;
		\end{equation}
		to be interpreted as $Av_1(v_1-v_2) - Av_2(v_1-v_2) \geq 0$ as real numbers. Note that this would also prove item i) {of the proposition}. To show \eqref{eq:monotonicity in proof}, we define 
		\begin{equation}\label{eq:zeta function}
			\zeta_\lambda(t)\coloneqq t\Psi((t\wedge M)\lambda).
		\end{equation}
		It holds that $t\mapsto \zeta_\lambda(t)$ is non-decreasing for all $\lambda \ge 0$,$M>0$ and strictly increasing if $t\Psi(t)$ is.

		For ease of notation, we now denote $\Psi_j := \Psi((|\nabla(v_j+g)|\wedge M)\eta^2)$ and $w_j := v_j + g$, $j=1,2$, in the lines of computation that follow. Observe that direct computation yields 
		\begin{equation*}
			\begin{aligned}
				\langle \Psi_1 \nabla w_1 -& \Psi_2 \nabla w_2 , \nabla(v_1-v_2) \rangle = \Psi_1 |\nabla w_1|^2  + \Psi_2 |\nabla w_2|^2   -(\Psi_1 + \Psi_2) \langle \nabla w_1 , \nabla w_2  \rangle. 
			\end{aligned}
		\end{equation*}
		It therefore follows that there holds, in the $\mm$-a.e.~sense, 
		\begin{equation}\label{eq:after multiply by eta2}
			\begin{aligned}
				\langle \Psi_1  \nabla w_1 - \Psi_2 \nabla w_2 , \nabla(v_1-v_2) \rangle   &\ge  \Psi_1 |\nabla w_1|^2  + \Psi_2 |\nabla w_2|^2   -(\Psi_1 + \Psi_2) |\nabla w_1||\nabla w_2| \\
				&= (\Psi_1  |\nabla w_1| - \Psi_2 |\nabla w_2|) (|\nabla w_1| - |\nabla w_2|) \\ 
				&=(\zeta_{\eta^2}(|\nabla w_1|) - \zeta_{\eta^2}(|\nabla w_2|) (|\nabla w_1| - |\nabla w_2|) \geq 0,
			\end{aligned}
		\end{equation}
		where we used that  $\zeta_\lambda$, \textit{cf.}~\eqref{eq:zeta function}, is monotone for all $\lambda\ge 0$. Integrating \eqref{eq:after multiply by eta2} gives \eqref{eq:monotonicity in proof}.

		\smallskip 
		
		\noindent 4. \textit{$A$ is strictly monotone when $t\mapsto t \Psi(t)$ is strictly increasing}: Suppose that $ (Av_1-Av_2)(v_1-v_2)=0$ for some $v_1,v_2\in V.$ 
		Then the two inequalities in \eqref{eq:after multiply by eta2} must be  equalities $\mm$-a.e. The first one,\textit{ i.e.}\ Cauchy–Schwarz, since $\Psi_1>0,\Psi_2>0$ implies that
		\[
		\la \nabla (v_1+g) ,\nabla (v_2+g)\ra=|\nabla v_1+g||\nabla v_2+g|, \quad \mm\text{-a.e.,}
		\]
		while the second  combined with the strict monotonicity of $t\mapsto \zeta_\lambda(t)$, for all $\lambda \in \rr$, gives
		\[
		|\nabla (v_1+g)|=|\nabla (v_2+g)|\quad \mm\text{-a.e.}
		\]
		Hence $|\nabla (v_1-v_2)|=0$ $\mm$-a.e.\ in $\Omega$ and by the Poincaré inequality \eqref{eq:poincare} we deduce $v_1=v_2.$
		
		\smallskip 
		
		\noindent 5. {The assumptions of the Minty--Browder Theorem are therefore satisfied, and it follows that for all $f \in L^2(\Omega) \subset V^*$ there exists $v \in V$ solving $Av = f$. In turn, by letting $u=v+g$, we obtain the sought solution of \eqref{eq:weak equation nonlinear}.} Finally estimate \eqref{eq:weak equation nonlinear W12estimate} follows immediately from \eqref{eq:Avv>} applied to $v=u-g$ and standard algebraic manipulations. 
	\end{proof}

	\subsection{Galerkin approximation}\label{subsec:galerkin}

	In this portion of the manuscript, we perform a Galerkin approximation of the quasilinear problem \eqref{eq:approx problem general discussion start of S6} as discussed at the start of \S \ref{subsec:general exist approx quasilinear}.   We shall first derive a Laplacian-estimate \eqref{eq:second order est for galerkin} in Prop.\  \ref{prop:smooth laplacian estimate} below, which is independent of the $k$-order subspace $V_k$---recall \eqref{eq:Vk def}---on which we perform the Galerkin approximation. This will allow us to deduce second order regularity of the limit solution.

	\begin{prop}[Laplacian-type estimate]\label{prop:smooth laplacian estimate}
		Let $\Xdm$ be a {locally compact} $\RCD(K,\infty)$ space, $\Omega\subset \X$ be open and bounded and $u\in \W(\Omega)$ with $\Delta u\in \W(\Omega)$. Let $M>0$. Let $\Psi\in \LIP([0,\infty))$  be bounded, such that
		$\Psi(t)> \Psi(0)>0$ for all $t>0$  and 
		\begin{equation}\label{eq:psi trick in 6.2}
			\frac{t\Psi'(t)}{\Psi(t)-\Psi(0)}\ge -1, \quad \text{ for a.e.\ } t >0.
		\end{equation}
		Suppose that for some $\eta \in \LIP_{c}(\Omega)$ and some $g \in \W(\Omega)$ with $\Delta g\in \W(\Omega)$ it holds $\Delta (u-g)\in \W_0(\Omega)$ and, with $\Psi_{M,\eta}$ as in \eqref{eq:PsiM def in 6.2},
		\begin{equation}\label{eq:psi nabla u term}
			-\int_\Omega \Psi_{M,\eta}(|\nabla u|) \la \nabla u, \nabla \Delta (u-g)\ra \d \mm=\int f \Delta (u-g) \d \mm.
		\end{equation} 
		Then 
		\begin{equation}\label{eq:second order est for galerkin}
			\int_\Omega  |\Delta u|^2\d \mm \le C\int_\Omega \Big( |\nabla u|^2+f^2+|\nabla \Delta g|^2+|\Delta g|^2 + g^2 \Big)  \d \mm,
		\end{equation}
		where $C>0$ is a constant depending only on $\Psi$, $M$,  $K$ and $\eta.$ 
	\end{prop}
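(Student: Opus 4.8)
The plan is to use the improved Bochner-type inequality of Proposition \ref{prop:bochner type} together with the equation \eqref{eq:psi nabla u term}, which is precisely the "tested against $\Delta(u-g)$" form that one obtains for a Galerkin approximant (since $V_k$ is mapped to itself by $\Delta$, the function $\Delta(u-g)$ is an admissible test function). First I would record, using the hypothesis \eqref{eq:psi trick in 6.2}, that the function $\widetilde\Psi\coloneqq \Psi-\Psi(0)$ is non-negative and satisfies the first inequality in \eqref{eq:psi' condition} with $\lambda=-1$ (more precisely the weaker form $t\widetilde\Psi'(t)\ge -\widetilde\Psi(t)$). The point of splitting off the constant $\Psi(0)>0$ is the same trick as in the proof of Proposition \ref{prop:a priori quasilinear lapl estimates}: the $\Psi(0)$-part contributes a term $\Psi(0)\la\nabla u,\nabla\Delta(u-g)\ra$ which, after integration by parts, produces the \emph{good} term $\Psi(0)\int \phi\,(\Delta u)^2$ (up to lower-order pieces), while the $\widetilde\Psi$-part is exactly of the shape handled by the Bochner inequality with the weight $\Psi_{M,\eta}$ built from $\widetilde\Psi$.

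Concretely I would proceed as follows. Rewrite \eqref{eq:psi nabla u term} as
\[
-\int_\Omega \widetilde\Psi_{M,\eta}(|\nabla u|)\la\nabla u,\nabla\Delta(u-g)\ra\d\mm
-\Psi(0)\int_\Omega \la\nabla u,\nabla\Delta(u-g)\ra\d\mm
=\int_\Omega f\,\Delta(u-g)\d\mm,
\]
where $\widetilde\Psi_{M,\eta}(|\nabla u|)\coloneqq \widetilde\Psi((|\nabla u|\wedge M)\eta^2)$. For the second integral, integrate by parts against $\Delta u$: this yields $\Psi(0)\int(\Delta u)^2-\Psi(0)\int\la\nabla u,\nabla\Delta g\ra$, so after rearranging we get a lower bound $\Psi(0)\int(\Delta u)^2 \le (\text{stuff})$. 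For the first integral, note that $\la\nabla u,\nabla\Delta u\ra$ appears, and I substitute using the Bochner-type inequality \eqref{eq:bochner type} of Proposition \ref{prop:bochner type} applied to $u$ (after a cut-off/approximation reduction to $u\in\test(\X)$ exactly as in Steps 2--3 of that proof, using Lemma \ref{lem:cut-off lemma} and Lemma \ref{lem:approximation lemma}), with the weight being $\widetilde\Psi_{M,\eta}$; this lets me replace $\int \widetilde\Psi_{M,\eta}\la\nabla u,\nabla\Delta u\ra$ by something controlled from above by $-\int|\nabla u|\la\nabla\widetilde\Psi_{M,\eta},\nabla|\nabla u|\ra - \int\widetilde\Psi_{M,\eta}(|{\rm Hess}\,u|^2+K|\nabla u|^2)$. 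The key structural cancellation is that the $|{\rm Hess}\,u|^2$ terms drop out: the term $|\nabla u|\la\nabla\widetilde\Psi_{M,\eta},\nabla|\nabla u|\ra$ contains a piece $\widetilde\Psi'\cdot\eta^2\cdot|\nabla u|\,\mathbbm 1_{\{|\nabla u|\le M\}}|\nabla|\nabla u||^2$, which by $t\widetilde\Psi'(t)\ge-\widetilde\Psi(t)$ and $|\nabla|\nabla u||\le|{\rm Hess}\,u|$ (Lemma \ref{lem:gradgrad}) is bounded below by $-\widetilde\Psi_{M,\eta}|{\rm Hess}\,u|^2$, cancelling the Hessian term from Bochner — this is the same mechanism as in \eqref{eq:orange 2}. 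The remaining pieces of $\nabla\widetilde\Psi_{M,\eta}$ carry a factor $\nabla\eta^2$, hence are supported on $\supp(\eta)$ and are bounded, after Young's inequality, by $C_{\eta,\Psi,M}(|\nabla u|^2 + |\nabla|\nabla u||^2\cdot(\text{small}))$ where the small coefficient times $|{\rm Hess}\,u|^2$ can be reabsorbed.

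After these substitutions and applications of Young's inequality with a small parameter, I would be left with an inequality of the form
\[
\tfrac{\Psi(0)}{2}\int_\Omega(\Delta u)^2\d\mm \;\le\; C\int_\Omega\Big(|\nabla u|^2+f^2+|\Delta g|^2+|\nabla\Delta g|^2+g^2\Big)\d\mm + \varepsilon\int_{\supp\eta}|{\rm Hess}\,u|^2\d\mm,
\]
where the cross-terms $\la\nabla u,\nabla\Delta g\ra$, $f\Delta(u-g)$, and the $\nabla\eta$-pieces have been split by Young's inequality to produce $|\nabla u|^2,f^2,|\Delta g|^2,|\nabla\Delta g|^2,g^2$ on the right (using also the global Poincaré-type control of $g$ by its norm, or simply keeping $g^2$ as allowed by the statement). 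The last term with $|{\rm Hess}\,u|^2$ is then absorbed via the Calderón–Zygmund inequality, Corollary \ref{cor:hessian laplacian estimate}, applied with cut-off $\phi$ equal to $1$ on $\supp\eta$: this gives $\int_{\supp\eta}|{\rm Hess}\,u|^2\le 4\int(\Delta u)^2\phi^2 + C_K\int|\nabla u|^2(\cdots)$, but here $\phi$ has \emph{larger} support than $\eta$, so to close the estimate I must be slightly careful — I would instead run the whole argument with $\eta$ replaced by a function that is $1$ on a fixed set and then use that in \eqref{eq:second order est for galerkin} the left-hand side is a global $L^2$ norm, so really the statement should be read as: choosing $\eta$ as given, the constant $C$ depends on $\eta$, and the $|{\rm Hess}\,u|^2$ term on $\supp\eta$ is itself controlled by $(\Delta u)^2$ on a slightly larger set times a constant depending on the cut-off. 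Since \eqref{eq:second order est for galerkin} allows $C$ to depend on $\eta$ and is a global estimate, one does not need a localization hierarchy: the $\varepsilon|{\rm Hess}\,u|^2$ term, bounded by $\varepsilon(4\int(\Delta u)^2 + C_K\int|\nabla u|^2(1+|\nabla\phi|^2))$ with $\phi$ a fixed cut-off depending only on $\eta$, is absorbed into the left-hand side for $\varepsilon$ small, completing the proof.

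\textbf{Main obstacle.} The delicate point is the reduction to $u\in\test(\X)$ so that the Bochner inequality \eqref{eq:bochner type} and the integration-by-parts manipulations (in particular moving $\la\nabla u,\nabla\Delta(u-g)\ra$ around) are rigorously justified, while keeping track that the approximation of $u$ — and of $g$ — preserves the tested identity \eqref{eq:psi nabla u term} in the limit, or at least an inequality sufficient to conclude; here one uses Lemma \ref{lem:psi' convergence} for the convergence of $\Psi_{M,\eta}(|\nabla u_n|)$ together with the $H^{2,2}$ and $|\nabla u_n|\to|\nabla u|$ in $W^{1,2}$ convergences from Lemma \ref{lem:approximation lemma}. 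The second subtle point is the bookkeeping of the Hessian-cancellation under the truncation $|\nabla u|\wedge M$ and the cut-off $\eta^2$: one must verify that the "bad" sign terms coming from $\widetilde\Psi'$ are exactly compensated, using $t\widetilde\Psi'(t)\ge-\widetilde\Psi(t)$ a.e., and that all the residual $M$- and $\eta$-dependent constants are finite — which is why the statement wisely allows $C$ to depend on $\Psi$, $M$, $K$ and $\eta$.
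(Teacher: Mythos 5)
Your plan follows the paper's proof essentially step by step: split $\Psi=\widetilde\Psi+\Psi(0)$ with $\widetilde\Psi\coloneqq\Psi-\Psi(0)\ge 0$, integrate by parts the $\Psi(0)$-piece of \eqref{eq:psi nabla u term} to produce the good $\Psi(0)\int(\Delta u)^2$ term, feed $\widetilde\Psi$ (which vanishes at $0$) into the Bochner-type inequality \eqref{eq:bochner type}, cancel the Hessian contribution via $t\widetilde\Psi'(t)\ge-\widetilde\Psi(t)$ and $|\nabla|\nabla u||\le|\mathrm{Hess}\,u|$, and absorb the residual Hessian term by Corollary \ref{cor:hessian laplacian estimate}.

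One small over-complication: your ``main obstacle'' paragraph is unnecessary. Proposition \ref{prop:bochner type} is already stated for $u\in W^{1,2}(\Omega)$ with $\Delta u\in W^{1,2}(\Omega)$ --- precisely the hypothesis here --- so it can be invoked directly; the reduction to $\test(\X)$ and the associated approximation bookkeeping are entirely internal to its proof and need not be redone, and there is no need to worry about preserving \eqref{eq:psi nabla u term} along an approximation, since that identity is used once, as-is, after Bochner has been applied. Similarly, your worry about the cut-off $\phi$ in the Calder\'on--Zygmund step having larger support than $\eta$ is moot: the term to absorb is $\varepsilon\int\eta^2|\mathrm{Hess}\,u|^2$, and Corollary \ref{cor:hessian laplacian estimate} applied with $\phi=\eta$ gives $\int\eta^2|\mathrm{Hess}\,u|^2\le 4\int\eta^2(\Delta u)^2+C_K\int|\nabla u|^2(\eta^2+|\nabla\eta|^2)$; since $\eta^2\le 1$ the $(\Delta u)^2$ piece is dominated by the global $L^2$ norm on the left-hand side and absorbs for $\varepsilon$ small. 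No localization hierarchy is needed.
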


	\begin{rem}\label{rem:condition vs strict monotonicity}
		Condition \eqref{eq:psi trick in 6.2} implies that $t \mapsto t \Psi(t)$ is non-decreasing. More strongly,
		\eqref{eq:psi trick in 6.2}  implies that  the first in \eqref{eq:psi' condition} holds with $\lambda=-1+\frac{\Psi(0)}{\|\Psi\|_\infty}.$ 
		\fr 
	\end{rem}

	Before giving the proof of this result, we explain why it is needed for our strategy, and why we cannot simply use the uniform estimate in Prop.\  \ref{prop:a priori quasilinear lapl estimates} instead of \eqref{eq:second order est for galerkin}. At first glance, it seems that \eqref{eq:second order est for galerkin} is not needed, since the estimate of Prop.\  \ref{prop:a priori quasilinear lapl estimates} \emph{appears} stronger. {Indeed, if one formally substitutes for $f$ in place of $\div(\Psi_{M,\eta}(|\nabla u|) \nabla u)$ in Prop.\  \ref{prop:a priori quasilinear lapl estimates} (say that $\eta=1$ in $\supp(\phi)$)} one obtains a sharper estimate than \eqref{eq:second order est for galerkin}.  However, the aforementioned formal substitution cannot be rigorously justified, as the equation $\div(\Psi_{M,\eta}(|\nabla u|)\nabla u) = f$ for the Galerkin approximation {does not hold pointwise, but} only  in duality with the subspace $V_k$. { As a matter of fact Prop.\  \ref{prop:smooth laplacian estimate} gives  a Laplacian estimate precisely  assuming only that $\div(\Psi_{M,\eta}(|\nabla u|)\nabla u) = f$ holds  in duality with $u-g$, see \eqref{eq:psi nabla u term}. The drawback is that the constant depends on $\eta$ and $M.$} Only after passing to the limit as $k\to\infty$ and obtaining a {sufficiently regular ``true''} solution, in duality with  $W^{1,2}_0(\Omega)$, we will be able to invoke the estimate of Prop.\  \ref{prop:a priori quasilinear lapl estimates}.

	\begin{proof}[Proof of Proposition \ref{prop:smooth laplacian estimate}]
		Set $\tilde \Psi=\Psi-\Psi(0)\ge 0$.
		Since $\tilde \Psi(0)=0$ ---this is precisely why we employ $\tilde\Psi$ instead of $\Psi$ directly--- the assumptions of the  Bochner-type inequality \eqref{eq:bochner type} in Prop.\  \ref{prop:bochner type} with $u,\eta$ and $\tilde \Psi$ are satisfied. Hence
		\begin{equation}\label{eq:bochner psi tilde}
			-\int_\Omega |\nabla u|\la \nabla \tilde \Psi_{M,\eta} ,\nabla |\nabla u| \ra \d \mm \ge \int_\Omega \Big( |{\rm Hess} (u)|^2  +\la \nabla u, \nabla \Delta u\ra + K|\nabla u|^2 \Big) \tilde \Psi_{M,\eta}  \d \mm. 
		\end{equation}
		{Setting $\Psi'_{M,\eta}\coloneqq \Psi'((|\nabla u | \wedge M)\eta^2)$}  we  now compute
		\begin{equation}\label{eq:deriv formula truncation}
			\nabla \tilde \Psi_{M,\eta} = \nabla\Psi_{M,\eta} = { \Psi'_{M,\eta}} \Big( \eta^2 \mathds{1}_{\{|\nabla u| \leq M\}}\nabla |\nabla u| + (|\nabla u|\wedge M) 2 \eta \nabla \eta \Big).
		\end{equation}
		Substituting \eqref{eq:psi nabla u term} into the right side of \eqref{eq:bochner psi tilde} and developing the left side using \eqref{eq:deriv formula truncation}, we get 
		\begin{equation*}
			\begin{aligned}
				&-\int_\Omega |\nabla u| {  \Psi'_{M,\eta}} \la \eta^2 \mathds{1}_{\{|\nabla u| \leq M \}} \nabla|\nabla u| + (|\nabla u|\wedge M)2 \eta \nabla \eta ,\nabla |\nabla u| \ra \d \mm \\ 
				\ge &  \int_\Omega \Big(    |{\rm Hess} (u)|^2 +   \la\nabla u,\nabla \Delta g\ra  +    K|\nabla u|^2 \Big) \tilde \Psi_{M,\eta} -f \Delta (u-g) - \Psi(0)  \la \nabla u, \nabla \Delta (u-g)\ra \d \mm. 
			\end{aligned}
		\end{equation*}
		We note here that the term on the left-hand side of  \eqref{eq:second order est for galerkin} will appear  integrating by parts in the final term on the right-hand side; we will do this later in the argument.	 Rearranging, we get 
		\begin{align*}
			\int_\Omega \Big( &{ \Psi'_{M,\eta}} |\nabla u|\eta^2 \mathds{1}_{\{|\nabla u| \leq M\}} |\nabla |\nabla u||^2 +  \tilde \Psi_{N,\eta} |{\rm Hess} (u)|^2 \Big) \d \mm \\ 
			\le &-\int_\Omega \Big(  \langle \nabla u , \nabla \Delta g \rangle    + K|\nabla u|^2 \Big)\tilde \Psi_{N,\eta} \d \mm + \int_\Omega f \Delta(u-g) \d \mm  \\
			&+ \!\Psi(0) \!\int_\Omega \!\!\langle \nabla u , \nabla \Delta(u\!-\!g) \rangle \d \mm \!-\! 2\!\!\int_\Omega \!\! {|\nabla u|}   (|\nabla u | \wedge M) {  \Psi'_{M,\eta}}\eta \langle \nabla \eta , \nabla |\nabla u| \rangle  \d \mm, 
		\end{align*}
		and moreover, using \eqref{eq:psi trick in 6.2} and \eqref{eq:gradgrad}
		we get 
		\[
		 { \Psi'_{M,\eta}} |\nabla u|\eta^2 \mathds{1}_{\{|\nabla u| \leq M\}} |\nabla |\nabla u||^2 +  \tilde \Psi_{M,\eta} |{\rm Hess} (u)|^2 \ge 0\qquad \mm\text{-a.e.}, 
		\]
		and we deduce 
		\begin{align*}
			-\Psi(0)\!\! \int_\Omega \langle \nabla u , \nabla \Delta(u-g) \rangle \d \mm \le &-\!\int_\Omega \!\Big(  \langle \nabla u , \nabla \Delta g \rangle   \! + \! K|\nabla u|^2 \Big)\tilde \Psi_{N,\eta} \d \mm \!+\! \int_\Omega f \Delta(u-g) \d \mm  \\
			& - 2\int_\Omega {|\nabla u|}(|\nabla u |\wedge M)  {\Psi'_{M,\eta}}  \eta\langle \nabla \eta , \nabla |\nabla u| \rangle  \d \mm.
		\end{align*}
		The final term on the right-hand side is bounded by $2M \Lip(\Psi) \int_\Omega \eta |\nabla\eta| |\hess(u)| |\nabla u| \d \mm$, again using \eqref{eq:gradgrad}.  Integrating by parts on the left-hand side, thanks to the assumption $\Delta(u-g) \in W^{1,2}_0(\Omega)$, and using Young's inequality yields, for all choices of $\delta >0$, 
		\begin{align*}
			\frac{\Psi(0)}{4}\!\int_\Omega \!\!  |\Delta u|^2 \d \mm  \leq & \frac{\Psi(0)}{2} \Vert \Delta g \Vert_{L^2(\Omega)}^2 + \Vert {\Psi}\Vert_{L^\infty}\int_\Omega\!\! \Big( |\nabla u| |\nabla \Delta g| + |K| |\nabla u|^2 \Big) \d \mm \!-\! \!\int_\Omega\!\! f \Delta g \d \mm \\ 
			+&\frac{2}{\Psi(0)} \Vert f \Vert^2_{L^2(\Omega)} \!+\! \frac{\delta}{2}  \Vert \eta |\hess(u)| \Vert^2_{L^2(\Omega)} \!+\! \frac{1}{2\delta}M^2\Lip(\Psi)^2\Vert \nabla \eta \Vert_{L^\infty}^2 \Vert  \nabla u \Vert^2_{L^2(\Omega)}. 
		\end{align*}		
Finally, using  {the Calder\'on--Zygmund inequality} {from Corollary \ref{cor:hessian laplacian estimate}} and choosing $\delta$ sufficiently small we can absorb the term containing $\hess (u)$ into the left-hand side and conclude.
	\end{proof}

	{We are ready to perform the Galerkin scheme. We will first use Prop.\  \ref{prop:existence minty} to obtain existence of the approximating solutions in duality with the subspace $V_k$ 	 (of eigenfunctions of the Laplacian) and then exploit the $k$-independent estimate \eqref{eq:second order est for galerkin} to obtain regularity of the limit. For the latter regularity we will need strong assumption on the boundary data, \textit{i.e.}\ $\Delta g\in \W(\Omega)$, which will be removed at later stage  by  approximating $g.$ }

	\begin{prop}[Galerkin scheme]\label{prop:galerkin scheme}
		Let $\Xdm$ be a {locally compact} $\RCD(K,\infty)$ space and $\Omega\subset \X$ be bounded with $\mm(\X\setminus \Omega)>0.$  Let $\Psi\in C([0,\infty))$  satisfy {\eqref{eq:upper lower c}} and be such that $t \mapsto t \Psi(t)$ is monotone non-decreasing. Fix also $\eta \in \LIP_c(\Omega)$, $g\in \W(\Omega)$ and $f\in L^2(\Omega)$. Then: 
		\begin{enumerate}[label=\roman*)]
			\item \underline{Solution to the reduced problem:} For all $k\in \nn$ there  exists $u_k\in g+V_k$ satisfying
			\begin{equation}\label{eq:galerkin equation}
				-\int_\Omega \Psi_{M,\eta}(|\nabla u_k|)\la \nabla u_k, \nabla w\ra\d\mm=\int_\Omega f w\d \mm, \quad \forall \, w\in V_k.
			\end{equation}
			\item \underline{Convergence to a solution:} Up to passing to a subsequence,  $u_k\rightharpoonup u$ weakly  in $W^{1,2}(\Omega)$ to some  $u\in g+W^{1,2}_0(\Omega)$ such that 
			\begin{equation}\label{eq:galerkin limit}
				\div(\Psi_{M,\eta}(|\nabla u|)\nabla u)=f.
			\end{equation}
			\item \underline{Regularity:} Assume furthermore that $\Psi\in \LIP\cap L^\infty([0,\infty))$,
			$\Psi(t)> \Psi(0)>0$ for all $t>0$  and
			\begin{equation}\label{eq:psi trick}
				\frac{t\Psi'(t)}{\Psi(t)-\Psi(0)}\ge -1, \quad \text{ for a.e.\ } t >0.
			\end{equation}
			Suppose also that $\Delta g\in \W(\Omega)$. Then $\Delta u \in L^2(\Omega).$
		\end{enumerate}
	\end{prop}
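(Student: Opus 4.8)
\emph{Plan.} I would prove the three items in turn, the crux being the identification of the limit equation in ii) by a Minty-type monotonicity argument, after which iii) becomes a $k$-uniform estimate plus a compactness step. For item i) I would simply invoke Prop.~\ref{prop:existence minty}: its standing hypotheses hold here (the map $t\mapsto t\Psi(t)$ is non-decreasing and \eqref{eq:upper lower c} is assumed, and we take the $L^0$-function there to be our fixed $\eta\in\LIP_c(\Omega)$), so part ii) of that proposition, applied with the closed subspace $V=V_k\subset\W_0(\Omega)$, yields $u_k\in g+V_k$ solving \eqref{eq:weak equation nonlinear}, which is \eqref{eq:galerkin equation}; moreover \eqref{eq:weak equation nonlinear W12estimate} gives the $k$-uniform bound $\|u_k\|_{\W(\Omega)}\le C(\|g\|_{\W(\Omega)}+\|f\|_{L^2(\Omega)})$.

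\emph{Item ii).} From the uniform bound and reflexivity of $W^{1,2}(\Omega)$ we get, along a subsequence, $u_k\rightharpoonup u$ weakly in $W^{1,2}(\Omega)$, and since $u_k-g\in V_k\subset\W_0(\Omega)$ (a closed, hence weakly closed, subspace) we have $u\in g+\W_0(\Omega)$. To pass to the limit in the nonlinear term I would run Minty's device: fix $\phi\in\bigcup_jV_j$, take $k$ large so that $\phi\in V_k$, insert $v_1=\phi$, $v_2=u_k-g$ into the monotonicity inequality \eqref{eq:monotonicity} of Prop.~\ref{prop:existence minty} i), and eliminate the term $\int\Psi_{M,\eta}(|\nabla u_k|)\la\nabla u_k,\nabla(u_k-g-\phi)\ra$ using \eqref{eq:galerkin equation} with the admissible test function $w=u_k-g-\phi\in V_k$. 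This gives
\[
-\int_\Omega f(u_k-g-\phi)\d\mm-\int_\Omega\Psi_{M,\eta}(|\nabla\phi+g|)\la\nabla(\phi+g),\nabla(u_k-g-\phi)\ra\d\mm\ge0 .
\]
Letting $k\to\infty$ — the first integral converges by weak $L^2$-convergence of $u_k$, the second by weak $L^2$-convergence of $\nabla u_k$ against the fixed field $\Psi_{M,\eta}(|\nabla\phi+g|)\nabla(\phi+g)\in L^2$ — the same inequality holds with $u$ in place of $u_k$, for all $\phi\in\bigcup_jV_j$; by density of $\bigcup_jV_j$ in $\W_0(\Omega)$ and continuity of $\phi\mapsto\Psi_{M,\eta}(|\nabla\phi+g|)\nabla(\phi+g)$ into $L^2$ (dominated convergence as in Step~1 of the proof of Prop.~\ref{prop:existence minty}) it extends to every $\phi\in\W_0(\Omega)$. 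Finally, taking $\phi=u-g-t\xi$ with $\xi\in\W_0(\Omega)$ and $t>0$, dividing by $t$, letting $t\to0^+$ (dominated convergence, using that $\Psi$ is continuous and bounded), and swapping $\xi$ with $-\xi$, we obtain
\[
-\int_\Omega\Psi_{M,\eta}(|\nabla u|)\la\nabla u,\nabla\xi\ra\d\mm=\int_\Omega f\xi\d\mm,\qquad\forall\,\xi\in\W_0(\Omega),
\]
which, since $\Psi_{M,\eta}(|\nabla u|)\nabla u\in L^2(T\X)\restr\Omega$, is exactly \eqref{eq:galerkin limit}.

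\emph{Item iii).} The key observation is that each $V_k$, being spanned by Laplacian eigenfunctions, is $\Delta$-invariant. Writing $v_k:=u_k-g\in V_k$, this gives $\Delta v_k\in V_k\subset\W_0(\Omega)$, so $\Delta(u_k-g)\in\W_0(\Omega)$, $\Delta u_k=\Delta v_k+\Delta g\in\W(\Omega)$ (using the hypothesis $\Delta g\in\W(\Omega)$), and $\Delta v_k$ is an admissible test function in \eqref{eq:galerkin equation}; choosing $w=\Delta(u_k-g)$ there shows that $u_k$ satisfies hypothesis \eqref{eq:psi nabla u term} of Prop.~\ref{prop:smooth laplacian estimate}, whose remaining hypotheses hold by the assumptions on $\Psi$, condition \eqref{eq:psi trick} (which is \eqref{eq:psi trick in 6.2}), and $\Delta g\in\W(\Omega)$. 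That proposition then yields
\[
\int_\Omega|\Delta u_k|^2\d\mm\le C\int_\Omega\Big(|\nabla u_k|^2+f^2+|\nabla\Delta g|^2+|\Delta g|^2+g^2\Big)\d\mm,
\]
with $C=C(\Psi,M,K,\eta)$ independent of $k$. Since $\|\nabla u_k\|_{L^2(\Omega)}$ is $k$-uniformly bounded by item i), the right-hand side is $k$-uniformly bounded, so $\{\Delta u_k\}_k$ is bounded in $L^2(\Omega)$; passing to a further subsequence with $\Delta u_k\rightharpoonup h$ weakly in $L^2(\Omega)$ and testing the identity $\int_\Omega\Delta u_k\,\psi\d\mm=-\int_\Omega\la\nabla u_k,\nabla\psi\ra\d\mm$ against $\psi\in\LIP_c(\Omega)$, weak $L^2$-convergence of $\nabla u_k$ forces $\Delta u=h\in L^2(\Omega)$.

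\emph{Main obstacle.} The delicate point is item ii): weak $W^{1,2}$-convergence of $u_k$ does not by itself allow one to pass to the limit inside the nonlinear coefficient $\Psi_{M,\eta}(|\nabla u_k|)$, and this is precisely where the monotonicity inequality of Prop.~\ref{prop:existence minty} must be brought in via the Minty trick. Item iii) is then comparatively routine, once one notices that the $\Delta$-invariance of the spaces $V_k$ makes $\Delta(u_k-g)$ a legitimate test function, so that the $k$-uniform estimate of Prop.~\ref{prop:smooth laplacian estimate} — rather than the genuinely uniform one of Prop.~\ref{prop:a priori quasilinear lapl estimates}, which is not applicable at this stage since the Galerkin equation holds only in duality with $V_k$ — can be invoked for the approximations.
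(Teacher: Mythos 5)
Your proposal is correct and follows essentially the same approach as the paper: Prop.~\ref{prop:existence minty} for part i), a Minty--Browder monotonicity argument for part ii), and the $\Delta$-invariance of $V_k$ combined with Prop.~\ref{prop:smooth laplacian estimate} for part iii). The only difference is a small streamlining in ii): the paper first extracts a weak $L^2$-limit $\chi$ of $\Psi_{M,\eta}(|\nabla u_k|)\nabla u_k$ and proves a diagonal relation for it before applying Minty, whereas you eliminate the nonlinear term at once by inserting $w=u_k-g-\phi$ into \eqref{eq:galerkin equation}, avoiding the auxiliary $\chi$ altogether.
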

	\begin{proof} We deal with each part separately.
    
		\noindent  \textit{Proof of i)}: This is a direct consequence of Prop.\  \ref{prop:existence minty}, which also gives
		\begin{equation}\label{eq:uk bound indep of eta}
			\| \nabla u_k \|_{L^2(\Omega)}^2\le C_{\Omega,\Psi}(\|f\|_{L^2(\Omega)}^2+ \|  g \|_{\W(\Omega)}^2) \quad \text{for all $k\in \nn.$}
		\end{equation}

		\noindent  \textit{Proof of ii)}: From \eqref{eq:uk bound indep of eta} we have that  $u_k$ converges weakly in $W^{1,2}(\Omega)$ to some $u \in g+\W_0(\Omega)$ up to passing to a subsequence. {By the boundedness of $\Psi$}, up to passing to a further subsequence, the product $\Psi_{M,\eta}(|\nabla u_k|) \nabla u_k$  converges weakly in $L^2(T\X)\restr{\Omega}$ to some $\chi\in L^2(T\X)\restr\Omega$. We claim that
		\begin{equation}\label{eq:claim chi}
			\div \chi= \div (\Psi_{M,\eta}(|\nabla u|)\nabla u).
		\end{equation}
		First, we note that this would be sufficient to conclude the proof of the proposition. Indeed, by passing to the limit in \eqref{eq:galerkin equation}, assuming \eqref{eq:claim chi}, we obtain that
		\[
		-\int_\Omega \Psi_{M,\eta}(|\nabla u|)\la \nabla u, \nabla w\ra\d\mm=\int_\Omega f w\d \mm, \quad \forall \, w\in V_{k'},
		\]
		for all $k' \in \nn$ and thus by density also for all  $w \in W^{1,2}_0(\Omega)$, which implies \eqref{eq:galerkin limit}. 
		
		It remains to prove the claim \eqref{eq:claim chi}.  {We argue by Minty-Browder monotonicity method.} Fix $w \in V_{k'}$. Set $v_k\coloneqq u_k-g\in V_k$ and $v\coloneqq u-g\in \W_0(\Omega)$. In particular $v_k \rightharpoonup v$ weakly in $W^{1,2}(\Omega)$.  By monotonicity---recall \eqref{eq:monotonicity}---we have 
		\begin{equation}\label{eq:monotonicity trick}
			\begin{aligned}  \int_\Omega \! \! \Psi_{M,\eta}(|\nabla (v_k\!+\!g)|)\la \nabla ( v_k\!+\!g ), \nabla (v_k\!-\!w)\ra\d\mm \! \ge  \! \int_\Omega \!\! \Psi_{M,\eta}(|\nabla (w\!+\!g)|)\la \nabla (w\!+\!g), \nabla (v_k\!-\!w)\ra\d\mm.
			\end{aligned}
		\end{equation}
		Using $v_k$ as test function in \eqref{eq:galerkin equation}, we get 
		\begin{align*}
			0&=\int_\Omega f v_k \d \mm + \int_\Omega \Psi_{M,\eta}(|\nabla u_k|)\la \nabla u_k,\nabla (v_k\pm w)\ra\d \mm\\
			&=\int_\Omega f v_k \d \mm + \int_\Omega\Big( \Psi_{M,\eta}(|\nabla (v_k+g)|)\la \nabla (v_k+g),\nabla (v_k-w)\ra + \Psi_{M,\eta}(|\nabla u_k|)\la \nabla u_k,\nabla w\ra\Big) \d \mm\\
			&\ge  \int_\Omega f v_k \d \mm + \int_\Omega \Big( \Psi_{M,\eta}(|\nabla (w+g)|)\la \nabla (w+g),\nabla (v_k-w)\ra + \Psi_{M,\eta}(|\nabla u_k|)\la \nabla u_k,\nabla w\ra \Big) \d \mm, 
		\end{align*}
		where we used \eqref{eq:monotonicity trick} to obtain the final inequality. Since $\Psi_{M,\eta} \in L^\infty$, by letting $k\to \infty$, 
		\begin{align}\label{eq:after use monotonicity trick}
			0\ge  \int_\Omega f v \d \mm + \int_\Omega \Psi_{M,\eta}(|\nabla (w+g)|)\la \nabla (w+g),\nabla (v-w)\ra + \int_\Omega \la \chi,\nabla w\ra\d \mm.
		\end{align}
		By density, the above actually holds for all $w \in \W_0(\Omega).$ 
		Meanwhile, we also have the equality 
		\begin{equation}\label{eq:key X trick}
			\int fv\d \mm=-\int\la\chi,\nabla v\ra \d \mm,
		\end{equation}
        where $v=u-g\in \W_0(\Omega)$ was defined before \eqref{eq:monotonicity trick}.
		Indeed, by \eqref{eq:galerkin equation} and since $\Psi_{M,\eta}(|\nabla u_k|) \nabla u_k$  converges weakly in $L^2(T\X)\restr{\Omega}$ to $\chi$, we deduce that  \eqref{eq:key X trick} holds with $v=v' \in V_{k'}$ for all $k'\in \nn$, from which \eqref{eq:key X trick} follows by density. Substituting \eqref{eq:key X trick} into  \eqref{eq:after use monotonicity trick}, we get 
		\begin{equation}\label{eq:final monotonicity}
			0\ge \int_\Omega \la \Psi_{M,\eta}(|\nabla (w+g)|)\nabla (w+g)-\chi, \nabla (v-w)\ra\d \mm, \quad \forall\, w \in \W_0(\Omega).
		\end{equation}
		 Taking $w=v+t h =u-g+th$, with $h \in \W_0(\Omega)$  arbitrarily chosen, we obtain 
		\[
		0\geq - t\int_\Omega \la \Psi_{M,\eta}(|\nabla (u+th)|)\nabla (u+th)-\chi, \nabla h\ra\d \mm, \quad \forall\, h \in \W_0(\Omega).
		\]
		Dividing by $t$, letting  $t\to 0^+$ and  $t\to 0^-$, by the Dominated Convergence Theorem,  we get
		\[
		0=\int_\Omega \la \Psi_{M,\eta}(|\nabla u|)\nabla u-\chi, \nabla h\ra\d \mm, \quad \forall\, h \in \W_0(\Omega), 
		\]
		which shows \eqref{eq:claim chi} and concludes the proof of ii). 
		
		\smallskip 
		
		\noindent  \textit{Proof of iii)}: The final part of the result follows immediately by applying Prop.\  \ref{prop:smooth laplacian estimate} with $u=u_k.$ Indeed, $u_k-g\in V_k$, whence it follows that $\Delta (u_k-g)\in V_k\subset \W_0(\Omega)$. We then obtain the requirement \eqref{eq:psi nabla u term} in Prop.\  \ref{prop:smooth laplacian estimate} by testing \eqref{eq:galerkin equation} with $w=\Delta (u_k-g)$. Finally, we apply Prop.\  \ref{prop:smooth laplacian estimate} combined with estimate \eqref{eq:uk bound indep of eta} to obtain 
		\begin{equation*}
			\int_\Omega  |\Delta u_k|^2\d \mm \le C\int_\Omega \Big( |\nabla g|^2+g^2+f^2+|\nabla \Delta g|^2+|\Delta g|^2  \Big) \d \mm,
		\end{equation*}
		{where the constant $C$ is independent of $k$}, from which $\Delta u \in L^2(\Omega)$ follows. 
	\end{proof}

	\subsection{Stability with respect to $M$, $\eta$, $g$ and proof of Theorem \ref{thm:main quasilinear}}\label{subsec:proof of theorem 2}

	{
	In what follows, we  let $\eta \to 1$ and $M\to\infty$ in the coefficient $\Psi_{M,\eta}$, \textit{cf.}~\eqref{eq:PsiM def in 6.2}, and finally prove Theorem \ref{thm:main quasilinear}. To do so will rely on the Laplacian estimate given in Prop.\  \ref{prop:a priori quasilinear lapl estimates}. We will also need to approximate the boundary data $g$ with a sequence $\{g_k\}_k$ such that $\Delta g_k \in W^{1,2}(\Omega)$ (\textit{cf.}\  Prop.\  \ref{prop:galerkin scheme}-iii)). In the process we will need the following technical convergence result. The proof  is very similar to Prop.\  \ref{prop:galerkin scheme}-ii) via the same monotonicity argument, and we shall omit it for brevity.	}
	{	\begin{prop}[Stability with respect to $\eta,g,\Psi$]\label{prop:quasilinear stabilities} 
Let $\Xdm$ be an $\RCD(K,\infty)$ space and $\Omega\subset \X$ be bounded with $\mm(\X\setminus \Omega)>0.$ Let $\Psi\in C([0,\infty))$ satisfy \eqref{eq:upper lower c} with $t \mapsto t \Psi(t)$ monotone non-decreasing. Fix $\eta \in L^0(\Omega)$,  $g\in \W(\Omega)$ and $f\in L^2(\Omega)$. 

Let $u_k\in \W(\Omega)$ satisfy $u_k\in g_k+\W_0(\Omega)$ and 
\begin{equation*}
\div(\psi_k(|\nabla u_k|)\nabla u_k)=f
\end{equation*}
for all $k\in \nn$, where one of the following holds:
\begin{enumerate}[label=\roman*)]
\item $\psi_k=\Psi_{M,\eta_k}$ with $\eta_k \in L^0(\Omega)$ converging to $\eta$ pointwise $\mm$-a.e., and $g_k\equiv g$;
\item $\psi_k=\Psi_{M,\eta}$ and $g_k \to g$ in $\W(\Omega)$;
\item $\psi_k=\Psi^k(t)$ with $\Psi^k \in C([0,\infty)])$  satisfying the same assumptions as $\Psi$  taking the same constant $c\in(0,1)$ in \eqref{eq:upper lower c}, $\Psi^k(t)\to \Psi(t)$ pointwise for all $t>0$, and $g_k\equiv g$.
\end{enumerate}
Then, up to subsequence, $u_k$ converges weakly in $W^{1,2}(\Omega)$ to $u\in g+\W_0(\Omega)$ satisfying
\begin{equation*}
\div(\Psi_{M,\eta}(|\nabla u|)\nabla u)=f\quad \text{(or $\div(\Psi(|\nabla u|)\nabla u)=f$ in case iii)}.
\end{equation*}
\end{prop}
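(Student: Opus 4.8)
The plan is to follow, almost verbatim, the Minty--Browder monotonicity scheme already carried out in the proof of Proposition~\ref{prop:galerkin scheme}-ii); the only genuinely new point is treating the three different approximation parameters ($\eta_k\to\eta$, $g_k\to g$, $\Psi^k\to\Psi$) in one pass.

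\emph{Uniform estimates and extraction of weak limits.} First I would observe that in every case $c\le\psi_k\le c^{-1}$, so the coercivity estimate exactly as in \eqref{eq:Avv>}, combined with the Poincar\'e inequality of Proposition~\ref{prop:poincare} applied to $v_k\coloneqq u_k-g_k\in\W_0(\Omega)$, gives $\|u_k\|_{\W(\Omega)}\le C(\|f\|_{L^2(\Omega)}+\|g_k\|_{\W(\Omega)})$ uniformly in $k$, since $\{g_k\}_k$ is bounded in $\W(\Omega)$ ($g_k\equiv g$ in cases i) and iii), $g_k\to g$ in case ii)). Up to a subsequence $u_k\rightharpoonup u$ weakly in $W^{1,2}(\Omega)$; since $\W_0(\Omega)$ is weakly closed and $g_k\to g$ (strongly, or constantly), we get $v_k\rightharpoonup u-g\in\W_0(\Omega)$, hence $u\in g+\W_0(\Omega)$. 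Moreover $|\psi_k(|\nabla u_k|)\nabla u_k|\le c^{-1}|\nabla u_k|$ is bounded in $L^2(T\X)\restr\Omega$, so a further subsequence gives $\psi_k(|\nabla u_k|)\nabla u_k\rightharpoonup\chi$ weakly there, and passing to the limit in the weak formulation (valid against all of $\W_0(\Omega)$ by Remark~\ref{rmk:sobolev test}) yields $\div\chi=f$. It remains only to identify $\chi$.

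\emph{The monotonicity trick.} For every $w\in\W_0(\Omega)$ I would use the pointwise monotonicity of $a\mapsto\psi_k(|a|)a$ (which for $\psi_k=\Psi_{M,\eta_k}$ or $\Psi_{M,\eta}$ is Proposition~\ref{prop:existence minty}-i), and for $\psi_k=\Psi^k$ follows from Cauchy--Schwarz and $t\mapsto t\Psi^k(t)$ being non-decreasing) to write, after testing $\div(\psi_k(|\nabla u_k|)\nabla u_k)=f$ against $v_k-w\in\W_0(\Omega)$,
\[
-\int_\Omega f(v_k-w)\,\d\mm-\int_\Omega\big\langle\psi_k(|\nabla(w+g_k)|)\nabla(w+g_k),\nabla(v_k-w)\big\rangle\,\d\mm\ \ge\ 0.
\]
The key point is that $\psi_k(|\nabla(w+g_k)|)\nabla(w+g_k)\to\Psi^\sharp(|\nabla(w+g)|)\nabla(w+g)$ \emph{strongly} in $L^2(T\X)\restr\Omega$, where $\Psi^\sharp\coloneqq\Psi_{M,\eta}$ in cases i) and ii) and $\Psi^\sharp\coloneqq\Psi$ (evaluated without truncation) in case iii): in case i) because $\Psi_{M,\eta_k}(|\nabla(w+g)|)\to\Psi_{M,\eta}(|\nabla(w+g)|)$ $\mm$-a.e.\ by continuity of $\Psi$ and of $t\mapsto t\wedge M$, dominated by $c^{-1}|\nabla(w+g)|$; in case ii) because $\nabla(w+g_k)\to\nabla(w+g)$ in $L^2$, hence $\mm$-a.e.\ along a subsequence, with $|\Psi_{M,\eta}|\le c^{-1}$; in case iii) because $\Psi^k(|\nabla(w+g)|)\to\Psi(|\nabla(w+g)|)$ $\mm$-a.e.\ on $\{|\nabla(w+g)|>0\}$ (the product being $0$ on the complement by our convention), again with $|\Psi^k|\le c^{-1}$. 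Pairing this strongly convergent sequence with the weakly convergent $\nabla(v_k-w)$ and using $v_k\rightharpoonup u-g$ together with $f\in L^2(\Omega)$, I pass to the limit; then, rewriting $-\int_\Omega f(u-g-w)\,\d\mm=\int_\Omega\langle\chi,\nabla(u-g-w)\rangle\,\d\mm$ via $\div\chi=f$, I obtain
\[
\int_\Omega\big\langle\chi-\Psi^\sharp(|\nabla(w+g)|)\nabla(w+g),\ \nabla(u-g-w)\big\rangle\,\d\mm\ \ge\ 0\qquad\forall\,w\in\W_0(\Omega).
\]
Taking $w=u-g-th$ with $h\in\W_0(\Omega)$ and $t>0$, dividing by $t$, letting $t\to0^+$ (dominated convergence: $\Psi^\sharp$ bounded and continuous, $\nabla(u-th)\to\nabla u$ in $L^2$), and then replacing $h$ by $-h$, yields $\int_\Omega\langle\chi-\Psi^\sharp(|\nabla u|)\nabla u,\nabla h\rangle\,\d\mm=0$ for all $h\in\W_0(\Omega)$. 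In particular $\Psi^\sharp(|\nabla u|)\nabla u\in\dom(\div)$ with $\div(\Psi^\sharp(|\nabla u|)\nabla u)=\div\chi=f$, which is the asserted equation.

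\emph{Main obstacle.} The only non-routine step is the strong $L^2$-convergence of the ``frozen'' coefficient $\psi_k(|\nabla(w+g_k)|)\nabla(w+g_k)$, since it is exactly this that makes the monotonicity inequality survive the passage to the limit (the weak limit $\chi$ alone would not do). One must also be attentive that in case iii) the limiting equation is the \emph{untruncated} one $\div(\Psi(|\nabla u|)\nabla u)=f$, consistently with the fact that the $u_k$ there solve the untruncated equation, and that the convention ``$\Psi(|\nabla v|)\nabla v=0$ where $|\nabla v|=0$'' is used both at finite $k$ and in the limit. All remaining ingredients are as in Proposition~\ref{prop:galerkin scheme}-ii).
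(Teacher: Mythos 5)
Your proposal is correct and follows exactly the approach the paper intends: the authors explicitly omit this proof, remarking that it is ``very similar to Prop.~\ref{prop:galerkin scheme}-ii) via the same monotonicity argument,'' and your argument is a faithful, detailed execution of that Minty--Browder scheme (uniform bounds from \eqref{eq:Avv>}, weak extraction of $u$ and $\chi$, strong $L^2$-convergence of the frozen coefficient, and the $w = u-g-th$ hemicontinuity trick), including the needed adjustments to handle all three parameter regimes $\eta_k$, $g_k$, $\Psi^k$.
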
}
	We are finally ready to prove Theorem \ref{thm:main quasilinear}.

	\begin{proof}[Proof of Theorem \ref{thm:main quasilinear}]
		The proof is divided into seven steps.
		
		\smallskip

		\noindent 1. \textit{Setup for the proof}: Fix any  $\Omega'\subset \subset \Omega$. {By the local compactness of $\X$}, we can find $\phi,\eta \in \LIP_{c}(\Omega)$ such that $\phi \equiv 1$ in $\Omega'$ and  $\eta \equiv 1$ in $\supp \phi$.

		We will first prove the theorem under the following  additional assumptions:
		\begin{itemize}
			\item[A)]  $\Delta g \in \W(\Omega);$
			\item[B)]$\Psi'(t)=0$ for all $t\ge M$ for some constant $M>0$ (in particular $\Psi(t)=\Psi(t\wedge M))$;
			\item[C)]  {$\Psi\in \LIP([0,\infty))$ and}  $\Psi(t)> \Psi(0)>0$ for all $t>0$  and
			\begin{equation}\label{eq:psi ass in proof}
				\frac{t\Psi'(t)}{\Psi(t)-\Psi(0)}\ge -1, \quad \text{ for a.e.\ } t >0, 
			\end{equation}
			\item[D)]  it holds $g=\bar g\restr \Omega$ for some $\bar g\in \W(\X)$. 
		\end{itemize}
		and then relax assumptions A) to {D) in steps 4--7 of the proof}.

		\smallskip

		\noindent 2. \textit{Galerkin approximations and $M$-independent estimate}: {From the first in \eqref{eq:psi' condition} and since $\Psi$ is positive we have that $t\mapsto t\Psi(t)$ is strictly increasing}. Hence, by applying  Prop.\  \ref{prop:existence minty} with $\Psi,\eta,g,f$,  there exists a unique $u \in g+ W^{1,2}_0(\Omega)$ weak solution in duality with $W^{1,2}_0(\Omega)$ of 
		\begin{equation}\label{eq:galerkin limit in proof}
			\div(\Psi_{M,\eta}(|\nabla u|)\nabla u)=f. 
		\end{equation}
		Meanwhile, by  applying Prop.\  \ref{prop:galerkin scheme} with $\Psi,\eta,g,f$,  we find functions $u_k\in g+V_k$ solving \eqref{eq:galerkin equation} and such that $u_k \rightharpoonup \tilde u\in g+\W_0(\Omega)$ weakly in $\W(\Omega)$, for some $\tilde u$ solving \eqref{eq:galerkin limit in proof}, which by uniqueness must coincide with $u$. Next, by applying part iii) of Prop.\  \ref{prop:galerkin scheme}, thanks to the current assumption that $\Delta g \in \W(\Omega),$ we deduce that $\Delta u \in L^2(\Omega)$.
		In turn all the assumptions of Prop.\  \ref{prop:a priori quasilinear lapl estimates} are satisfied thanks to assumption B).         Hence, the estimate \eqref{eq:quasilinear a-priori 6.3} holds with $\varphi$ chosen as a test function (selected in Step 1 of the proof), and we get the $M$-independent bound 
		\begin{equation}\label{eq:quasilinear a-priori}
			\int_\Omega \phi^2 (\Delta u)^2\d \mm \le C \int_\Omega \Big( [\div(\Psi(|\nabla u|)\nabla u)]^2\phi^2+ |\nabla u|^2(\phi^2+|\nabla \phi|^2) \Big) \d \mm,
		\end{equation}
		where $C>0$ is a constant depending only on  $c$, $\lambda$ and $K$; in particular, it is independent of $M,\eta$.  We stress that  we did  not use equation \eqref{eq:galerkin limit in proof} so far.  Exploiting now \eqref{eq:galerkin limit in proof} and locality (recall that $\eta=1$ in $\supp(\phi)$) we deduce that $\div(\Psi(|\nabla u|)\nabla u)=f$ $\mm$-a.e.\ in $\supp(\phi)$; we emphasise that is also due to assumption B) which ensures that $\Psi_{M,\eta}(|\nabla u|) = \Psi(|\nabla u|)$ on inside the set $\supp (\varphi)$. Recalling also estimate \eqref{eq:weak equation nonlinear W12estimate} for $\|u\|_{\W(\Omega)}$  we conclude that $u$ satisfies {the $L^2$-estimate on $\Delta u$ } in \eqref{eq:quasilinear estimate} as desired. 	However, we emphasise that (at this point) $u$ is a solution of the approximate problem \eqref{eq:galerkin limit in proof} and not  the original equation \eqref{eq:quasilinear equation thm6}. We must now let $\eta \to 1$ and remove the assumptions A), B), C), D) all the while preserving the second order estimate \eqref{eq:quasilinear estimate}.

		\smallskip

		\noindent 3. \textit{Letting $\eta\to 1$}: We let  $\eta_n\in \LIP_{c}(\Omega)$ be such that $\eta_n \equiv 1$ in $\supp \phi$ and $\eta_n\to 1$ pointwise in $\Omega.$ Let $u_n\in g+\W_0(\Omega)$ be the unique function solving \eqref{eq:galerkin limit in proof} with $\eta=\eta_n$. By Step 2, the estimate \eqref{eq:quasilinear estimate} holds with $u=u_n$. Moreover, by part i) of Prop.\  \ref{prop:quasilinear stabilities}, up to passing to a subsequence, the sequence $\{u_n\}_n$ converges to $u\in g+ \W_0(\Omega)$ weakly in $W^{1,2}(\Omega)$, which is a weak solution of \eqref{eq:quasilinear equation thm6}{; recall that we have (for the moment) assumed $\Psi(t)=\Psi(t \wedge M)$ by the additional assumption B)}.  However, this solution is unique by Prop.\  \ref{prop:existence minty}. Finally \eqref{eq:quasilinear estimate} holds for $u$ by lower semicontinuity.  This shows the results under assumptions A), B), C), D). {In the remainder of the proof, we remove these additional assumptions.}

		\smallskip 
		
		\noindent 4. \textit{Removing assumption $A)$}: Consider now any $g \in \W(\Omega)$. By D) there exists $g\in \W(\X)$ such that $g=\bar g\restr \Omega$. Then, we can find $g_n\in \W(\X)$ such that $g_n\to g$ and $\Delta g_n \in \W(\X)$; \textit{e.g.}~we can take $g_n\coloneqq h_{t_n}\bar g$, where $t_n\to 0^+$ and $ h_{t_n}\bar g$ is the evolution of $\bar g$ via the heat flow (see \textit{e.g.}~\cite[Chapter 5]{GP20}). In particular $\tilde g_n\coloneqq g_n\restr\Omega\to g$ in $\W(\Omega)$ and $\Delta \tilde g_n \in \W(\Omega)$. By applying Step 3 with $\tilde g_n$ we obtain $u_n \in\tilde g_n + \W_0(\Omega)$ weak solution of 
		\begin{equation*}
			\div(\Psi(|\nabla u_n|)\nabla u_n) = f, 
		\end{equation*}
		in duality with $W^{1,2}_0(\Omega)$, and satisfying \eqref{eq:quasilinear estimate} with $u=u_n$. We can then apply Prop.\  \ref{prop:quasilinear stabilities} in case iii) to obtain that the sequence $\{u_n\}_n$, up to passing to a further subsequence, converge to $u$ the solution of \eqref{eq:quasilinear equation thm6} in $g+\W_0(\Omega)$. Again we  obtain \eqref{eq:quasilinear estimate} also for $u$ by the one from $u_n$ using the weak lower semiconituity of the norm{; we remark that the right-hand side of \eqref{eq:quasilinear estimate} only involves $f,g$ and does not involve the norms of $u_n$, so that we do not require the strong convergence of the sequence $\{u_n\}_n$ to deduce the final estimate for $\Delta u$}.
		
		\smallskip 
		\noindent 5. \textit{Removing assumption $B)$, \textit{i.e.}\  letting $M\to\infty$}: From here onwards, let $\Psi$ be as in the hypotheses of the theorem and satisfy assumption C), but not B). Define ${\Psi_M} (t)\coloneqq \Psi(t\wedge M)$. Clearly, $\Psi_M$ still satisfies the assumptions of the theorem and {satisfies \eqref{eq:psi' condition} and \eqref{eq:above and below}  with the same parameters $c$ and $\lambda$ as $\Psi$ (see Lemma \ref{lem:regularity of truncated psi})}, and  C); indeed, $\Psi_M'(t)=\Psi'(t)$ for $t < M$ and $\Psi_M'(t)=0$ for $t \geq M$, {with continuous derivative up to a negligible set of points}. Moreover, $\Psi_M(t)\to \Psi(t)$ as $M\to\infty$.  Hence, labelling $u_M \in g+W^{1,2}_0(\Omega)$ the solution of $$\div(\Psi_M(|\nabla u_M|)\nabla u_M) = f,$$ 
		an application of part iii) of Prop.\  \ref{prop:quasilinear stabilities} implies that the sequence $\{u_M\}_M$ converges weakly in $W^{1,2}(\Omega)$, up to passing to a further subsequence, to $u \in g+\W_0(\Omega)$ which solves \eqref{eq:quasilinear equation thm6}. Finally, estimate \eqref{eq:quasilinear estimate} holds, {with a constant sill depending only on $\lambda,c$ and $K$,} for each $u_M$ by Step 4 of the proof and because $\Psi_M$ satisfies B). As before, we deduce that $u$ satisfies \eqref{eq:quasilinear estimate} as well.  {As per Step 4 of the proof, only $f,g$ appear on the right-hand side of \eqref{eq:quasilinear estimate}, whence the strong convergence of $\{u_M\}_M$ in $W^{1,2}$ is not required to obtain the final estimate \eqref{eq:quasilinear estimate}.}
		\smallskip 
		
		\noindent 6. \textit{Removing assumption $C)$}:  From here onwards, we only assume that D) holds. Define, for all $n\in \nn$, 
		\[
		\Psi_n(t)\coloneqq \min\bigg\{\frac{{c(1+\lambda)}}{2}+nt\,,\,\Psi(t)\bigg\}, \quad \forall\,\, t\ge 0,
		\]
		{where $\lambda$ is the constant in assumption \eqref{eq:psi' condition} for $\Psi$, which we can assume $\lambda \le 0.$}
		The assumption $\Psi\ge c$ implies that $\Psi_n(t)>\Psi_n(0)=c(1+\lambda)/2$ for all $t>0.$ {{Around zero  $\Psi_n$ is a linear function} while}  from \eqref{eq:psi' condition} we have $|\Psi'(t)|\le t^{-1}(|\lambda|+|\Lambda|)$, hence  $\Psi_n\in \LIP([0,\infty))$.
		Note also that $\Psi'_n$ is continuous up to a negligible set of points, as it coincides a.e.\ either with $\Psi'$ or $n.$ {From this we also check that  $\Psi_n$ satisfies \eqref{eq:psi' condition} with the same $\lambda$ as $\Psi$.} Finally a basic computation as in \eqref{eq:psi trick 2} shows that $\Psi_n$ satisfies \eqref{eq:psi ass in proof} { for a.e.\ $t>0$ such that  $\Psi'_n(t)=\Psi'(t)$, while \eqref{eq:psi ass in proof} is trivially true when $\Psi'_n=n\ge0$.} Thus $\Psi_n$ satisfies the hypotheses of the theorem and  $C)$. Finally, we  have $\Psi_n(t)\to \Psi(t)$ for all $t> 0$. We can conclude as in the previous step using iii) of Prop.\  \ref{prop:quasilinear stabilities}.

		\smallskip 
		
		\noindent 7. \textit{Removing assumption $D)$}: The existence and uniqueness of $u$ solving \eqref{eq:quasilinear equation thm6} follows from Prop.\  \ref{prop:existence minty}, for which we do not need assumption D). For $\Omega'$ as in the assumptions, by local compactness there exists $\Omega''\subset\subset \X$ open such that $\Omega'\subset \subset \Omega''$. We now apply the result of Step 6 to $\Omega''$ with $\Psi$, $f\restr{\Omega''}$ and $g=u\restr{\Omega''}$. Indeed $u\restr{\Omega''}$ can be extended to some $\bar g \in \W(\X)$, as we can simply take $\bar g=\phi (u\restr{\Omega''})$ for some $\phi \in \LIP_{c}(\Omega)$ with $\phi \equiv 1$ in $\Omega''$. We obtain a (unique) solution $u'$ in $\Omega''$ which satisfies $\Delta u'\in L^2_\loc(\Omega'')$ and such that \eqref{eq:quasilinear estimate} holds with   $u'$ and $\Omega'$. However also $u\restr{\Omega''}$ solves the same equation by definition, thus by uniqueness $u\restr{\Omega''}=u'$. Hence we obtain that \eqref{eq:quasilinear estimate} holds for $u$ in $\Omega'$. By the arbitrariness of $\Omega'$ this concludes the proof.
	\end{proof}

	{
		\begin{rem}\label{rmk:non autonomous summary thm2}
			We observe that Theorem \ref{thm:main quasilinear} automatically self-improves to operators of the form
			$\div(\aa\, \Psi(|\nabla u|)\nabla u),$ where $\aa\in \LIP(\Omega)$  is such that $A^{-1}\le \aa\le A$ for some $A\ge 1.$ This is an immediate consequence of  the Leibniz rule for the divergence.
	\end{rem}}

	\section{Quasilinear Equations with p-growth}\label{sec:p harmonic}

	{
	In this section we consider degenerate/singular quasilinear elliptic operators satisfying \eqref{eq:psi' condition} and the $p$-growth condition \eqref{eq:psi growth}, \textit{e.g.}\ the $p$-Laplacian. In particular we prove Theorem \ref{thm:p-delta regularity}. The idea is to {argue through} Theorem \ref{thm:main quasilinear} by {obtaining the relevant estimates} applied to the regularized-truncated function $\Psi\big(\sqrt{(t\wedge M)^2+\delta}\, \big)$ {and successively letting $M\to\infty$ and $\delta \to 0$ (in this specific order)}. This procedure is rather delicate as the truncation changes the growth conditions of the operator from $p$ to $2.$ 
	
	We will rely  on the following two crucial converge{nce} results{: Prop.\  \ref{prop:approx of delta by M-delta} (concerned with the limit $M\to\infty$) and Prop.\  \ref{prop:approx W1p Fphi by Fphidelta} (concerned with the limit $\delta \to 0$)}.
	{Broadly speaking,} Prop.\  \ref{prop:approx of delta by M-delta} says that the solution to the equation \eqref{eq:quasilinear equation delta} with $\Psi(t)$ replaced by $\Psi(t\wedge M)$ converges strongly in the Sobolev sense to the solution of \eqref{eq:quasilinear equation delta} as $M\to +\infty$. This result is a delicate  interplay between exponent $p$ and $2$. More precisely $\Psi(t\wedge M)$ does not have $p$-growth as $\Psi$, but instead has $2$-growth (in the sense of \eqref{eq:psi growth}), indeed it is bounded above and below by positive constants  for $t\ge 1$. Hence, simple variational arguments can not yield the sought convergence of solutions and we {must} argue via convexity and monotonicity. Note that the statement is formulated in the equivalent terms of minimizers of the corresponding functionals $F_\Phi,F_{\Phi_M}$ defined in \eqref{eq:F_Phi def}.}
	\begin{prop}[Convergence of $M$-truncations]\label{prop:approx of delta by M-delta}
		Let $\Psi\in {\sf AC}_\loc(0,\infty)$ satisfy \eqref{eq:psi' condition} and \eqref{eq:psi growth} for $p\in(1,\infty)$. Suppose also that $\Psi\ge c>0$ in the case $p\ge 2.$  Let $\Xdm$ be an $\RCD(K,\infty)$ space and let $\Omega\subset \X$ be open, bounded, with  $\mm(\X\setminus\Omega)>0$ and such that
		$W^{1,p}_0(\Omega)=\widehat W^{1,p}_0(\Omega).$
		Let $f\in L^2\cap L^{p'}(\Omega)$, where $p'\coloneqq \frac{p}{p-1}$, and $g\in W^{1,p \vee 2}(\Omega)$. For all $M>0$ set $\Psi_M(t)\coloneqq \Psi(t\wedge M)$, $\Phi_M(t)\coloneqq \int_0^t s\Psi_M(s)\d s$ and $\Phi(t)\coloneqq \int_0^t s\Psi(s)\d s$.  Let $u_M\in g+W^{1,2}_0(\Omega)$, $u\in g+W^{1,p}_0(\Omega)$ be respectively  the unique minimizers of $F_{\Phi_M}$ and $F_{\Phi}$ in their respective domains. Then $$u_M\to u \quad \text{strongly in }W^{1,{q}}(\Omega), \qquad q\coloneqq \min \{p,2\}.$$ 
		Moreover, it holds
		\begin{equation}\label{eq:uniform bound for uM}
			\| \,\Psi_M(|\nabla u_M|)\, |\nabla u_M| \,\|_{L^1(\Omega)}\le C \int_\Omega \Big( |f|^{p'\vee 2}+|g|^{ p}+|\nabla g|^{p\vee 2}+1 \Big) \d \mm,
		\end{equation}
		where $C$ is  a constant depending only on $\Psi,p$ and $\Omega.$ 
	\end{prop}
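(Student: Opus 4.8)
\textbf{Proof strategy for Proposition \ref{prop:approx of delta by M-delta}.}
The plan is to first establish the uniform bound \eqref{eq:uniform bound for uM} and then extract a limit and identify it with $u$ via monotonicity. First I would record that, by Lemma \ref{lem:psi and phi} and the assumptions on $\Psi$, both $\Phi$ and each $\Phi_M$ are strictly convex and $p$-admissible (resp.\ $2$-admissible for $\Phi_M$, since truncating changes the growth exponent from $p$ to $2$), so the minimizers $u_M$ and $u$ exist and are unique by Proposition \ref{prop:existence variational}. The key quantitative input is the coercivity: because $\Psi_M$ satisfies the $2$-growth condition \eqref{eq:psi growth} uniformly in $M$ (for $t\ge 1$ the truncated function is bounded above and below by constants independent of $M$, and near zero it is controlled by $\Psi$), one gets $\Phi_M(t)\ge c_0(|t|^2-1)$ with $c_0$ independent of $M$; similarly $\Phi(t)\ge c(|t|^p-1)$. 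Comparing $F_{\Phi_M}(u_M)\le F_{\Phi_M}(g)$ and using Young's and Poincar\'e inequalities on $u_M-g\in W^{1,2}_0(\Omega)$ yields a bound on $\|\nabla u_M\|_{L^2(\Omega)}$ (hence on $\|u_M\|_{W^{1,2}(\Omega)}$) in terms of $\int_\Omega(|f|^2+|g|^2+|\nabla g|^2+1)$, uniformly in $M$. To get \eqref{eq:uniform bound for uM} itself, test the Euler--Lagrange equation \eqref{eq:weak EL} for $u_M$ (Proposition \ref{prop:EL}) with $\phi=u_M-g$: this gives $\int_\Omega \Psi_M(|\nabla u_M|)|\nabla u_M|^2\,\d\mm = -\int_\Omega f(u_M-g)\,\d\mm + \int_\Omega \Psi_M(|\nabla u_M|)\langle\nabla u_M,\nabla g\rangle\,\d\mm$, and then one estimates $\int_\Omega\Psi_M(|\nabla u_M|)|\nabla u_M|\,\d\mm$ by splitting into $\{|\nabla u_M|\le 1\}$ (where $\Psi_M$ is bounded and $\mm(\Omega)<\infty$) and $\{|\nabla u_M|>1\}$ (where $\Psi_M(t)t\le \nu t^{p-1}\le \nu(\Psi_M(|\nabla u_M|)|\nabla u_M|^2)$ or, more directly, where $\Psi_M(t)t\approx t^{p-1}\le t^2$... ), using Young's inequality and the already-established $L^{p\vee 2}$ bounds on $\nabla u_M$ and $\nabla g$ together with $f\in L^{p'\vee 2}$.

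Having the uniform bound, I would next upgrade it to a uniform $W^{1,q}$ bound with $q=\min\{p,2\}$. When $p\ge 2$ this is immediate since $q=2$ and we already have the $W^{1,2}$ bound; when $p<2$, so $q=p$, one uses that on $\{|\nabla u_M|\ge 1\}$ one has $|\nabla u_M|^p\le C\,\Psi_M(|\nabla u_M|)|\nabla u_M|^2$ by the lower $p$-growth bound $\Psi_M(t)\ge \nu^{-1}t^{p-2}$ for $t\ge 1$ (which survives truncation for $t\le M$, and for $t>M$ one uses that $|\nabla u_M|$ is then at least $M\ge 1$ and $\Psi_M$ is bounded below there too — one should be slightly careful here and it may be cleaner to bound $\int|\nabla u_M|^p$ directly via \eqref{eq:uniform bound for uM} and the splitting argument), so $\int_\Omega|\nabla u_M|^p\,\d\mm\le C$ uniformly. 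Thus, along a subsequence, $u_M\rightharpoonup \bar u$ weakly in $W^{1,q}(\Omega)$ for some $\bar u\in g+W^{1,q}_0(\Omega)$; moreover the weak limit $\bar u$ actually lies in $g+W^{1,p}_0(\Omega)$ (the hypothesis $W^{1,p}_0(\Omega)=\widehat W^{1,p}_0(\Omega)$ is used precisely here: $u_M-g$ vanishes a.e.\ outside $\Omega$ and stays bounded in $W^{1,p}$... this needs the $p<2$ case's $L^p$-gradient bound; in the $p\ge2$ case $q=2\ge p$ so there is nothing to do).

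Now I would identify $\bar u=u$ and promote the convergence to strong convergence. For identification, pass to the limit in the weak formulation \eqref{eq:weak EL} for $u_M$ against a fixed test function $\phi\in W^{1,p}_0\cap L^\infty(\Omega)$, using the Minty--Browder monotonicity trick exactly as in Proposition \ref{prop:galerkin scheme}(ii) and Proposition \ref{prop:quasilinear stabilities}(iii): the vector fields $\Psi_M(|\nabla u_M|)\nabla u_M$ are bounded in $L^{p'}(\Omega)$ (by the uniform estimates and \eqref{eq:phi' bound}), hence converge weakly to some $\chi$; monotonicity of $z\mapsto \Psi_M(|z|)z$ together with $\Psi_M\to\Psi$ pointwise and the convexity structure forces $\chi=\Psi(|\nabla\bar u|)\nabla\bar u$ and $\div\chi=f$, so $\bar u$ solves the Euler--Lagrange equation for $F_\Phi$, hence by Proposition \ref{prop:EL} and uniqueness (Proposition \ref{prop:existence variational}) we get $\bar u=u$ and the whole family converges, not just a subsequence. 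For strong convergence in $W^{1,q}$, the cleanest route is energy convergence plus uniform convexity: one shows $\lim_M F_{\Phi_M}(u_M)=F_\Phi(u)$ by sandwiching — $\limsup_M F_{\Phi_M}(u_M)\le \limsup_M F_{\Phi_M}(u)= F_\Phi(u)$ by dominated convergence (here $\Phi_M\le\Phi$, $\Phi_M\uparrow\Phi$, and $u\in W^{1,p}$ makes $\Phi(|\nabla u|)$ integrable), while $\liminf_M F_{\Phi_M}(u_M)\ge F_\Phi(u)$ follows from weak lower semicontinuity (Lemma \ref{lem:lsc energy}) after discarding the now-known-convergent linear term $\int f u_M$ and using $\Phi_M\le\Phi_{M'}$ for $M\ge M'$ to reduce to a fixed functional. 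From $\int_\Omega\Phi_M(|\nabla u_M|)\,\d\mm\to\int_\Omega\Phi(|\nabla u|)\,\d\mm$, weak $W^{1,q}$ convergence, and the strict convexity/$q$-growth of the $\Phi$'s, a standard uniform-convexity argument (of Clarkson type, as in the classical Euclidean proof) upgrades to $\nabla u_M\to\nabla u$ strongly in $L^q$, and Poincar\'e gives $u_M\to u$ in $L^q$, completing the proof.

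\textbf{Main obstacle.} The delicate point is the mismatch of growth exponents: $\Psi_M$ has $2$-growth for each fixed $M$ but the limit $\Psi$ has $p$-growth, so one cannot run a single variational argument uniformly in the ``natural'' space — the uniform estimates must be carried out in $W^{1,q}$ with $q=\min\{p,2\}$ and then the passage to the limit in the nonlinear term requires the monotonicity (Minty) method rather than naive compactness. The extra hypotheses ($\mm(\X\setminus\Omega)>0$ for Poincar\'e, $W^{1,p}_0=\widehat W^{1,p}_0$ for the boundary-condition stability in the singular case $p<2$, $\Psi\ge c>0$ for $p\ge 2$ to keep $\Phi_M$ genuinely $2$-coercive down to zero, and $g\in W^{1,p\vee 2}$, $f\in L^{2}\cap L^{p'}$ so that all energies make sense) are exactly what is needed to make these two steps go through, and I expect the bookkeeping in the splitting estimate for \eqref{eq:uniform bound for uM} — keeping every constant independent of $M$ while juggling the regions $\{|\nabla u_M|\lessgtr 1\}$ and $\{|\nabla u_M|\lessgtr M\}$ — to be the most technical part.
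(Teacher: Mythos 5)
Your overall architecture (uniform bounds $\to$ weak limit $\to$ identification via monotonicity $\to$ strong convergence via energy convergence) matches the paper, and the ``main obstacle'' paragraph correctly diagnoses the exponent mismatch. The execution, however, breaks down for $p<2$. The central error is the claim that ``$\Psi_M$ satisfies the $2$-growth condition uniformly in $M$'', hence $\Phi_M(t)\ge c_0(t^2-1)$ with $c_0$ independent of $M$: for $p<2$ one has $\Psi_M(t)=\Psi(M)\to 0$ for $t>M$, so the quadratic coefficient of $\Phi_M$ at infinity degenerates as $M\to\infty$, and there is no $M$-independent $W^{1,2}$ bound on $u_M$. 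The correct uniform coercivity is $\Phi_M(t)\ge c(t^q-1)$ with $q=p\wedge 2$ (Lemma~\ref{lem:prop phiM}-iii), so the ``already-established $L^{p\vee 2}$ bounds on $\nabla u_M$'' you invoke to derive \eqref{eq:uniform bound for uM} do not exist. Relatedly, your Euler--Lagrange testing with $u_M-g$ leaves a cross-term $\int_\Omega\Psi_M(|\nabla u_M|)|\nabla u_M||\nabla g|\,\d\mm$ that cannot be absorbed by Young's inequality because $\Psi_M$ is unbounded near $0$ when $p<2$; the paper instead works from the energy inequality $F_{\Phi_M}(u_M)\le F_{\Phi_M}(g)$ and the pointwise bound $\Phi_M(t)\ge ct^2\Psi_M(t)$ (Lemma~\ref{lem:prop phiM}-vi), which is why \eqref{eq:uniform bound for uM} is derived \emph{last} (Step 4), not first.

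The Minty--Browder identification has a second gap: it needs the fluxes $\Psi_M(|\nabla u_M|)\nabla u_M$ to be bounded in $L^{p'}(\Omega)$ uniformly in $M$, but \eqref{eq:phi' bound} only gives $\Psi(t)t\le C(1+t^{p-1})$ for the fixed $\Phi$; for $p<2$ the truncated flux $\Psi_M(t)t=\Psi(M)t$ grows linearly for $t>M$, not like $t^{p-1}$, and no uniform $L^{p'}$ bound is available from what you have established. The paper avoids weak compactness of the nonlinear term altogether: Step 2 identifies the limit via the monotone auxiliary family $\phi_m\uparrow\Phi$ (Lemma~\ref{lem:prop phiM}-v,vii), together with the \emph{upper} bound $\Phi_M(t)\le C(t^p+t^2+1)$ (Lemma~\ref{lem:prop phiM}-iv) applied only to fixed Lipschitz competitors. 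Note also that your ``$\Phi_M\le\Phi$, $\Phi_M\uparrow\Phi$'' is reversed for $p<2$ (there $\Phi_M\ge\Phi$, $\Phi_M\downarrow\Phi$), which is precisely why the paper introduces the separate truncation $\phi_M$ rather than comparing the $\Phi_M$ directly. Your closing energy-convergence/quantitative-convexity step is the right idea and essentially the paper's Lemma~\ref{lem:effective monot}, but as written it rests on the earlier gaps.
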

	
	{Estimate \eqref{eq:uniform bound for uM} should be interpreted as a $M$-truncated uniform $L^{p-1}$-estimates on $|\nabla u_M|$. Indeed the $p$-growth assumption \eqref{eq:psi growth} gives that $\Psi_M(t)t\geq (t\wedge M)^{p-2}t $ for $t\ge 1.$ Note also that $L^1$-control on  $\Psi_M(|\nabla u_M|)\, |\nabla u_M|$ is the one needed for the uniform estimate in Prop.\  \ref{cor:key estimate}.}
	We will  employ Prop.\  \ref{prop:approx of delta by M-delta}  to    take the limit $M\to \infty$ in $\Psi(\sqrt{(|\nabla u |\wedge M)^2 + \delta})$,  before investigating the limit $\delta \to 0$.  Indeed  $\Psi(\sqrt{t^2+\delta})$ is bounded below by a positive constant in the case $p\ge 2$, from \eqref{eq:psi growth}. {We note that it is harmless for \eqref{eq:uniform bound for uM} to depend on $\Psi$ rather than only $\lambda,\Lambda $, since sending $\delta \to 0$, we will rely on the uniform estimate given by  Prop.\  \ref{cor:key estimate}. }

	The second {convergence} result (Prop.\  \ref{prop:approx W1p Fphi by Fphidelta}) {is now concerned with the $\delta \to 0$ limit of }  $\Psi(\sqrt{t^2+\delta})$. Again,  for brevity, the statement is in terms of minimizers.
	\begin{prop}[Convergence of $\delta$-regularization]\label{prop:approx W1p Fphi by Fphidelta}
		Let $\Psi\in {\sf AC}_\loc(0,\infty)$ satisfy \eqref{eq:psi' condition} and \eqref{eq:psi growth} for some $p\in(1,\infty)$.   Let $\Xdm$ be an $\RCD(K,\infty)$ space and let $\Omega\subset \X$ be open, bounded with $\mm(\X\setminus\Omega)>0$.  Set $\Phi(t)\coloneqq \int_0^ts\Psi(s) \d s $ and 
		$	\Phi^\delta(t)\coloneqq \Phi(\sqrt{t^2+\delta}).$
		Let $g\in W^{1,p}(\Omega)$ and let $u_\delta, u\in g+W^{1,p}_0(\Omega)$ be respectively the unique minimizers of $F_{\Phi^\delta}$ and $F_\Phi$ in $g+W^{1,p}_0(\Omega)$.
		Then $u_\delta\to u$ {strongly} in $W^{1,p}(\Omega)$ as $\delta \to 0^+.$
	\end{prop}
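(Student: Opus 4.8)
The plan is to pass to the Euler--Lagrange formulation of both minimization problems (Proposition \ref{prop:EL}) and to exploit the uniform-in-$\delta$ strict monotonicity of the regularized operator $v\mapsto\Psi(\sqrt{|v|^2+\delta})\,v$; the weak $W^{1,p}$-limit of the $u_\delta$'s will then be identified automatically. First one records the structural facts: by Lemma \ref{lem:psi and phi}, $\Phi$ is strictly convex, $p$-admissible and coercive, $\Phi(t)\ge c(t^p-1)$; the function $\Phi^\delta(t)=\Phi(\sqrt{t^2+\delta})$ is again convex (composition of the convex increasing $\Phi$ with the convex $t\mapsto\sqrt{t^2+\delta}$) and $p$-admissible---note $(\Phi^\delta)'(0)=0$ since $\frac{\der}{\der t}\sqrt{t^2+\delta}\big|_{t=0}=0$, and $(\Phi^\delta)'(t)=t\,\Psi(\sqrt{t^2+\delta})$ for $t>0$---and $\Phi(t)\le\Phi^\delta(t)\le\Phi(\sqrt{t^2+1})\le C(1+t^p)$ for $0<\delta\le1$. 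Comparing $u_\delta$ with the competitor $g$ gives $F_{\Phi^\delta}(u_\delta)\le F_{\Phi^\delta}(g)\le C\big(1+\|g\|_{W^{1,p}(\Omega)}^p+\|f\|_{L^{p'}(\Omega)}\|g\|_{L^p(\Omega)}\big)$, and combining this with the coercivity of $\Phi^\delta$, the Poincaré inequality (Proposition \ref{prop:poincare}) for $u_\delta-g\in W^{1,p}_0(\Omega)$, and Young's inequality, one obtains $\sup_{0<\delta\le1}\|u_\delta\|_{W^{1,p}(\Omega)}=:C_0<\infty$, independent of $\delta$.

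By Proposition \ref{prop:EL}, both minimizers satisfy their weak Euler--Lagrange equations: for every $\phi\in W^{1,p}_0(\Omega)$,
\begin{equation}
-\int_\Omega\Psi(\sqrt{|\nabla u_\delta|^2+\delta})\la\nabla u_\delta,\nabla\phi\ra\d\mm=\int_\Omega f\phi\d\mm=-\int_\Omega\Psi(|\nabla u|)\la\nabla u,\nabla\phi\ra\d\mm,
\end{equation}
all integrals being finite since $\Psi(\sqrt{t^2+\delta})\,t\le\Phi'(\sqrt{t^2+\delta})\le C(1+t^{p-1})$ and $\Psi(t)\,t=\Phi'(t)\le C(1+t^{p-1})$. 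Writing $A_\delta(v):=\Psi(\sqrt{|v|^2+\delta})\,v$ and $A_0(v):=\Psi(|v|)\,v$ and testing with $\phi=u_\delta-u\in W^{1,p}_0(\Omega)$, subtraction gives
\begin{equation}
I_\delta:=\int_\Omega\la A_\delta(\nabla u_\delta)-A_\delta(\nabla u),\nabla u_\delta-\nabla u\ra\d\mm=\int_\Omega\la A_0(\nabla u)-A_\delta(\nabla u),\nabla u_\delta-\nabla u\ra\d\mm.
\end{equation}
By Hölder the last integral is bounded by $\|A_0(\nabla u)-A_\delta(\nabla u)\|_{L^{p'}(\Omega)}\big(\|\nabla u_\delta\|_{L^p(\Omega)}+\|\nabla u\|_{L^p(\Omega)}\big)$; since $|A_0(\nabla u)-A_\delta(\nabla u)|\le C(1+|\nabla u|^{p-1})\in L^{p'}(\Omega)$ and $A_\delta(\nabla u)\to A_0(\nabla u)$ $\mm$-a.e.\ as $\delta\to0^+$ (continuity of $\Psi$; the product vanishes where $\nabla u=0$), dominated convergence together with the uniform bound from the first step yields $I_\delta\to0$.

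The remaining step, upgrading $I_\delta\to0$ to $\nabla u_\delta\to\nabla u$ in $L^p$, is the main obstacle and relies on the uniform strict monotonicity of $A_\delta$. Computing $DA_\delta(v)=\Psi(\sqrt{|v|^2+\delta})\,\mathrm{Id}+\frac{\Psi'(\sqrt{|v|^2+\delta})}{\sqrt{|v|^2+\delta}}\,v\otimes v$ and using \eqref{eq:psi' condition} (so that $\lambda>-1$), its smallest eigenvalue is $\ge(1+\min\{\lambda,0\})\,\Psi(\sqrt{|v|^2+\delta})>0$; together with the $p$-growth \eqref{eq:psi growth} this produces, uniformly in $0<\delta\le1$, a pointwise coercivity estimate of classical $p$-growth type---$\la A_\delta(a)-A_\delta(b),a-b\ra\ge c_1|a-b|^p$ when $p\ge2$, and $\la A_\delta(a)-A_\delta(b),a-b\ra\ge c_1\,|a-b|^2(1+|a|+|b|)^{p-2}$ when $1<p<2$ (in the singular range the estimate may have to be phrased through a $\Psi$-adapted ``$\mathcal{V}$-functional'' rather than a bare power of $|a-b|$, the essential point being uniformity in $\delta$ and across the degenerate/singular regimes of $|\nabla u|$, where near the origin $\Psi$ is only controlled by \eqref{eq:psi' condition}). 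For $p\ge2$ this gives $c_1\|\nabla u_\delta-\nabla u\|_{L^p(\Omega)}^p\le I_\delta\to0$ directly; for $1<p<2$, Hölder's inequality with exponents $\tfrac2p,\tfrac2{2-p}$ gives $\int_\Omega|\nabla u_\delta-\nabla u|^p\d\mm\le(c_1^{-1}I_\delta)^{p/2}\big(\int_\Omega(1+|\nabla u_\delta|+|\nabla u|)^p\d\mm\big)^{(2-p)/2}\to0$, using again the uniform $W^{1,p}$-bound from the first step. Either way $\nabla u_\delta\to\nabla u$ strongly in $L^p(T\X)\restr\Omega$, and a final application of the Poincaré inequality to $u_\delta-u\in W^{1,p}_0(\Omega)$ promotes this to $u_\delta\to u$ in $W^{1,p}(\Omega)$, as claimed.
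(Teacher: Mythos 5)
Your first two steps are correct, and your route to the key smallness $I_\delta\to 0$ is a valid alternative to the paper's: instead of the variational chain $F_\Phi(v)\le\liminf F_\Phi(u_\delta)\le\ldots\le\limsup F_{\Phi^\delta}(u)=F_\Phi(u)$ combined with uniqueness of the minimizer, you subtract the two Euler--Lagrange equations and test with $u_\delta-u$, then pass to the limit by dominated convergence. Both paths reach the same control.

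The gap is in the final upgrade. The claimed pointwise coercivity estimates---$\langle A_\delta(a)-A_\delta(b),a-b\rangle\ge c_1|a-b|^p$ for $p\ge 2$, or the $\mathcal V$-type bound for $1<p<2$---are not consequences of \eqref{eq:psi' condition} and \eqref{eq:psi growth}, because \eqref{eq:psi growth} is assumed only for $t\ge 1$. Near $t=0$, \eqref{eq:psi' condition} controls $\Psi$ only up to powers $t^\lambda,t^\Lambda$ (Lemma \ref{lem:psi basic}-i)), and $\Lambda$ need not equal $p-2$; the paper explicitly emphasizes this freedom, e.g.\ $\Psi(t)=\min\{t^{p_1-2},t^{p_2-2}\}$. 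Concretely, for $p=3$ and $\Psi(t)=\min\{t^2,t\}$ (so $\lambda=1$, $\Lambda=2$), with scalars $a=2\varepsilon$, $b=\varepsilon$, $\delta\le\varepsilon^2$, one computes $\langle A_\delta(a)-A_\delta(b),a-b\rangle\asymp\varepsilon^4$ while $|a-b|^p=\varepsilon^3$, so no $\delta$-uniform $c_1$ exists. The eigenvalue computation you use delivers $(1+\lambda)\Psi(\sqrt{|v|^2+\delta})$ as a lower bound, but bounding $\Psi(\sqrt{|v|^2+\delta})$ below by $\nu^{-1}|v|^{p-2}$ requires $\sqrt{|v|^2+\delta}\ge 1$, which you cannot guarantee. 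What your proof actually controls is the weighted quantity
\begin{equation}
\int_\Omega\bigl(\text{inf of }\Psi\text{ on a }|\nabla u_\delta|\vee|\nabla u|\text{-adapted interval}\bigr)\,|\nabla u_\delta-\nabla u|^2\d\mm\longrightarrow 0,
\end{equation}
which is precisely what Lemma \ref{lem:effective monot} extracts from $I_\delta\to 0$ (or from the convergence of energies), and upgrading this to $L^p$ convergence requires the $\varepsilon$-splitting mechanism of Lemma \ref{lem:lp lemma}: on $\{|\nabla u_\delta-\nabla u|<\varepsilon\}$ the contribution is $\le\varepsilon^p\mm(\Omega)$, while on the complement $\Psi$ is uniformly bounded below by an $\varepsilon$-dependent constant by continuity and positivity. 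This extra, non-pointwise step is the missing ingredient in your proposal.
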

	
	{In  the proof of Theorem \ref{thm:p-delta regularity}, $f\in L^2(\Omega)$ may not imply $f\in L^{p'}(\Omega)$ for $p<2,$ which obstructs variational arguments. We  address this via the following result.  }
	\begin{prop}\label{prop:continuity wrt f}
		Let $\Psi\in {\sf AC}_\loc(0,\infty)$ satisfy \eqref{eq:psi' condition} and \eqref{eq:psi growth} for some  $p\in(1,\infty)$. {Let $\Xdm$ be an $\RCD(K,\infty)$ space, let $\Omega\subset \X$ be open bounded} and such that $\mm(\X\setminus\Omega)>0$ and  let $f_n \in L^2(\Omega)$ converge strongly in $L^2(\Omega)$ to some $f \in L^2(\Omega)$. Let also $g\in W^{1,p}(\Omega)$ and suppose  that $u$ and $u_n$ are respectively solutions of 
		\begin{equation}\label{eq:two equaions f,f_n}
			\begin{cases}
				\div(\Psi(|\nabla u|)\nabla u) u=f, & \text{in $\Omega$},\\
				u\in g+W^{1,p}_0(\Omega),
			\end{cases}
			\quad \quad 
			\begin{cases}
				\div(\Psi(|\nabla u_n|)\nabla u_n)=f_n, & \text{in $\Omega$},\\
				u_n\in g+W^{1,p}_0(\Omega).
			\end{cases}
		\end{equation}
		Then   $|\nabla (u_n-u)|\to 0$ in $L^{p-1}(\Omega).$ In particular, the sequence $\{|\nabla u_n|\}_{n}$ is bounded  in $L^{p-1}(\Omega).$
	\end{prop}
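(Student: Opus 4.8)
The plan is to test the difference of the two equations in \eqref{eq:two equaions f,f_n} against $u_n-u$ and to exploit the strict monotonicity of the vector field $\xi\mapsto\Psi(|\xi|)\xi$ built into \eqref{eq:psi' condition}. Since $t\mapsto t\Psi(t)$ is non-decreasing and \eqref{eq:psi growth} holds, one has $\Psi(|\nabla v|)|\nabla v|\le \Psi(1)+\nu|\nabla v|^{p-1}$ for every $v\in W^{1,p}(\Omega)$, so $V_n:=\Psi(|\nabla u_n|)\nabla u_n$ and $V:=\Psi(|\nabla u|)\nabla u$ belong to $L^{p'}(T\X)\restr{\Omega}$, with divergences $f_n,f\in L^2(\Omega)$; by Remark \ref{rmk:sobolev test} both equations may then be tested against $W^{1,p}_0\cap L^\infty(\Omega)$, and against all of $W^{1,p}_0(\Omega)$ when $p\ge2$. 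For $p\ge2$ we may use $\phi=u_n-u$ directly and subtract to obtain
\begin{equation}\label{eq:plan identity}
  \int_\Omega \la V_n-V,\nabla(u_n-u)\ra \d\mm=\int_\Omega (f-f_n)(u_n-u)\d\mm ;
\end{equation}
for $1<p<2$ this identity is recovered only after testing against the truncations $T_k(u_n-u)$ and letting $k\to\infty$, a step subsumed in the a priori analysis described below.

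The algebraic ingredient is the monotonicity estimate for $\xi\mapsto\Psi(|\xi|)\xi$: its symmetric Jacobian has eigenvalues $\Psi(|\xi|)$ (with multiplicity) and $\Psi(|\xi|)+|\xi|\Psi'(|\xi|)$, both $\ge \min\{1,1+\lambda\}\,\Psi(|\xi|)>0$ by \eqref{eq:psi' condition}. Combined with \eqref{eq:psi growth} (and the bound on $t\Psi(t)$ near $0$ from Lemma \ref{lem:psi basic}) this yields a constant $c_0=c_0(\lambda,\Lambda,\nu,p)>0$ with
\[
  \la V_n-V,\nabla(u_n-u)\ra\ \ge\ c_0\,\big(1+|\nabla u_n|+|\nabla u|\big)^{p-2}\,|\nabla(u_n-u)|^2\qquad(1<p\le2)
\]
$\mm$-a.e., and the standard lower bound $\ge c_0|\nabla(u_n-u)|^p-c_0^{-1}$ for $p\ge2$. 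Feeding the first estimate into \eqref{eq:plan identity} and reconstructing $\int_\Omega|\nabla(u_n-u)|^{p-1}$ via H\"older's inequality with exponents $\tfrac{2}{p-1}$ and $\tfrac{2}{3-p}$ — observing that the leftover weight has exponent $\tfrac{(2-p)(p-1)}{3-p}\le p-1<p$, so by Jensen's inequality it is integrable with integral bounded by $1+\|\nabla u_n\|_{L^p(\Omega)}^p+\|\nabla u\|_{L^p(\Omega)}^p$ — one arrives at
\begin{equation}\label{eq:plan holder}
  \int_\Omega|\nabla(u_n-u)|^{p-1}\d\mm\ \le\ C\Big(\int_\Omega|f-f_n|\,|u_n-u|\d\mm\Big)^{\frac{p-1}{2}}\big(1+\|\nabla u_n\|_{L^p(\Omega)}^p\big)^{\theta}
\end{equation}
for some $\theta=\theta(p)>0$ (the case $p\ge2$ being analogous and simpler).

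It remains to produce a uniform bound on $\|\nabla u_n\|_{L^p(\Omega)}$ — or, for $1<p<2$, on $\|\nabla u_n\|_{L^{p-1}(\Omega)}$ together with $\|u_n\|_{L^q(\Omega)}$ for some $q>1$ — and to check that the data term in \eqref{eq:plan holder} vanishes. For $p\ge2$ this is routine: testing the $u_n$-equation with $u_n-g\in W^{1,p}_0(\Omega)$, using the coercivity $\Phi(t)\ge c(t^p-1)$ of Lemma \ref{lem:psi and phi}, the Poincar\'e inequality of Prop.\ \ref{prop:poincare}, the embedding $L^p(\Omega)\hookrightarrow L^2(\Omega)$ and Young's inequality, one absorbs $\|\nabla u_n\|_{L^p(\Omega)}$ and obtains a bound depending only on $\sup_n\|f_n\|_{L^2(\Omega)}$, $\|g\|_{W^{1,p}(\Omega)}$, $\Omega$ and the structural constants. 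For $1<p<2$ the term $\int_\Omega f_n(u_n-g)$ is no longer handled by duality, as $f_n\in L^2\not\subset L^{p'}(\Omega)$: here one tests the $u_n$-equation against the truncations $T_k(u_n-g)$ (admissible by Remark \ref{rmk:sobolev test}), controls $\int_{\{|u_n-g|<k\}}|\nabla u_n|^p$ by $Ck\,\|f_n\|_{L^1(\Omega)}$ plus lower-order terms through \eqref{eq:psi growth}, and runs a dyadic summation over the level sets $\{2^{j}\le|u_n-g|<2^{j+1}\}$ — in the spirit of Boccardo--Gallou\"et — together with the Poincar\'e and Rellich embeddings on the bounded set $\Omega$, to reach the desired uniform estimates. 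With these at hand, $\int_\Omega|f-f_n|\,|u_n-u|\le\|f-f_n\|_{L^2(\Omega)}\|u_n-u\|_{L^2(\Omega)}\to0$, so \eqref{eq:plan holder} forces $\int_\Omega|\nabla(u_n-u)|^{p-1}\d\mm\to0$; the ``in particular'' statement then follows from $\nabla u\in L^p(\Omega)\hookrightarrow L^{p-1}(\Omega)$ and the subadditivity of $w\mapsto\int_\Omega|w|^{p-1}\d\mm$. The \emph{main obstacle} is precisely this a priori $L^{p-1}$-bound for $|\nabla u_n|$ in the singular range $1<p<2$ with only $L^2$ (hence $L^1$) data: the Boccardo--Gallou\"et-type estimate is where essentially all of the work lies, the monotonicity-and-H\"older part being soft.
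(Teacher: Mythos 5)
Your plan and the paper's proof start from the same two ingredients --- the Stampacchia truncations $T_k(u_n-g)$ and $T_k(u_n-u)$, and the effective monotonicity of $\xi\mapsto\Psi(|\xi|)\xi$ (in the metric setting one cannot diagonalise a Jacobian; the pointwise inequality you want is supplied by Lemma~\ref{lem:quant mon Psi} and Lemma~\ref{lem:effective monot}). For $p\ge 2$ your argument is essentially correct: $f_n\in L^2\subset L^{p'}(\Omega)$, so \eqref{eq:plan identity} is directly justified, the uniform $L^p$ gradient bound follows from testing with $u_n-g$ and coercivity, and the monotonicity-plus-H\"older step closes the loop.

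For $1<p<2$, however, there is a genuine gap, and it is exactly where you flag the ``main obstacle''. Two steps break down. First, identity \eqref{eq:plan identity} cannot be recovered by ``letting $k\to\infty$'': the right-hand side $\int(f-f_n)T_k(u_n-u)$ has no limit unless $u_n-u\in L^2(\Omega)$, and for $p<2$ the only information is $u_n-u\in W^{1,p}_0(\Omega)\hookrightarrow L^p\not\hookrightarrow L^2$. Second, \eqref{eq:plan holder} invokes a uniform bound on $\|\nabla u_n\|_{L^p(\Omega)}^p$. No such bound is available: your own Boccardo--Gallou\"et step (and the paper's Step~1, estimate \eqref{eq:integral gradient bound u}) only produces truncated $L^p$ estimates whose right-hand side grows with $k$, which after level-set summation yields a uniform bound in $L^{q(p-1)}$ for some $q>1$ but not in $L^p$. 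So the two halves of your plan are internally inconsistent in the singular range: the a priori analysis delivers weaker norms than the H\"older closure requires.

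The paper resolves this differently: it keeps $k$ \emph{fixed} in the monotonicity step, so that only $\int_{\{|u_n-u|<k\}}\la V_n-V,\nabla(u_n-u)\ra\le k\|f-f_n\|_{L^1}\to 0$ is needed; it then combines this with the Markov-type estimate on $\mm(\{|\nabla u_n|>k\})$ from Step~2 and with Lemma~\ref{lem:lp lemma} to obtain convergence of $\nabla(u_n-u)$ in \emph{measure}, and separately establishes (via Cavalieri) the uniform $L^{q(p-1)}$ bound. The conclusion then comes from a Vitali-type argument (convergence in measure $+$ uniform integrability $\Rightarrow$ $L^{p-1}$ convergence), not from H\"older's inequality. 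To repair your plan you would either have to downgrade \eqref{eq:plan holder} to a truncated statement at fixed $k$ and supply a separate tail estimate (which is precisely the paper's route), or find an unconditional $L^p$-gradient bound for $p<2$, which does not hold in this generality.
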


	{For clarity of presentation,} the proofs of Prop.\  \ref{prop:approx of delta by M-delta}, Prop.\  \ref{prop:approx W1p Fphi by Fphidelta} and Prop.\  \ref{prop:continuity wrt f} are postponed to \S \ref{sec:proof of propositions}. We {proceed by proving Theorem \ref{thm:p-delta regularity}, using all three of the aforementioned propositions}.
	\begin{proof}[Proof of Theorem \ref{thm:p-delta regularity}]
		The proof is divided into several steps in order of increasing generality. We first prove the result under the following additional assumptions:
		\begin{enumerate}[label=\Alph*), ref=\Alph*)]
			\item \label{it: W=hat W} $W^{1,p}_0(\Omega)=\widehat W^{1,p}_0(\Omega)$ (recall \eqref{eq:hat w1p});
			\item \label{it: g extends} there exists $\bar g \in \W(\X)$ such that $u=\bar g\restr \Omega$ (in particular  $u\in W^{1,\max\{2,p\}}(\Omega)$).
			\item \label{it: Psi good} $\Psi\ge c>0$ if $p\ge 2$, moreover for all $M>0$ it holds that  $C_M^{-1}\le \Psi(t\ww M)\le C_M$ for all $t>0$ and some constant $C_M>1.$ 
			\item \label{it: f in dual} $f\in L^2\cap L^{p'}(\Omega)$, where $p'\coloneqq \frac{p}{p-1}.$
		\end{enumerate}

		\smallskip 
		
		\noindent 1. \textit{Proof under assumptions \ref{it: W=hat W}, \ref{it: g extends}, \ref{it: Psi good}, \ref{it: f in dual}} : 
		Recall first that $\Psi_M(t)\coloneqq \Psi(t\wedge M)$ also satisfies \eqref{eq:psi' condition} with the same $\lambda,\Lambda$ as $\Psi$ (see Lemma \ref{lem:regularity of truncated psi}). Hence by \ref{it: Psi good} the function $\Psi_M$ satisfies the assumptions of Theorem \ref{thm:main quasilinear}. Therefore, {by applying Theorem \ref{thm:main quasilinear}}, for all numbers $M>0$  there exists a unique weak solution $u_M\in u+\W_0(\Omega)${---we recall that in the statement of the theorem we assume the existence of $u$ solution of \eqref{eq:quasilinear equation delta}---}  of 
		\begin{equation}\label{eq:p-delta poisson thm}
			\div\big( \Psi_M(|\nabla u_M|)\,\nabla u_M\big)=f,
		\end{equation}
		with $\Delta u_M \in L^2_\loc(\Omega)$. Applying  Prop.\  \ref{cor:key estimate}  to $ u_M$ (up to extending $u_M$ to $\X$ as in Lemma \ref{lem:cut-off lemma}) we obtain that for all $B_R(x)\subset \subset \Omega$ with $R\le 1,$ it holds
		\begin{equation}\label{eq:key applied uM}
			\begin{split}
				\fint_{B_{R/2}(x)} \Big( &R^{-2}|V_M|^2 +  |\nabla V_M|^2 \Big) \d \mm \le C
				\fint_{B_R(x)}   f^2 \d \mm 
				+ C \left(\fint_{B_R(x)}|V_M| \d \mm\right)^2,
			\end{split}
		\end{equation}
		where $V_M\coloneqq \Psi_M(|\nabla u_{{M}}|)\nabla u_M$ and $C$ is a constant depending only on $K,N,\Lambda,\lambda$. Set  $h_M\coloneqq\Psi_M(|\nabla u_M|)|\nabla u_M|\in L^2(\Omega)$.
		Thanks to assumptions \ref{it: W=hat W}, \ref{it: g extends}, \ref{it: Psi good}, \ref{it: f in dual} we apply   Prop.\  \ref{prop:approx of delta by M-delta} and get 
		\[
		\| h_M \,\|_{L^1(\Omega)}\le C \int_\Omega \Big( |f|^{p'\vee 2}+|u|^{p}+|\nabla u|^p+1 \Big) \d \mm; 
		\]
		$C$ is a constant depending only on $\Psi,p,\Omega,$ but not $M.$ Therefore, {the above and \eqref{eq:key applied uM} imply } 
		\[
		\begin{split}
			\int_{B_{R/2}(x)} \Big( &R^{-2}h_M^2 +  |\nabla h_M|^2 \Big) \d \mm \le C(p,{\Vert f \Vert_{L^2}},{\Vert u \Vert_{W^{1,p}}},R,x,\Psi,\Omega)
		\end{split}
		\]
		{where the constant $C(p,{\Vert f \Vert_{L^2}},{\Vert u \Vert_{W^{1,p}}},R,x,\Psi,\Omega)>0$  is a constant independent of $M$}.    This shows that the sequence of functions $\{h_M\}_M$ is  uniformly bounded in $\W_\loc(\Omega)$ independently of $M$. { Prop.\  \ref{prop:approx of delta by M-delta} also gives that  $|\nabla u_M-\nabla u|\to 0$ in $L^q(\Omega),$ where $q\coloneqq \min \{2,p\}.$ Hence we can easily verify that, up to passing to a subsequence,
			\begin{equation}\label{eq:strong gradient convergence in proof}
				h_M=\Psi_M(|\nabla u|)\,|\nabla u_M|\to \Psi(|\nabla u|)\,|\nabla u| , \quad \alme \text{ in $\Omega$.}
			\end{equation}
			By the Rellich Theorem (see  \cite[Thm.\ 6.3-ii)]{GigliMondinoSavare13} or \cite[\S 8]{HajlaszKoskela00}), up to  a further  subsequence,  $\{h_M\}_M$ converges {strongly} in $L^2_\loc(\Omega)$ to some $h\in L^2_\loc(\Omega)$, and \eqref{eq:strong gradient convergence in proof} implies $h=\Psi(|\nabla u|)\,|\nabla u|$. Therefore
			\begin{equation}\label{eq:convergence of integral gradM}
				\int_{B_R(x)}\Psi_M(|\nabla u_{{M}}|)|\nabla u_M| \d \mm \to \int_{B_R(x)}\Psi(|\nabla u|)\,|\nabla u| \d \mm, \quad \text{as $M\to +\infty.$}
			\end{equation}
			  By lower semicontinuity of the energy (see \cite[Lemma 2.18]{ivan1})  combined with \eqref{eq:key applied uM}, \eqref{eq:strong gradient convergence in proof} and \eqref{eq:convergence of integral gradM} we obtain both that 
			$\Psi(|\nabla u|)\nabla u \in W^{1,2}_{C,\loc}(T\X;\Omega)$ and  \eqref{eq:key applied udelta}.} {Finally \eqref{eq:gradient estimate for udelta} follows immediately from Prop.\  \ref{prop:lip a priori} applied to $u_M$ (note that $\Psi_M$ satisfies the assumptions), the fact that  $|\nabla u_M-\nabla u|\to 0$ $\mm$-a.e.\ in $\Omega$ and again \eqref{eq:convergence of integral gradM}}. Note also that by \eqref{eq:psi growth} it holds $\Psi(1)\le \nu.$

		\smallskip 
		
		\noindent 2. \textit{Removing assumptions \ref{it: W=hat W} and \ref{it: g extends}}: We now consider any $\Omega$ and  $u\in W^{1,p}(\Omega).$ {The existence and uniqueness of the solution $u$ still holds by Prop.\  \ref{lem:psi and phi} and the other  results in \S \ref{sec:minim}.}
		Since the required regularity estimates are local, we can replace $\Omega$ with any $B_R(x)\subset\subset\Omega$ and $u$  with $u \restr {B_R(x)}.$ By Lemma \ref{lem:good balls}, up to enlarging $R$ {if necessary}, we can assume that $W^{1,p}_0(B_R(x))=\widehat W^{1,p}_0(B_R(x)).$ 
		Moreover there exists $\bar g\in W^{1,p}(\X)$ such that $\bar g\restr {B_R(x)}=u \restr {B_R(x)}$, \textit{e.g.}\ multiplying $u$ by a  cut-off function (\textit{cf.}~the last step of the proof of Theorem \ref{thm:main quasilinear}). From now on we denote $\Omega=B_R(x).$
		By density we can find $\bar g_n \in \LIP_{bs}(\X)\subset \W\cap W^{1,p}(\X)$ such that $g_n\to \bar g $ in $W^{1,p}(\X)$. Set $g_n\coloneqq \bar g_n\restr\Omega$ and let $u_n\in g_n+W^{1,p}_0(\Omega)$ be the solution to \eqref{eq:quasilinear equation delta}. By Step 1 the statement holds for $u_n$ and $g_n$.  Hence it is sufficient to show that  $u_n\to u $ strongly in $W^{1,p}(\Omega)$, since the regularity estimates pass to the limit from $u_n$  to $u$ as in  the previous step.   
		This convergence is  standard, but we provide a short argument. Observe that $\{|{\nabla }u_n|\}_n$ is uniformly bounded in ${L^{p}}(\Omega)$, {since each $u_n$ is a minimizer of the functional $F_\Phi$ in $g_n + W^{1,p}_0(\Omega)$ and $\Phi$ is coercive; \textit{cf.}~\eqref{eq:F_Phi def} for the definition of $F_\Phi$. {Furthermore, $\{g_n\}_n$ is uniformly bounded in $W^{1,p}(\Omega)$ as it converges therein}. Then,  by }the Poincaré inequality, {we deduce that ${\{u_n\}_n}$ is bounded in $W^{1,p}(\Omega)$. {Setting $v_n\coloneqq u_n+(u-g_n)\in u+W^{1,p}_0(\Omega)$}, we have that $v_n$ converges weakly up to a subsequence to some $v\in {g+W^{1,p}_0(\Omega)}.$ }Hence $F_\Phi(v)\le \liminf_n F_\Phi(u_n)$ by Lemma \ref{lem:lsc energy}. On the other hand by minimality $F_{\Phi}(v_n)\ge F_\phi(u).$ However $\|v_n-u_n\|_{W^{1,p}(\Omega)}=\|u-g_n\|_{W^{1,p}(\Omega)}\to 0$ and thus $\liminf_n F_{\Phi}(v_n)=\liminf_n F_{\Phi}(u_n)$ {(\textit{cf.}\ Lemma \ref{lem:lsc energy})}. We deduce that $F_\Phi(v) \le F_\Phi(u)$ which gives that $v=u$ and $\lim_n F_{\Phi}(u_n)=F_\Phi(u)$.  From Lemma \ref{lem:effective monot}  with $\Psi(t)$, which  satisfies $\Psi(t)\ge \nu t^{p-2}$ for $t\ge 1$,  arguing as in \eqref{eq:monotonicity quant trick p>2} we deduce {$v_n\to v$ in $W^{1,p}(\Omega)$ } and so {$u_n\to u$} in $W^{1,p}(\Omega)$.
		
		\smallskip
		
		\noindent 3. \textit{Removing assumption \ref{it: Psi good}}: Consider now a general $\Psi$ as in the assumptions {of Theorem \ref{thm:p-delta regularity}} and for all $\delta\in(0,1)$ define $\Psi^\delta(t)\coloneqq \Psi (\sqrt{t^2+\delta}\,).$ From Lemma \ref{lem:regularity of truncated psi} we have that $\Psi^\delta$ satisfies \eqref{eq:psi' condition} with the same $\lambda,\Lambda.$ Moreover by \eqref{eq:psi growth} for $\Psi$ we have, for {all $p \in (1,\infty)$} and all $t\ge 1$, 
		\begin{align*}
			&C_p^{-1}\nu^{-1} t^{p-2}\le  \nu^{-1} (t^2+\delta)^\frac{p-2}{2}\le \Psi^\delta(t)\le \nu (t^2+\delta)^\frac{p-2}{2}\le C_p\nu t^{p-2},
		\end{align*}
		where $C_p>1$ depends only on $p$, having used that $\frac{1}{2}\le \frac{t^2}{t^2+\delta}\le 1$ {for $t \geq 1>\delta$}. Therefore  $\Psi^\delta$  also satisfies \eqref{eq:psi growth} with constant $C_p\nu$. Moreover  $\Psi^\delta(t\wedge M)$ clearly belongs to $\LIP([0,\infty)$ and it is globally bounded above and below by positive constants. Finally for $p\ge 2$  we have that $\Psi^\delta\ge c>0$ for some constant $c>0;$  indeed  by \eqref{eq:psi growth} it holds $\Psi^\delta(t)\ge \nu\delta^\frac{p-2}{2}$ for all $t\ge 1$ and $\Psi$ is positive and continuous in $(0,\infty)$. This shows that $\Psi^\delta$ satisfies {hypothesis} \ref{it: Psi good}.
		
		Let $u_\delta$ be the solution of \eqref{eq:quasilinear equation delta} with $\Psi=\Psi^\delta.$ By the previous step we have that the statements holds for $\Psi^\delta$ and $u_\delta$ with the same $\lambda,\Lambda.$ To conclude it  is sufficient to show that  $u_\delta\to u $ strongly in $W^{1,p}(\Omega)$, since the needed regularity estimates  pass to the limit from $u_\delta$  to $u$ as we did above. This convergence is precisely the content of Prop.\  \ref{prop:approx W1p Fphi by Fphidelta}.
		
		\smallskip
		
		\noindent 4. \textit{Removing assumption \ref{it: f in dual}}: Consider now a general $f\in L^2(\Omega).$ Take $f_n\coloneqq (-n)\vee f \wedge n\in L^{p'}\cap L^2(\Omega)$. In particular $f_n\to f$ in $L^2(\Omega).$ By virtue of Prop.\  \ref{prop:existence variational}, for each $n\in\mathbb{N}$ there exists a unique $u_n\in u+W^{1,p}_0(\Omega)$ such that $\div( \Psi(|\nabla u_n|)\, \nabla u_n)=f_n$ in $\Omega$. Moreover, by Prop.\  \ref{prop:continuity wrt f}, we have that $|\nabla u_n-\nabla u|\to0$ in $L^{p-1}(\Omega).$ From the previous step  the {conclusion of the theorem} holds for $u_n$. Hence, sending $n\to +\infty$ we deduce the result for $u$ passing to the limits in the regularity estimates as we did {in Step 1 of the proof}. {  {From our definition of $f_n$} we get $\fint_{B_{R}(x)} |f_n|^{q} \d \mm \le C_0$, hence  \eqref{eq:gradient estimate for udelta} for $u_n$ holds with the same constant $\tilde C$ for all $n.$ {By passing again to the limit $n\to\infty$, we recover \eqref{eq:gradient estimate for udelta} for $u$ the solution of \eqref{eq:quasilinear equation delta}.}}
	\end{proof}
	
	{
		\begin{rem}\label{rmk:non autonomous summary}
			The argument for the proof of Theorem \ref{thm:p-delta regularity} can be easily adapted to the case $\div(\aa \Psi(|\nabla u|)\nabla u)$ where $\aa\in \LIP(\Omega)$  is bounded above and below by positive constants. Indeed Theorem \ref{thm:main quasilinear} also holds in this case, as observed in Remark \ref{rmk:non autonomous summary thm2}. Moreover all the ``first-order'' results such as existence, uniqueness and approximation arguments (\textit{e.g.}\ the ones in \S \ref{sec:minim} or Propositions \ref{prop:approx of delta by M-delta}-\ref{prop:approx W1p Fphi by Fphidelta} work the same way (in fact for this the Lipschitz regularity of $\aa$ is not needed). Finally, it was observed that all the second order-uniform estimates extend  to this case (see Remark \ref{rmk:non autonom version 2}). Thus the  argument carries over with the relevant modifications.  \fr
	\end{rem}}

{
\begin{rem}\label{rem:nondeg detailed}
If we assume that $\Psi\ge c>0$  in Theorem \ref{thm:p-delta regularity}, for $p\ge 2$, then we  deduce that $u \in H^{2,2}_\loc(\Omega)$ if $p\ge 2.$ The proof is identical and we omit it for brevity. The key  is the uniform $L^2$-Hessian bound given by  Prop.\  \ref{prop:key estimate hessian}, which then passes to the limit exactly as per \eqref{eq:key}. In particular estimate \eqref{eq:key hessian} would also hold for $u.$  If $p< 2 $, we cannot assume $\Psi$ is bounded below in view of \eqref{eq:psi growth}. Instead the standard non-degeneracy assumption for $p\in(1,2)$ would be
\begin{equation}\label{eq:non deg p<2}
    c(t^2+1)^\frac{p-2}{2}\le \Psi(t)   \le  c^{-1}(t^2+1)^\frac{p-2}{2}
\end{equation}
and the expected result is $u\in W^{2,p}$ (see \textit{e.g.}\ \cite[Chapter 8]{giustibook}). However, the space $W^{2,p}(\X)$, for $p<2$,  is not even clearly defined in RCD setting unless one also assumes the gradient to be $L^2$ (see comments at the beginning of \cite[\S 3.3.3]{Gigli14})---we refrain from treating this case.  \fr
\end{rem}}

{
With Theorem \ref{thm:p-delta regularity} at hand we can now prove the Cheng-Yau type inequality for $p$-harmonic functions. { The argument relies heavily on the one presented in \cite{WZ11}---for brevity, many details are omitted and we only provide an account of where the strategy must be adjusted.}
\begin{proof}[Proof of Corollary \ref{cor:pharm}]
    Set $B\coloneqq B_R(x).$ We can assume $\mm(\Omega\setminus B)>0.$ Theorem \ref{thm:main plap} gives already that $u\in \LIP_\loc(B)$. Hence we focus on \eqref{eq:CY}. Since we know also $\gradu^{p-1}\in \W_\loc(B),$ we can argue as in  \cite{WZ11}. We thus outline only the main points.
Set $w\coloneqq -(p-1)\log(u)$ and ${\bf g}\coloneqq |\nabla w|^2$. {Since $u \in \LIP_\loc(B)$ is positive on $B$, we deduce} $w\in \LIP_\loc(B)$. {By the chain rule, $|\nabla u|^p = (|\nabla u|^{p-1})^{\frac{p}{p-1}} \in \W_\loc(B)$,  since $|\nabla u|^{p-1} \in W^{1,2}_\loc$. Then,} by the Leibniz rule ${\bf g}^{p/2}\in L^\infty_\loc\cap \W_\loc(B)$. Moreover by the chain rule $w$  solves $\Delta_p w={\bf g}^{p/2}$ in $B$. It is sufficient to deduce the integral inequality in \cite[eq.\ (2.7) p.\ 766]{WZ11} involving ${\bf g}$, from which \eqref{eq:CY} follows verbatim as in \cite{WZ11}. A technical difficulty is that to obtain \cite[eq.\ (2.7)]{WZ11}  it is used therein that $u\in C^{1,\alpha}$ and $C^\infty$ away from the critical set {$\{|\nabla u | = 0\}$}, while here this is not possible as $|\nabla u|$ is not even continuous. Instead we argue via the regularized {{$\Delta_{p,\delta}$}} operator.
For all $\delta>0$ consider the (unique) solution to
\[
\begin{cases}
    \Delta_{p,\delta} \,v_\delta={\bf g}^{p/2}, & \text{in $B$,}\\
     v_\delta \in w +W^{1,p}_0(B).
\end{cases}
\]
By Prop.\  \ref{prop:approx W1p Fphi by Fphidelta} we have that $v_\delta \to w$ in $W^{1,p}(B)$. Moreover, since ${\bf g}\in L^\infty_\loc(B)$, by Theorem \ref{thm:main quasilinear}  $\Delta v_\delta \in L^2_\loc(B)$ and { by Theorem \ref{thm:p-delta regularity}} $v_\delta \in \LIP_\loc(B)$ with $\sup_{\delta>0}\||\nabla v_\delta|\|_{L^\infty(K)}<\infty$ for all compact sets $K\subset B.$ In particular  $f_\delta\coloneqq|\nabla v_\delta|^2+\delta\in L^\infty_\loc\cap \W_\loc(B) $ and
\begin{equation}\label{eq:fdelta to g}
    f_\delta \overset{L^q(K)}{\longrightarrow} {\bf g} \quad \text{ as $\delta \to 0^+$, \, for all compact sets $K\subset B$  and all $q<\infty$.}
\end{equation}
Below $\delta$ is fixed and we write $f,v$ in place of $f_\delta,v_\delta$ { for brevity}.
For all $\phi \in L^\infty\cap \W(\X)$ with compact support in $B$, it holds
\begin{equation}\label{eq:trolololol}
    \begin{aligned}
            \mathcal{L}_f(\phi) &\coloneqq - \int \Big( \,\big\langle\, \nabla \phi,  f^{\frac{p}{2}-1} \nabla f + (p-2) \nabla v f^{\frac{p}{2}-2}\la \nabla v, \nabla f \ra \,\big\rangle  + 2\la \nabla v, \nabla \Delta_{p,\delta}v\ra \phi \Big) \d \mm\\
                      &\ge \int  2\phi f^{\frac{p}{2}-1}\big ( |{\bf H} v|^2 + K|\nabla v|^2 \big) + \phi \la \nabla  f^{\frac{p}{2}-1},  \nabla f \ra  -  2\div(\nabla v\phi) \Delta_{p,\delta}  v\d \mm\\
                      &\quad - 2\int  f^{p/2-1} \div(\nabla v\phi)\Delta v +
                       (p-2)f^{p/2-2} \la \nabla v, \nabla f \ra \div(\nabla v\phi)     \d \mm\\
                       &=\int  2\phi f^{p/2-1}\big ( |{\bf H} v|^2 + K|\nabla v|^2 \big) + \big(\frac p2-1\big)|\nabla f|^2 f^{\frac{p}{2}-2} \phi  \d \mm,
    \end{aligned}
\end{equation}
where in first inequality we used the Bochner inequality of the second part of Theorem \ref{thm:improved Bochner} { with test function $2\varphi f^{\frac{p}{2}-1}$} {(extending $v$ using Lemma \ref{lem:cut-off lemma})}, { the relation $\nabla f= \nabla |\nabla v|^2$, and we integrated by parts the final term in $\mathcal{L}(f)(\varphi)$,} and in the last line we used that $\Delta_{p,\delta }v {=\div(f^{\frac{p}{2}-1}\nabla v)} = f^{\frac{p}{2}-1} \Delta v + \frac{(p-2)}{2} f^{\frac{p}{2}-2} \la \nabla v, \nabla f \ra  $. Inequality \eqref{eq:trolololol} replaces \cite[Lemma 2.1]{WZ11}. Computing in coordinates (see \cite[p.\ 763]{WZ11} and {\textit{cf.}\ \cite[Lemma 4.6]{GigliViolo23}}) gives
\begin{equation}\label{eq:hessian jensen}
    |{\bf H} v|^2\ge \frac14 f^{-1}|\nabla f|^2+ \frac{f^{2-p}(\Delta_{p,\delta} v)^2}{N-1}-c_{p,N} f^{-\frac{p}{2}}|\nabla v||\nabla f||\Delta_{p,\delta}  v|,
\end{equation}
where $c_{p,N}>0$ is a constant depending only on $N$ and $p.$ Plugging \eqref{eq:hessian jensen} into \eqref{eq:trolololol}
\begin{equation}
\begin{aligned}
    & -\mathcal L_f(\phi)+\int \frac{2f^{1-p/2}{\bf g}^{p}}{N-1}\phi\d\mm\le C_{N,p}\int  \left(K^-f^{p/2} +  f^{-1/2}|\nabla f|{\bf g}^{p/2}\right) \phi \d \mm.
    \end{aligned}
\end{equation}
Choosing $\phi=\eta^2f^b$ for any $b>p/2$,  rearranging and using  the Young's inequality as in \cite[pag.\ 765]{WZ11} gives   (reintroducing the notation $f_\delta=f$) 
\begin{align*}
    \int b|\nabla &f_\delta^\frac{p}{2}|^2  f_\delta^{b-p/2}\eta^2 \d \mm + \int {f_\delta^{1+b-p/2}{\bf g}^{p}}\eta^2\d\mm\\
    &\le C_{N,p}\int  \left(K^-f_\delta^{p/2+b}+f_\delta^{1/2+b}|\nabla {\bf g}^{p/2}| \right)  \eta^2 +\frac1bf_\delta^{1+b-p/2} {\bf g}^{p}\eta^2+ \frac1bf_\delta^{p/2+b}|\nabla \eta|^2 \d \mm.
\end{align*}
By \eqref{eq:fdelta to g} all terms  pass to the limit { as $\delta \to 0^+$}, except the one with $|\nabla f_\delta|$ which is lower semicontinuous being a constant multiple of $\int |\nabla f_\delta^{p/4+b/2}|\eta^2\ \d \mm$. Hence we obtain the same inequality replacing $f_\delta$ by ${\bf g}$, which after  reabsorbing $|\nabla {\bf g}^{p/2}|$ using the Young's inequality gives \cite[eq.\ (2.7)]{WZ11} as desired.
\end{proof}}

{
For completeness we prove below the applications listed at the end of the introduction.

\begin{proof}[Proof of Corollary \ref{cor:sobolev}]
    We can assume that $\|u\|_{L^{p^*}(\mm)}=1$. By minimality $u$ satisfies the Euler-Lagrange equation {associated to $\mathscr{F}[v] = A \||\nabla v|\|_{L^p(\mm)}^p+B\|v\|_{L^p(\mm)}^p - \Vert v \Vert_{L^{p^*}(\mm)}^p$, namely}:
    \[
    -A \Delta_p u= |u|^{p-2}u ( |u|^{p^*-p} -B).
    \]
   Since the right-hand side is of the form $g |u|^{p-2}u$ with $g=|u|^{p^*-p} -B \in L^{N/p}_\loc(\mm)$, standard first order iteration arguments  that readily adapt to this setting show that $u\in L^\infty_\loc(\mm)$ (see \textit{e.g.}\ \cite[Theorem E.0.20]{peral}). We conclude by applying Theorem \ref{thm:main plap}.
\end{proof}

\begin{proof}[Proof of Corollary \ref{cor:minimal surf}]
    Since $u$ is assumed locally Lipschitz,  in every ball $B\subset\subset \Omega$ it solves $\div(\Psi_M(|\nabla u|)\nabla u)=0$, where $\Psi_M(t)\coloneqq ((t\wedge M)^2+1)^{-\frac12}$, for $M$ big enough {depending on $\Vert \nabla u \Vert_{L^\infty(B)}$}. Moreover, $\Psi_M$ satisfies \eqref{eq:psi' condition} with $\lambda=\frac{-M^2}{M^2+1}>-1$ and $\Lambda=0.$ Moreover $c<\Psi_M\le c^{-1}$ for some $c>0$ {again depending on $\Vert \nabla u \Vert_{L^\infty(B)}$}. We conclude by applying Theorem \ref{thm:main quasilinear}.
\end{proof}

\begin{proof}[Proof of Corollary \ref{cor:eigein}]
   Since $u$ is locally bounded in $\Omega$ (see \textit{e.g.}\ \cite[Theorem A.4]{ivan1}) we can directly apply Theorem \ref{thm:main plap}.
\end{proof}

}

\section{Proofs of Propositions  \ref{prop:approx of delta by M-delta}, \ref{prop:approx W1p Fphi by Fphidelta} and \ref{prop:continuity wrt f}} \label{sec:proof of propositions}

	\subsection{Approximation of $F_\Phi$ via $M$-truncation and $\delta$-regularization}\label{app:variational}
	In this section we prove Prop.\  \ref{prop:approx of delta by M-delta} and Prop.\  \ref{prop:approx W1p Fphi by Fphidelta}, sated in \S \ref{sec:p harmonic} and used to prove Theorem \ref{thm:p-delta regularity}.

	\begin{proof}[Proof of Proposition \ref{prop:approx W1p Fphi by Fphidelta}]
		Recall $\Phi$ is given by $\Phi(t) = \int_0^t s \Psi(s) \d s$, and $\Phi^\delta(t)\coloneqq \Phi(\sqrt{t^2+\delta})$.
		
		The minimizers $u,u_\delta$ exist unique by Prop.\  \ref{prop:existence variational}; indeed $\Phi,\Phi^\delta$ are {$p$-}admissible in the sense of Definition \ref{def:admissible Phi}, { strictly convex and satisfies \eqref{eq:coerciveness} with a uniform constant independent of $\delta$, thanks to Lemma \ref{lem:psi and phi}}.  The rest of the proof is divided into three steps. 
		
		\smallskip 
		
		\noindent 1. \textit{Uniform bounds in $W^{1,p}$}: Standard estimates using that $u_\delta$ is a minimizer, the {uniform coercivity of $\Psi_\delta$} and the Poincaré inequality ({applied to $u_\delta \!-\! g \in W^{1,p}_0(\Omega)$}) show
		\begin{equation}\label{eq:to get the W1p bound on udelta}
			\begin{aligned}
				\int_\Omega |\nabla u_\delta|^p\d \mm &\leq C \Big( F_{\Phi^\delta}(u_\delta) + \Vert f \Vert_{L^{p'}(\Omega)}^{p'} + \Vert g \Vert_{W^{1,p}(\Omega)}^p \Big), 
			\end{aligned} 
		\end{equation}
		where $C>0$ is a constant independent of $\delta.$
		Meanwhile, by minimality and monotonicity, 
		\begin{equation}\label{eq:chain of ineqs Phidelta Phi}
			F_{\Phi^\delta}(u_\delta)\le F_{\Phi^\delta}(u_1) \le F_{\Phi^1} (u_1)<\infty,
		\end{equation}
		which, combined with \eqref{eq:to get the W1p bound on udelta} and the Poincaré inequality (Prop.\  \ref{prop:poincare}), implies  the sequence $\{u_\delta\}_\delta$ is uniformly bounded in $W^{1,p}(\Omega).$ Hence up to passing to a subsequence which we do not relabel, we have that $u_\delta\rightharpoonup v$ weakly in $W^{1,p}(\Omega)$ for some limit function $v\in g+W^{1,p}_0(\Omega).$  
		It remains to show $v=u$ and $u_\delta \to u$ strongly in $W^{1,p}(\Omega)$. 
		
		\smallskip 
		
		\noindent 2. \textit{Showing $v=u$}: As $u_\delta \rightharpoonup v$ in $W^{1,p}(\Omega)$, the weak lower semicontinuity provided by Lemma \ref{lem:lsc energy} implies $ F_\Phi(v)\le \liminf_{\delta\to 0} F_\Phi(u_\delta).$ Therefore, arguing analogously to \eqref{eq:chain of ineqs Phidelta Phi},
		\begin{equation}\label{eq:comparing Fv with Fu}
			\begin{aligned}
				F_\Phi(v)&\le \liminf_{\delta\to 0} F_\Phi(u_\delta)\le  \liminf_{\delta\to 0} F_{\Phi^\delta}(u_\delta) \le \limsup_{\delta\to 0} F_{\Phi^\delta}(u_\delta)\le  \limsup_{\delta\to 0} F_{\Phi^\delta}(u) =F_\Phi(u),
			\end{aligned}
		\end{equation}
		where in the last step we used the Monotone Convergence Theorem. 
		Hence, by the uniqueness of the minimizer for $F_\Phi$, we deduce that $u=v.$ 
		
		\smallskip 
		
		\noindent 3. \textit{Strong convergence of $\{\nabla u_\delta\}_\delta$ in $L^{p}$}: The conclusion of the previous step implies that all the inequalities in \eqref{eq:comparing Fv with Fu} are equalities, hence $\lim_{\delta \to 0} F_{\Phi^\delta}(u_\delta)=F_\Phi(u).$ Applying Lemma \ref{lem:effective monot}  we get 
		\begin{equation}\label{eq:monotonicity quant trick p>2}
			\begin{aligned}
				\frac{{1+\lambda}}9  &\int_\Omega   \Big( \inf_{\big[\frac{|\nabla u_\delta|\vee |\nabla u|}3,|\nabla u_\delta|\vee |\nabla u|\big]} \Psi\Big)  |\nabla u_\delta-\nabla u|^2\d \mm \\
				&\le \int_\Omega \Big( \Phi(|\nabla u_\delta|)- \Phi(|\nabla u|)-\Psi(|\nabla u|)\la \nabla u,\nabla (u_{\delta}-u)\ra \Big)\d \mm\\
				&=\int_\Omega \Big( \Phi_{\delta}(|\nabla u_\delta|)- \Phi(|\nabla u|)+f(u_\delta-u) \Big) \d \mm\\
				&=F_{ \Phi}(u_\delta)-F_{ \Phi}(u)\le F_{\Phi^\delta}(u_\delta)-F_\Phi(u)\to 0, 
			\end{aligned}
		\end{equation}
		{using the Euler--Lagrange equation for $u$ (\textit{cf.}~Prop.\  \ref{prop:EL}) in the third line.}
		From  Lemma \ref{lem:lp lemma}, since $|\nabla u_\delta-\nabla u|\le 2(|\nabla u_\delta|\vee |\nabla u|)$, {we get}  $|\nabla u_\delta-\nabla u|\to 0$ in $L^p(\Omega)$; {therein, we choose the sets $A_n = \Omega$ for all $n$}. Strong convergence in $W^{1,p}(\Omega)$ follows by the Poincaré inequality.
	\end{proof}

	\begin{proof}[Proof of Prop.\  \ref{prop:approx of delta by M-delta}]
		In the sequel $\Phi_M,\Psi_M$ are as in the statement and $\phi_M$ is as in Lemma \ref{lem:prop phiM}.
		The minimizers $u_M,u$ exist unique by Prop.\  \ref{prop:existence variational}, thanks to \ref{it:PhiM conv}, \ref{it:PhiM adm} and \ref{it:PhiM coercive} in Lemma \ref{lem:prop phiM}.

		\smallskip 
		
		\noindent 1. \textit{Uniform $W^{1,p}$-bounds for $u_M$:} By minimality, for all $M$ there holds 
		\begin{equation}\label{eq:unif est F_Phi_M}
			F_{\Phi_M}(u_M)\le F_{\Phi_M}(g)\le \int_\Omega \Big( C(|\nabla g|^p+|\nabla g|^2 {+1}) + fg \Big) \d \mm<\infty,
		\end{equation}
		where we used \ref{it:PhiM bound} in Lemma \ref{lem:prop phiM}.
		Hence by arguing as per Step 1 of the proof of Prop.\  \ref{prop:approx W1p Fphi by Fphidelta} and using the uniform coercivity estimate of $\Phi_M$ given in \ref{it:PhiM coercive} of Lemma \ref{lem:prop phiM}, we obtain that the sequence $\{u_M\}_M$ is uniformly bounded in $W^{1,q}(\Omega)$ {(recall $q = 2 \wedge p$)} and thus converges weakly in $W^{1,q}(\Omega)$, up to a subsequence which we do not relabel, to some $v\in g+W^{1,q}_0(\Omega).$

		\smallskip 
		
		\noindent 2. \textit{Weak convergence:} {Our objective in this step is to show first $v \in g+W^{1,p}_0(\Omega)$, and second 
			\begin{equation}\label{eq:conv of inf for PhiM}
				F_\Phi(v)=F_\Phi(u)= \lim_{M\to +\infty} F_{\Phi_M}(u_M).
		\end{equation}}
		This would show that $u=v$, by uniqueness of the minimizer {in the class $g+W^{1,p}_0(\Omega)$---we emphasise that, at this stage, we only know that $v \in g+W^{1,q}_0(\Omega)$}. From \ref{it:PhiM monotone} for all $m\le M$ we have
		\begin{align*}
			\int_\Omega \Big( \phi_m(|\nabla v|)+fv \Big) \d \mm &\le \liminf_M \,\int_\Omega \Big( \phi_m(|\nabla u_M|)+fu_M \Big) \d \mm \\
			&\le \liminf_M\,\int_\Omega \Big( \phi_M(|\nabla u_M|)+fu_M \Big) \d \mm \le \liminf_M \,F_{\Phi_M}(u_M),
		\end{align*}
		where in the first inequality we used Lemma \ref{lem:lsc energy} and the convexity of $\phi_M.$ Sending $m\to +\infty$ and applying the Monotone Convergence Theorem (recall that $\phi_M\uparrow \Phi$) we obtain that $F_\Phi(v)\le  \liminf_M F_{\Phi_M}(u_M)$. In particular $|\nabla v|\in L^p(\Omega)$ {due to the coercivity of $\Phi$} (see Lemma \ref{lem:psi and phi}). Note that we do not know yet that $v \in L^p(\Omega)$. From $|\nabla v| \in L^p(\Omega)$ we have $-k\vee v\wedge k \in W^{1,p}(\Omega)$ for all $k\in \nn$. However, since $v-g\in W^{1,2}_0(\Omega)\subset \widehat W^{1,2}_0(\Omega)$ we have that $v-g=0$ $\mm$-a.e.\ in $\X\setminus \Omega$.  Therefore $(-k)\vee (v-g)\wedge k\in \widehat W^{1,p}_0(\Omega)=W^{1,p}_0(\Omega)$ and  by the Poincaré inequality \eqref{eq:poincare} we deduce that $(v-g)\in L^p(\Omega)$ and in particular $v\in L^p(\Omega)$ and so $v\in g+W^{1,p}_0(\Omega)$ as well. 
		Moreover for all $f \in \LIP_{c}(\Omega)$ we have by minimality
		$$\limsup_M \,F_{\Phi_M}(u_M)\le \limsup_M\, F_{\Phi_M}(f+g)=F_\Phi(f+g),$$
		where in the last step we used the Dominated Convergence Theorem using \ref{it:PhiM bound} in Lemma \ref{lem:prop phiM}. By arbitrariness of $f$ we obtain 
		$\limsup_M \,F_{\Phi_M}(u_M)\le F_\Phi(u)\le F_\Phi(v)$, which shows \eqref{eq:conv of inf for PhiM}.

		\smallskip 
		
		\noindent 3. \textit{Strong convergence:} {Our objective in this step is to show $u_M \to u$ strongly in $W^{1,q}(\Omega)$. The previous steps showed that $u_M \rightharpoonup u$ weakly in $W^{1,q}(\Omega)$ with  $u \in g+W^{1,p}_0(\Omega)$. Due to the density of $\LIP_{c}(\Omega)$ in $W^{1,p}_0(\Omega)$, there exists a sequence $\{w_n\}_n \subset g + \LIP_{c}(\Omega)$ converging strongly to $u$ in $W^{1,p}(\Omega)$. 	For each fixed $n$, we have $F_{\Phi_{M}}(w_n) \to F_\Phi(w_n)$ as $M\to \infty$ by the Dominated Convergence Theorem as above. 
			Furthermore, we have $F_{\Phi}(w_n)\to F_{\Phi}(u)$ as $n\to\infty$ by strong convergence. By diagonal argument we can extract a subsequence $\{M_n\}_n$ such that 
			$ F_{\Phi_{M_n}}(w_n) \to  F_{\Phi}(u)$ as $n\to +\infty.$}
		Applying Lemma \ref{lem:effective monot} to $\Psi_M(t)\coloneqq \Psi(t\wedge M_n)$ (which satisfies again \eqref{eq:the usual psi'} for some $\lambda\le 0$), {and therein setting $v=\nabla w_n$ and $w=\nabla u_{M_n}$},  we obtain 
		\begin{equation*}
			\begin{aligned}
				\frac{1+\lambda}{9} &\int_\Omega   \Big( \inf_{\big[\frac{|\nabla u_{M_n}|\vee |\nabla w_n|}3,|\nabla u_{M_n}|\vee |\nabla w_n|\big]} \Psi_M\Big)   |\nabla u_{M_n}-\nabla w_n|^2\d \mm \\
				&\le \int_\Omega \Big( \Phi_{M}(|\nabla w_n|)-\Phi_{M_n}(|\nabla u_{M_n}|)-\Psi_{M_n}(|\nabla u_{M_n}|)\la \nabla u_{M_n},\nabla (w_n -u_{M_n})\ra \Big) \d \mm\\
				&=\int_\Omega \Big( \Phi_{M} (|\nabla w_n|)-\Phi_{M_n}(|\nabla u_{M_n}|)+f(w_n-u_{M_n}) \Big) \d \mm\\
				&=F_{\Phi_{M_n}}(w_n)-F_{\Phi_{M_n}}(u_{M_n})\to F_{\Phi}(u)-F_\Phi(u)=0,
			\end{aligned}
		\end{equation*}
		{where we used the Euler--Lagrange equation for $u_{M_n}$ (\textit{cf.}~Prop.\  \ref{prop:EL}) in the third line.} By Lemma \ref{lem:lp lemma}, using that $\Psi\ge c>0$ for $p\ge 2$ and  the triangle inequality, we get  $|\nabla u_{M_n}-\nabla u|\to 0$ in $L^q(\Omega)$.
		The strong $W^{1,q}(\Omega)$ convergence then follows by the Poincaré inequality.

		\smallskip
		
		\noindent 4. \textit{Uniform gradient estimate}: 
		{Finally}, we prove the estimate \eqref{eq:uniform bound for uM}. Recall from \eqref{eq:unif est F_Phi_M} 
		\[
		\int_\Omega \Phi_M(|\nabla u_M|)   \d \mm \le \int_\Omega \Big( C(|\nabla g|^p+|\nabla g|^2 {+1} ) + f(g-u_M) \Big) \d \mm.
		\]
		In turn, using Young's inequality, we obtain from the previous inequality that, for all $\eps>0$, 
		\begin{align*}
			\int_\Omega \Big( \eps^{-1}f^{2\vee p'}\! + &\eps(u_M\!-\!g)^q\!+\! C(|\nabla g|^p+|\nabla g|^2 {+1}) \Big) \d \mm \!   \geq \int_\Omega \Phi_M(|\nabla u_M|) \d \mm  \\ 
			&\ge \frac{c}{2} \int_\Omega \Psi_M(|\nabla u_M|)|\nabla u_M|^2 \d \mm  +   \frac{c}{2}\int_\Omega (|\nabla u_M|^q\!-\!1) \d \mm, 
		\end{align*}
		where we used \ref{it:PhiM coercive} and \ref{it:PhiM super coercive} of Lemma \ref{lem:prop phiM} to obtain the final line, and where $c>0$ is a constant depending only on $\Psi$, but not $M.$ Applying Poincaré's inequality on the left side to $u_M-g$ and absorbing the term with $|\nabla u_M|$ into the right side for $\eps$ small (depending on $p$ and $\delta$) we obtain
		\begin{equation}\label{eq:with square bound in appendix prop}
			\int_\Omega \Psi_M(|\nabla u_M|)|\nabla u_M|^2 \d \mm \le  \tilde C\int_\Omega \Big( |\nabla g|^2+|\nabla g|^p+g^2+f^{2\vee p'}+1 \Big) \d \mm ,
		\end{equation}
		with $\tilde C$ depending only on $\Psi,\Omega$. We get \eqref{eq:uniform bound for uM} since 
		$\Psi(t\wedge M)t\le\Psi_M(t)t^2 +\max_{[0,1]}\Psi$, $\forall t\ge 0$. 
	\end{proof}

	\subsection{Continuity of $\div(\Psi(\gradu)\nabla u)$ operator in $L^{p-1}$}\label{app:continuity of p Laplace wrt f}

	Here we prove Prop.\  \ref{prop:continuity wrt f}, stated in \S \ref{sec:p harmonic} and used to prove Theorem \ref{thm:p-delta regularity}. {The argument is very similar to \cite[Prop.\  3.4]{ivan1}.}

	\begin{proof}[Proof of Proposition  \ref{prop:continuity wrt f}]
		It is sufficient to show that there exists $q>1$ such that
		\begin{equation}\label{eq:gradient to zero in meas}
			\begin{split}
				&|\nabla u_n-\nabla u|\to 0 \quad \text{ in $\mea$-measure}, \qquad \sup_n \||\nabla u_n|^{p-1}\|_{L^q(\mm)}  <+\infty.
			\end{split}
		\end{equation}
		From \eqref{eq:gradient to zero in meas} we conclude by standard arguments (see \textit{e.g.}~\cite[Lemma 8.2]{HajlaszKoskela00}). Below we show \eqref{eq:gradient to zero in meas}. By $C$ we denote a constant depending only on $\Omega,\Psi$ and $p$, possibly changing from line to line. 
		
		\smallskip 
		
		\noindent 1. \textit{Truncated estimate on $|\nabla u_n|^p$}: Our goal in this step is to obtain the estimate 
		\begin{equation}\label{eq:integral gradient bound u}
			\int_{\{|u_n-g|\le k\}} |\nabla u_n|^p\d \mm \le C\Vert |\nabla g|+1\Vert_{L^p(\Omega)}^p  + Ck^{2- {p\wedge 2}} \|f\|_{L^2(\Omega)}^{2\wedge(\frac{p}{p-1})}, 
		\end{equation}
		for all $k>0$. The underlying idea is to test with $u_n-g$ in the equation $\div(\Psi(|\nabla u_n|\nabla u_n)=f_n$. However, since we do not know \emph{a priori} that $f_n \in L^{p'}(\Omega)$ for $p \in (1,2)$, this cannot be done directly. In order to overcome this, for all $k>0$, we define the {Stampacchia-type} truncation $F_k(t)\coloneqq (-k)\vee t \wedge k$ and insert $F_k(u_n-g) \in W^{1,p}_0\cap L^\infty(\Omega)$ into the weak formulation of $\div(\Psi(|\nabla u_n|\nabla u_n)=f_n$ (\textit{cf.}~Remark \ref{rmk:sobolev test}), noting that, by locality, 
		\begin{equation}\label{eq:gradient of truncation}
			\nabla F_k(u_n-g) = (\nabla u_n - \nabla g)\mathds{1}_{\{|u_n-g|\leq k\}}.
		\end{equation} 
		{From the growth assumption \eqref{eq:psi growth} and the non-negativity of $\Psi$ we deduce that  } $t^2\Psi(t)\ge c(t^p-1)$ for some $c>0$. { Again from \eqref{eq:psi growth} and Lemma \ref{lem:psi basic}-i) we deduce $t\Psi(t)\le \tilde c(t^{p-1}+1)$  for some $\tilde c>0$.}  Using both of these bounds and  testing with $F_k(u_n-g)$  we obtain
		\begin{equation*}
			\begin{split}
				\int_{\{|u_n-g|\le k\}}& (c|\nabla u_n|^p-1)\d \mm\le \tilde c\int_{\{|u_n-g|\le k\}} \!\!\!\!\!\!\!\!\!\ {(|\nabla u_n|^{p-1}+1)}|\nabla g| \d\mm + \int_\Omega |F_k(u_n\!-\!g)| |f_n|  \d \mm\\
				&\le \frac c2\!\!\int_{\{|u_n-g|\le k\}} \!\!\!\!\!\!\!\!\!\! {(|\nabla u_n|^{p}+1)} \d \mm\!+\! C_p \Vert \nabla g\Vert_{L^p(\Omega)}^p \!+\! \|F_k(u_n\!-\!g) \|_{L^2(\Omega)} \|f_n\|_{L^2(\Omega)},
			\end{split}
		\end{equation*}
		{where $C_p>0$ depends only on $p,c$ and $\tilde c.$}  Hence
		\begin{equation}\label{eq:ffft}
			\begin{split}
				\int_{\{|u_n-g|\le k\}} |\nabla u_n|^p\d \mm&\le C\bigg( \Vert |\nabla g|+1\Vert_{L^p(\Omega)}^p +k^{1-\frac {p\wedge 2}2} \|F_k(u_n-g) \|_{L^p(\Omega)}^\frac {p\wedge 2}2  \|f_n\|_{L^2(\Omega)}\bigg),
			\end{split}
		\end{equation}
        where we used the H\"older inequality for $p\ge 2$ and   $|F_k(u_n-g)|^2 \leq k^{2-p}|F_k(u_n-g)|^p$ if $p\le 2$. 
		Since $f_n \to f$ strongly in $L^2(\Omega)$, the final term on the right-hand side is bounded independently of $n$ by $C\Vert f \Vert_{L^2(\Omega)}$.	We apply the Poincaré inequality, recalling that $F_k(u_n - g) \in W^{1,p}_0(\Omega)$ and the relation \eqref{eq:gradient of truncation}, and then apply Young's inequality to get 
		\begin{align*}
			k^{1-\frac {p\wedge 2}2} \|  F_k(u_n-g) \|_{L^p(\Omega)}^\frac {p\wedge 2}2 \|f\|_{L^2(\Omega)}\le \delta \int_{\{|u_n-g|\le k\}} |\nabla (u_n-g)|^p\d \mm+ C\delta^{-1}k^{2- {p\wedge 2}} \|f\|_{L^2(\Omega)}^{2\wedge(\frac{p}{p-1})}, 
		\end{align*}
		where $\delta>0$ is arbitrary. By taking $\delta$ sufficiently small and using the triangle inequality, we absorb the term with $|\nabla u_n|$ into the left-hand side of \eqref{eq:ffft} and obtain \eqref{eq:integral gradient bound u}. 
		
		\smallskip 
		
		\noindent 2. \textit{Estimate on $\mm(\{|\nabla u_n|>k\})$}: By \eqref{eq:integral gradient bound u} and Poincaré inequality applied to $F_k(u_n-g)$, 
		\begin{equation}\label{eq:integral lp bounds}
			\int_\Omega | F_k(u_n-g)|^p\d \mm \le  C\Vert |\nabla g|  {+1}\Vert_{L^p(\Omega)}^p  + Ck^{2-p\wedge 2}\|f\|_{L^2(\Omega)}^{2\wedge(\frac{p}{p-1})}.
		\end{equation}
		Combining  \eqref{eq:integral lp bounds} and the Markov inequality, noting that $|F_k| \leq k$, we have 
		\begin{equation}\label{eq:markov unif bounds}
			\begin{aligned}
				\mm(|u_n-g| \geq k)&= \mm(|F_k(u_n-g)| = k) = \mm(|F_k(u_n-g)| \geq k) \\ 
				&\leq Ck^{-p}\Vert \nabla g + 1 \Vert^p_{L^p(\Omega)} + Ck^{(2-p)-p\wedge 2}\|f\|_{L^2(\Omega)}^{2\wedge(\frac{p}{p-1})}.
			\end{aligned}
		\end{equation}
		Combining  \eqref{eq:integral gradient bound u} with \eqref{eq:markov unif bounds} and using again the Markov inequality we get for every $k>0$ 
		\begin{equation}\label{eq:measure of gradun}
			\begin{aligned}
				\mea(\{|\nabla u_n|>k \})&\le  \mea(\{|\nabla u_n| \mathds{1}_{\{|u_n-g| \leq k\}}>k \}) +\mea(\{|u_n-g| \geq k\}) \\ 
				&\le C\frac{1}{k^p}\Vert |\nabla g| {+1} \Vert_{L^p(\Omega)}^p  +C \frac 1{k^{p-2+p\wedge 2}} \|f\|_{L^2(\Omega)}^{2\wedge(\frac{p}{p-1})}.
			\end{aligned}
		\end{equation}

		\smallskip 
		
		\noindent 3. \textit{Conclusion}: This shows the second part of \eqref{eq:gradient to zero in meas}; indeed by Cavalieri's formula, 
		\begin{align*}
			\int_{\Omega} {(|\nabla u_n|^{p-1})^q}\d \mm &=\int_0^\infty qt^{q-1}\mm(\{|\nabla u_n|^{p-1}>t\})\d t \\
			&\le
			q\mm(\Omega)+qC\int_1^\infty  t^{q-1}\big({t^{-\frac{p}{p-1}} + t^{-\frac{(p-2)+p\wedge 2}{p-1}}    }\big)\d t,
		\end{align*}
where $C$ is independent of $n$, and the last term is finite	if {$q<\min\big\{\frac{(p-2)+p\wedge 2}{p-1},\frac{p}{p-1}\big\}$} (note that $\frac{(p-2)+p\wedge 2}{p-1}>1$).

		For the first part of \eqref{eq:gradient to zero in meas} we use  $F_k(u_n - u) \in W^{1,p}_0(\Omega)$, with $k\in\nn$ arbitrary, as test function in both the equations in \eqref{eq:two equaions f,f_n} and subtract the two identities  to get, {for all fixed $k$}, 
		\[
		\int_{\{|u-u_n|<k\}} \la\Psi(|\nabla u_n|)\nabla u_n-\Psi(|\nabla u|)\nabla u , \nabla (u_n-u)\ra\le k \|f-f_n\|_{L^1(\Omega)}\to 0 \quad {\text{as } n \to \infty}.
		\]
		Applying the first monotonicity inequality of Lemma \ref{lem:quant mon Psi}, we obtain
		\begin{equation}
			{\lim_{n\to\infty}} \int_{A_n^k} \Psi\Big(\frac{|\nabla u_n|\vee |\nabla u|}{2} \Big) |\nabla u-\nabla u_n|^2\d \mm = 0,
		\end{equation}
		where $A_n^k\coloneqq \{|u-u_n|<k,\, |\nabla u_n|<k,\}.$   {We actually obtain that the limit holds integrating in $\{|u-u_n|<k\}$, but it is convenient to restrict to $A_n^k$. In this way $\||\nabla u_n|\|_{L^p(A_n^k)}$ is  uniformly bounded in $n$ and we can thus apply }
		the $L^p$-convergence Lemma \ref{lem:lp lemma} to obtain that $\|\nabla (u- u_n)\|_{L^p(A_n^k)}\to 0.$ By \eqref{eq:markov unif bounds} and \eqref{eq:measure of gradun} for all $\eps>0$ it holds $\mm(A_n^k)\le \eps$ for all $k$ big enough, uniformly in $n$. This implies $|\nabla (u- u_n)|\to 0$ in measure and proves \eqref{eq:gradient to zero in meas}. 			
	\end{proof}

	\appendix

	\section{Properties of the functions \texorpdfstring{$\Psi $ and $ \Phi$}{Phi and Psi}}\label{sec:appendix}

	{In the following statement we collect useful elementary properties of the function $\Psi$.}
	\begin{lemma}\label{lem:psi basic}
		Let $\Psi \in {\sf AC}_\loc(0,\infty)$ be positive, satisfying \eqref{eq:psi' condition} for constants $\lambda,\Lambda$. Then:
		\begin{enumerate}[label=\roman*)]
			\item $\Psi(1) \min  \{t^{\Lambda},t^\lambda\}\le \Psi(t)\le \Psi(1) \max  \{t^{\Lambda},t^\lambda\}$, for all $t>0,$
			\item  $ \Psi(t)\le (t/s)^\Lambda  \Psi(s)$ for all $t\ge s>0$,
			\item $\sqrt{t^2+1}\Psi(\sqrt{t^2+1})\le 2^{\Lambda +1} \big (t\Psi(t)+\Psi(1)\big)$
		\end{enumerate}
	\end{lemma}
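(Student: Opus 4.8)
The three claims are all consequences of the two-sided bound \eqref{eq:psi' condition} on the logarithmic derivative of $\Psi$. The plan is to integrate this differential inequality and then perform elementary manipulations.

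\emph{Step 1 (claims i) and ii)).} Since $\Psi\in{\sf AC}_\loc(0,\infty)$ is positive, the function $t\mapsto\log\Psi(t)$ is locally absolutely continuous on $(0,\infty)$ with derivative $\Psi'(t)/\Psi(t)$, and \eqref{eq:psi' condition} reads $\lambda/t\le(\log\Psi)'(t)\le\Lambda/t$ for a.e.\ $t>0$. Integrating between $s$ and $t$ with $0<s\le t$ gives
\[
\lambda\log(t/s)\le\log\Psi(t)-\log\Psi(s)\le\Lambda\log(t/s),
\]
that is $(t/s)^\lambda\le\Psi(t)/\Psi(s)\le(t/s)^\Lambda$ for all $t\ge s>0$. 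The upper bound here is exactly ii). Taking $s=1$ yields $\Psi(t)\le\Psi(1)t^\Lambda$ and $\Psi(t)\ge\Psi(1)t^\lambda$ for $t\ge1$; integrating the other way (i.e.\ applying the inequality with the roles reversed for $t\le1$) gives $\Psi(t)\le\Psi(1)t^\lambda$ and $\Psi(t)\ge\Psi(1)t^\Lambda$ for $0<t\le1$. Combining the two regimes gives $\Psi(1)\min\{t^\Lambda,t^\lambda\}\le\Psi(t)\le\Psi(1)\max\{t^\Lambda,t^\lambda\}$ for all $t>0$, which is i).

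\emph{Step 2 (claim iii)).} Write $r\coloneqq\sqrt{t^2+1}\ge1$ and note $r\le\sqrt{2}\max\{t,1\}$. We want to bound $r\Psi(r)$. Consider first $t\ge1$: then $r\ge1$, so applying ii) with the pair $(r,1)$ (valid since $r\ge1$) gives $\Psi(r)\le r^\Lambda\Psi(1)$; alternatively, and more usefully, apply ii) with the pair $(r,t)$ when $r\ge t$ — which holds since $r=\sqrt{t^2+1}\ge t$ — to get $\Psi(r)\le(r/t)^\Lambda\Psi(t)\le 2^{\Lambda/2}\Psi(t)$, using $r/t=\sqrt{1+t^{-2}}\le\sqrt{2}$ for $t\ge1$. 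Hence $r\Psi(r)\le\sqrt{2}\,t\cdot 2^{\Lambda/2}\Psi(t)\le 2^{\Lambda+1}t\Psi(t)$ (crudely). For $0<t\le1$ we have $r\le\sqrt{2}$, and by monotonicity from ii) (with pair $(r,1)$, $r\ge1$) $\Psi(r)\le r^\Lambda\Psi(1)\le 2^{\Lambda/2}\Psi(1)$, so $r\Psi(r)\le\sqrt{2}\cdot 2^{\Lambda/2}\Psi(1)\le 2^{\Lambda+1}\Psi(1)$. In both cases $r\Psi(r)\le 2^{\Lambda+1}\big(t\Psi(t)+\Psi(1)\big)$, which is iii). (One should double-check the constant: the worst power of $2$ comes from $\sqrt{2}\cdot 2^{\Lambda/2}=2^{(\Lambda+1)/2}\le 2^{\Lambda+1}$ when $\Lambda\ge-1$, which is guaranteed since $\Lambda\ge\lambda>-1$; if $\Lambda<0$ one uses $r^\Lambda\le 1$ and the constant only improves.)

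\emph{Expected difficulty.} There is no real obstacle: the only points requiring a little care are (a) justifying that $\log\Psi$ is locally absolutely continuous so that the fundamental theorem of calculus applies to the a.e.\ inequality \eqref{eq:psi' condition} — this follows from $\Psi\in{\sf AC}_\loc(0,\infty)$ and positivity, since $1/\Psi$ is then locally bounded — and (b) keeping track of the constant in iii) and verifying it is bounded by $2^{\Lambda+1}$ using $\Lambda>-1$. Everything else is a routine integration of a one-dimensional differential inequality followed by case analysis on $t\lessgtr 1$.
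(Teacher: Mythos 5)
Your treatment of parts i) and ii) is correct and follows the same route as the paper: integrate the logarithmic derivative bound to get $(t/s)^\lambda \le \Psi(t)/\Psi(s) \le (t/s)^\Lambda$ for $t \ge s > 0$, then specialise $s=1$ and split the cases $t\gtrless 1$.

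Part iii), however, has a genuine gap when $\Lambda<0$, a regime permitted by \eqref{eq:psi' condition} (take for instance $\Psi(t)=t^{-0.9}$, so $\lambda=\Lambda=-0.9$). You estimate $r\Psi(r)$ by separately bounding $r\le\sqrt2\,t$ and $(r/t)^\Lambda\le 2^{\Lambda/2}$, but the second inequality is false when $\Lambda<0$: since $r/t\ge1$ and the exponent is negative, $(r/t)^\Lambda$ decreases in $r/t$, so its maximum over $r/t\in[1,\sqrt2]$ is $1$, not $2^{\Lambda/2}<1$. Your proposed fallback, replacing $2^{\Lambda/2}$ by $1$, gives $r\Psi(r)\le\sqrt2\,t\Psi(t)$, which does not suffice: for $-1<\Lambda<-1/2$ one has $2^{\Lambda+1}<\sqrt2$, and since $t\Psi(t)\ge\Psi(1)t^{1+\lambda}\to\infty$, the inequality $\sqrt2\,t\Psi(t)\le 2^{\Lambda+1}\big(t\Psi(t)+\Psi(1)\big)$ fails for $t$ large. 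The same issue recurs in your treatment of $0<t\le 1$. So the assertion ``the constant only improves'' is not correct.

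The fix is to avoid splitting the product: write $r\Psi(r)\le r(r/t)^\Lambda\Psi(t)=(r/t)^{\Lambda+1}\,t\Psi(t)$. Since $\Lambda+1>0$ (this is where $\lambda>-1$ enters, via $\Lambda\ge\lambda$), the function $x\mapsto x^{\Lambda+1}$ is increasing, and $r/t=\sqrt{1+t^{-2}}\le 2$ for $t\ge1$ gives $(r/t)^{\Lambda+1}\le 2^{\Lambda+1}$ regardless of the sign of $\Lambda$. For $0<t<1$ one instead uses $r\le\sqrt2$ and the monotonicity of $s\mapsto s\Psi(s)$ (a consequence of the lower bound $\lambda>-1$ in \eqref{eq:psi' condition}) to get $r\Psi(r)\le\sqrt2\,\Psi(\sqrt2)\le(\sqrt2)^{\Lambda+1}\Psi(1)\le 2^{\Lambda+1}\Psi(1)$, again using $\Lambda+1>0$ in the last step. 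This is exactly the paper's argument.
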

	\begin{proof}
		i) and ii) follow easily integrating $\frac{\Psi'}{\Psi}$ and using \eqref{eq:psi' condition}. For iii) note that by ii) we have 
		\[
		\sqrt{t^2+1}\Psi(\sqrt{t^2+1}) \le \Big(\frac{\sqrt{t^2+1}}{t}\Big)^{\Lambda+1} t\Psi(t) \le 2^{\Lambda+1} t\Psi(t), \qquad \forall t \geq 1, 
		\]
		having used that $\sqrt{t^2+1}\le 2t$ for all $t\ge 1.$ By i) and since $t\Psi(t)$ is non-decreasing we have $\sqrt{t^2+1}\Psi(\sqrt{t^2+1})\le \sqrt 2\Psi(\sqrt 2)\le(\sqrt 2)^{\Lambda +1}\Psi(1)$ for all $t \in (0,1)$. 
	\end{proof}

	{We now prove that condition \eqref{eq:psi' condition} is preserved by taking $M$-truncation or $\delta$-regularization.}
	\begin{lemma}\label{lem:regularity of truncated psi}
		Let $\Psi \in {\sf AC}_\loc(0,\infty)$ satisfy \eqref{eq:psi' condition} with parameters $\lambda\le 0$ and $\Lambda$. Then for all constants $M>0,\delta>0$ both the functions 
		$
			\Psi_M(t)\coloneqq \Psi\big(t\wedge M)$ and $ \Psi^{\delta}(t)\coloneqq \Psi\big(\sqrt{t^2+\delta}\,\big)$
		belong to $\AC_\loc(0,\infty)$ and satisfy again \eqref{eq:psi' condition} with the same parameters $\lambda,\Lambda.$
	\end{lemma}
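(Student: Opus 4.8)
The plan is to verify directly that both transformations preserve absolute continuity and the ellipticity bound \eqref{eq:psi' condition}, by computing the logarithmic derivative $\frac{t\,(\Psi_M)'(t)}{\Psi_M(t)}$ (respectively $\frac{t\,(\Psi^\delta)'(t)}{\Psi^\delta(t)}$) almost everywhere and comparing it against $\frac{s\Psi'(s)}{\Psi(s)}$ evaluated at the relevant point $s$. The key observation is that in both cases the new function is a composition $\Psi\circ h$ with $h$ a locally Lipschitz, non-decreasing, locally bi-Lipschitz-onto-its-image map of $(0,\infty)$; hence $\Psi\circ h \in \AC_\loc(0,\infty)$ by the standard chain rule for absolutely continuous functions (composition of $\AC_\loc$ with locally Lipschitz is $\AC_\loc$), and its derivative is $(\Psi'\circ h)\cdot h'$ a.e.

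First I would treat the $M$-truncation. Here $h(t)=t\wedge M$, which is $1$-Lipschitz, non-decreasing, with $h'(t)=\mathds 1_{\{t<M\}}$ a.e.; so $\Psi_M\in\AC_\loc(0,\infty)$ and, for a.e.\ $t$, $\Psi_M'(t)=\Psi'(t)\mathds 1_{\{t<M\}}$. For $t<M$ we get $\frac{t\Psi_M'(t)}{\Psi_M(t)}=\frac{t\Psi'(t)}{\Psi(t)}\in[\lambda,\Lambda]$, while for $t>M$ we get $\frac{t\Psi_M'(t)}{\Psi_M(t)}=0$, which lies in $[\lambda,\Lambda]$ since $\lambda\le 0$ and $\Lambda\ge\lambda$; here the positivity of $\Psi$ (hence of $\Psi_M=\Psi(t\wedge M)$) is what makes the quotient well-defined. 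This gives \eqref{eq:psi' condition} for $\Psi_M$ with the same $\lambda,\Lambda$.

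Next I would treat the $\delta$-regularization. Here $h(t)=\sqrt{t^2+\delta}$, which is smooth, strictly increasing, locally Lipschitz with $h'(t)=\frac{t}{\sqrt{t^2+\delta}}\in(0,1)$, and locally bi-Lipschitz onto $(\sqrt\delta,\infty)$; so $\Psi^\delta\in\AC_\loc(0,\infty)$ and $(\Psi^\delta)'(t)=\Psi'(\sqrt{t^2+\delta})\,\frac{t}{\sqrt{t^2+\delta}}$ a.e. Therefore, writing $s=\sqrt{t^2+\delta}\ge t$,
\begin{equation*}
\frac{t\,(\Psi^\delta)'(t)}{\Psi^\delta(t)}=\frac{t^2}{t^2+\delta}\cdot\frac{s\Psi'(s)}{\Psi(s)}=\frac{t^2}{s^2}\cdot\frac{s\Psi'(s)}{\Psi(s)}\in\Big[\tfrac{t^2}{s^2}\lambda,\ \tfrac{t^2}{s^2}\Lambda\Big]\cap\Big[\tfrac{t^2}{s^2}\Lambda,\ \tfrac{t^2}{s^2}\lambda\Big],
\end{equation*}
and since $\tfrac{t^2}{s^2}\in(0,1)$ and $\lambda\le 0\le\Lambda$ one checks that $\tfrac{t^2}{s^2}\lambda\ge\lambda$ and $\tfrac{t^2}{s^2}\Lambda\le\Lambda$, so the quotient stays in $[\lambda,\Lambda]$. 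Again $\Psi>0$ guarantees the denominators are nonzero. One should be a little careful about the a.e.\ statement: $\frac{s\Psi'(s)}{\Psi(s)}\in[\lambda,\Lambda]$ holds for $s$ outside a Lebesgue-null set $N\subset(0,\infty)$, and since $t\mapsto\sqrt{t^2+\delta}$ maps null sets to null sets (being locally bi-Lipschitz), the bound on $\frac{t(\Psi^\delta)'(t)}{\Psi^\delta(t)}$ holds for a.e.\ $t$.

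The only mild subtlety — and the single place one must be slightly attentive rather than purely mechanical — is keeping track of where $\lambda\le 0$ is actually used: it is precisely what lets the upper bound ($0\le\Lambda$, and $\tfrac{t^2}{s^2}\lambda\ge\lambda$) and the degenerate case $t\ge M$ go through; without it the truncation could in principle violate the lower bound. Everything else is routine verification of the chain rule for $\AC_\loc$ functions together with the elementary inequalities for the factor $t^2/(t^2+\delta)\in(0,1)$, so I would state these computations and omit further detail.
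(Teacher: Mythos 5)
Your argument is correct and takes essentially the same route as the paper's: pass to the absolutely-continuous chain rule, then compare the logarithmic derivative of the composition with $\frac{s\Psi'(s)}{\Psi(s)}$ using $\lambda\le 0$ and the factor $\tfrac{t^2}{t^2+\delta}\in(0,1)$ (respectively the indicator $\mathds 1_{\{t<M\}}$). Two small remarks: the displayed intersection of intervals is garbled (the second interval $\bigl[\tfrac{t^2}{s^2}\Lambda,\tfrac{t^2}{s^2}\lambda\bigr]$ is reversed, so the intersection as written is empty), although the inequalities $\tfrac{t^2}{s^2}\lambda\ge\lambda$ and $\tfrac{t^2}{s^2}\Lambda\le\Lambda$ you then invoke are the correct ones; and both your proof and the paper's tacitly use $\Lambda\ge 0$ (needed for $\tfrac{t^2}{s^2}\Lambda\le\Lambda$ and for the value $0$ at $t>M$ to respect the upper bound), which is harmless since $\Lambda$ may always be enlarged.
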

	\begin{proof}
		Clearly again $\Psi_M,\Psi^{\delta}\in {\sf AC}_\loc(0,\infty)$. The fact that $\Psi_M$ satisfies again \eqref{eq:psi' condition} with  $\lambda,\Lambda$ follows because $\Psi_M'=\mathds{1}_{\{t\le M\}} \Psi'$ and $\lambda\le 0$.  On the other hand
		\[
		t(\Psi^{\delta})'(t)= \frac{t^2}{\sqrt{t^2+\delta}}  \Psi'\big(\sqrt{t^2+\delta}\,\big)
		{\implies  \frac{t(\Psi^{\delta})'(t)}{\Psi^\delta(t)} = \frac{t^2}{t^2+\delta} \cdot \frac{(\sqrt{t^2+\delta})  \Psi'(\sqrt{t^2+\delta})}{\Psi(\sqrt{t^2+\delta})}  }     \]
		hence by \eqref{eq:psi' condition} for $\Psi$, we get 
		$\lambda\le \lambda\Big( \frac{t^2}{t^2+\delta} \Big)  \leq  \frac{t(\Psi^{\delta})'(t)}{\Psi^\delta(t)} \leq \Lambda \Big( \frac{t^2}{t^2+\delta} \Big) \leq \Lambda$, 
		where we used that $\lambda\le 0$ and $\frac{t^2}{t^2+\delta} \leq 1$. This shows that $\Psi^{\delta}$ satisfies again \eqref{eq:psi' condition} with  $\lambda,\Lambda.$ 
	\end{proof}

	{The following technical lemma   plays a role in the proof of Prop.\  \ref{prop:approx of delta by M-delta}.}
	\begin{lemma}[Properties of $\Phi_M$]\label{lem:prop phiM} Let $\Psi \in {\sf AC}_\loc(0,\infty)$ be positive, satisfying \eqref{eq:psi' condition} and the $p$-growth condition \eqref{eq:psi growth} for  $p\in(1,\infty)$.  For all $M\ge 1$ define the functions 
		\begin{equation}\label{eq:PhiM def integral}
			\Phi_M(t)\coloneqq \int_0^t s \Psi(s\wedge M)\d s;  \quad \phi_M(t)\coloneqq \int_0^t (s\wedge M) \Psi(s\wedge M)\d s. 
		\end{equation}
		Then, for constants $c,C>0$ depending only on $\Psi$ and $p$, it holds$:$
		\begin{enumerate}[label=\roman*)]
			\item\label{it:PhiM conv} $\Phi_M$ (resp.\ $\phi_M$) is monotone and strictly convex (resp.\ convex) for all $M>0$,
			\item\label{it:PhiM adm} $\Phi_M$ is 2-admissible in the sense of Definition \ref{def:admissible Phi},
			\item\label{it:PhiM coercive} $ c\,(t^{2\wedge p} -1) \le \Phi_M(t)$ for all $ t\ge0,M\ge 1,$
			\item\label{it:PhiM bound} $\Phi_M(t)\le C(t^p+t^2+1)$ holds for all $ t\ge0$  and all $M\ge 1,$
			\item\label{it:PhiM monotone} for all $0<m\le M$ it holds
			$\phi_m(t)\le \phi_{M}(t)\le \Phi_M(t),$ {for all $t\ge 0$,} 
			\item\label{it:PhiM super coercive} $c\,t^2\, \Psi(t\wedge M)\le \Phi_M(t)$ holds for all $t\ge 0$,
			\item\label{it:PhiM limit} $\phi_M(t)\uparrow \Phi(t)$ pointwise as $M\to +\infty$, {where $\Phi(t) \coloneqq \int_0^t s \Psi(s) \d s$}.
		\end{enumerate}
	\end{lemma}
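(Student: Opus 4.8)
The plan is to verify the seven items one by one, directly from the integral formulas \eqref{eq:PhiM def integral}, relying on only two structural facts about $\Psi$: that the first inequality in \eqref{eq:psi' condition} makes $r\mapsto r\Psi(r)$ non-decreasing with $(r\Psi(r))'\ge(1+\lambda)\Psi(r)>0$ a.e.\ and $r\Psi(r)\to0$ as $r\to0^+$ (the last from Lemma \ref{lem:psi basic}-i)), so that $\Phi_M$ and $\phi_M$ are well defined and of class $C^1$ with
\[
\Phi_M'(t)=t\,\Psi(t\wedge M),\qquad \phi_M'(t)=(t\wedge M)\,\Psi(t\wedge M);
\]
and the $p$-growth bound \eqref{eq:psi growth}. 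I would record these derivative formulas at the outset.

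Items \ref{it:PhiM conv} and \ref{it:PhiM adm} then follow at once: $\Phi_M',\phi_M'\ge0$ give monotonicity; $\Phi_M'$ is strictly increasing (on $(0,M)$ because $(t\Psi(t))'>0$, and on $(M,\infty)$ because it equals $t\Psi(M)$), hence $\Phi_M$ is strictly convex, whereas $\phi_M'$ is only non-decreasing, so $\phi_M$ is convex; and $\Phi_M'(0)=0$ while $\Phi_M'(t)=t\,\Psi(t\wedge M)\le C_M(1+t)$ — because $r\mapsto r\Psi(r)$ is bounded on $(0,M]$ by Lemma \ref{lem:psi basic}-i) and equals $t\Psi(M)$ for $t\ge M$ — whence $|\Phi_M(t)|\le C_M(1+t^2)$ and $\Phi_M$ is $2$-admissible (the constant here may depend on $M$, which is harmless since \ref{it:PhiM adm} is only used to invoke Prop.\ \ref{prop:existence variational} for a fixed $M$).

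For the growth estimates \ref{it:PhiM coercive} and \ref{it:PhiM bound} I would isolate the auxiliary inequality: for all $s\ge1$ and $M\ge1$,
\[
\nu^{-1}\min\{s^{p-1},s\}\ \le\ s\,\Psi(s\wedge M)\ \le\ \nu\max\{s^{p-1},s\},
\]
proved from \eqref{eq:psi growth} by splitting into $s\le M$ and $s>M$, and in the second case distinguishing $p\ge2$ from $p<2$ (using $M^{p-2}\le1\le s^{p-2}$ or its reverse, according to the sign of $p-2$). Integrating this from $1$ to $t$, together with $\int_0^1 s\Psi(s)\,\mathrm ds=\Phi(1)$ (a constant depending only on $\Psi$) and $0\le\Phi_M(t)\le\Phi(1)$ for $t\le1$, yields \ref{it:PhiM coercive} with $c=\nu^{-1}/(2\wedge p)$ and \ref{it:PhiM bound}. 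For the sharper lower bound \ref{it:PhiM super coercive} I would instead use Lemma \ref{lem:psi basic}-ii): if $t\le M$ then $\Psi(s)\ge(s/t)^{\Lambda}\Psi(t)\ge2^{-|\Lambda|}\Psi(t)$ for $s\in[t/2,t]$, so $\Phi_M(t)\ge\int_{t/2}^{t}s\,\Psi(s)\,\mathrm ds\ge\tfrac{3}{8}2^{-|\Lambda|}t^2\Psi(t)$; if $t>M$ then $\Phi_M(t)=\Phi(M)+\Psi(M)\tfrac{t^2-M^2}{2}$, and bounding $\Phi(M)=\Phi_M(M)\ge\tfrac{3}{8}2^{-|\Lambda|}M^2\Psi(M)$ by the previous case and combining the two terms (using $\tfrac{3}{8}2^{-|\Lambda|}\le\tfrac12$ and $t^2-M^2\ge0$) gives $\Phi_M(t)\ge\tfrac{3}{8}2^{-|\Lambda|}t^2\Psi(M)$; since in either case the factor is $\Psi(t\wedge M)$, the choice $c=\tfrac{3}{8}2^{-|\Lambda|}$ works.

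Finally, \ref{it:PhiM monotone} holds because $s\wedge m\le s\wedge M\le s$ and $r\mapsto r\Psi(r)$ non-decreasing make every integrand of $\phi_M-\phi_m$ and of $\Phi_M-\phi_M$ non-negative; and \ref{it:PhiM limit} follows since $\phi_M(t)=\int_0^t s\,\Psi(s)\,\mathrm ds=\Phi(t)$ already for $M\ge t$, while $M\mapsto\phi_M(t)$ is non-decreasing by \ref{it:PhiM monotone}. The only genuinely delicate point — and the one I would be most careful about — is maintaining $M$-independence of the constants in \ref{it:PhiM coercive}, \ref{it:PhiM bound} and \ref{it:PhiM super coercive}: since the $p$-growth of $\Psi$ is postulated only for arguments $\ge1$ while the truncation level $M$ is free (but $\ge1$), one must treat the three regimes $s\le1$, $1\le s\le M$, $s>M$ separately, which is exactly what the case analysis above is designed to do.
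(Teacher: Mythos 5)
Your proof is correct, and for items \ref{it:PhiM conv}, \ref{it:PhiM adm}, \ref{it:PhiM monotone} and \ref{it:PhiM limit} it coincides with the paper's argument. For \ref{it:PhiM coercive}--\ref{it:PhiM bound} you package the $p$-growth bound \eqref{eq:psi growth} into a clean two-sided auxiliary inequality $\nu^{-1}\min\{s^{p-1},s\}\le s\Psi(s\wedge M)\le\nu\max\{s^{p-1},s\}$ before integrating; the paper performs essentially the same case analysis inline, directly under the integral sign, arriving at the same $M$-uniform constants — this is a cosmetic refactoring rather than a different route. The only place you genuinely diverge is item \ref{it:PhiM super coercive}: you split into $t\le M$ versus $t>M$ and combine the two regimes using the explicit formula $\Phi_M(t)=\Phi(M)+\Psi(M)\tfrac{t^2-M^2}{2}$ for $t>M$. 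The paper avoids the case split entirely by observing that $s\mapsto s\,\Psi(s\wedge M)$ is non-decreasing (this is the content of item i)), so that $\Phi_M(t)\ge\int_{t/2}^t s\,\Psi(s\wedge M)\,\der s\ge\tfrac{t^2}{4}\Psi(\tfrac t2\wedge M)\ge 2^{-\Lambda}\tfrac{t^2}{4}\Psi(t\wedge M)$ in one stroke, where the last step uses Lemma \ref{lem:psi basic}-ii) applied to the truncated function $\Psi(\cdot\wedge M)$ (which satisfies \eqref{eq:psi' condition} with the same $\lambda,\Lambda$ by Lemma \ref{lem:regularity of truncated psi}). Your argument buys nothing over this, and your constant $\tfrac38\,2^{-|\Lambda|}$ is slightly lossier than the paper's $\tfrac14\,2^{-\Lambda}$ when $\Lambda<0$; but since the lemma only requires some positive $c$ depending on $\Psi$ and $p$, both are perfectly acceptable.
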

	\begin{proof}
		{ We first note that by  Lemma \ref{lem:psi basic}-i) we have that $t\Psi(t) $  is bounded in $(0,C]$ for all $C>0$, hence $\Phi_M,\phi_M$ are well defined, finite and continuous in $[0,\infty)$.}
		
		\noindent 1. \textit{Proof of  \ref{it:PhiM conv}}: By \eqref{eq:psi' condition} it follows that $(t\Psi(t))'\ge (1+\lambda)\Psi(t)>0$, hence $t\Psi(t)$ is strictly increasing. {A similar computation} shows that $s \Psi(s\wedge M)$ is strictly increasing and that $(s\ww M) \Psi(s\wedge M)$ is increasing, which {respectively imply the convexity of $\Phi_M$ and $\varphi_M$}.

		\noindent 2. \textit{Proof of    \ref{it:PhiM adm}}: By Lemma \ref{lem:psi basic} we have $s\Psi(s)\le \Psi(1) \max\{s^{\lambda+1},s^{\Lambda+1}\}$ for all $t>0.$  Hence $s\Psi(s\wedge M)\le C_M(s+1)$ for all $t>0$ for some constant $C_M,$   which {(by Young's inequality)} gives $\Phi_M(t)\le  C_M(t^2+1)$ up to increasing $C_M,$ as desired.

		\noindent 3. \textit{Proof of    \ref{it:PhiM coercive} and \ref{it:PhiM bound}}: {Since $\Psi$ is positive, it is clear that the lower bound of part iii) is satisfied over the interval $t \in [0,1]$ for some $c$. Using the continuity of {$\Phi_M$}, the same is true for part iv). Thus, we may restrict our attention to } $t\ge 1$. By the $p$-growth condition \eqref{eq:psi growth}, 
		\[
		\Phi_M(t)\!\ge\! \int_1^{t} \!\!\!s \Psi(s\ww M)\d s \ge  \!\nu^{-1}\!\!\int_1^{t}\!\! s (t\ww M)^{p-2} \d s \!\ge\! \nu^{-1}\min\{1,(t\ww M)^{p-2}\} \!\int_1^t \!\! s \d s \ge \frac{\nu^{-1}}2{(t^{p\ww 2}\!-\!1)},
		\]
		{and, similarly,} $\Phi_M(t)\le  c_\Psi+\nu \max\{1,(t\ww M)^{p-2}\}\frac{t^2}2\le  c_\Psi+\nu t^{p\vee 2},$ where $c_\Psi\coloneqq \int_0^1 s \Psi(s)\d s<\infty$, { since $s\Psi(s)$ is bounded in $(0,1)$}.

		\noindent 4. \textit{Proof of \ref{it:PhiM monotone}}: Immediate because $(s\ww M)\Psi(s\wedge M)$ is increasing in $M$ and since $\Psi\ge 0.$

		\noindent 5. \textit{Proof of \ref{it:PhiM super coercive}}:  Recall that $ s\Psi(s\wedge M)$  is non-decreasing {and $\Psi$ is positive}, hence
		\[
		\Phi_M(t) \ge \int_{t/2}^t s \Psi\left(t\wedge M\right)\d s\ge \frac{t^2}{4} \Psi\left(\frac{t}{2}\wedge M\right) \ge 2^{-\Lambda}\frac{t^2}{4} \Psi(t\wedge M), 
		\]
		where we used {part ii) of} Lemma \ref{lem:psi basic} applied to $\Psi(s\wedge M)$.

		\noindent 6. \textit{Proof of \ref{it:PhiM limit}}: {This is clear from the Monotone Convergence Theorem}.
	\end{proof}

	\begin{lemma}[Effective convexity]\label{lem:quant mon Psi}
		Let $\Psi:[0,\infty)\to [0,\infty)$ be in $\AC_\loc(0,\infty)$ and such that
		\begin{equation}\label{eq:the usual psi'}
			{t\Psi'(t)}\ge  \lambda{\Psi(t)}, \quad \text{ for a.e.\ $t\ge 0$}
		\end{equation}
		for some constant $-1<\lambda\le 0$. Then, by setting $\Phi(T)\coloneqq \int_0^T r \Psi(r) \d r$, it holds 
		\begin{align}
			&(t\Psi(t)-s\Psi(s))(t-s)\ge \frac{1+\lambda}4 |t-s|^2\Psi   \bigg(\frac{t\vee s}2\bigg), \quad \text{ for all $s,t\ge 0$,} \label{eq:quant mon Psi} \\ 
            & \Phi(T)-\Phi(S)-\Phi'(S)(T-S)\ge \frac{1+\lambda}{9} |S-T|^2 \big( \inf_{\big(\frac{S\vee T}3,S\vee T\big)}\Psi\big) , \quad \text{ for all $S,T\ge 0$.} \label{eq:quant mon Psi 2}
		\end{align}		
	\end{lemma}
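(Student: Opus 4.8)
The plan is to prove both inequalities by reducing to a one-dimensional convexity computation. The crucial hypothesis \eqref{eq:the usual psi'} says precisely that $t \mapsto t\Psi(t)$ has derivative at least $(1+\lambda)\Psi(t) > 0$ a.e., so $t\Psi(t)$ is strictly increasing and, more quantitatively, $\Phi'(t) = t\Psi(t)$ is ``uniformly increasing'' on any bounded interval. I would first record the elementary fact that, by integrating \eqref{eq:the usual psi'}, $\Psi$ satisfies $\Psi(t) \ge (t/s)^\lambda \Psi(s)$ for $t \le s$ (the analogue of Lemma \ref{lem:psi basic}-ii), with $\lambda \le 0$ meaning $\Psi$ cannot decrease too fast as the argument shrinks); in particular $\Psi(t/2) \ge 2^\lambda \Psi(t) \ge \tfrac12 \Psi(t)$ since $\lambda \in (-1,0]$, and similarly $\Psi(t/3) \ge \tfrac13\Psi(t)$.

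For \eqref{eq:quant mon Psi}: by symmetry assume $0 \le s \le t$ and write $h(r) \coloneqq r\Psi(r) = \Phi'(r)$. Then $h$ is locally absolutely continuous with $h'(r) = \Psi(r) + r\Psi'(r) \ge (1+\lambda)\Psi(r)$ a.e. Hence
\[
(t\Psi(t) - s\Psi(s))(t-s) = (t-s)\int_s^t h'(r)\,\d r \ge (1+\lambda)(t-s)\int_s^t \Psi(r)\,\d r.
\]
Now restrict the integral to $r \in [(s+t)/2, t] \subset [t/2, t]$, which has length $(t-s)/2$, and on this interval bound $\Psi(r) \ge \Psi(t/2) \ge \tfrac12 \Psi(t/2)$... more carefully: on $[t/2,t]$ we have $\Psi(r) \ge \min_{[t/2,t]}\Psi$, and since $r \mapsto r\Psi(r)$ is increasing while we want a lower bound in terms of $\Psi$ at a single point, I would instead use that for $r \ge t/2$, $\Psi(r) \ge (t/2r)^{|\lambda|}\cdot$(something) — cleanest is: for $r \in [t/2, t]$, $r\Psi(r) \ge (t/2)\Psi(t/2)$ by monotonicity of $h$, so $\Psi(r) \ge \tfrac{t/2}{r}\Psi(t/2) \ge \tfrac12 \Psi(t/2)$. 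Thus $\int_{(s+t)/2}^t \Psi(r)\,\d r \ge \tfrac{t-s}{2}\cdot\tfrac12\Psi(t/2) = \tfrac{t-s}{4}\Psi(t/2)$, and recalling $t = t \vee s$ this yields $(t\Psi(t)-s\Psi(s))(t-s) \ge \tfrac{1+\lambda}{4}(t-s)^2\Psi((t\vee s)/2)$, which is \eqref{eq:quant mon Psi}.

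For \eqref{eq:quant mon Psi 2}: again assume $0 \le S \le T$ (the case $T \le S$ is analogous, using that $\Phi$ is convex so both sides behave correctly). By Taylor with integral remainder,
\[
\Phi(T) - \Phi(S) - \Phi'(S)(T-S) = \int_S^T (\Phi'(r) - \Phi'(S))\,\d r = \int_S^T \Big(\int_S^r h'(\rho)\,\d\rho\Big)\d r \ge (1+\lambda)\int_S^T\!\!\int_S^r \Psi(\rho)\,\d\rho\,\d r.
\]
I would then bound the double integral from below by restricting to $r \in [(2S+T)/3, T]$ and $\rho \in [(S+2T)/3, r]$... actually a cleaner route: on the region $\{(2S+T)/3 \le \rho \le r \le T\}$ one has $\rho \ge (2S+T)/3 \ge (S\vee T)/3$ when $S$ is small, but this needs care if $S$ is not small relative to $T$. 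The honest approach: restrict to $\rho, r$ both in $[(S+2T)/3, T]$, an interval of length $(T-S)/3$ contained in $((S\vee T)/3, S\vee T)$, giving a triangular region of area $(T-S)^2/18$ on which $\Psi(\rho) \ge \inf_{((S\vee T)/3, S\vee T)}\Psi$. This produces the constant $\tfrac{1+\lambda}{18}$; to reach $\tfrac{1+\lambda}{9}$ as stated one optimizes the choice of subinterval slightly (e.g.\ use $[(S+T)/2, T]$ paired appropriately, or simply note $\tfrac{1}{18} < \tfrac19$ and the cruder constant still suffices wherever the lemma is invoked, but I would chase the sharper one). The main obstacle is purely bookkeeping: choosing the subintervals so that the lower endpoint stays above $(S\vee T)/3$ uniformly in the relative sizes of $S,T$ while keeping the area large enough — once the right subinterval is pinned down, both inequalities follow from the single estimate $h'(r) \ge (1+\lambda)\Psi(r)$ a.e.\ together with $\lambda \le 0$.
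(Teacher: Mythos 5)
Your proof of \eqref{eq:quant mon Psi} is correct and follows essentially the same route as the paper: reduce to $t \geq s$, integrate $(r\Psi(r))' \geq (1+\lambda)\Psi(r)$, restrict to a subinterval of $[t/2,t]$, and use monotonicity of $r\Psi(r)$ to replace $\Psi(r)$ by $\tfrac12\Psi(t/2)$. (The paper restricts to $[s\vee\tfrac{t}{2},t]$ and you restrict to $[\tfrac{s+t}{2},t]$, but both have length at least $(t-s)/2$ and land in $[t/2,t]$, so the constants come out the same.)

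For \eqref{eq:quant mon Psi 2} there is a genuine gap: your triangular restriction only yields the constant $\tfrac{1+\lambda}{18}$, and the ``optimization'' you gesture at to recover $\tfrac{1+\lambda}{9}$ is never carried out. The missing idea is to replace the triangle by a \emph{rectangle}. Set $x \coloneqq S + \tfrac13(T-S)$ and $y \coloneqq S + \tfrac23(T-S)$ (the trisection of $[S,T]$, for $S\leq T$). Then for every $r \in [y,T]$ the inner domain $[S,r]$ contains $[x,y]$, so
\[
\int_S^T\!\!\int_S^r \Psi(\rho)\,\mathrm{d}\rho\,\mathrm{d}r \;\geq\; \int_y^T\!\!\int_x^y \Psi(\rho)\,\mathrm{d}\rho\,\mathrm{d}r \;\geq\; (T-y)(y-x)\,\inf_{(x,y)}\Psi \;=\; \tfrac{1}{9}(T-S)^2\,\inf_{(x,y)}\Psi,
\]
because the inner integral is bounded below by a quantity \emph{independent of} $r$, so the region is a genuine $\tfrac{T-S}{3}\times\tfrac{T-S}{3}$ rectangle rather than a triangle; this is exactly where the factor of two is gained. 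Finally one checks $x = \tfrac{2S+T}{3} \geq \tfrac{T}{3} = \tfrac{S\vee T}{3}$ (since $S\geq 0$) and $y \leq T = S\vee T$, so $\inf_{(x,y)}\Psi \geq \inf_{(\frac{S\vee T}{3},\,S\vee T)}\Psi$ as required. Incidentally, your worry about whether $(2S+T)/3 \geq (S\vee T)/3$ ``when $S$ is not small'' is unfounded: $\tfrac{2S+T}{3} \geq \tfrac{T}{3}$ holds for all $S \geq 0$, and (since we took $S\leq T$) $S\vee T = T$, so the constraint is automatic.

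Also note that your appeal to ``$\tfrac{1}{18}$ suffices wherever the lemma is invoked'' is an unverified claim about downstream uses (the lemma feeds into Lemma \ref{lem:effective monot} and from there into the convergence arguments of \S\ref{sec:proof of propositions}); since the fix above costs nothing, it is better to prove the stated constant.
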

	\begin{proof}
		{
			We begin with \eqref{eq:quant mon Psi}. Note  that  $r\Psi(r)$ is non-decreasing by  assumption \eqref{eq:the usual psi'}. If $s=t=0$ there is nothing to prove, while if $s=0$ the statement follows immediately because by monotonicity {of $r \Psi(r)$ we have} $\Psi(t)\ge \frac12\Psi(t/2)$. By symmetry we can assume   {without loss of generality} that $t\ge s>0$.
			To obtain the estimate, we begin by writing 
			\[
			t\Psi(t)-s\Psi(s)=\int_s^t (r\Psi(r))' \d r \ge (1+\lambda)\int_s^t\Psi(r)\d r \ge (1+\lambda)\int_{s\vee \frac{ t}{2}}^t\Psi(r) \d r.
			\]
			By monotonicity {of $r \Psi(r)$} we have $\Psi(r)\ge \frac{t}{2r}\Psi\big(\frac{t}{2}\big)\ge \frac12\Psi\big(\frac{ t}{2}\big)$ for all $r \in (s\vee \frac{ t}{2},t)$ . Hence, 
			\[
			t\Psi(t)-s\Psi(s)\ge (1+\lambda) \Big(t-s\vee \frac{ t}{2}\Big)\frac12\Psi\big(\frac{ t}{2}\big)\ge \frac {t-s}4\Psi\big(\frac{ t}{2}\big),
			\]
			where we used that  $t-s\vee \frac{ t}{2}\ge \frac{t-s}{2}.$ Multiplying by $(s-t)$ we obtain \eqref{eq:quant mon Psi}.}	
            
            We pass to \eqref{eq:quant mon Psi 2}. We assume $S \le  T$, the other case is similar. By \eqref{eq:the usual psi'} we have  $\Phi''(t)\ge (1+\lambda)\Psi(t)$ for a.e.\ $t\ge 0$.
		By the fundamental theorem of calculus
		\begin{equation}\label{eq:taylor Phi}
			\Phi(T)-\Phi(S)-\Phi'({S})(T-S)=\int_S^T\int_S^r \Phi''(t)\d t \d r \ge (1+\lambda)\int_S^T\int_S^r \Psi(t)\d t \d r. 
		\end{equation}
		We {set $x=S+\frac{1}{3}(T-S)$ and $y = S+\frac{2}{3}(T-S)$}. 
		We have {$(x,y) \subset (S,T)$ and $|x-y|=|x-S|=|T-{y}|=|S-T|/3$. Hence, by the positivity of $\Psi$}, 
		\[
		\int_S^T\int_S^r \Psi(t)\d t \d r\ge \int_y^T\int_x^y \Psi(t)\d t \d r\ge |T-y||y-x| \inf_{(x,y)} \Psi =\frac19|T-S|^2 \inf_{(x,y)} \Psi.
		\]
		The conclusion follows noting that 
		{$x = \frac{T}{3}+\frac{2}{3}S \geq \frac{T}{3} = \frac{T \vee S}{3}$ and $y \leq T = T\vee S$.}
	\end{proof}
	
   {The result below extends the standard one  in $\rr^n$ for $\Psi(t)=t^{p-2}$, see  \textit{e.g.} \cite[Lemma A.0.5]{peral}. }  
	\begin{lemma}[Effective monotonicity]\label{lem:effective monot}
		Let $\Xdm$ be an infinitesimally Hilbertian metric measure space. Let $\Psi,\Phi:[0,\infty)\to [0,\infty)$ be as in Lemma \ref{lem:quant mon Psi}. Then for all $v,w\in L^0(T\X)$, 
		\begin{align}
			\la \Psi(|v|)v-\Psi(|w|)w,v-w\ra &\ge \frac{1+\lambda}4 |v-w|^2\Psi\bigg(\frac{|w|\vee |v|}2\bigg), \quad &&\alme \label{eq:quant mon} \\ 
			\Phi(|v|)-\Phi(|w|)-\Psi(|w|)\la w,v-w\ra &\ge \frac{1+\lambda}{36} |v-w|^2 \Big( \inf_{\big[\frac{|w|\vee |v|}3,|v|\vee |w|\big]} \Psi\Big), \quad &&\alme \label{eq:quant mon 2}
		\end{align}
	\end{lemma}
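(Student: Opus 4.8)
The plan is to deduce both pointwise inequalities \eqref{eq:quant mon} and \eqref{eq:quant mon 2} from their scalar counterparts \eqref{eq:quant mon Psi} and \eqref{eq:quant mon Psi 2} in Lemma \ref{lem:quant mon Psi}, exploiting the fact that in an infinitesimally Hilbertian space the fibers of $L^0(T\X)$ carry a genuine inner product, so that everything reduces to two-dimensional Euclidean computations $\mm$-a.e. First I would fix Borel representatives of $v,w$ and argue at a fixed point $x$ where $|v|(x),|w|(x)$ and $\langle v,w\rangle(x)$ are well defined; write $t\coloneqq|v|(x)$, $s\coloneqq|w|(x)$ and let $\theta$ be determined by $\langle v,w\rangle(x)=ts\cos\theta$ (interpreting things trivially if $t=0$ or $s=0$).

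For \eqref{eq:quant mon}, expand $\langle \Psi(|v|)v-\Psi(|w|)w,\,v-w\rangle = \Psi(t)t^2+\Psi(s)s^2-(\Psi(t)+\Psi(s))ts\cos\theta$ at the point $x$. Since $\cos\theta\le 1$ and $\Psi(t)+\Psi(s)\ge 0$, this is bounded below by $\Psi(t)t^2+\Psi(s)s^2-(\Psi(t)+\Psi(s))ts = (\Psi(t)t-\Psi(s)s)(t-s)$. Now apply the scalar inequality \eqref{eq:quant mon Psi}, which gives $(\Psi(t)t-\Psi(s)s)(t-s)\ge \tfrac{1+\lambda}4|t-s|^2\Psi\!\big(\tfrac{t\vee s}2\big)$. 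It remains to absorb the discrepancy between $|t-s|^2=\big||v|(x)-|w|(x)\big|^2$ and $|v-w|^2(x)=t^2+s^2-2ts\cos\theta$. Here one uses instead the sharper lower bound $\Psi(t)t^2+\Psi(s)s^2-(\Psi(t)+\Psi(s))ts\cos\theta \ge \tfrac12(\Psi(t)+\Psi(s))(t^2+s^2-2ts\cos\theta) + \tfrac12(\Psi(t)t-\Psi(s)s)(t-s)$, obtained by rewriting the left side as a convex combination; the first summand controls $\tfrac12(\Psi(t)\vee\Psi(s))|v-w|^2(x)$, and monotonicity of $r\Psi(r)$ together with \eqref{eq:quant mon Psi} handles the rest, yielding the claimed bound $\tfrac{1+\lambda}4|v-w|^2\Psi\!\big(\tfrac{|v|\vee|w|}2\big)$ after elementary estimates (possibly with the stated constant, the precise numerics being the only place care is needed).

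For \eqref{eq:quant mon 2}, the left-hand side at $x$ equals $\Phi(t)-\Phi(s)-\Psi(s)\langle w,v-w\rangle(x) = \Phi(t)-\Phi(s)-\Psi(s)(ts\cos\theta - s^2)$. Using $\cos\theta\le 1$ and $\Psi(s)\ge 0$ we bound this below by $\Phi(t)-\Phi(s)-\Psi(s)s(t-s) = \Phi(t)-\Phi(s)-\Phi'(s)(t-s)$ when $t\ge s$; when $t<s$ the sign of $(t-s)$ flips and one must instead write $-\Psi(s)(ts\cos\theta-s^2)\ge -\Psi(s)s(t\cos\theta - s)$ and split according to the sign of $\cos\theta$, reducing in all cases to the scalar Taylor-type estimate \eqref{eq:quant mon Psi 2}. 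Applying \eqref{eq:quant mon Psi 2} gives $\ge \tfrac{1+\lambda}9|t-s|^2\big(\inf_{(\frac{t\vee s}3,\,t\vee s)}\Psi\big)$, and again I would upgrade $|t-s|^2$ to $|v-w|^2(x)$ by combining with the convexity/Cauchy--Schwarz slack exactly as above, at the cost of passing from the constant $\tfrac{1+\lambda}9$ to $\tfrac{1+\lambda}{36}$ and from the open interval $\big(\tfrac{t\vee s}3, t\vee s\big)$ to the closed $\big[\tfrac{|v|\vee|w|}3, |v|\vee|w|\big]$ (the closure being harmless since $\inf$ over the closure is $\le$ the infimum over the open interval). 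Finally, integrating / taking the $\mm$-a.e.\ statement concludes, since all quantities are Borel.

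The main obstacle is purely bookkeeping of constants: converting the ``radial'' differences $\big||v|-|w|\big|^2$ that come out of the one-dimensional lemmas into the genuine module norm $|v-w|^2$, which requires a careful but elementary decomposition of the bilinear expression into a part proportional to $|v-w|^2$ and a part to which the scalar estimates apply, while keeping $\Psi$ evaluated at a point comparable to $\tfrac{|v|\vee|w|}2$ or in the interval $\big[\tfrac{|v|\vee|w|}3,|v|\vee|w|\big]$. There is no analytic difficulty beyond this; infinitesimal Hilbertianity is used exactly once, to guarantee the inner-product structure on fibers that makes the reduction to the scalar case legitimate.
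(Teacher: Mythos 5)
Your general plan --- reducing $\mm$-a.e.\ to a two-dimensional Euclidean computation on the fiber and then invoking the scalar estimates of Lemma \ref{lem:quant mon Psi} --- is indeed the route taken by the paper. However, the decomposition you propose to upgrade $||v|-|w||^2$ to $|v-w|^2$ is false. You claim
\begin{equation*}
\Psi(t)t^2+\Psi(s)s^2-(\Psi(t)+\Psi(s))ts\cos\theta \ \ge\ \tfrac12(\Psi(t)+\Psi(s))(t^2+s^2-2ts\cos\theta)+\tfrac12(\Psi(t)t-\Psi(s)s)(t-s),
\end{equation*}
but subtracting the right side from the left gives exactly $\tfrac12(t-s)\bigl(\Psi(t)s-\Psi(s)t\bigr)$, whose sign is controlled by whether $r\mapsto\Psi(r)/r$ is monotone --- a property \emph{not} implied by \eqref{eq:the usual psi'}. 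Taking $\Psi\equiv 1$ (which satisfies the hypotheses with $\lambda=0$), $t=2$, $s=1$, $\cos\theta=1$ gives left side $=1$ and right side $=3/2$, so the inequality fails. Since this step is the entire content of the passage from the scalar to the vectorial inequality, the proof has a genuine gap; the same unjustified step reappears in your treatment of \eqref{eq:quant mon 2}, so it is not merely a numerics issue as you suggest.

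The fix, which is what the paper does, is to use the exact algebraic \emph{identity}
\begin{equation*}
\la \Psi(|v|)v-\Psi(|w|)w,\,v-w\ra \ =\ \bigl(\Psi(|v|)|v|-\Psi(|w|)|w|\bigr)\bigl(|v|-|w|\bigr)\ +\ \bigl(\Psi(|v|)+\Psi(|w|)\bigr)\bigl(|v||w|-\la v,w\ra\bigr),
\end{equation*}
so that the second summand is exactly the Cauchy--Schwarz slack $|v||w|-\la v,w\ra\ge 0$ (\emph{not} the full $|v-w|^2$). One then applies \eqref{eq:quant mon Psi} to the first summand, bounds the coefficient of the second below by $\tfrac12\Psi\bigl(\tfrac{|v|\vee|w|}{2}\bigr)$ using the monotonicity of $r\Psi(r)$, and recombines via the elementary identity $|v-w|^2=\bigl(|v|-|w|\bigr)^2+2\bigl(|v||w|-\la v,w\ra\bigr)$, using $\tfrac12\ge\tfrac{1+\lambda}{2}$ to match coefficients. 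The analogous decomposition $\Phi(|v|)-\Phi(|w|)-\Psi(|w|)\la w,v-w\ra=\Phi(|v|)-\Phi(|w|)-\Phi'(|w|)(|v|-|w|)+\Psi(|w|)\bigl(|v||w|-\la v,w\ra\bigr)$, together with a case split on $|w|\lessgtr |v|/3$ (where $(|v|-|w|)^2\ge \tfrac14|v-w|^2$ in one case and $\Psi(|w|)\ge\inf_{[(|v|\vee|w|)/3,\,|v|\vee|w|]}\Psi$ in the other), gives \eqref{eq:quant mon 2} with the constant $\tfrac{1+\lambda}{36}$.
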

	\begin{proof}
		{We start with \eqref{eq:quant mon}.  It is sufficient to show its validity $\mm$-a.e.\ in the set $\{|w|\ge |v|\}$ and then argue by symmetry. By simple manipulation we have
			\begin{align*}
				\la \Psi(|v|)v\!-\!\Psi(|w|)w,v\!-\!w\ra \!=\!  (\Psi(|v|)|v|\!-\!\Psi(|w|)|w|)(|v|\!-\!|w|)\!+\!  (\Psi(|v|)\!+\!\Psi(|w|))\big(|v||w|\!-\!\la v, w\ra\big).
			\end{align*}
			Applying \eqref{eq:quant mon Psi} to the first term in the right-hand side and restricting to the set $\{|w|\ge |v|\}$, 
			\begin{align*}
				\la \Psi(|v|)v-\Psi(|w|)w,v-w\ra&\ge  \frac{1+\lambda}4 ||w|-|v||^2\Psi\bigg(\frac{|v|}2\bigg)+  \Psi(|v|)\, \big(|v||w|-\la v, w\ra\big)\\
				&\ge   \frac{1+\lambda}4 |w-v|^2\Psi\bigg(\frac{|v|}2\bigg),
			\end{align*}
			where we used that  $\Psi(|v|)\ge \frac12 \Psi(|v|/2)$, since $s\Psi(s)$ is non-decreasing,} { and \eqref{eq:quant mon} follows.}

		We move to \eqref{eq:quant mon 2}. Expanding and manipulating, {using also that $\Phi'(t)=t\Psi(t)$}, we have 
		\[
		\Phi(|v|)\!-\!\Phi(|w|)-\!\Psi(|w|)\la w,v\!-\!w\ra\!=\!\Phi(|v|)\!-\!\Phi(|w|)\!-\!\Phi'(|w|)(|v|\!-\!|w|)+\Psi(|w|)\,(|v||w|\!-\!\la w,v\ra).
		\]
		Hence, by applying inequality \eqref{eq:quant mon Psi 2} pointwise, we get
		\begin{align*}
			\Phi(|v|)\!-\!\Phi(|w|)\!-\!\Psi(|w|)\la w,v\!-\!w\ra &\ge  \frac{1\!+\!\lambda}{9} (|v|\!-\!|w|)^2 \Big( \inf_{\big[\frac{|w|\vee |v|}3,|v|\vee |w|\big]} \Psi\Big) \!+\! \Psi(|w|)\,(|v||w|\!-\!\la w,v\ra).
		\end{align*}
		If $|w|\le |v|/3$ we claim that $(|v|\!-\!|w|)^2\ge \frac{|v-w|^2}4 $. This {is enough to prove \eqref{eq:quant mon 2}} in this case since the last term is non-negative. If $|v|=|w|=0$ it is clear, otherwise setting $t\coloneqq |w|/|v|\in(0,1/3]$
		\[
		(|v|-|w|)^{-2}|v-w|^2\le (|v|-|w|)^{-2}(|v|+|w|)^2={(1-t)^{-2}(1+t)^2}\le 4.
		\]
		If $|w|\ge |v|/3$, then  $\Psi(|w|)\ge \inf_{\big[\frac{|w|\vee |v|}3,|v|\vee |w|\big]} \Psi$ and we conclude again, {this time using the final term on the right side to cancel the $-\frac{2}{9}(1\!+\!\lambda)|v||w|$ from the first term on the right side}.
	\end{proof}
	
	{The next convergence result is  used in all the proofs of \S \ref{sec:proof of propositions}.}
	\begin{lemma}\label{lem:lp lemma}
		Fix $p\in(1,\infty).$
		Let $\Psi:[0,\infty)\to (0,\infty)$ be  continuous in $(0,\infty)$ and such that 
		$$\Psi(t)\ge ct^{p-2}, \quad \text{ for all $t\ge 1$} \qquad { (\text{for some given } c>0).  }$$ 
		Let $(\mathcal X,\mathcal A, \mu)$ be a finite measure space, $\{f_n\}_n, \{g_n\}_n \subset L^p(\mu)$ Borel functions and  ${ A_n}\subset \mathcal X$ sets, such that $g_n\ge |f_n|$ in ${ A_n}$, and $\sup_n\|g_n\|_{L^p({ A_n})}<\infty$. {Define, for all $\delta \in (0,1)$, $n\in\mathbb{N}$, $x \in \mathcal{X}$, the {{closed} interval}  $\omega_{\delta,n}(x) := [\delta g_n(x) , \delta^{-1}g_n(x)]$. Suppose that}, for some $\delta \in (0,1)$, there holds 
		\[
		\lim_{n\to +\infty}\int_{{ A_n}} \Big(\inf_{\omega_{\delta,n}(x)} \Psi \Big)|f_n(x)|^2\d \mu(x)=0. 
		\]
		Then $\|f_n\|_{L^p({ A_n})}\to 0$.
	\end{lemma}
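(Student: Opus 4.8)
The plan is to prove $\int_{A_n}|f_n|^p\,d\mu\to 0$ by decomposing $A_n$ according to the size of $g_n$ and estimating each piece using: the structural bound $\Psi(t)\ge ct^{p-2}$ for $t\ge1$, the pointwise inequality $|f_n|\le g_n$, the local positivity and continuity of $\Psi$ on $(0,\infty)$, the finiteness $\mu(\mathcal X)<\infty$, and the uniform bound $S:=\sup_n\|g_n\|_{L^p(A_n)}^p<\infty$. Throughout, $\delta\in(0,1)$ is the fixed parameter from the hypothesis, we abbreviate $\epsilon_n:=\int_{A_n}\big(\inf_{\omega_{\delta,n}(x)}\Psi\big)|f_n(x)|^2\,d\mu(x)\to0$, and we note $|f_n|=0$ on $A_n\cap\{g_n=0\}$ so only $A_n\cap\{g_n>0\}$ contributes. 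We fix a threshold $a'\in(0,\delta^{-1})$ and split $A_n\cap\{g_n>0\}$ into Region~I $=\{g_n>\delta^{-1}\}$, Region~II $=\{a'<g_n\le\delta^{-1}\}$ and Region~III $=\{0<g_n\le a'\}$ (all intersected with $A_n$), writing $\int_{\mathrm I},\int_{\mathrm{II}},\int_{\mathrm{III}}$ for the corresponding integrals.

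On Region~III one has simply $|f_n|^p\le g_n^p\le a'^p$, hence $\int_{\mathrm{III}}|f_n|^p\,d\mu\le a'^p\mu(\mathcal X)$, which is made arbitrarily small by choosing $a'$ small. On Region~II the interval $\omega_{\delta,n}(x)=[\delta g_n,\delta^{-1}g_n]$ is contained in the fixed compact set $[\delta a',\delta^{-2}]\subset(0,\infty)$, so $m(a'):=\min_{[\delta a',\delta^{-2}]}\Psi>0$ by continuity and positivity of $\Psi$, whence $\int_{\mathrm{II}}|f_n|^2\,d\mu\le m(a')^{-1}\epsilon_n\to0$. Since $|f_n|\le\delta^{-1}$ on Region~II, for $p\ge2$ we bound $|f_n|^p\le\delta^{2-p}|f_n|^2$ to get $\int_{\mathrm{II}}|f_n|^p\to0$; for $p<2$ we instead write, for arbitrary $b>0$, $\int_{\mathrm{II}}|f_n|^p\le b^p\mu(\mathcal X)+b^{p-2}\int_{\mathrm{II}}|f_n|^2$ (splitting according to whether $|f_n|\le b$ or $|f_n|>b$ and using $p-2<0$), then let $n\to\infty$ and $b\to0$ to again conclude $\int_{\mathrm{II}}|f_n|^p\to0$ for each fixed $a'$.

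On Region~I we have $\omega_{\delta,n}(x)\subset(1,\infty)$, hence $\Psi\ge ct^{p-2}$ there. If $p\ge2$ the function $t^{p-2}$ is non-decreasing, so $\inf_{\omega_{\delta,n}(x)}\Psi\ge c(\delta g_n)^{p-2}\ge c\delta^{p-2}|f_n|^{p-2}$ (using $g_n\ge|f_n|$), giving $\big(\inf_{\omega_{\delta,n}(x)}\Psi\big)|f_n|^2\ge c\delta^{p-2}|f_n|^p$ pointwise and therefore $\int_{\mathrm I}|f_n|^p\,d\mu\le c^{-1}\delta^{2-p}\epsilon_n\to0$. If $p<2$ this pointwise inequality is false in general, and this is the genuine difficulty of the lemma: instead one uses $\inf_{\omega_{\delta,n}(x)}\Psi\ge c(\delta^{-1}g_n)^{p-2}=c\delta^{2-p}g_n^{p-2}$, so that $\int_{\mathrm I}g_n^{p-2}|f_n|^2\,d\mu\le(c\delta^{2-p})^{-1}\epsilon_n$, and applies Hölder's inequality with conjugate exponents $\tfrac2p$ and $\tfrac2{2-p}$ to the identity $|f_n|^p=(g_n^{p-2}|f_n|^2)^{p/2}(g_n^p)^{(2-p)/2}$, valid on $\{g_n>0\}$:
\begin{equation*}
\int_{\mathrm I}|f_n|^p\,d\mu\le\Big(\int_{\mathrm I}g_n^{p-2}|f_n|^2\,d\mu\Big)^{p/2}\Big(\int_{\mathrm I}g_n^p\,d\mu\Big)^{(2-p)/2}\le\big((c\delta^{2-p})^{-1}\epsilon_n\big)^{p/2}\,S^{(2-p)/2}\to0.
\end{equation*}

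Combining the three regions: given $\varepsilon>0$, first fix $a'\in(0,\delta^{-1})$ with $a'^p\mu(\mathcal X)<\varepsilon$; then $\limsup_n\int_{A_n}|f_n|^p\,d\mu\le\varepsilon$, since Regions~I and~II contribute $0$ in the limit. Letting $\varepsilon\to0$ yields $\|f_n\|_{L^p(A_n)}\to0$. The only nontrivial point is Region~I in the case $p<2$, where the naive pointwise estimate $(\inf\Psi)|f_n|^2\gtrsim|f_n|^p$ degenerates on the set where $g_n$ is large but $|f_n|\ll g_n$, and one must instead exploit the uniform $L^p$-control of $g_n$ via Hölder's inequality; everything else is a routine truncation argument using only $\mu(\mathcal X)<\infty$ and the continuity and positivity of $\Psi$ on $(0,\infty)$.
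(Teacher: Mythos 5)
Your proof is correct and follows essentially the same strategy as the paper's: truncate away the region where $g_n$ (equivalently $|f_n|$) is small using $\mu(\mathcal X)<\infty$, exploit the $p$-growth of $\Psi$ at infinity together with its positivity and continuity on compact subsets of $(0,\infty)$ where $g_n$ is bounded away from zero, and for $p<2$ resolve the degeneracy via H\"older's inequality with exponents $\tfrac{2}{p}$ and $\tfrac{2}{2-p}$ against the uniform $L^p$-bound on $g_n$. The paper merges your Regions~I and~II into the single set $\{g_n\ge\varepsilon\}$ by first producing a unified lower bound $\Psi(t)\ge c_\varepsilon t^{p-2}$ valid for all $t\ge\delta\varepsilon$, which slightly streamlines the bookkeeping, but the underlying ideas are the same.
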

	\begin{proof}
		By continuity and positivity  for all $\eps>0$ there exists $c_\eps>0$ such that $\Psi(t)\ge c_\eps t^{p-2}$ for all $t\ge \delta \eps$; {for instance, we set $c_\varepsilon = \min\big\{c,\frac{\min_{[\varepsilon \delta , 1]} \Psi}{\max\{1,(\varepsilon \delta)^{p-2}\}}\big\}$}. Note that $\delta$ is fixed. 
		In particular 
		\begin{equation}\label{eq:inf psi ineq}
			\inf_{\omega_{\delta,n}(x)} \Psi\ge c_\eps g_n(x)^{p-2}\, (\delta^{p-2}\wedge \delta^{2-p}), \quad \forall \,x \in \{g_n\ge \eps\}, 
		\end{equation}
		whence the quantity $\inf_{\omega_{\delta,n}(x)} \Psi$ effectively controls $|f_n|^{p-2}$. In detail, we have:
		\begin{align*}
			\int_{{ A_n}} |f_n|^p\d \mu& \le \int_{\{|f_n|\le \eps\}} |f_n|^p\d \mu  +\int_{\{|f_n|\ge \eps  \}\cap { A_n}} |f_n|^p\d \mu\le\eps^p \mu(\mathcal{X})  +\int_{\{|f_n|\ge \eps  \}\cap { A_n}} |f_n|^p\d \mu.
		\end{align*}
		To conclude it is enough to show that the final term on the right-hand side vanishes as $n\to \infty$; indeed, one may take the limsup as $n\to\infty$ followed by the limit as $\varepsilon \to 0$ (noting that the left-hand side is independent of $\varepsilon$) to deduce the result. We split the cases $p<2$ and $p\ge 2.$ For $p<2$ by applying H\"older's inequality and using that $\{|f_n|\ge \eps\}\subset \{g_n\ge \eps\}$, we get 
		\begin{align*}
			\int_{\{|f_n|\ge \eps  \}\cap { A_n}} \!|f_n|^p\d \mu(x)&\le 
			\left(\int_{ A_n} \Big(\inf_{\omega_{\delta,n}(x)} \Psi \Big) |f_n|^2\d \mu\right)^\frac{p}{2} \! \left(\int_{\{g_n\ge \eps  \}\cap { A_n}} \frac1{(\inf_{\omega_{\delta,n}(x)} \Psi)^\frac{p}{2-p}} \d \mu\right)^\frac{2-p}{2}\\
			&\le \left(\int_{ A_n} \Big(\inf_{\omega_{\delta,n}(x)} \Psi \Big) |f_n|^2\d \mu(x)\right)^\frac{p}{2} \left(\int_{{ A_n}} c_\eps^{{}\frac{p}{p-2}}\delta^{-p}g_n^p \d \mu\right)^\frac{2-p}{2}\to 0,
		\end{align*}
		in the limit as $n\to\infty$, where we used \eqref{eq:inf psi ineq} to obtain the final line.
		If instead $p\ge 2$
		\begin{align*}
			\int_{\{|f_n|\ge \eps  \}\cap { A_n}} |f_n|^p\d \mu(x)&= \int_{\{|f_n|\ge \eps  \}\cap { A_n}} |f_n|^{p-2}|f_n|^2\d \mu(x)\le \int_{\{g_n\ge \eps  \}\cap { A_n}} g_n^{p-2}|f_n|^2\d \mu(x)\\
			&\le c_\eps^{-1} \delta^{-(p-2)}\!\!\int_{{ A_n}}\!\!\!\! \Big(\inf_{\omega_{\delta,n}(x)} \Psi \Big)|f_n|^2\d \mu\to 0,
		\end{align*}
		in the limit as $n\to\infty$, where in the last line we used \eqref{eq:inf psi ineq}.
	\end{proof}

	\smallskip 
	
	 \noindent\textbf{Acknowledgements.} The authors thank Cristiana De Filippis, Nicola Gigli and Endre S\"uli for useful discussions. I.Y.V.\ was partially supported by the European Union (ERC, ConFine, 101078057).

\def\cprime{$'$} \def\cprime{$'$}

\end{document}